\newcolumntype{x}[1]{>{\centering\let\newline\\\arraybackslash\hspace{0pt}}p{#1}}
\newenvironment{diagram*}{\vspace{0.1cm}\begin{center}\begin{tikzcd}}{\end{tikzcd}\end{center}\vspace{0.1cm}}
\newcommand{\ThmTitleNoOptional}[3]{#1 #2.#3}
\newcommand{\ThmTitleOptional}[4]{#1 #2.#3: #4}
\NewDocumentCommand\goodnewtcbtheorem{ommmm}{%
  \IfNoValueTF{#1}{%
  \newenvironment{#2}[1][]{\refstepcounter{#5}\begin{tcolorbox}[title={\if\relax\detokenize{##1}\relax{\ThmTitleNoOptional{#3}{\thesection}{\arabic{#5}}}\else{\ThmTitleOptional{#3}{\thesection}{\arabic{#5}}{##1}}\fi},#4]}{\end{tcolorbox}}%
  \newenvironment{#2*}[1][]{\begin{tcolorbox}[title={\if\relax\detokenize{##1}\relax{#3}\else{#3: ##1}\fi},#4]}{\end{tcolorbox}}%
  }{%
  \newenvironment{#2}[1][]{\refstepcounter{#5}\begin{tcolorbox}[title={\if\relax\detokenize{##1}\relax{#3 #1\arabic{#5}}\else{#3 #1\arabic{#5}: ##1}\fi},#4]}{\end{tcolorbox}}%
  \newenvironment{#2*}[1][]{\begin{tcolorbox}[title={\if\relax\detokenize{##1}\relax{#3}\else{#3: ##1}\fi},#4]}{\end{tcolorbox}}%
  }
}
\NewDocumentCommand\goodrenewtcbtheorem{ommmm}{%
  \IfNoValueTF{#1}{%
  \renewenvironment{#2}[1][]{\refstepcounter{#5}\begin{tcolorbox}[title={\if\relax\detokenize{##1}\relax{#3 \thesection.\arabic{#5}}\else{#3 \thesection.\arabic{#5}: ##1}\fi},#4]}{\end{tcolorbox}}%
  \renewenvironment{#2*}[1][]{\begin{tcolorbox}[title={\if{#3}\else{#3: ##1}\fi},#4]}{\end{tcolorbox}}%
  }{%
  \renewenvironment{#2}[1][]{\refstepcounter{#5}\begin{tcolorbox}[title={\if\relax\detokenize{##1}\relax{#3 #1\arabic{#5}}\else{#3 #1\arabic{#5}: ##1}\fi},#4]}{\end{tcolorbox}}%
  \renewenvironment{#2*}[1][]{\begin{tcolorbox}[title={\if{#3}\else{#3: ##1}\fi},#4]}{\end{tcolorbox}}%
  }
}
\numberwithin{thmcounter}{section}
\newtcolorbox{note}{fancynote}
\newtcolorbox{todo}{fancytodo}
\newcommand{\qedsymbol}{$\blacksquare$}
\newtcolorbox{proof}[1][Proof]{fancyproof, title={#1}}
\renewcommand\qedsymbol{$\blacksquare$}
\newcommand{\oset}[3][0ex]{%
  \mathrel{\mathop{#3}\limits^{
    \vbox to#1{\kern-2\ex@
    \hbox{$\scriptstyle#2$}\vss}}}}
\newcommand{\bigmid}{\biggm|}
\newcommand{\To}{\Rightarrow}
\newcommand{\bfG}{\ensuremath{\mathbf{G}}}
\newcommand{\sfB}{\ensuremath{\mathsf{B}}}
\newcommand{\sfC}{\ensuremath{\mathsf{C}}}
\newcommand{\bbJ}{\ensuremath{\mathbb{J}}}
\newcommand{\calA}{\ensuremath{\mathcal{A}}}
\newcommand{\calE}{\ensuremath{\mathcal{E}}}
\newcommand{\calT}{\ensuremath{\mathcal{T}}}
\newcommand{\frakC}{\ensuremath{\mathfrak{C}}}
\newcommand{\frakF}{\ensuremath{\mathfrak{F}}}
\newcommand{\frakN}{\ensuremath{\mathfrak{N}}}
\newcommand{\frakS}{\ensuremath{\mathfrak{S}}}
\newcommand{\R}{\mathbb{R}}
\newcommand{\C}{\mathbb{C}}
\newcommand{\N}{\mathbb{N}}
\newcommand{\adj}{\operatorname{adj}}
\newcommand{\img}{\operatorname{im}}
\newcommand{\End}{\operatorname{End}}
\newcommand{\Aut}{\operatorname{Aut}}
\newcommand{\coker}{\operatorname{coker}}
\newcommand{\Span}{\operatorname{Span}}
\newcommand{\iso}{\xrightarrow{\sim}}
\newcommand{\sur}{\twoheadrightarrow}
\newcommand{\HH}{\operatorname{H}}
\newcommand{\pre}{\mathrm{pre}}
\newcommand{\id}{\mathrm{id}}
\title{Conjugating by singular operators \\ \large On the boundedness of similarity transforms near singular points}
\author{Daniel Falkowski \& Carl-Fredrik Lidgren}
\newcommand{\1}{\ensuremath{\mathbb{1}}}
\newcommand{\pw}{\ensuremath{\mathrm{pw}}}
\begin{document}
\maketitle

\begin{abstract}
	We consider the question of, given operators \(A,Z\) and a sequence of invertible operators \(U_n\to Z\), whether the sequence \(U_nAU_n^{-1}\) is bounded in norm, as well as generalizations of this
	where \(U_nAU_n^{-1}\) is modified by some bounded linear map on bounded linear operators. In the setting of Hilbert spaces, we provide a complete classification in terms of algebraic criteria of
	those \(A\) for which such a sequence exists, as long as \(Z\) is of \emph{generalized index zero,} which always holds in finite-dimensional contexts. In the process, we prove that particular coefficients
	arising in inverses of certain \emph{good paths} going to \(Z\) can also be classified in terms of an entirely algebraic criterion.
\end{abstract}

\tableofcontents

\clearpage
\section{Introduction}\label{section:introduction}
\subsection{The Problem \& the Approach}\label{subsection:problem-statement}
This document deals with trying to answer the following basic question, which comes up naturally in the context of jointly diagonalizing matrices (see e.g.\ \cite{troedsson2024jointeigendecompositionmatrices}):
\begin{question}\label{question:pathwise-question}
	Let \(U_k\to Z\) be a sequence of invertible matrices converging to a singular matrix \(Z\), and let \(A\) be a square matrix. Does the sequence
	\[ \| U_kAU_k^{-1} \| \]
	diverge to infinity? What if one modifies the sequence by, for example, deleting the diagonal of \(U_kAU_k^{-1}\)?
\end{question}

This question is in general hard to answer, as the behaviour of a given ``path'' \(U_k\to Z\) may be highly particular to the specific path itself. Therefore, as
a simplifying approximation, we may wish to instead ask about all approach paths at once. Doing this naturally introduces two extremes: first, asking for which
pairs \((A,Z)\) it holds that for \emph{every} sequence \(U_k\to Z\), the sequence as in Question \ref{question:pathwise-question} does not diverge to infinity, which can be rephrased as

\begin{question}\label{question:the-question}
	Let \(V=\C^n\). Determine for which \(Z\in\End(V)\) and \(A\in\End(V)\) it holds that
	\[ \limsup_{r\to 0}\sup_{\substack{U\in\Aut(V) \\ \|U-Z\|<R}}\|UAU^{-1}\| = \limsup_{r\to 0}\sup_{\substack{U\in\Aut(V) \\ \|U-Z\|<r}}\sup_{w\not= 0}\frac{\|UAw\|}{\|Uw\|} < \infty. \]
\end{question}

Secondly, asking for which pairs \((A,Z)\) there exists \emph{some} sequence \(U_k\to Z\) such that the sequence in Question \ref{question:pathwise-question} does not diverge to infinity, which
may be rephrased as
\begin{question}\label{question:local-infimum-question}
	Let \(Z\) be singular, and let \(A\) be arbitrary. Determine when
	\[ \limsup_{r\to 0}\inf_{\substack{U\in\Aut(\C^n) \\ \|U - Z\| < r}}\| UAU^{-1} \| < \infty. \]
\end{question}

Taking into account that we may also wish to ``modify'' \(U_kAU_k^{-1}\) by some operation, such as deleting the diagonal as suggested in Question \ref{question:pathwise-question}, there is a modification
one can make to the above questions to facilitate that. In particular,
\begin{question}\label{question:hadamard-question}
	Let \(\varphi\!:\End(\C^n)\to\End(\C^n)\) be a bounded linear map. Determine for which \(A,Z\in\End(\C^n)\) we have
	\[ \limsup_{r\to 0}\sup_{\substack{U\in\Aut(\C^n), \\ \|U-Z\|<r}}\|\varphi(UAU^{-1})\| < \infty.\]
\end{question}
\begin{question}\label{question:local-hadamard-infimum-question}
	Let \(\varphi\!:\End(\C^n)\to\End(\C^n)\) be a bounded linear map. Determine for which \(A,Z\in\End(\C^n)\) we have
	\[ \limsup_{r\to 0}\inf_{\substack{U\in\Aut(\C^n) \\ \|U - Z\| < r}}\| \varphi(UAU^{-1}) \| < \infty. \]
\end{question}

That the above questions do indeed make sense given the situation is justified in Proposition \ref{prop:S-and-S_V-as-pathwise-cap-or-cup}. We would like to highlight that the primary case of interest is the
one for which the map \(\varphi\) is given by a Hadamard product, and in particular the Hadamard product with a matrix \(J\)
whose entries are all one aside from the diagonal, where it is zero. In other words, while we state the question in this generality, the intended application is to the concrete situation where one
deletes the diagonal as mentioned in Question \ref{question:pathwise-question}.

Now, the above questions are still fairly tricky to answer at first glance---indeed, naively it is \emph{a priori} very hard to check how a pair \((A,Z)\) fares in any of them---so the strategy we take throughout this paper will be to study
the pairs \((A,Z)\) \emph{in families} as opposed to individually, and in particular, we will parametrize these families by the \(Z\)'s. In an effort for maximum generality, we will now state some definitions
which will help us organize the above in far more flexible circumstances than we are actually concretely interested in.

\begin{definition}\label{definition:frakN}
	Let \(V\) be a Banach algebra, let \(X\) be a Banach space, and let \(\varphi\!:V\to X\) be a bounded linear map. Define the functions
	\[ \frakN^\varphi_\cap\!:\End(V)\times\End(V)\to\R\cup\{\infty\}\quad\text{and}\quad\frakN^\varphi_\cup\!:\End(V)\times\End(V)\to\R\cup\{\infty\} \]
	by
	\begin{align*}
		\frakN^\varphi_\cap(a,z) &:=  \limsup_{r\to 0}\sup_{\substack{u\in V^\times, \\ \|u-z\|<r}}\|\varphi(uau^{-1})\| \\ 
		\frakN^\varphi_\cup(a,z) &:=  \limsup_{r\to 0}\inf_{\substack{u\in V^\times, \\ \|u-z\|<r}}\|\varphi(uau^{-1})\|.
	\end{align*}
	When \(X=V\) and \(\varphi=\id_V\), we write \(\frakN_\cap := \frakN_\cap^{\id_V}\) and \(\frakN_\cup := \frakN_\cup^{\id_V}\).
\end{definition}

So, choosing \(V=\End(\C^n)\), Question \ref{question:the-question} asks for which \(A\) and \(Z\) we have \(\frakN_\cap(A,Z)<\infty\), and similarly Question \ref{question:local-infimum-question} asks
for which \(A\) and \(Z\) we have \(\frakN_\cup(A,Z)<\infty\). More generally, the finiteness of the functions \(\frakN_\cap^\varphi\) and \(\frakN_\cup^\varphi\) allow us to express
Questions \ref{question:hadamard-question} and \ref{question:local-hadamard-infimum-question}.

\begin{definition}\label{definition:frakS}
	Let \(V\) be a Banach algebra, let \(X\) be a Banach space, and let \(\varphi\!:V\to X\) be a bounded linear map. For each \(z\in V\), define the sets
	\[ \frakS^\varphi_\cap(z) = \{ a\in V \mid \frakN^\varphi_\cap(a,z) < \infty \}\quad\text{and}\quad \frakS_\cup^\varphi(z) = \{ a\in V \mid \frakN_\cup^\varphi(a,z) < \infty \}. \]
	As before, we write \(\frakS_\cap(-) := \frakS^{\id_V}(-)\) and \(\frakS_\cup(-) := \frakS_\cup^{\id_V}(-)\).
\end{definition}
\begin{terminology}
	Elements of \(\frakS_\cap(z)\) and \(\frakS_\cup(z)\) are said to be \emph{sup-/inf-bounded at} \(z\), respectively (or just \emph{bounded at} \(z\) if context makes it clear which one is meant). Convsersely,
	elements in the complement are said to be \emph{sup-/inf-unbounded at} \(z\) (or just \emph{unbounded at} \(z\), subject to contextual clues).
\end{terminology}

Choosing \(V=\End(\C^n)\), we note that by introducing these sets, one ``reduces'' finding pairs \((A,Z)\) such that e.g. \(\frakN_\cup(A,Z)<\infty\) to computing the family \(\frakS_\cup(Z)\).
In this way, we can ``remove'' an entire dependency on \(A\) by blurring our eyes until they become simply the set \(\frakS_\cup(Z)\), which can be a useful tool for finding exploitable structure.
In essence, one hopes to turn answering the questions provided earlier into determining the ``algebra/calculus'' of the families \(\frakS_\cup(-)\).

Before getting into the weeds, there are some small comments to make.
\begin{itemize}[label=\(\star\)]
\item While we made a choice to work over \(\C\), everything in this paper works without alteration over \(\R\) as well.
\item A proof and result very similar to Lemma \ref{lemma:J-is-equivalent-to-id} appears in \cite{troedsson2024jointeigendecompositionmatrices}, though they were both developed independently based on the proof
in Appendix \ref{section:appendix-variant-of-modified}.
\end{itemize}

\subsection{Structure of the Paper \& Main Results}
We will begin with an outline of the overall structure of this paper before we disclose the main results. Because Question \ref{question:the-question} and Question \ref{question:local-infimum-question} require very different approaches,
we divide this paper up into sections to reflect this fact. However, before any of that, we give some preliminary notions in Section \ref{subsection:preliminaries} (most importantly the ``kernel criterion'').

In Section \ref{section:existence-of-bounded-paths}, we address Question \ref{question:local-infimum-question} and its generalization Question \ref{question:local-hadamard-infimum-question}.
In particular, the former is handled very elegantly in Section \ref{subsection:bounded-unmodified-existence}, where we give conditions under which the question can be answered
essentially completely, though we defer actually verifying the conditions until Section \ref{subsection:good-paths-in-hilbert-spaces-gen-ind-zero}. Inbetween these two, we have Section \ref{subsection:duality-in-coefficients},
discussing a duality phenomenon which arises from the difference between \(\|UAU^{-1}\|\) and \(\|U^{-1}AU\|\), which also provides us with a crucial tool in understanding the structure
of \emph{good paths,} which are the main devices we used to solve Question \ref{question:local-infimum-question} in Section \ref{subsection:bounded-unmodified-existence}.

In Section \ref{subsection:pathwise-boundedness}, we take a detour to talk about questions of boundedness along particular specified paths. When the path is given by some analytic function,
one can produce a refinement of the kernel criterion, which ends up giving some further rigidity to good paths (see Corollary \ref{corollary:good-path-rigidity-in-kernels}). Having done that,
we proceed to discussing Question \ref{question:local-hadamard-infimum-question}, giving two different kinds of approaches: in Section \ref{subsection:preorder}, we detail a relation between
modifiers \(\varphi\preceq\psi\) which descends to give comparisons like \(\frakS^\psi_\cup(-)\subseteq\frakS^\varphi_\cup(-)\), and this allows us, in favourable circumstances, to reduce computing
the more complex \(\frakS^\varphi_\cup(-)\) to the ordinary \(\frakS_\cup(-)\); in Section \ref{subsection:bounded-modified-existence}, besides applying the aforementioned relation, we develop
an alternate but also \emph{complete} description of \(\frakS_\cup^\varphi(-)\) inspired by the results of Sections \ref{subsection:bounded-unmodified-existence} \& \ref{subsection:good-paths-in-hilbert-spaces-gen-ind-zero}.

We end Section \ref{section:existence-of-bounded-paths} with Section \ref{subsection:modifiers-equivalent-to-the-identity}, giving a curious but suggestive consequence of the description
in Section \ref{subsection:bounded-modified-existence} that allows us to completely characterize those \(\varphi\) for which \(\frakS_\cup^\varphi(-)\) and \(\frakS_\cup(-)\) agree.

Section \ref{section:existence-of-unbounded-paths} is dedicated to addressing Question \ref{question:the-question} and Question \ref{question:hadamard-question}. The central observation
for our approach to these is given in Section \ref{subsection:unbounded-generalities}: that any element of \(\frakS^\varphi_\cap(Z)\) must also be in \(\frakS^\varphi_\cap(Z')\) for all sufficiently
nearby \(Z'\). With some trickery, this allows one to deduce that, at least in finite-dimensions, \(\frakS^\varphi_\cap(Z)\) can take only one of two forms, which we do in Section \ref{subsection:unbounded-modified-existence}.
In order to distinguish exactly what happens in our primary cases of interest, we use the same tools developed in Section \ref{subsection:preorder}.

Finally, we have two appendices: Appendix \ref{section:appendix-variant-of-modified}, giving an alternative (and more elementary) perspective on the results of Section \ref{subsection:preorder},
and Appendix \ref{section:appendix-notation}, where we provide some of the notation used throughout the paper.

While some of the main results have already been alluded to above, we now present them in a bit more detail. In Section \ref{subsection:good-paths-in-hilbert-spaces-gen-ind-zero}, we define what
it means for a bounded linear operator \(Z\) on a Banach space to be of \emph{generalized index zero.} These are operators \(Z\) which have closed image and for which there is
some invertible operator \(U\) such that \(UZ\) is self-adjoint (see Corollary \ref{corollary:gen-ind-zero-iff-closed-image-and-almost-self-adjoint}). This turns out to be a very important notion, and as suggested by the name, is a generalization
of the condition of being index zero. In particular, any index zero operator is also generalized index zero, and hence any linear map \(\C^n\to\C^n\) automatically satisfies this condition.
Operators with this property turn out to exhibit good behaviours. For example, an arbitrary operator need not be the limit of a sequence of invertible operators, but any generalized index zero
operator is, and we give a construction for this. The below is a combination of Theorem \ref{thm:good-linear-paths-equals-ker-equals-vee}, Corollary \ref{corollary:blow-up-for-almost-all},
and Theorem \ref{thm:hilbert-space-gen-index-zero-C1=C'}.
\begin{theorem}
	Let \(X\) be a Hilbert space, and let \(Z\in\sfB(X)\) be an operator of generalized index zero. Then there is a sequence of invertible operators converging to \(Z\), and
	\[ \frakS_\cup(Z) = \{ A\in\sfB(X)\mid A\ker{Z}\subseteq\ker{Z} \}. \]
	In particular, if \(X\) is finite-dimensional and \(Z\) is singular and non-zero, then for any sequence of invertibles \(U_n\to Z\) and almost all \(A\), we will have
	\[ \|U_n A U_n^{-1}\| \to \infty. \]
\end{theorem}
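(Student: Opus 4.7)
\begin{proofsketch}
The plan is to break the statement into three parts: (i) construction of invertible approximants, (ii) the two containments characterizing $\frakS_\cup(Z)$, and (iii) the genericity corollary. Parts (i) and (ii) share one explicit family of approximants built from the generalized index zero hypothesis.

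First I would use the hypothesis to produce canonical structure on $Z$: pick an invertible $U$ with $S := UZ$ self-adjoint. Since $Z$ has closed image, so does $S$; self-adjointness together with closed image yields the orthogonal decomposition $X = \ker S \oplus \img S$, and $S|_{\img S}\colon \img S \to \img S$ is a bounded bijection (hence has bounded inverse by the open mapping theorem). Writing $P$ for the orthogonal projection onto $\ker S = \ker Z$, set $T_n := S + \tfrac{1}{n} P$ and $U_n := U^{-1} T_n$. Each $T_n$ acts as $\tfrac{1}{n} I$ on $\ker S$ and as $S|_{\img S}$ on $\img S$, so is invertible; hence each $U_n$ is invertible, and $U_n \to U^{-1} S = Z$, giving existence.

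For the containment $\{A : A \ker Z \subseteq \ker Z\} \subseteq \frakS_\cup(Z)$, I would write $A$ in block form with respect to $X = \ker Z \oplus \img S$ as $\bigl(\begin{smallmatrix} A_{11} & A_{12} \\ A_{21} & A_{22} \end{smallmatrix}\bigr)$. The invariance hypothesis forces $A_{21} = 0$, and a direct computation gives
\[
T_n A T_n^{-1} \;=\; \begin{pmatrix} A_{11} & \tfrac{1}{n}\, A_{12}\, S|_{\img S}^{-1} \\ n\, S|_{\img S} A_{21} & S|_{\img S} A_{22}\, S|_{\img S}^{-1} \end{pmatrix}.
\]
The potentially divergent bottom-left entry vanishes precisely because $A_{21} = 0$, the upper-right picks up additional $\tfrac{1}{n}$-damping, and the diagonal blocks are $n$-independent. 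Thus $\sup_n \|T_n A T_n^{-1}\| < \infty$, and conjugating by the fixed $U$ gives $\sup_n \|U_n A U_n^{-1}\| < \infty$; combined with $U_n \to Z$ this yields $\frakN_\cup(A, Z) < \infty$ and hence $A \in \frakS_\cup(Z)$.

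The reverse containment is the kernel criterion from the preliminaries: if $A v \notin \ker Z$ for some $v \in \ker Z$, then for any invertible sequence $W_n \to Z$ we have $\|W_n v\| \to \|Zv\| = 0$ while $\|W_n A v\| \to \|Z A v\| > 0$, so $\|W_n A W_n^{-1}\| \geq \|W_n A v\|/\|W_n v\| \to \infty$, forcing $\frakN_\cup(A, Z) = \infty$. For the genericity corollary, when $X$ is finite-dimensional and $Z$ is singular and nonzero, $\{A : A \ker Z \subseteq \ker Z\}$ is a proper linear subspace of $\End(X)$ (proper because both $\ker Z \neq 0$ and $\ker Z \neq X$), hence has Lebesgue measure zero, and the kernel criterion applies to every $A$ in its complement along every invertible sequence $U_n \to Z$. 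I expect the main conceptual obstacle to be confirming that generalized index zero is genuinely what is needed for this argument to close up: one needs both a self-adjoint representative of $Z$ (so that $\ker S$ and $\img S$ are orthogonal complements, making $P$ well-behaved) and closed image (to get invertibility of $S|_{\img S}$ from open mapping), and the hypothesis packages both at once.
\end{proofsketch}
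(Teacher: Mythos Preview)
Your proof is correct and rests on the same underlying construction as the paper: reduce to a self-adjoint representative via the sharpened polar decomposition, use the orthogonal splitting \(X = \ker S \oplus \img S\), and build the explicit linear path \(S + \tfrac{1}{n}P\) along which boundedness is checked block-by-block. The paper routes this through its abstract \emph{good path} machinery---introducing the sets \(\frakS_{C_{-1}}(Z)\) and the chain \(\frakS_{\ker}(Z)\subseteq\frakS_{C_{-1}}(Z)\subseteq\frakS_{\pw}(Z;\cdot)\subseteq\frakS_\cup(Z)\subseteq\frakS_{\ker}(Z)\)---whereas you collapse that chain into a single direct computation of \(T_nAT_n^{-1}\). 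Your version is more self-contained for this particular statement; the paper's abstraction pays off elsewhere (the duality in \(\frakC_{-1}(Z)\), the modifier version \(\frakS_\cup^\varphi\), and the rigidity results for good paths), so the extra scaffolding is not wasted. One small point worth making explicit: your opening step ``pick an invertible \(U\) with \(UZ\) self-adjoint'' is itself the content of the sharpened polar decomposition, which the paper proves as a separate theorem; you correctly identify this as the place where the generalized-index-zero hypothesis is consumed, but in a fully written-out version you would need to supply that argument rather than invoke it.
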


The proof of the above result leverages the existence of \emph{good paths} \(Z+tE(t)\). These are paths which are invertible near zero and of the form
\[ Z+tE(t) = Z + \sum_{k=1}^\infty t^kE_k\quad\text{where}\quad (Z+tE(t))^{-1} = C_{-1}t^{-1} + \sum_{k=0}^\infty t^kC_k. \]
In other words, they are invertible paths to \(Z\) with a prescribed form for their inverse. It is an independently interesting question to study the structure such objects. The below is a rephrasing of
Theorem \ref{thm:hilbert-space-gen-index-zero-C1=C'} and Corollary \ref{corollary:good-path-rigidity-in-kernels}, which also expands upon a statement in the last result.
\begin{theorem}
	Let \(X\) be a Hilbert space, \(Z\in\sfB(X)\) of generalized index zero. Then there exists a good path converging to \(Z\). Furthermore, an operator \(C\in\sfB(X)\)
	arises as the \((-1)\)th coefficient in the inverse of a good path converging to \(Z\) if and only if
	\[ \ker(C) = \img(Z),\quad \text{and}\quad \img(C) = \ker(Z). \]
	Moreover, if we have a good path \(Z + tE(t)\) as above, then there exists some $n > 0$ such that
	\[ \forall m < n,\, \ker{Z}\subseteq\ker{E_m},\quad\text{and}\quad \ker{Z}\cap\ker{E_n} = 0. \]
\end{theorem}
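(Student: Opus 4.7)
The plan is to reduce all three assertions to coefficient-matching in the two series identities
\[ (Z+tE(t))\bigl(\textstyle C_{-1}t^{-1}+\sum_{k\ge 0}t^k C_k\bigr) = I = \bigl(\textstyle C_{-1}t^{-1}+\sum_{k\ge 0}t^k C_k\bigr)(Z+tE(t)), \]
combined with one explicit construction that exploits the Hilbert-space structure afforded by the generalized index zero hypothesis.

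For the first assertion (existence of a good path) and the sufficiency direction of the second (characterization of $C_{-1}$), I would treat both at once. Using that $Z$ is of generalized index zero, fix an invertible $U$ with $UZ$ self-adjoint (with $\img Z$ closed, automatic from the same hypothesis). Self-adjointness of $UZ$ gives $\ker(UZ)^\perp = \img(UZ)$, and together with $U$ being an isomorphism this yields orthogonal decompositions
\[ X = \ker Z \oplus (\ker Z)^\perp \quad\text{and}\quad X = \img Z \oplus (\img Z)^\perp, \]
with $Z$ restricting to a bounded isomorphism $(\ker Z)^\perp \iso \img Z$; write $G\colon\img Z \to (\ker Z)^\perp$ for its inverse. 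Given any $C \in \sfB(X)$ with $\ker C = \img Z$ and $\img C = \ker Z$, the restriction $C|_{(\img Z)^\perp}\colon(\img Z)^\perp \to \ker Z$ is a bounded bijection with bounded inverse $F$. Setting $E_1 := F\circ P_{\ker Z}$ and $C_0 := G\circ P_{\img Z}$, a direct block computation against the four orthogonal pieces verifies
\[ (Z + tE_1)(t^{-1}C + C_0) = I = (t^{-1}C + C_0)(Z + tE_1), \]
so the constant path $Z + tE_1$ is a good path realizing $C_{-1} = C$. This proves sufficiency. For existence alone, apply this to $C = P_{\ker Z} U$, which satisfies the kernel/image conditions via the self-adjoint identity $\ker Z = (U\img Z)^\perp$.

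The necessity direction of the characterization and the kernel-rigidity claim both drop out of the two lowest-order coefficients. The $t^{-1}$ coefficients give $ZC_{-1}=0$ and $C_{-1}Z=0$, so $\img C_{-1}\subseteq \ker Z$ and $\img Z\subseteq \ker C_{-1}$. The $t^0$ coefficient of the first identity reads $ZC_0 + E_1 C_{-1} = I$; applied to $w\in \ker C_{-1}$ this gives $w = ZC_0 w\in \img Z$, promoting the second inclusion to equality. The $t^0$ coefficient of the second identity reads $C_{-1}E_1 + C_0 Z = I$; applied to $v\in \ker Z$ this gives $v = C_{-1}(E_1 v)\in \img C_{-1}$, promoting the first inclusion to equality. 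The same relation $C_{-1}E_1 v = v$ for $v\in \ker Z$ also shows $E_1|_{\ker Z}$ is injective, i.e., $\ker Z\cap \ker E_1 = 0$, which is exactly the kernel-rigidity claim with $n=1$ (the universal clause then being vacuous).

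The main obstacle is the sufficiency construction: while each ingredient is conceptually natural, verifying $(Z+tE_1)(t^{-1}C+C_0) = I$ and its mirror cleanly requires tracking all four mixed products $ZC$, $ZC_0$, $E_1 C$, $E_1 C_0$ (and the four for the mirror) and matching them against the correct orthogonal projections, which depends essentially on the four orthogonal decompositions. This is the precise point at which the generalized index zero hypothesis is indispensable; once the bookkeeping is done, the remaining assertions are one-line consequences of the series expansion.
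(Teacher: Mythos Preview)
Your proof is correct, and in two of the three parts it takes a genuinely different (and more elementary) route than the paper.

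For the necessity direction of the characterization, the paper argues indirectly: it develops a duality between \(\frakS_\cup(Z)\) and \(\frakS_\cup^*(C)\), shows via Lemma~\ref{lemma:same-invariant-matrices-imply-equal} that \(\frakS_\cup(Z)=\frakS_\cup^*(C)\) forces \(\img(C)=\ker(Z)\), and then invokes the symmetry \(C\in\frakC_{-1}(Z)\Leftrightarrow Z\in\frakC_{-1}(C)\). Your direct reading of the \(t^{-1}\) and \(t^0\) coefficients bypasses all of that machinery entirely and is considerably shorter. For sufficiency, the two arguments are close in spirit: the paper reduces to the self-adjoint case via a sharpened polar decomposition (Theorem~\ref{thm:polar-decomposition}) and then builds a linear good path in block form, whereas you work directly with the two orthogonal decompositions of \(X\); the underlying construction is the same.

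The most notable difference is the kernel-rigidity statement. The paper proves it by way of the filtration algebras \(\frakS_\frakF(\cap_k\ker E_k)\) and the identity \(\frakS_\frakF(\cap_k\ker E_k)=\frakS_{\ker}(E_0)\) for good paths (Theorem~\ref{thm:good-path-truncation-rigidity}), deducing only that \emph{some} \(n\) works. Your argument from \(C_{-1}E_1+C_0Z=I\) shows that \(n=1\) \emph{always} works for a good path, which is strictly sharper than what the paper states and proves. The trade-off is that the paper's filtration argument applies to arbitrary admissible (not necessarily good) paths and yields the refined inclusion of Proposition~\ref{prop:analytic-paths-boundedness-criterion}, whereas your coefficient identity is specific to the good-path hypothesis.
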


The above two results came out of our investigation of the ordinary, unmodified Question \ref{question:local-infimum-question}. For the more complex Question \ref{question:local-hadamard-infimum-question}, we have the following
result, which is a rephrasing of Theorem \ref{thm:hilbert-hadamard-general}.
\begin{theorem}
	Let \(X\) be a Hilbert space, \(Z\in\sfB(X)\) of generalized index zero, and let \(\varphi\!:\sfB(X)\to Y\) be a bounded linear map with codomain a Banach space \(Y\). Then \(A\in\frakS_\cup^\varphi(Z)\)
	if and only if there is some \(C\in\sfB(X)\) such that \(\img(C) = \ker(Z)\), \(\ker(C) = \img(Z)\), and \(\varphi(ZAC) = 0\).
\end{theorem}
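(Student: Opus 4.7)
The key tool is Theorem \ref{thm:hilbert-space-gen-index-zero-C1=C'}, which realizes every operator $C$ satisfying $\img(C)=\ker(Z)$ and $\ker(C)=\img(Z)$ as the $(-1)$th coefficient of a good path converging to $Z$.

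\textbf{Sufficiency ($\Leftarrow$).} Given such a $C$ with $\varphi(ZAC)=0$, produce a good path $U(t) = Z + tE(t)$ realizing $C$, with inverse $U(t)^{-1} = t^{-1}C + C_0 + tC_1 + \cdots$ convergent on a punctured neighborhood of $0$. Direct multiplication gives
\[ U(t)AU(t)^{-1} \;=\; t^{-1}\,ZAC \;+\; \bigl(ZAC_0 + E_1 A C\bigr) \;+\; O(t), \]
so applying the bounded linear $\varphi$ annihilates the $t^{-1}$-term by hypothesis and keeps $\|\varphi(U(t)AU(t)^{-1})\|$ bounded as $t\to 0$. This witnesses $A \in \frakS^\varphi_\cup(Z)$ via, for instance, the sequence $U(1/n)$.

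\textbf{Necessity ($\Rightarrow$).} Fix a sequence of invertibles $U_n \to Z$ with $\|\varphi(U_nAU_n^{-1})\| \le M$. Singularity of $Z$ forces $\|U_n^{-1}\|\to\infty$, so set $t_n := 1/\|U_n^{-1}\|$ and $D_n := t_n U_n^{-1}$, giving $\|D_n\|=1$. The identities
\[ ZD_n = t_n\1 + (Z-U_n)D_n, \qquad D_n Z = t_n\1 + D_n(Z-U_n), \]
\[ \varphi(ZAD_n) = t_n\,\varphi(U_nAU_n^{-1}) - \varphi\bigl((U_n-Z)AD_n\bigr) \]
each have right-hand sides vanishing in norm as $n\to\infty$. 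Extract a weak-$*$ convergent subsequence $D_n \to C$ in $\sfB(X)$ (using weak-$*$ compactness of the unit ball, viewing $\sfB(X)$ as the dual of trace-class operators). Transferring the identities to the limit yields $ZC = 0$, $CZ = 0$, and $\varphi(ZAC) = 0$; that is, $\img(C)\subseteq\ker(Z)$, $\ker(C)\supseteq\img(Z)$, and the $\varphi$-vanishing condition.

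\textbf{Main obstacle.} Two interlocking difficulties dominate the forward direction. First, $\varphi$ need not be weak-$*$-continuous, so the passage $\varphi(ZAD_n)\to\varphi(ZAC)$ requires extra structure, possibly by showing a stronger convergence of the composite $ZAD_n$ (exploiting that it factors through the closed subspace $\img(Z)$), or through a finite-rank density argument on $\varphi$. Second, the limit $C$ only satisfies \emph{inclusions} on kernels and images, whereas the theorem demands \emph{equalities}. This is precisely where the generalized-index-zero hypothesis enters: it yields closed image for $Z$ and orthogonal splittings $X = \ker(Z)\oplus\ker(Z)^\perp = \img(Z)\oplus\img(Z)^\perp$ with a dimension match $\dim\ker(Z) = \dim\img(Z)^\perp$. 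One then corrects $C$ by adding an operator $C'$ with full kernel/image equalities such that $\varphi(ZAC')=0$, so that $C + C'$ is the required witness; establishing that the ``bijective'' stratum in the space of operators $\img(Z)^\perp \to \ker(Z)$ meets the affine solution variety $\{\Delta : \varphi(ZA\Delta) = 0\}$ nontrivially (equivalently, that one can pick $C'$ whose bijectivity survives the linear constraint) is the technical heart of the argument, and is what forces the generalized-index-zero assumption rather than something weaker.
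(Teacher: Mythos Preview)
Your sufficiency argument is correct and is exactly how the paper handles that direction (Corollary~\ref{corollary:hadamard-S-vee-contains-union-of-hadamard-SC} combined with $\frakC'(Z)=\frakC_{-1}(Z)$ from Theorem~\ref{thm:hilbert-space-gen-index-zero-C1=C'}).

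For necessity you propose a genuinely different strategy from the paper, and the two obstacles you flag are real and, as far as I can see, not repairable along the lines you sketch.

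\textbf{On weak-$*$ continuity.} The map $\varphi$ is an arbitrary bounded linear map into an arbitrary Banach space $Y$. There is no predual structure on $Y$ to exploit, and norm convergence $\varphi(ZAD_n)\to 0$ together with weak-$*$ convergence $D_n\to C$ does not give $\varphi(ZAC)=0$. Your ``finite-rank density'' idea would require $\varphi$ to be approximable by weak-$*$-continuous maps, which is false in general.

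\textbf{On the correction step.} Even granting $\varphi(ZAC)=0$, your limit only satisfies $\img(C)\subseteq\ker(Z)$ and $\ker(C)\supseteq\img(Z)$. The weak-$*$ limit can be zero (the unit sphere is not weak-$*$ closed), and even a nonzero norm limit in finite dimensions can have singular $\ker(Z)\to\ker(Z)$ block: if $U_n^{-1}$ blows up at different rates in two directions of $\ker Z$, your scalar normalisation $t_n$ kills the slower one. Your proposed fix---produce $C'\in\frakC'(Z)$ with $\varphi(ZAC')=0$---is precisely the statement to be proved, so the argument is circular.

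\textbf{How the paper avoids both issues.} After reducing to self-adjoint $Z$ via sharpened polar decomposition (Theorem~\ref{thm:polar-decomposition}), the paper works in the splitting $X=\img(Z)\oplus\ker(Z)$ and writes each $U_n$ in $2\times2$ block form. The explicit block-inverse formula (Lemma~\ref{lemma:inverse-of-block-map}) expresses $U_n^{-1}$ concretely in terms of the Schur complement $P_n$, and one checks that the only term in $U_nAU_n^{-1}$ not of the form $o(\lVert P_n^{-1}\rVert)$ or $O(1)$ is the top-right contribution $Z_{11}A_{12}P_n^{-1}$. Boundedness of $\varphi(U_nAU_n^{-1})$ then forces $\varphi$ of that block to vanish for a \emph{single fixed} sufficiently large $n$, and one takes $C$ with bottom-right block $P_n^{-1}$. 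Since $U_n$ is invertible, so is $P_n$, hence $C\in\frakC'(Z)$ automatically: no limit is ever taken, so neither of your obstacles arises.
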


This theorem suggests that it would be nice if one were able to simplify the collection of \(C\)'s one has to consider nicely (so that one could realistically compute \(\frakS^\varphi_\cup(Z)\)), but to the authors' knowledge there is
no good way to do this. On the other hand, since the ordinary \(\frakS_\cup(Z)\) is much more computable, one may wonder under what conditions \(\frakS_\cup^\varphi(Z) = \frakS_\cup(Z)\) for all \(Z\) of generalized index zero. While
Section \ref{subsection:preorder} gives one approach to this kind of question, the theorem above actually yields a property completely characterizing these \(\varphi\)'s. The below is
a combination of Theorem \ref{thm:S_V^phi=S_V-implies-phi(2nd-ord-nilp)-nonzero} and Corollary \ref{corollary:hadamard-phis-equiv-to-id}.
\begin{theorem}
	Let \(X\) be a Hilbert space, \(Y\) a Banach space, and let \(\varphi\!:\sfB(X)\to Y\) be a bounded linear map. Then \(\frakS_\cup^\varphi(Z) = \frakS_\varphi(Z)\) for
	all \(Z\in\sfB(X)\) of generalized index zero if and only if for all non-zero \(T\in\sfB(X)\) such that \(T^2=0\), we have \(\varphi(T)\not=0\).

	In particular, if \(X\) is finite-dimensional and \(\varphi\) is given by taking the Hadamard product \(A\mapsto H*A\) with some matrix \(H\), then \(\frakS_\cup^\varphi(Z) = \frakS_\cup(Z)\)
	for all \(Z\) if and only if all non-diagonal entries in \(H\) are non-zero.
\end{theorem}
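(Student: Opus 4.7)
The plan is to leverage the characterization from the previous theorem: $A \in \frakS_\cup^\varphi(Z)$ iff there exists $C \in \sfB(X)$ with $\img(C) = \ker(Z)$, $\ker(C) = \img(Z)$, and $\varphi(ZAC) = 0$, combined with $\frakS_\cup(Z) = \{A : A\ker(Z) \subseteq \ker(Z)\}$. The crucial structural observation is that $\ker(C) \supseteq \img(Z)$ forces $CZ = 0$, so $(ZAC)^2 = ZA(CZ)AC = 0$ automatically---every $ZAC$ arising this way is second-order nilpotent. This is the lever that makes the equivalence natural.

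First I would establish the easy inclusion $\frakS_\cup(Z) \subseteq \frakS_\cup^\varphi(Z)$ (valid for any $\varphi$): if $A$ preserves $\ker(Z)$, then for any valid $C$, $\img(AC) \subseteq A\ker(Z) \subseteq \ker(Z)$, so $ZAC = 0$ and $\varphi(ZAC) = 0$; existence of at least one valid $C$ is guaranteed by the earlier result on good paths for $Z$ of generalized index zero. The backward direction ($\Leftarrow$) then falls out immediately: assuming $\varphi(T) \neq 0$ for every nonzero $T$ with $T^2 = 0$, and given $A \in \frakS_\cup^\varphi(Z)$ with witnessing $C$, the combination of $\varphi(ZAC) = 0$ with $ZAC$ being second-order nilpotent forces $ZAC = 0$; then $A\ker(Z) = A\img(C) = \img(AC) \subseteq \ker(Z)$, so $A \in \frakS_\cup(Z)$.

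For the forward direction ($\Rightarrow$), I would argue by contraposition: given $T \neq 0$ with $T^2 = 0$ and $\varphi(T) = 0$, construct $Z$ of generalized index zero and $A$ with $A \in \frakS_\cup^\varphi(Z) \setminus \frakS_\cup(Z)$. Take $Z = T$, which is legitimate when $\img(T)$ is closed (automatic in finite dimensions). Decomposing $X = V_1 \oplus V_2 \oplus V_3$ orthogonally with $V_1 = \img(T)$, $V_2 = \ker(T) \ominus V_1$, and $V_3 = (\ker T)^\perp$, the operator $T$ restricts to a bounded bijection $T_0 : V_3 \to V_1$; the invertible involution $U$ defined by $(v_1, v_2, v_3) \mapsto (T_0 v_3, v_2, T_0^{-1} v_1)$ then satisfies $UT = P_{V_3}$, which is self-adjoint, so $Z$ is of generalized index zero. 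Defining $C$ by $C|_{V_1} = 0$, $C|_{V_2} = \id$, $C|_{V_3} = T_0$ yields $\img(C) = \ker(Z)$ and $\ker(C) = \img(Z)$. Defining $A$ by $A|_{V_1} = T_0^{-1}$ (into $V_3$) and $A|_{V_2 \oplus V_3} = 0$, a short direct check gives $ZAC = T$, hence $\varphi(ZAC) = 0$ and $A \in \frakS_\cup^\varphi(Z)$; on the other hand $A$ sends any nonzero vector of $V_1 \subseteq \ker(Z)$ into $V_3$, which is disjoint from $\ker(Z)$, so $A \notin \frakS_\cup(Z)$.

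The corollary on Hadamard products then follows by inspection: for $\varphi(A) = H * A$, any off-diagonal matrix unit $E_{ij}$ with $H_{ij} = 0$ and $i \neq j$ is a nonzero second-order nilpotent in $\ker\varphi$; conversely, if all off-diagonal entries of $H$ are nonzero, any $T \in \ker\varphi$ is diagonal, hence cannot be a nonzero second-order nilpotent. The main obstacle I anticipate is handling the forward direction in an infinite-dimensional Hilbert space when $\img(T)$ fails to be closed, in which case $Z = T$ is not of generalized index zero. One route is to exhibit a different nonzero $T' \in \ker\varphi$ with $(T')^2 = 0$ and closed image (e.g.\ finite-rank), reducing to the case above; another is to build $Z$ directly from a 2-dimensional subspace $\Span(v, Tv)$ and to argue that for a suitable $v$ the functional $y \mapsto \varphi(y \otimes v)$ on $v^\perp$ has nontrivial kernel, yielding a rank-one $\varphi$-vanishing nilpotent. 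Neither route is entirely routine, since $\{T : T^2 = 0\}$ is a closed algebraic variety rather than a linear subspace and $\varphi$-vanishing is not preserved under natural approximation procedures.
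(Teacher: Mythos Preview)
Your backward direction and the Hadamard corollary are correct and coincide with the paper's argument. The forward direction also works as you wrote it whenever $\img(T)$ is closed, and you correctly identify the remaining gap when it is not.

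The paper closes that gap with a much simpler choice of $Z$: instead of taking $Z=T$, take $Z$ to be the orthogonal projection onto $\ker(T)$ and $C=I-Z$ the projection onto $\ker(T)^\perp$. This $Z$ is self-adjoint with closed image, hence of generalized index zero for any $T$, with no hypothesis on $\img(T)$. Since $T^2=0$ forces $\img(T)\subseteq\ker(T)$, in the decomposition $X=\ker(T)\oplus\ker(T)^\perp$ the operator $T$ has block form $\begin{pmatrix}0 & T_{12}\\ 0 & 0\end{pmatrix}$, so $ZTC=T$. Thus $A:=T$ itself is the witness: $\varphi(ZAC)=\varphi(T)=0$ gives $A\in\frakS_\cup^\varphi(Z)$, while $T$ sends $\ker(Z)=\ker(T)^\perp$ nontrivially into $\ker(T)$, which meets $\ker(Z)$ only in $0$, so $A\notin\frakS_{\ker}(Z)=\frakS_\cup(Z)$. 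Your two proposed workarounds (producing a finite-rank replacement for $T$, or a rank-one nilpotent in $\ker\varphi$) are therefore unnecessary; the difficulty dissolves once you stop insisting that $Z$ be built from $T$ as an operator and instead build it from the subspace $\ker(T)$.
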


Here, we will interject and mention one of the ingredients in proving the results about \(\frakS_\cup(-)\): a slightly sharpened polar decomposition for operators of generalized index zero.
In general, polar decomposition provides for you a decomposition of an operator into a partial isometry and a self-adjoint operator. However, when the operator is assumed to be of generalized index zero,
this can be strengthened to where the partial isometry is actually invertible. Below is a slight paraphrasing of Theorem \ref{thm:polar-decomposition}.
\begin{theorem}[Sharpened polar decomposition]
	Let \(X\) be a Hilbert space, and let \(Z\in\sfB(X)\) be of generalized index zero.
	Then there is some \(U\in\sfB(X)^\times\), which is also a partial isometry, such that \( Z = U\sqrt{Z^* Z} \). Furthermore, under some conditions, \(U\) is unitary.
\end{theorem}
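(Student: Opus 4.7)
The plan is to upgrade the standard polar decomposition $Z = U_0 |Z|$ (with $|Z| := \sqrt{Z^*Z}$) to one whose angular factor is invertible, by correcting $U_0$ on $\ker Z$. Recall that the canonical $U_0$ is a partial isometry with initial space $(\ker Z)^\perp$ and final space $\overline{\img Z}$; since generalized index zero entails closed image, that final space is simply $\img Z$, and $U_0$ restricts to a unitary $(\ker Z)^\perp \iso \img Z$ while vanishing on $\ker Z$. To extend $U_0$ to an invertible we need to define it on $\ker Z$ as an isometry onto $(\img Z)^\perp = \ker Z^*$, i.e.\ produce a unitary $W \!: \ker Z \iso \ker Z^*$.

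The existence of $W$ is the algebraic heart of the argument and is where the generalized index zero hypothesis enters. Fix, via Corollary \ref{corollary:gen-ind-zero-iff-closed-image-and-almost-self-adjoint}, an invertible $V \in \sfB(X)$ such that $T := VZ$ is self-adjoint. Since $V$ is a topological isomorphism, $T$ also has closed image, so $X = \ker T \oplus \img T$ orthogonally, with $\ker T = \ker Z$ and $\img T = V(\img Z)$. Consequently $V$ restricts to a Banach space isomorphism $\img Z \iso (\ker Z)^\perp$, and the induced map on quotients produces a chain
\[ \ker Z^* \cong X/\img Z \cong X/(\ker Z)^\perp \cong \ker Z \]
of Banach space isomorphisms. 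Since any two Hilbert spaces that are isomorphic as Banach spaces share the same orthonormal basis cardinality (e.g.\ via density character), a unitary $W\!:\ker Z \iso \ker Z^*$ exists.

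With $W$ in hand, set $U := U_0 + W P_{\ker Z}$, where $P_{\ker Z}$ is the orthogonal projection onto $\ker Z$. Using the decompositions $X = (\ker Z)^\perp \oplus \ker Z$ and $X = \img Z \oplus \ker Z^*$, the map $U$ is a surjective isometry, hence unitary; and since $\img |Z| \subseteq (\ker Z)^\perp$, the corrective term annihilates $|Z|$ and $Z = U_0 |Z| = U |Z|$. The ``furthermore'' clause then comes for free on a Hilbert space, because any invertible partial isometry $U$ satisfies that $U^* U$ is both a projection and invertible, forcing $U^* U = I$ and hence unitarity.

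The main obstacle is the middle paragraph: translating the bare existence of an invertible $V$ with $VZ$ self-adjoint into a Hilbert-dimension match between $\ker Z$ and $\ker Z^*$. Everything else is routine polar-decomposition bookkeeping, but this step is exactly what generalized index zero is designed to deliver, and all the work lies in leveraging the self-adjointness of $VZ$ plus closedness of $\img Z$ to identify the two orthogonal complements correctly.
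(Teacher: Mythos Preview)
Your overall strategy---correct the standard polar isometry $U_0$ on $\ker Z$ by a unitary $W\!:\ker Z\iso\ker Z^*$---is sound and essentially parallel to the paper's construction. However, there is a genuine circularity: you invoke Corollary \ref{corollary:gen-ind-zero-iff-closed-image-and-almost-self-adjoint} to obtain the invertible $V$ with $VZ$ self-adjoint, but in the paper that corollary is \emph{deduced from} the sharpened polar decomposition you are trying to prove. So as written the argument loops.

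The fix is immediate and actually simplifies your middle paragraph: drop the detour through $V$ entirely and use the \emph{definition} of generalized index zero. By hypothesis there is a continuous isomorphism $\theta\!:\ker Z\iso\coker Z$; since $\img Z$ is closed (Remark \ref{remark:gen-ind-zero-has-closed-image}), the quotient $\coker Z = X/\img Z$ is canonically identified with $(\img Z)^\perp = \ker Z^*$. This already gives the Banach-space isomorphism $\ker Z\cong\ker Z^*$ you need, and your density-character step then produces the unitary $W$. Everything after that is correct.

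Once repaired, your route differs from the paper's in one interesting respect: the paper plugs the given $\theta$ directly into the extension (so $U$ is unitary only when $\theta$ happens to be an isometry, which is the ``some conditions'' clause), whereas you discard $\theta$ after extracting the dimension match and build a fresh unitary $W$. Your $U$ is therefore \emph{always} unitary, and your closing observation---that an invertible partial isometry is automatically unitary---is correct but redundant in your own construction.
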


For the purposes of Section \ref{section:existence-of-unbounded-paths}, we remark that Proposition \ref{prop:S(invertible)-is-everything} below implies that we may always restrict our attention
solely to singular operators. With this, we come to the of the main theorems of Section \ref{section:existence-of-unbounded-paths}, one of which unfortunately only works in finite-dimensions. The statement
below is a combination of Theorem \ref{thm:S^phi(Z)-classification} and Theorem \ref{thm:S(singular)-is-cI}.
\begin{theorem}
	Let \(X\) be a Hilbert space, and let \(Z\in\sfB(X)\) be singular. Then \(\frakS_\cap(Z) = \{\lambda I\mid \lambda\in\C\}\).

	If \(X\cong\C^n\) is finite-dimensional and \(\varphi\!:\End(\C^n)\to\End(\C^n)\) is some bounded linear map, then either \(\frakS^\varphi_\cap(Z) = \{\lambda I \mid \lambda\in\C\}\) or \(\frakS^\varphi_\cap(Z) = \End(\C^n)\).
\end{theorem}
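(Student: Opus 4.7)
The plan is to prove Part 2 (the dichotomy) via a $\GL_n(\C)$-invariance argument and Part 1 by a direct perturbation construction forcing $A \in \C I$. Both rest on the observation from Section \ref{subsection:unbounded-generalities}: if $A \in \frakS^\varphi_\cap(Z)$ with boundedness witnessed on the open ball of radius $r$ about $Z$, then $A \in \frakS^\varphi_\cap(Z')$ for every $Z'$ with $\|Z'-Z\| < r/2$, since the $r/2$-ball around $Z'$ sits inside the $r$-ball around $Z$.

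For Part 2, I couple this with the change of variable $W = UV$ in the definition of $\frakN^\varphi_\cap$, which gives $\frakN^\varphi_\cap(VAV^{-1},Z) < \infty \iff \frakN^\varphi_\cap(A,ZV) < \infty$ (the factors $\|V^{\pm 1}\|$ merely rescale the radius and so do not affect finiteness of the $\limsup$). Hence $VAV^{-1} \in \frakS^\varphi_\cap(Z) \iff A \in \frakS^\varphi_\cap(ZV)$, and for $V$ sufficiently close to $I$ the matrix $ZV$ lies in the ball around $Z$ witnessing membership, so $\frakS^\varphi_\cap(Z)$ is stable under conjugation by such $V$. Since $\GL_n(\C)$ is connected, any open neighborhood of $I$ generates the whole group, making $\frakS^\varphi_\cap(Z)$ a $\C$-linear subspace of $\End(\C^n)$ invariant under the full $\GL_n(\C)$-adjoint action. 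Combined with the trivial inclusion $\C I \subseteq \frakS^\varphi_\cap(Z)$ (scalars are fixed by conjugation), the decomposition $\End(\C^n) = \C I \oplus \mathfrak{sl}_n$ and the irreducibility of $\mathfrak{sl}_n$ as a $\GL_n(\C)$-representation (for $n \geq 2$; the case $n = 1$ is trivial) force $\frakS^\varphi_\cap(Z) \in \{\C I, \End(\C^n)\}$.

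For Part 1, the same observation combined with $\frakS_\cap \subseteq \frakS_\cup$ gives $\frakS_\cap(Z) \subseteq \frakS_\cup(Z')$ for all nearby $Z'$, so by Theorem \ref{thm:good-linear-paths-equals-ker-equals-vee} any $A \in \frakS_\cap(Z)$ must satisfy $A\ker Z' \subseteq \ker Z'$ for every nearby $Z'$ of generalized index zero. For any 1D line $L = \C(v_0 + \delta w)$ close to a line of $\ker Z$ (with $v_0 \in \ker Z$ a unit vector and $\delta$ small), I construct a small finite-rank perturbation $Z' = Z + E$ with $\ker Z' = L$ exactly: send $v_0 + \delta w$ to $-\delta Zw$, map a complement of $L$ inside $\ker Z$ injectively into $X$, and extend by zero elsewhere, yielding $\|E\| = O(\delta)$. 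Hence $AL = A\ker Z' \subseteq \ker Z' = L$, so $A$ preserves every 1D line in an open neighborhood of $\PP(\ker Z) \subseteq \PP(X)$. But any non-scalar $A$ has $\{v \in X : Av \in \C v\}$ of empty interior (if $A$ were scalar on an open ball, linearity would force it scalar everywhere), so its non-invariant 1D lines form an open dense subset of $\PP(X)$, contradicting preservation of an entire open neighborhood of lines. Hence $A \in \C I$.

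The main obstacle is the perturbation step in Part 1: in the infinite-dimensional Hilbert setting one must verify both that $\dim \ker Z$ is controlled (or allow infinite-rank perturbations built coordinate-by-coordinate) and that the constructed $Z'$ retains generalized index zero so that Theorem \ref{thm:good-linear-paths-equals-ker-equals-vee} applies—automatic in finite dimensions, and in general requiring the stability of closed range and Fredholm-like data under finite-rank perturbations. The $\GL_n(\C)$-invariance argument for the dichotomy is, by contrast, short and clean once the semicontinuity observation and the change of variable are in hand.
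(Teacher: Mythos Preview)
Your argument for the finite-dimensional dichotomy (Part 2) is correct and genuinely different from the paper's. You exploit Corollary~\ref{corollary:hadamard-conjugation-law} together with Theorem~\ref{thm:S(Z)-contained-in-cup-cap-ker} to show that \(\frakS^\varphi_\cap(Z)\) is stable under the adjoint action of a neighbourhood of \(I\) in \(\GL_n(\C)\), and then invoke connectedness and the irreducibility of \(\mathfrak{sl}_n\). The paper instead extracts only specific closure properties (Corollary~\ref{corollary:S^phi-local-closure-properties}: closure under \([A,x]\) and \(xAx\) for \(x^2\in\{0,1\}\)) and then runs an explicit computation with the rank-one operators \(E_{xy}\) (Lemma~\ref{lemma:subspace-of-matrices-spans}) to show these already force everything. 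Your route is cleaner; the paper's is more hands-on but has the virtue that the closure properties it isolates make sense in infinite dimensions. One small point: ``a neighbourhood of \(I\) generates \(\GL_n\)'' is not quite the right justification, since the neighbourhood depends on \(A\); the clean fix is that \(G_A=\{V:VAV^{-1}\in\frakS^\varphi_\cap(Z)\}\) is open (by your argument) and closed (since \(\frakS^\varphi_\cap(Z)\) is a finite-dimensional, hence closed, subspace), so equals \(\GL_n(\C)\) by connectedness.

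For Part 1 in a general Hilbert space, there is a real gap, and it is exactly the one you flag. You need, for every line \(L\) near \(\PP(\ker Z)\), a bounded perturbation \(Z'\) of \(Z\) with \(\ker Z'=L\). When \(\ker Z\) is infinite-dimensional this forces \(E=Z'-Z\) to be injective on an infinite-dimensional subspace while having small norm, and you must simultaneously ensure \((Z+E)|_{L^\perp}\) is injective; you have not carried this out, and the phrase ``map a complement of \(L\) inside \(\ker Z\)'' is already problematic since \(L\) need not lie in \(\ker Z\). (Incidentally, you do not need Theorem~\ref{thm:good-linear-paths-equals-ker-equals-vee} or generalized index zero here: \(\frakS_\cap(Z')\subseteq\frakS_{\ker}(Z')\) holds unconditionally by Theorem~\ref{thm:kernel-invariant-algebra}.) The paper sidesteps the perturbation construction entirely: for \(Z\neq 0\) it uses the same closure properties as above together with Lemma~\ref{lemma:subspace-with-closure-violates-invariance}, which shows that any subspace of \(\sfB(X)\) satisfying them must contain an operator that fails to preserve \(\ker Z\), contradicting \(\frakS_\cap(Z)\subseteq\frakS_{\ker}(Z)\). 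The case \(Z=0\) is handled separately by passing to nearby rank-one projections --- essentially your argument specialized to the one situation where the perturbation is trivial to write down.
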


The theorem provides us with two possibilities for \(\frakS^\varphi(Z)\), but we have no immediate way to tell which one will hold. However, we would still like to know what \(\frakS^\varphi(Z)\) looks like for at least
\emph{some} more interesting choices of \(Z\), for which we have the following, which is a paraphrasing of Theorem \ref{thm:hadamard-sup-version-J-class}.
\begin{theorem}
	Let \(\varphi\!:\End(\C^n)\to\End(\C^n)\) be given by \(A\mapsto J*A\), where all non-diagonal elements of \(J\) are non-zero, and let \(Z\in\End(\C^n)\). Then \(\frakS^\varphi_\cap(Z) = \frakS_\cap(Z)\).
	In particular, suppose \(Z\) is and singular. Then \(\frakS^\varphi_\cap(Z) = \{\lambda I \mid \lambda\in\C\}\).
\end{theorem}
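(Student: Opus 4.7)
The easy inclusion $\frakS_\cap(Z) \subseteq \frakS_\cap^\varphi(Z)$ is immediate from the boundedness of $\varphi$, which gives $\|\varphi(UAU^{-1})\| \le \|\varphi\|\,\|UAU^{-1}\|$ and hence $\frakN_\cap^\varphi(A,Z) \le \|\varphi\|\,\frakN_\cap(A,Z)$. For invertible $Z$, both sides equal $\End(\C^n)$ by Proposition \ref{prop:S(invertible)-is-everything}, so the content lies entirely in the singular case. By Theorem \ref{thm:S(singular)-is-cI} we have $\frakS_\cap(Z) = \{\lambda I : \lambda \in \C\}$, so the task is to establish the reverse inclusion $\frakS_\cap^\varphi(Z) \subseteq \{\lambda I\}$.

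My plan is to apply the dichotomy of Theorem \ref{thm:S^phi(Z)-classification}, which constrains $\frakS_\cap^\varphi(Z)$ to be either $\{\lambda I\}$ or all of $\End(\C^n)$; it therefore suffices to rule out the second alternative by producing a single $A \in \End(\C^n)$ with $\frakN_\cap^\varphi(A,Z) = \infty$. The main tool is the trivial chain $\frakS_\cap^\varphi(Z) \subseteq \frakS_\cup^\varphi(Z)$ (because $\inf \le \sup$), combined with Corollary \ref{corollary:hadamard-phis-equiv-to-id}, which applies to our $\varphi = J*-$: if $J*T = 0$, the non-vanishing off-diagonal entries of $J$ force $T$ to be diagonal, and a diagonal matrix with $T^2 = 0$ must vanish. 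Thus $\frakS_\cup^\varphi(Z) = \frakS_\cup(Z)$, and Theorem \ref{thm:hilbert-space-gen-index-zero-C1=C'} identifies this with $\{A : A\ker Z \subseteq \ker Z\}$ (every operator on $\C^n$ being of generalized index zero). For $Z$ singular with $Z \ne 0$, $\ker Z$ is a proper nontrivial subspace, so this set is a proper subspace of $\End(\C^n)$, and the dichotomy forces $\frakS_\cap^\varphi(Z) = \{\lambda I\}$ as desired.

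The only case this argument misses is $Z = 0$, where $\ker Z = \C^n$ and the $\frakS_\cup$-reduction yields nothing. Here one would argue by scaling invariance: if $U \in \GL_n(\C)$ then $\epsilon U \in \GL_n(\C)$ for all $\epsilon \ne 0$, and $(\epsilon U) A (\epsilon U)^{-1} = UAU^{-1}$, so the defining supremum is $r$-independent and equals $\sup_{U \in \GL_n(\C)}\|\varphi(UAU^{-1})\|$. For a non-scalar $A$ one writes down an explicit one-parameter family $V_t \in \GL_n(\C)$ such that some off-diagonal entry of $V_t A V_t^{-1}$ grows unboundedly in $t$: in a diagonalizing basis of $A$ with two distinct eigenvalues, $V_t = I + tE_{ij}$ produces $(V_tAV_t^{-1})_{ij} = A_{ij} + t(A_{jj}-A_{ii}) + O(t^2)$, while in the case $A = \lambda I + N$ with $N \ne 0$ nilpotent, a diagonal $V_t$ with one entry equal to $t$ at a well-chosen position makes $(V_t N V_t^{-1})_{ij}$ scale linearly. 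Since $J_{ij} \ne 0$, the corresponding entry of $J*(V_t A V_t^{-1})$ also diverges, so $A \notin \frakS_\cap^\varphi(0)$.

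The main bookkeeping hurdle I anticipate is precisely this $Z = 0$ subcase, where one has to check the blow-up claim uniformly across the two Jordan-structural subcases; however, it is a routine linear-algebra calculation, and critically it is the hypothesis that all off-diagonal entries of $J$ are nonzero which guarantees that the diverging entry lies at a position $\varphi$ does not annihilate.
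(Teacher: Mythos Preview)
Your argument is correct for nonzero singular \(Z\), but it takes a considerably longer route than the paper. The paper's proof is a one-liner: by Corollary~\ref{corollary:hadamard-phis-equiv-to-id}(b) (the very result you cite), a Hadamard multiplier with all off-diagonal entries nonzero satisfies \(\varphi\simeq\id\); then Corollary~\ref{corollary:phi-simeq-psi-implies-same-S} gives \(\frakS_\cap^\varphi(Z)=\frakS_\cap(Z)\) for \emph{every} \(Z\) at once, and Theorem~\ref{thm:S(singular)-is-cI} finishes the singular case. You already invoke Corollary~\ref{corollary:hadamard-phis-equiv-to-id} but use only its \(\frakS_\cup^\varphi=\frakS_\cup\) conclusion, then pass through the dichotomy of Theorem~\ref{thm:S^phi(Z)-classification} and the chain \(\frakS_\cap^\varphi\subseteq\frakS_\cup^\varphi=\frakS_{\ker}\); this works, but the same corollary already hands you \(\varphi\simeq\id\), from which the \(\frakS_\cap\)-equality is immediate and uniform in \(Z\). (Small citation slip: the identification \(\frakS_\cup(Z)=\frakS_{\ker}(Z)\) is Corollary~\ref{corollary:S-vee-equals-S-ker}, not Theorem~\ref{thm:hilbert-space-gen-index-zero-C1=C'}.)

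Your separate treatment of \(Z=0\) has a subtle gap: the Hadamard product \(J*(-)\) is computed in the \emph{fixed} standard basis, so when you pass to a diagonalizing basis for \(A\) and look at the \((i,j)\) entry of \(V_tAV_t^{-1}\) there, that entry is not the one \(J\) multiplies. The fix is easy---the blowing-up direction \(PE_{ij}P^{-1}\) is rank one and square-zero, hence cannot be diagonal in the standard basis, so \(J*(PE_{ij}P^{-1})\neq 0\)---but it needs to be said. A cleaner alternative, which avoids both the basis issue and the Jordan case-split, is to use locality: if \(A\in\frakS_\cap^\varphi(0)\) then by Theorem~\ref{thm:S(Z)-contained-in-cup-cap-ker} we have \(A\in\frakS_\cap^\varphi(Z')\) for all \(Z'\) in a ball about \(0\), in particular for some nonzero singular \(Z'\), and your already-established case forces \(A=\lambda I\). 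The paper's route needs no such special handling at all.
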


\subsection{Some Basic Preliminaries}\label{subsection:preliminaries}
We collect some miscellaneous preliminary notions which will be useful throughout the paper. To start, we want to justify that the constructions \(\frakS^\varphi_\cap(-)\) and \(\frakS_\cup^\varphi(-)\)
really do what we claim. To this end, we introduce a much more fine-grained set specifically containing information about a particular path.

\begin{definition}\label{definition:frakS_pw}
	Let \(V\) be a Banach algebra, \(X\) a Banach space, \(\varphi\!:V\to X\) a bounded linear map, let \(z\in V\), and consider a sequence \(u_k \to z\), with \(u_k\in V^\times\) for all \(k\). Define the vector space
	\[ \frakS_\pw^\varphi(z; u_k) := \left\{ a\in V \bigmid \lim_{k\to\infty} \|\varphi(u_kau_k^{-1})\| < \infty \right\}. \]
	If the path \(u_k\to z\) is determined by some function \(z + e(t)\) where \(e(t)\to 0\) as \(t\to 0\), we will alternatively write \(\frakS_\pw^\varphi(z; z+e(t))\) or \(\frakS_\pw^\varphi(z; e(t))\).
	If \(X=V\) and \(\varphi=\id\), we write \(\frakS_{\pw}(-)\) instead of \(\frakS_{\pw}^{\id}(-)\).
\end{definition}
\begin{remark}
	The set \(\frakS_\pw^{\varphi}(z; u_k)\) of Definition \ref{definition:frakS_pw} captures boundedness along the given path \(u_k \to z\), and we have
	\[ \frakS^{\varphi}_\cap(z) \subseteq \bigcap_{u_k \to z}\frakS_\pw^{\varphi}(z;u_k) = \bigcap_{z+e(t)}\frakS_\pw^\varphi(z;e(t)). \]
\end{remark}

\begin{proposition}\label{prop:S-and-S_V-as-pathwise-cap-or-cup}
	Let \(V\) be a Banach algebra, \(X\) a Banach space, \(\varphi\!:V\to X\) and let \(z\in V\). Then
	\[ \frakS^\varphi_\cap(z) = \bigcap_{u_k\to z}\frakS_{\pw}^\varphi(z;u_k)\quad\text{and}\quad \frakS_\cup^\varphi(z) = \bigcup_{u_k\to z}\frakS_{\pw}^\varphi(z;u_k). \]
\end{proposition}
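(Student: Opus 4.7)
The plan is to unwind the definitions and exploit a monotonicity property. As $r$ decreases, the ball $\{u\in V^\times : \|u-z\|<r\}$ shrinks, so the quantity $\sup_{\|u-z\|<r}\|\varphi(uau^{-1})\|$ is monotone non-increasing as $r\to 0^+$ and the corresponding infimum is monotone non-decreasing. Hence the two $\limsup_{r\to 0}$'s appearing in Definition \ref{definition:frakN} are honest monotone limits, so that
\[ \frakN_\cap^\varphi(a,z) = \inf_{r>0}\sup_{u\in V^\times,\,\|u-z\|<r}\|\varphi(uau^{-1})\| \quad\text{and}\quad \frakN_\cup^\varphi(a,z) = \sup_{r>0}\inf_{u\in V^\times,\,\|u-z\|<r}\|\varphi(uau^{-1})\|. \]
Consequently, $a\in\frakS_\cap^\varphi(z)$ iff there exist constants $r,M>0$ such that $\|\varphi(uau^{-1})\|\le M$ for every $u\in V^\times$ with $\|u-z\|<r$, whereas $a\in\frakS_\cup^\varphi(z)$ iff there is some $M>0$ such that for every $r>0$ one can find a $u\in V^\times$ with $\|u-z\|<r$ and $\|\varphi(uau^{-1})\|\le M$.

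From here both equalities follow formally. For $\frakS_\cap^\varphi(z) = \bigcap_{u_k\to z}\frakS_\pw^\varphi(z;u_k)$, the inclusion $\subseteq$ is immediate: given witnesses $r,M$ as above and any sequence $u_k\to z$ of invertibles, we have $\|u_k-z\|<r$ for all sufficiently large $k$, so the sequence $\|\varphi(u_kau_k^{-1})\|$ is eventually bounded by $M$. For $\supseteq$ I would argue by contrapositive: if $a\notin\frakS_\cap^\varphi(z)$, then for each $k\in\N$ the supremum $\sup_{\|u-z\|<1/k}\|\varphi(uau^{-1})\|$ is infinite, so a diagonal choice produces $u_k\in V^\times$ with $\|u_k-z\|<1/k$ and $\|\varphi(u_kau_k^{-1})\|\ge k$; this path $u_k\to z$ witnesses $a\notin\frakS_\pw^\varphi(z;u_k)$.

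For the symmetric equality $\frakS_\cup^\varphi(z) = \bigcup_{u_k\to z}\frakS_\pw^\varphi(z;u_k)$, the argument mirrors the above. Given $a\in\frakS_\cup^\varphi(z)$ with witnessing bound $M$, a diagonal extraction picks $u_k\in V^\times$ with $\|u_k-z\|<1/k$ and $\|\varphi(u_kau_k^{-1})\|\le M$, exhibiting $a\in\frakS_\pw^\varphi(z;u_k)$. Conversely, if $u_k\to z$ is a path along which the sequence $\|\varphi(u_kau_k^{-1})\|$ is bounded by some $M$, then for any $r>0$ eventually $\|u_k-z\|<r$, so the inner infimum over the ball of radius $r$ is at most $M$; taking $r\to 0^+$ gives $\frakN_\cup^\varphi(a,z)\le M<\infty$.

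There is no real obstacle here, as the proposition is a formal unpacking of three closely related quantitative conditions into their pathwise analogues. The only point that requires a moment's thought is recognizing the monotonicity of the inner sup/inf in $r$ in order to dispense with the outer $\limsup$ before doing the diagonal extractions; once this is done the rest is bookkeeping.
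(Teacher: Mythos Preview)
Your proof is correct and follows essentially the same approach as the paper's: both directions of both equalities are handled by the same diagonal-extraction and contrapositive arguments. Your explicit observation about monotonicity of the inner sup/inf in $r$ (reducing the outer $\limsup$ to an honest $\inf$/$\sup$) is a helpful clarification that the paper leaves implicit, but otherwise the arguments are the same.
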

\begin{proof}
For the left assertion, one inclusion is clear, as remarked above. For the other inclusion, we proceed by proving the contrapositive: suppose
\(\frakN^\varphi_\cap(a,z)=\infty\). Then we know that
\[ \forall r > 0,\quad \sup_{\substack{\|u-z\| < r \\ u\in V^\times}}\|\varphi(uau^{-1})\| = \infty. \]
In particular, we may define a sequence of invertibles \(u_k \to z\) by choosing \(u_k\in V^\times\) such that \(\|u_k-z\| < 1/k\) and \(\|u_kau_k^{-1}\| > k\). Then, by construction
\[ \lim_{k\to\infty} \|u_kau_k^{-1}\| = \infty \]
so that \(a\not\in\frakS^\varphi_{\pw}(z;u_k)\).

To prove the right assertion, note that one easily sees that
\[ \bigcup_{u_k\to z}\frakS_{\pw}^\varphi(z;u_k) \subseteq \frakS_\cup^\varphi(z). \]
For the other inclusion, let \(A\in\frakS_\cup^\varphi(z)\), and let \(\lambda = \frakN_\cup^\varphi(a,z) < \infty\). This tells us, in particular, that
\[ \forall r>0,\quad \inf_{\substack{\|u-z\| < r \\ u\in V^\times}}\|uau^{-1}\| \leq \lambda, \]
so we may pick \(u_k\in V^\times\) such that \(\|u_k - z\| < 1/k\) and \(\|u_kau_k^{-1}\| \to \lambda\),
so that \(a\in\frakS_{\pw}^\varphi(z;u_k)\).
\end{proof}

This yields a number of corollaries. An immediate one is that we may restrict our attention entirely to singular operators, as demonstrated below.

\begin{proposition}\label{prop:S(invertible)-is-everything}
	Let \(V\) be a Banach algebra. If \(u\in V^\times\) is invertible, then
	\[ \frakS^\varphi_\cap(u) = V. \]
\end{proposition}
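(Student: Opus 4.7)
The plan is essentially to observe that when $u$ is invertible, the map $v\mapsto\varphi(vav^{-1})$ is continuous at $u$, so the $\limsup$ defining $\frakN_\cap^\varphi(a,u)$ reduces to a value of a continuous function at $u$, which is obviously finite. There is no real obstacle; the content is packaged in standard Banach algebra facts.

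More concretely, I would first recall that $V^\times$ is open in a Banach algebra: if $\|v-u\|<1/\|u^{-1}\|$, then $v = u(1 - u^{-1}(u-v))$, where the factor $1-u^{-1}(u-v)$ is invertible via the Neumann series, giving the explicit bound $\|v^{-1}\| \le \|u^{-1}\|/(1-\|u^{-1}\|\,\|v-u\|)$. Hence, for sufficiently small $r$, the constraint $v\in V^\times$ in the definition of $\frakN_\cap^\varphi$ is automatic for $v$ in the ball of radius $r$ about $u$, and on this ball $\|v^{-1}\|$ stays bounded (in fact it converges to $\|u^{-1}\|$).

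From here, for any fixed $a\in V$, the estimate $\|\varphi(vav^{-1})\| \le \|\varphi\|\,\|v\|\,\|a\|\,\|v^{-1}\|$, together with the bound on $\|v^{-1}\|$ above and the obvious bound $\|v\|\le \|u\|+r$, shows that $\sup_{\|v-u\|<r}\|\varphi(vav^{-1})\|$ is finite for all sufficiently small $r>0$ and stays bounded as $r\to 0$. Therefore $\frakN_\cap^\varphi(a,u)<\infty$, i.e.\ $a\in\frakS_\cap^\varphi(u)$. Since $a\in V$ was arbitrary and the inclusion $\frakS_\cap^\varphi(u)\subseteq V$ is trivial, this gives $\frakS_\cap^\varphi(u)=V$.

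In fact, with essentially the same argument one obtains the sharper continuity statement that $\lim_{v\to u}\varphi(vav^{-1}) = \varphi(uau^{-1})$, so that $\frakN_\cap^\varphi(a,u) = \|\varphi(uau^{-1})\|$; the bound then follows \emph{a fortiori}. No generalized-index-zero hypothesis, no choice of path, and no structural result about $V$ beyond the Banach-algebra axioms are needed for this proposition.
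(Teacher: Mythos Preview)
Your proof is correct and rests on the same idea as the paper's: continuity of inversion near an invertible element forces $\varphi(vav^{-1})$ to stay bounded. The only cosmetic difference is that you work directly with the $\limsup$ over balls using the explicit Neumann-series bound on $\|v^{-1}\|$, whereas the paper passes through the pathwise characterization (Proposition~\ref{prop:S-and-S_V-as-pathwise-cap-or-cup}) and observes that $u_k\to u$ implies $u_k^{-1}\to u^{-1}$, hence $u_k a u_k^{-1}\to uau^{-1}$.
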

\begin{proof}
If \(u\) is invertible and \(u_k\to u\) is a sequence of invertibles, then \(u_k^{-1}\to u^{-1}\) and for any \(a\in V\) we have
\[ u_k a u_k^{-1} \to u a u^{-1}.\]
In particular,
\[ \lim_{k\to\infty}\|u_kau_k^{-1}\| = \|uau^{-1}\| < \infty \]
so by Proposition \ref{prop:S-and-S_V-as-pathwise-cap-or-cup} we are done.
\end{proof}

Here is a basic observation about the structure of the family \(\frakS_{\pw}^\varphi(-)\), and in particular \(\frakS_{\pw}(-)\), which together imply some structure
on \(\frakS^\varphi_\cap(-)\) following from Proposition \ref{prop:S-and-S_V-as-pathwise-cap-or-cup}.
\begin{theorem}\label{thm:local-algebra}
	Let \(V\) be a Banach algebra, let \(z\in V\), and let \(u_k\to z\) be a sequence of invertibles.
	\begin{enumerate}[label=(\alph*)]
	\item For a Banach space \(X\) and bounded linear map \(\varphi\!:V\to X\), the set \(\frakS_{\pw}^\varphi(z)\) forms a vector space over \(\C\).
	\item For a Banach algebra \(W\) and continuous algebra map \(\varphi\!:V\to W\), the vector space \(\frakS_{\pw}^\varphi(z)\) forms an algebra over \(\C\).
	\end{enumerate}
	In particular, \(\frakS^\varphi_\cap(z)\) is always vector space, and choosing \(V=W\) and \(\varphi=\id\), we see that \(\frakS_\cap(z)\) is an algebra.
\end{theorem}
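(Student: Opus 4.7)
The plan is to unpack the membership condition and show both closure properties pointwise in $k$, then pass through Proposition \ref{prop:S-and-S_V-as-pathwise-cap-or-cup} for the ``in particular'' claim.

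First, I would observe that $a\in\frakS_{\pw}^\varphi(z;u_k)$ is exactly the statement that the sequence of real numbers $\|\varphi(u_k a u_k^{-1})\|$ is bounded (the only way a sequence of nonnegative reals has $\limsup<\infty$). So (a) reduces to checking that the collection of $a$'s for which this sequence is bounded is closed under $\C$-linear combinations. Given $a,b$ in the set and $\lambda,\mu\in\C$, I would use three facts in sequence: (i) the conjugation $v\mapsto u_k v u_k^{-1}$ is $\C$-linear for each fixed $k$; (ii) $\varphi$ is $\C$-linear; (iii) the norm on $X$ satisfies the triangle inequality and absolute homogeneity. These give
\[ \|\varphi(u_k(\lambda a+\mu b)u_k^{-1})\| \leq |\lambda|\,\|\varphi(u_k a u_k^{-1})\| + |\mu|\,\|\varphi(u_k b u_k^{-1})\|,\]
so the left side is the sum of two bounded sequences and hence bounded. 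Note that continuity of $\varphi$ plays no direct role here; only linearity is needed.

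For part (b) I would use the same strategy, but this time exploiting that $\varphi$ is an algebra homomorphism and that the norm on the Banach algebra $W$ is submultiplicative. The key identity is
\[ u_k(ab)u_k^{-1} = (u_k a u_k^{-1})(u_k b u_k^{-1}),\]
so applying $\varphi$ and then the norm yields
\[ \|\varphi(u_k ab u_k^{-1})\| = \|\varphi(u_k a u_k^{-1})\,\varphi(u_k b u_k^{-1})\| \leq \|\varphi(u_k a u_k^{-1})\|\cdot\|\varphi(u_k b u_k^{-1})\|,\]
which is again bounded as a product of two bounded sequences. Combined with (a) applied with $X=W$, this shows closure under both addition and multiplication, giving a subalgebra of $V$.

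For the ``in particular'' part, I would invoke Proposition \ref{prop:S-and-S_V-as-pathwise-cap-or-cup} to write
\[ \frakS_\cap^\varphi(z) = \bigcap_{u_k\to z}\frakS_\pw^\varphi(z;u_k),\]
and conclude via the general fact that an intersection of vector subspaces of $V$ is a subspace, and an intersection of subalgebras of $V$ is a subalgebra. Setting $W=V$ and $\varphi=\id_V$ in part (b) covers the final algebra claim for $\frakS_\cap(z)$. I do not expect any substantive obstacles: this statement is structural, and the only thing that could trip one up is a notational mismatch in interpreting ``$\lim$'' versus ``$\limsup$'' in Definition \ref{definition:frakS_pw}, which I would resolve at the start by rephrasing as sequence boundedness.
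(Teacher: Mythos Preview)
Your proposal is correct and mirrors the paper's proof essentially verbatim: part (a) via linearity of $\varphi$ plus subadditivity and homogeneity of the norm, part (b) via the factorization $u_k(ab)u_k^{-1}=(u_kau_k^{-1})(u_kbu_k^{-1})$ together with multiplicativity of $\varphi$ and submultiplicativity of the norm, and the final claim by intersecting over all paths via Proposition~\ref{prop:S-and-S_V-as-pathwise-cap-or-cup}. Your added remark reconciling ``$\lim$'' with boundedness is a helpful clarification but not a departure in method.
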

\begin{proof}
Part (a) follows immediately from the linearity of \(\varphi\), and the subadditivity and homogeneity of the norm \(\|\cdot\|\). For part (b), since \(\varphi\) is
an algebra map, we have
\[ \|\varphi(u_kabu_k^{-1})\| = \| \varphi(u_kau_k^{-1}\cdot u_kbu_k^{-1}) \| \leq \| \varphi(u_kau_k^{-1}) \|\cdot \|\varphi(u_kbu_k^{-1})\| \]
from which the result follows.

The final statement follows immediately by applying Proposition \ref{prop:S-and-S_V-as-pathwise-cap-or-cup} since the intersection of vector spaces (resp.\ algebras) is a vector space (resp.\ an algebra).
\end{proof}

Another interesting property of the ``pathwise spaces'' is that one can relate boundedness along one path to boundedness along another. This fact will
implicitly be used in Section \ref{section:existence-of-unbounded-paths}.
\begin{theorem}\label{thm:pathwise-hadamard-conjugation-law}
	Let \(V\) be a Banach algebra, let \(\varphi\!:V\to X\) be a bounded linear map, let \(z\in V\), and let \(u_k\to z\) be a sequence of invertibles. Then, for all \(p\in V^\times\), we have
	\[ \frakS_{\pw}^\varphi(zp; u_kp) = p^{-1} \frakS_\cup^\varphi(z;u_k) p\quad\text{and}\quad\frakS_{\pw}(pz;pu_k) = \frakS(z;u_k). \]
\end{theorem}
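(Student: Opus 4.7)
The plan is to verify both identities directly from the definition of $\frakS_\pw^\varphi$. I read the $\frakS_\cup^\varphi$ appearing on the right-hand side of the first identity as a typo for $\frakS_\pw^\varphi$, since the pathwise object is the only one whose definition actually depends on the choice of path $u_k$. First, I would verify that $u_kp \to zp$ and $pu_k \to pz$, and that $u_kp, pu_k \in V^\times$, so that the pathwise sets on both sides are well-defined. Both facts are immediate from continuity of multiplication in the Banach algebra and invertibility of $p$.

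For the first identity, the key algebraic observation is that the $p$'s absorb into the $u_k$-conjugation:
\[ (u_kp)\,a\,(u_kp)^{-1} \;=\; u_k(pap^{-1})u_k^{-1}. \]
Applying $\varphi$ and taking norms shows that the sequence $\|\varphi((u_kp)a(u_kp)^{-1})\|$ is literally equal to the sequence $\|\varphi(u_k(pap^{-1})u_k^{-1})\|$. Hence $a \in \frakS_\pw^\varphi(zp; u_kp)$ if and only if $pap^{-1} \in \frakS_\pw^\varphi(z; u_k)$, which is exactly the statement that $a \in p^{-1}\frakS_\pw^\varphi(z; u_k)p$.

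For the second identity (here $\varphi = \id$, so norms are of elements of $V$), the $p$'s do \emph{not} absorb:
\[ (pu_k)\,a\,(pu_k)^{-1} \;=\; p\bigl(u_k a u_k^{-1}\bigr)p^{-1}. \]
Conjugation by $p$ as a self-map of $V$ is however a bi-Lipschitz linear automorphism, via the two submultiplicative estimates
\[ \|pbp^{-1}\| \leq \|p\|\,\|p^{-1}\|\,\|b\| \qquad\text{and}\qquad \|b\| = \|p^{-1}(pbp^{-1})p\| \leq \|p\|\,\|p^{-1}\|\,\|pbp^{-1}\|. \]
Applied with $b = u_k a u_k^{-1}$, these show that $\|u_kau_k^{-1}\|$ and $\|(pu_k)a(pu_k)^{-1}\|$ are bounded sequences simultaneously, yielding the equality of the two pathwise sets.

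There is no real obstacle here; the only bookkeeping is tracking the placement of the $p$'s and invoking the bi-Lipschitz comparison for the second identity. The slight asymmetry between the two parts (one produces a nontrivial conjugation $p^{-1}(-)p$, the other does not) is a direct consequence of which side of $u_k$ the factor $p$ lands on after inverting $u_kp$ versus $pu_k$.
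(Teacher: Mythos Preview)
Your proof is correct and follows essentially the same approach as the paper: the same algebraic absorption $(u_kp)a(u_kp)^{-1} = u_k(pap^{-1})u_k^{-1}$ for the first identity, and the same bi-Lipschitz estimate for conjugation by $p$ for the second. You also correctly identified the $\frakS_\cup^\varphi$ on the right-hand side as a typo for $\frakS_\pw^\varphi$.
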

\begin{proof}
Simply observe that we may write
\[ \lim_{k\to\infty} \| \varphi(u_kau_k^{-1}) \| = \lim_{k\to\infty} \| \varphi\!\left( (u_kp)(p^{-1} a p)(u_kp)^{-1}\right)\! \|. \]
Therefore, \(a\in\frakS_{\pw}^\varphi(z;u_k)\) if and only if \(p^{-1}ap\in\frakS_{\pw}^\varphi(zp;u_kp)\), and \(b\in\frakS_\cup^\varphi(zp;u_kp)\) if and only if \(pbp^{-1}\in\frakS_{\pw}^\varphi(z;u_k)\),
so that \(\frakS_{\pw}^\varphi(zp;u_kp) = p^{-1}\frakS_{\pw}^\varphi(z;u_k)p\). This proves the first statement.

For the second statement, note that for all \(x\in V\), we have
\[  \frac{1}{\| p \|\cdot\| p^{-1} \|} \cdot \| p x p^{-1} \| \leq \| x \| \leq \| p^{-1} \| \cdot \| p \| \cdot \| p x p^{-1}\|. \]
Therefore,
\[ \frac{1}{\| p \|\cdot\| p^{-1} \|} \cdot \| (p u_k) a (pu_k)^{-1} \| \leq \| u_k a u_k^{-1} \| \leq \| p^{-1} \| \cdot \| p \| \cdot  \| (p u_k) a (pu_k)^{-1}\| \]
which yields the result.
\end{proof}
\begin{corollary}\label{corollary:hadamard-conjugation-law}
	Let \(V\) be a Banach algebra, let \(\varphi\!:V\to X\) be a bounded linear map, and let \(z\in V\). Then, for all \(p\in V^\times\), we have
	\[ \frakS^\varphi_\cap(zp) = p^{-1} \frakS^\varphi(z) p\quad\text{and}\quad\frakS^\varphi_\cup(zp) = p^{-1} \frakS^\varphi_\cup(z) p. \]
\end{corollary}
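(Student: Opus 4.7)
The plan is to deduce the corollary directly from Theorem \ref{thm:pathwise-hadamard-conjugation-law} together with Proposition \ref{prop:S-and-S_V-as-pathwise-cap-or-cup}, by performing a simple change of variables at the level of approach paths.

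First I would observe that, since $p\in V^\times$, right-multiplication by $p$ is a homeomorphism $V\to V$ (its inverse is right-multiplication by $p^{-1}$, and both are bounded). Consequently, the assignment $(u_k)\mapsto (u_kp)$ is a bijection from sequences of invertibles converging to $z$ onto sequences of invertibles converging to $zp$. This lets me re-index the union and intersection in Proposition \ref{prop:S-and-S_V-as-pathwise-cap-or-cup}:
\[ \frakS^\varphi_\cap(zp) = \bigcap_{u_k\to z}\frakS_\pw^\varphi(zp;u_kp),\qquad \frakS^\varphi_\cup(zp) = \bigcup_{u_k\to z}\frakS_\pw^\varphi(zp;u_kp). \]

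Next I would apply the first identity of Theorem \ref{thm:pathwise-hadamard-conjugation-law} (read with $\frakS_\pw$ in place of $\frakS_\cup$, as the proof there actually establishes), giving $\frakS_\pw^\varphi(zp;u_kp)=p^{-1}\frakS_\pw^\varphi(z;u_k)p$ for every path. Since conjugation by $p$ is a bijection on $V$, it commutes with arbitrary intersections and unions, yielding
\[ \frakS^\varphi_\cap(zp) = p^{-1}\!\left(\bigcap_{u_k\to z}\frakS_\pw^\varphi(z;u_k)\right)\!p,\qquad \frakS^\varphi_\cup(zp) = p^{-1}\!\left(\bigcup_{u_k\to z}\frakS_\pw^\varphi(z;u_k)\right)\!p. \]
Applying Proposition \ref{prop:S-and-S_V-as-pathwise-cap-or-cup} once more to the bracketed expressions gives the two claimed equalities.

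I do not anticipate any genuine obstacle here: the entire content of the corollary is packaged inside Theorem \ref{thm:pathwise-hadamard-conjugation-law} and the cofinality observation that $u_k\to z$ if and only if $u_kp\to zp$. The only mild point of care is making sure the bijection of paths is used in the correct direction so that the intersection (resp.\ union) ranges over the correct indexing set after the change of variables; this is immediate once $p$ is known to be a unit.
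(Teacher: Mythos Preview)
Your proposal is correct and follows essentially the same approach as the paper: both use the bijection between paths $u_k\to z$ and paths $u_kp\to zp$ to re-index the intersection/union from Proposition \ref{prop:S-and-S_V-as-pathwise-cap-or-cup}, then apply Theorem \ref{thm:pathwise-hadamard-conjugation-law} termwise and pull the conjugation outside. Your parenthetical remark that the first identity of Theorem \ref{thm:pathwise-hadamard-conjugation-law} should read $\frakS_\pw^\varphi$ rather than $\frakS_\cup^\varphi$ is also correct and worth noting.
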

\begin{proof}
Observe that there is a bijection between the sets
\[ \{ \text{sequences } u_k\to z \}\quad\text{and}\quad \{ \text{sequences } v_k\to zp \} \]
given by multiplication on the right by \(p\) or \(p^{-1}\). In particular, applying Proposition \ref{prop:S-and-S_V-as-pathwise-cap-or-cup} and Theorem \ref{thm:pathwise-hadamard-conjugation-law}, we have
\begin{align*}
	\frakS^\varphi(zp) = \bigcap_{v_k\to zp}\frakS_{\pw}^\varphi(zp;v_k) &= \bigcap_{u_k\to z}\frakS_{\pw}^\varphi(zp;u_kp)\\
	&= \bigcap_{u_k\to z}p^{-1}\frakS_{\pw}^\varphi(z;u_k)p = p^{-1}\left(\bigcap_{u_k\to z}\frakS_{\pw}^\varphi(z;u_k)\right)p = p^{-1} \frakS^\varphi(z) p.
\end{align*}
The proof for \(\frakS_\cup^\varphi(-)\) is identical.
\end{proof}

Clearly, if \(\varphi\) is some arbitrary bounded linear map, the problem of computing the families \(\frakS^\varphi_\cap(-)\) and \(\frakS_\cup^\varphi(-)\) is harder than computing
\(\frakS_\cap(-)\) and \(\frakS_\cup(-)\), so these are what we begin with. A starting point is a sufficient condition for being unbounded:

\begin{proposition}\label{prop:local-erik}
	Let \(X\) be a Banach space, and let \(A,Z\in\sfB(X)\). If the two equivalent conditions
	\begin{enumerate}[label=(\roman*)]
	\item \(\ker(Z)\not\subseteq\ker(ZA)\),
	\item \(A\ker(Z)\not\subseteq\ker(Z)\),
	\end{enumerate}
	are satisfied, then \(\frakN_\cup(A,Z)=\frakN_\cap(A,Z)=\infty\). In particular, \(A\not\in\frakS_\cup(Z)\) and \(A\not\in\frakS_\cap(Z)\).
\end{proposition}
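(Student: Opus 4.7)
The plan is to first dispense with the equivalence of (i) and (ii), which is immediate from unwinding definitions: $A\ker(Z)\subseteq\ker(Z)$ says exactly that $Z(Av)=0$ for every $v\in\ker(Z)$, i.e., $\ker(Z)\subseteq\ker(ZA)$. For the main assertion, I would in fact prove the stronger pathwise statement that $\|U_k A U_k^{-1}\|\to\infty$ for \emph{every} sequence of invertibles $U_k\to Z$. By Proposition \ref{prop:S-and-S_V-as-pathwise-cap-or-cup} this immediately yields both $A\notin\frakS_\cup(Z)$ and $A\notin\frakS_\cap(Z)$, hence $\frakN_\cup(A,Z)=\frakN_\cap(A,Z)=\infty$. (If no sequence of invertibles converges to $Z$ at all, then both quantities are infima over empty sets and equal $+\infty$ by convention, so the claim is vacuous.)

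The mechanism is to lower-bound the operator norm by testing against a single vector that does not depend on $k$. Using condition (i), I would pick a witness $v\in\ker(Z)$ with $ZAv\neq 0$ (so automatically $v\neq 0$), and given any sequence of invertibles $U_k\to Z$, set $w_k:=U_k v$, which is nonzero since $U_k$ is invertible. Then
\[ \|U_k A U_k^{-1}\| \;\geq\; \frac{\|U_k A U_k^{-1} w_k\|}{\|w_k\|} \;=\; \frac{\|U_k A v\|}{\|U_k v\|}, \]
and by continuity of the evaluation map $\sfB(X)\times X\to X$, the numerator tends to $\|ZAv\|>0$ while the denominator tends to $\|Zv\|=0$, so the ratio diverges to $\infty$.

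I do not anticipate any real obstacle; the routine details are just the two continuity estimates $U_k v\to Zv$ and $U_k Av\to ZAv$ together with the positivity of the limit numerator. The one point that deserves emphasis is that the \emph{same} witness vector $v$ works uniformly along every approach path $U_k\to Z$, and it is precisely this uniformity that lets us pass beyond pathwise divergence (which would only give $A\notin\frakS_\cup(Z)$ for the particular path tested) to the much stronger intersection-version conclusion $A\notin\frakS_\cap(Z)$.
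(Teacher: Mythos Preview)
Your argument is correct and is essentially the same as the paper's: both pick a witness $v\in\ker(Z)$ with $ZAv\neq 0$ and lower-bound $\|UAU^{-1}\|$ by the ratio $\|UAv\|/\|Uv\|$, whose numerator stays bounded away from zero while the denominator tends to zero. The only cosmetic difference is that the paper works directly with the $\limsup_{r\to 0}\inf_{\|U-Z\|<r}$ definition (using $\|Uv\|=\|(U-Z)v\|\leq r\|v\|$), whereas you phrase it pathwise and invoke Proposition~\ref{prop:S-and-S_V-as-pathwise-cap-or-cup}.

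One small expository slip in your final paragraph: you have the roles of $\frakS_\cap$ and $\frakS_\cup$ reversed. Divergence along a \emph{single} path already gives $A\notin\frakS_\cap(Z)$ (since $\frakS_\cap$ is the intersection over all paths); it is the uniformity---divergence along \emph{every} path---that yields the stronger conclusion $A\notin\frakS_\cup(Z)$, since $\frakS_\cup$ is the union. This does not affect the correctness of the proof, only the commentary.
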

\begin{proof}
That the two conditions are equivalent is simply the following observation:
\begin{align*}
	\ker(Z)\not\subseteq \ker(ZA) &\iff \exists x\in\ker(Z)\text{ s.t. }ZAx\not=0 \\
	&\iff \exists x\in\ker(Z)\text{ s.t. } Ax\not\in\ker(Z) \\
	&\iff A(\ker(Z)) \not\subseteq \ker(Z)
\end{align*}
Now, suppose we have some \(x\not=0\) in \(X\) such that \(Zx = 0\) but \(ZAx \not= 0\). Then
\begin{align*}
	\limsup_{r\to 0}\inf_{\substack{U\in\sfB(X)^\times, \\ \|U-Z\|<r}}\left(\sup_{w\not=0}\frac{\|UAw\|}{\|Uw\|}\right) &\geq
	\limsup_{r\to 0}\inf_{\substack{U\in\sfB(X)^\times, \\ \|U-Z\|<r}}\frac{\|UAx\|}{\|Ux\|} \\
	&= \limsup_{r\to 0}\inf_{\substack{U\in\sfB(X)^\times, \\ \|U-Z\|<r}}\frac{\|UAx\|}{\|(U-Z)x\|} \\
	&\geq \limsup_{r\to 0}\inf_{\substack{U\in\sfB(X)^\times, \\ \|U-Z\|<r}}\frac{1}{r}\frac{\|UAx\|}{\|x\|} = \infty \\
\end{align*}
where the equality is because \(Ux = Ux - 0 = Ux - Zx\), and where we note that \(\|UAx\|\not\to 0\) as \(r\to 0\) since \(ZAx \not= 0\).
We conclude that \(\frakN_\cup(A,Z)=\infty\). Finally, it is clear that by the definitions of \(\inf\) and \(\sup\) that \(\frakN_\cup(A,Z)\leq\frakN_\cap(A,Z)\), so that the latter is also infinite.
\end{proof}

From the above, we introduce the following gadget to allow us to fit the criterion provided into the general formalism we are working with.

\begin{definition}\label{def:kernel-invariant-set}
	Let \(X\) be a Banach space. For any \(Z\in\sfB(X)\), define the set
	\[ \frakS_{\ker}(Z) := \{ A\in\sfB(X)\mid A\ker(Z)\subseteq\ker(Z) \}, \]
	i.e.\ the set of operators which keep the kernel of \(Z\) invariant.
\end{definition}
\begin{theorem}\label{thm:kernel-invariant-algebra}
	Let \(X\) be a Banach space, and let \(Z\in\sfB(X)\). Then
	\begin{enumerate}[label=(\alph*)]
	\item \(\frakS_{\ker}(Z)\) forms an algebra, and
	\item we have the inclusions
	\[ \{\lambda I \mid \lambda\in\C\} \subseteq  \frakS_\cap(Z)\subseteq \frakS_\cup(Z)\subseteq \frakS_{\ker}(Z). \]
	\end{enumerate}
\end{theorem}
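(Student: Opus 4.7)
The plan is to handle the two parts as independent but short verifications, both of which should follow almost directly from definitions together with earlier results in the excerpt. Since every ingredient has already been established, I do not anticipate any substantive obstacle; the theorem is essentially a packaging statement, and the work is really in cleanly assembling the pieces.

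For part (a), I would verify the closure properties of $\frakS_{\ker}(Z)$ directly. Linearity of operators gives that if $A, B \in \frakS_{\ker}(Z)$ then $(A+B)\ker(Z) \subseteq A\ker(Z) + B\ker(Z) \subseteq \ker(Z)$ and $(\lambda A)\ker(Z) \subseteq \ker(Z)$, so it is a subspace. For the multiplicative structure, I would chain the inclusions $(AB)\ker(Z) = A(B\ker(Z)) \subseteq A\ker(Z) \subseteq \ker(Z)$, and note that $I \in \frakS_{\ker}(Z)$ trivially, giving a unital subalgebra of $\sfB(X)$.

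For part (b), I would prove the three inclusions from left to right in order. The first, $\{\lambda I \mid \lambda \in \C\} \subseteq \frakS_\cap(Z)$, is immediate from the identity $U(\lambda I)U^{-1} = \lambda I$, which makes the inner supremum in $\frakN_\cap(\lambda I, Z)$ equal to $|\lambda|$ uniformly in $r$. The middle inclusion $\frakS_\cap(Z) \subseteq \frakS_\cup(Z)$ is the trivial observation that $\inf \leq \sup$, so $\frakN_\cup(A,Z) \leq \frakN_\cap(A,Z)$ and finiteness of the latter forces finiteness of the former.

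The final inclusion $\frakS_\cup(Z) \subseteq \frakS_{\ker}(Z)$ is the one that uses earlier machinery: it is simply the contrapositive of Proposition \ref{prop:local-erik}. If $A \notin \frakS_{\ker}(Z)$, then $A\ker(Z) \not\subseteq \ker(Z)$, which is the second equivalent condition in that proposition, and hence $\frakN_\cup(A,Z) = \infty$, so $A \notin \frakS_\cup(Z)$. Assembling the three inclusions yields the displayed chain, completing the proof.
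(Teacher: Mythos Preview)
Your proposal is correct and follows essentially the same approach as the paper: part (a) is verified directly by checking closure under linear combinations and composition, and part (b) assembles the three inclusions via the identity $U(\lambda I)U^{-1}=\lambda I$, the trivial inequality $\frakN_\cup\leq\frakN_\cap$, and the contrapositive of Proposition~\ref{prop:local-erik}. The only cosmetic difference is that you additionally note the unital structure, which the paper leaves implicit.
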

\begin{proof}
(a) Let \(A,B\in\frakS_{\ker}(Z)\). Then, clearly,
\[ AB\ker(Z)\subseteq A\ker(Z)\subseteq \ker(Z) \]
so that \(AB\in\frakS_{\ker}(Z)\). Similarly, if \(\lambda,\mu\in\C\) and \(x\in\ker(Z)\), we have \(Z(\lambda A+\mu B)x = \lambda ZAx + \mu ZBx = 0\),
so \((\lambda A+\mu B)x\in\ker(Z)\), and hence \(\lambda A +\mu B \in \frakS_{\ker}(Z)\).

(b) The first inclusion is trivial since $\lambda I$ commutes with everything. That \(\frakS_\cap(Z)\subseteq\frakS_\cup(Z)\) is immediate since \(\frakN_\cup(A,Z)\leq\frakN_\cap(A,Z)\), and that \(\frakS_\cup(Z)\subseteq\frakS_{\ker}(Z)\)
is a consequence of Proposition \ref{prop:local-erik}.
\end{proof}

\clearpage
\section{Existence of Bounded Paths}\label{section:existence-of-bounded-paths}
The goal of this section is to treat the computation or characterization of \(\frakS_\cup(-)\) and \(\frakS_\cup^\varphi(-)\) in as great a generality as we can muster. Since the latter
is harder, more finicky, and generally less elegant, we will start with the former.

\subsection{Existence Without a Modifier}\label{subsection:bounded-unmodified-existence}
Our general approach is to make use of the algebra \(\frakS_{\ker}(Z)\) described in Definition \ref{def:kernel-invariant-set}.
In principle, since Theorem \ref{thm:kernel-invariant-algebra} tells us that \(\frakS_{\cup}(Z)\subseteq\frakS_{\ker}(Z)\), we could simply compute e.g.\ the dimension of \(\frakS_{\ker}(Z)\) and be done (at least in
the finite-dimensional setting). However, we will instead prove the stronger result that \(\frakS_\cup(Z)\) and \(\frakS_{\ker}(Z)\) are \emph{equal,} in essentially full generality, and only then compute \(\frakS_{\ker}(Z)\). To do this,
we will pass through yet another intermediary. The idea here is as follows: the algebra \(\frakS_\cup(Z)\) concerns boundedness properties of arbitrary paths converging to \(Z\); we will consider the more modest
setting of a path converging to \(Z\) which has a prescribed form for its inverse, allowing us to deduce boundedness along that particular path very easily.

\begin{terminology}\label{terminology:good-path}
	Let \(V\) be a Banach algebra, and let \(z\in V\).
	\begin{enumerate}[label=(\arabic*)]
	\item An \emph{admissible path} to \(z\) is a path \(\R_{>0}\to V\), invertible near zero, of the form
	\[ z+t\cdot e(t), \]
	where \(e(t)\) is analytic for sufficiently small \(t\), i.e.\ is of the form
	\[ e(t) = \sum_{k=0}^\infty e_k t^k.  \]
	\item We say an admissible path \(z+te(t)\) is \emph{linear} if \(e(t)\) is a constant function.
	\item An admissible path \(z+te(t)\) is \emph{good} if the inverse is of the form
	\[ c_{-1}t^{-1} + c(t) \]
	where \(c(t)\) is analytic.
	\end{enumerate}
\end{terminology}
\begin{notation}
	If \(C(t)\) is some analytic function, we will write \(C_k\) for the \(k\)th coefficient in the series expansion of \(C(t)\).
\end{notation}

Of course, one can write down any definition one wants, and it could all be for nothing if the set of things satisfying the definition is empty. However, we defer the issue of
good paths for Section \ref{subsection:good-paths-in-hilbert-spaces-gen-ind-zero}, focusing instead on how they ought to be used. We make a definition, the succeeding Proposition \ref{prop:general-good-paths-kill-Z} being the motivation:

\begin{definition}\label{definition:frakS_c(z)}
	Let \(V\) be a Banach algebra, let \(z\in V\), and let \(c\in V\) be such that \(zc = cz = 0\). Define the algebra
	\[ \frakS_{c}(z) := \{ a\in V\mid zac = 0 \}. \]
\end{definition}

\begin{lemma}\label{lemma:Sker-in-SC}
	Let \(X\) be a Banach space, and let \(Z,C\in\sfB(X)\) be such that \(ZC = CZ = 0\). Then
	\[\frakS_{\ker}(Z)\subseteq\frakS_{C}(Z).\]
\end{lemma}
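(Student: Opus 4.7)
The plan is to unwind the definitions and observe that the inclusion is essentially immediate from a chain of invariances.

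First I would rewrite what it means to lie in each set. By Definition \ref{def:kernel-invariant-set}, an operator $A$ lies in $\frakS_{\ker}(Z)$ exactly when $A(\ker Z) \subseteq \ker Z$. By Definition \ref{definition:frakS_c(z)} (specialized to $V = \sfB(X)$, $z = Z$, $c = C$, whose hypothesis $ZC = CZ = 0$ is exactly what we are given), $A$ lies in $\frakS_C(Z)$ exactly when $ZAC = 0$. So the task reduces to: if $A$ preserves $\ker Z$, then $ZAC = 0$.

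Next, I would make the key observation that the hypothesis $ZC = 0$ is equivalent to the operator-theoretic statement $\img(C) \subseteq \ker(Z)$. From here the argument is a three-step containment: for any $x \in X$, we have $Cx \in \img(C) \subseteq \ker(Z)$; since $A \in \frakS_{\ker}(Z)$, we then get $ACx \in A(\ker Z) \subseteq \ker(Z)$; hence $ZACx = 0$. As $x$ was arbitrary, $ZAC = 0$, which is exactly the condition $A \in \frakS_C(Z)$.

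There is no real obstacle here; the proof uses only the hypothesis $ZC = 0$ (the symmetric assumption $CZ = 0$ plays no role in this direction of the inclusion, though it is presumably natural to keep in the statement so that $\frakS_C(Z)$ fits within the framework motivated by the good-path setup of Terminology \ref{terminology:good-path}, where both $Zc_{-1} = 0$ and $c_{-1} Z = 0$ will arise from expanding the identity $(Z + tE(t))(c_{-1}t^{-1} + c(t)) = I$ and matching coefficients in $t^{-1}$). I would note this in passing but not belabor it, since the lemma's content is really just the one-line observation that an $A$-invariant subspace containing $\img(C)$ is still killed by $Z$.
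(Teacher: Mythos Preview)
Your proof is correct and follows essentially the same route as the paper: translate $ZC=0$ into $\img(C)\subseteq\ker(Z)$, then chain through $A\img(C)\subseteq A\ker(Z)\subseteq\ker(Z)$ to conclude $ZAC=0$. Your observation that only the hypothesis $ZC=0$ is used (not $CZ=0$) is accurate and worth noting.
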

\begin{proof}
Saying that \(ZC = 0\) is the same as saying that \(\img{C}\subseteq\ker{Z}\). Now, if \(A\in\frakS_{\ker}(Z)\), then \(A\) keeps the kernel
of \(Z\) invariant, and hence
\[ A\img(C) \subseteq A\ker(Z)\subseteq\ker(Z)\implies ZAC=0 \]
as desired.
\end{proof}

\begin{proposition}\label{prop:general-good-paths-kill-Z}
	Let \(V\) be Banach algebra, let \(z\in V\), and let \(z+te(t) \in \calE(z)\) be a good path to \(z\) with inverse \(c_{-1}t^{-1} + c(t)\). Then \(c_{-1}z=zc_{-1}=0\).
\end{proposition}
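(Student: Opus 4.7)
The plan is to exploit the defining equation of invertibility $(z + te(t))(c_{-1}t^{-1} + c(t)) = 1 = (c_{-1}t^{-1} + c(t))(z + te(t))$ valid for all sufficiently small positive $t$, and simply compare Laurent coefficients. The point is that while $c_{-1}t^{-1}$ blows up as $t \to 0$, the fact that it cancels against everything else on the right-hand side (which is the constant $1$) forces algebraic relations on $z$ and $c_{-1}$.

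More precisely, first I would expand
\[ (z + te(t))(c_{-1}t^{-1} + c(t)) = zc_{-1} \cdot t^{-1} + \bigl(zc(t) + e(t)c_{-1} + te(t)c(t)\bigr), \]
and observe that the second group of terms is analytic at $t=0$ (a convergent power series with coefficients in $V$), while the first is the sole contribution of order $t^{-1}$. Since the whole expression equals the constant $1 \in V$, I would multiply both sides by $t$ and take $t \to 0$ in the norm topology of $V$: the analytic part times $t$ tends to $0$, the right-hand side $t \cdot 1$ tends to $0$, and the singular part becomes $zc_{-1}$. Hence $zc_{-1} = 0$.

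The argument for $c_{-1}z = 0$ is entirely symmetric: expand
\[ (c_{-1}t^{-1} + c(t))(z + te(t)) = c_{-1}z \cdot t^{-1} + \bigl(c(t)z + c_{-1}e(t) + tc(t)e(t)\bigr), \]
again isolate the $t^{-1}$ coefficient as the only singular contribution, multiply by $t$, and let $t \to 0$.

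There isn't really a main obstacle here; the only thing worth being careful about is that the two products above are genuine identities in $V$ for each small $t$ (not merely formal equalities of Laurent series), so the passage to the limit is a legitimate analytic-in-$V$ statement rather than a purely algebraic uniqueness-of-coefficients argument. Invertibility of $z + te(t)$ near $0$, which is built into the definition of a good path, is what guarantees the product really does equal $1$ on both sides, so this point is automatic.
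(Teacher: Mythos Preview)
Your proposal is correct and follows essentially the same approach as the paper: expand the product identity $(z+te(t))(c_{-1}t^{-1}+c(t)) = 1$ and isolate the $t^{-1}$ coefficient (the paper phrases this as ``comparing coefficients,'' while you carry it out explicitly via multiplying by $t$ and letting $t\to 0$). Your version is, if anything, slightly more careful in justifying the passage to the limit.
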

\begin{proof}
We have that
\[ (z+te(t))(c_{-1}t^{-1}+c(t)) = I = (c_{-1}t^{-1}+c(t))(z+te(t)). \]
Expanding \(e(t)\) and \(c(t)\) in their series forms, then multiplying this out and comparing coefficients, yields the result.
\end{proof}

These good paths have predictable boundedness properties, characterized by the simple algebraic criterion of \(\frakS_{C_{-1}}(Z)\),
and furthermore, Lemma \ref{lemma:Sker-in-SC} relates this to \(\frakS_{\ker}(Z)\). Using this, we obtain the following lemma, and one of the main
theorems of this section.

\begin{lemma}\label{lemma:good-path-annihilates-coefficient-implies-vee}
	Let \(V\) be a Banach algebra, let \(z\in V\), and let \(z+te(t)\) be a good path with inverse \(c_{-1}t^{-1}+c(t)\). Then \(\frakS_{c_{-1}}(z)\subseteq \frakS_{\pw}(z;z+te(t))\). In particular,
	\(\frakS_{c_{-1}}(z)\subseteq  \frakS_\cup(z)\).
\end{lemma}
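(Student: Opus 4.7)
The plan is to directly expand $u(t)au(t)^{-1}$ using the explicit good path structure, isolate the only potentially singular term, and observe that the hypothesis $a \in \frakS_{c_{-1}}(z)$ is exactly what kills it.

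First I would write out, for $a \in V$,
\[
u(t)\,a\,u(t)^{-1} = (z + t\,e(t))\,a\,\bigl(c_{-1}t^{-1} + c(t)\bigr),
\]
and multiply through carefully. The four resulting terms are
\[
z\,a\,c_{-1}\,t^{-1}, \quad z\,a\,c(t), \quad e(t)\,a\,c_{-1}, \quad t\,e(t)\,a\,c(t),
\]
where the second pair has lost a factor of $t^{-1}$ cancellation from the $t \cdot t^{-1}$. The only term that can blow up as $t \to 0$ is $z\,a\,c_{-1}\,t^{-1}$; the other three have analytic coefficients and hence bounded norms as $t \to 0$ (their limits being $z\,a\,c_0$, $e_0\,a\,c_{-1}$, and $0$ respectively, with explicit bounds by suitable constants depending on $\|e(t)\|$, $\|c(t)\|$ on a fixed compact neighbourhood of $0$).

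The hypothesis $a \in \frakS_{c_{-1}}(z)$ says precisely that $z\,a\,c_{-1} = 0$, so the singular term vanishes identically. Consequently $\|u(t)\,a\,u(t)^{-1}\|$ stays bounded as $t \to 0$. Picking any sequence $t_k \to 0^+$ on which $z+t_k e(t_k)$ is invertible (which exists because $z + te(t)$ is invertible for all sufficiently small positive $t$ by assumption), we get a sequence $u_k := z + t_k e(t_k) \to z$ of invertibles along which $\|u_k a u_k^{-1}\|$ stays bounded. This is exactly the statement $a \in \frakS_{\pw}(z; z+te(t))$.

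For the second assertion, applying Proposition \ref{prop:S-and-S_V-as-pathwise-cap-or-cup} gives
\[
\frakS_{\pw}(z; z+te(t)) \subseteq \bigcup_{u_k \to z} \frakS_{\pw}(z; u_k) = \frakS_\cup(z),
\]
which chained with the first inclusion yields $\frakS_{c_{-1}}(z) \subseteq \frakS_\cup(z)$. There is no real obstacle here: once one has the good-path structure in hand, the argument is just an expansion together with matching the $t^{-1}$ coefficient to the algebraic condition defining $\frakS_{c_{-1}}(z)$. The only thing to be a touch careful about is the routine verification that the norms of $c(t)$ and $e(t)$ remain bounded on a neighbourhood of zero, which is immediate from analyticity.
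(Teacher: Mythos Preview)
Your proof is correct and follows essentially the same approach as the paper: expand $(z+te(t))a(c_{-1}t^{-1}+c(t))$ into four terms, observe that the only potentially unbounded one is $zac_{-1}t^{-1}$, kill it using the hypothesis $a\in\frakS_{c_{-1}}(z)$, and then invoke Proposition~\ref{prop:S-and-S_V-as-pathwise-cap-or-cup} for the second inclusion. The paper's version is slightly terser but the argument is identical.
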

\begin{proof}
Suppose \(a\in\frakS_{c_{-1}}(z)\). Note that
\[ \frakN_\cup(a,z)=\limsup_{r\to 0}\inf_{\substack{u\in V^\times \\ \|u-z\|<r}}\| uau^{-1} \| \leq \lim_{t\to 0}\|(z+te(t))a(z+te(t))^{-1}\|. \]
Writing out the latter using the fact that we have an explicit inverse and that \(zac_{-1}=0\), we get
\begin{align*}
	(z+te(t))a(c_{-1}t^{-1} + c(t)) &= zac_{-1}t^{-1} + zac(t) + e(t)ac_{-1} + e(t)ac(t)\cdot t \\
	&= zac(t) + e(t)ac_{-1} + e(t)ac(t)\cdot t = O(1)
\end{align*}
which is bounded in norm as \(t\to 0\). Finally, see Proposition \ref{prop:S-and-S_V-as-pathwise-cap-or-cup}.
\end{proof}

\begin{theorem}\label{thm:good-linear-paths-equals-ker-equals-vee}
	Let \(X\) be a Banach space, let \(Z\in \sfB(X)\), and suppose that there is some good path \(Z+tE(t)\) going to \(Z\). Then
	\[ \frakS_{\ker}(Z) = \frakS_{C_{-1}}(Z) = \frakS_{\pw}(Z;Z+tE(t)) = \frakS_\cup(Z), \]
	where \((Z+tE(t))^{-1} = C_{-1}t^{-1}+C(t)\). In particular, \(\frakS_{C_{-1}}(Z)\) is independent of \(E(t)\).
\end{theorem}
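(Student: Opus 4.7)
The plan is to establish a cyclic chain of inclusions
\[ \frakS_\cup(Z) \subseteq \frakS_{\ker}(Z) \subseteq \frakS_{C_{-1}}(Z) \subseteq \frakS_{\pw}(Z; Z + tE(t)) \subseteq \frakS_\cup(Z), \]
each of which is already available as a result earlier in the excerpt, and then read off the final ``independence'' clause as a formal consequence.

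First I would invoke Theorem \ref{thm:kernel-invariant-algebra}(b) for the inclusion $\frakS_\cup(Z) \subseteq \frakS_{\ker}(Z)$. Next, to get $\frakS_{\ker}(Z) \subseteq \frakS_{C_{-1}}(Z)$, I would apply Lemma \ref{lemma:Sker-in-SC}; this requires the hypothesis $Z C_{-1} = C_{-1} Z = 0$, which is precisely the content of Proposition \ref{prop:general-good-paths-kill-Z} applied to the given good path. The third inclusion $\frakS_{C_{-1}}(Z) \subseteq \frakS_{\pw}(Z; Z + tE(t))$ is then Lemma \ref{lemma:good-path-annihilates-coefficient-implies-vee} applied verbatim. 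Finally, the inclusion $\frakS_{\pw}(Z; Z + tE(t)) \subseteq \frakS_\cup(Z)$ is a direct instance of the ``union'' formula in Proposition \ref{prop:S-and-S_V-as-pathwise-cap-or-cup}, since the pathwise space sits inside the union of all such spaces, which equals $\frakS_\cup(Z)$.

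Since the first and last term of the chain coincide, all four sets must be equal; this is essentially a bookkeeping exercise with no real obstacle, the ``hard work'' having already been done in the preceding lemmas (in particular, in the explicit expansion of $(Z + tE(t)) a (Z + tE(t))^{-1}$ carried out in the proof of Lemma \ref{lemma:good-path-annihilates-coefficient-implies-vee}, which is what makes the chain close up). For the last sentence, observe that the set $\frakS_{\ker}(Z)$ is manifestly defined in terms of $Z$ alone, with no mention of $E(t)$. Since we have just shown $\frakS_{C_{-1}}(Z) = \frakS_{\ker}(Z)$ for any choice of good path with leading inverse coefficient $C_{-1}$, the set $\frakS_{C_{-1}}(Z)$ must likewise be independent of the specific good path—and hence of $E(t)$—used to produce that $C_{-1}$.

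If anything tempts me to be careful, it is simply making sure the hypotheses of each cited result are actually in force: Lemma \ref{lemma:Sker-in-SC} needs $ZC = CZ = 0$ (supplied by Proposition \ref{prop:general-good-paths-kill-Z}), and Lemma \ref{lemma:good-path-annihilates-coefficient-implies-vee} needs $Z + tE(t)$ to be a good path (supplied by hypothesis). No new analytic input is needed beyond what is already packaged into these lemmas.
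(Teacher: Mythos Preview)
Your proof is correct and essentially identical to the paper's own argument: the paper establishes the same cyclic chain of inclusions (starting at \(\frakS_{\ker}(Z)\) rather than \(\frakS_\cup(Z)\), but that is immaterial) by citing exactly the same ingredients, namely Lemma \ref{lemma:Sker-in-SC}, Lemma \ref{lemma:good-path-annihilates-coefficient-implies-vee}, and Theorem \ref{thm:kernel-invariant-algebra}(b). Your explicit invocation of Proposition \ref{prop:general-good-paths-kill-Z} and Proposition \ref{prop:S-and-S_V-as-pathwise-cap-or-cup} is slightly more thorough than the paper's terse citation list, but the underlying argument is the same.
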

\begin{proof}
By Lemma \ref{lemma:Sker-in-SC} and Lemma \ref{lemma:good-path-annihilates-coefficient-implies-vee} together with
part (b) of Theorem \ref{thm:kernel-invariant-algebra}, we have
\[ \frakS_{\ker}(Z)\subseteq \frakS_{C_{-1}}(Z) \subseteq \frakS_{\pw}(Z;Z+tE(t)) \subseteq \frakS_{\cup}(Z) \subseteq \frakS_{\ker}(Z) \]
which yields the result.
\end{proof}
\begin{corollary}\label{corollary:S-vee-equals-S-ker}
	Let \(Z\in\End(\C^n)\). Then \(\frakS_\cup(Z)=\frakS_{\ker}(Z)\).
\end{corollary}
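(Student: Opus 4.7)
The plan is to deduce this directly from Theorem \ref{thm:good-linear-paths-equals-ker-equals-vee}, which already establishes \(\frakS_\cup(Z)=\frakS_{\ker}(Z)\) whenever a good path going to \(Z\) exists. So the only thing that needs verification is that every \(Z\in\End(\C^n)\) actually admits a good path; everything else is then machinery already in place.

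To produce such a path, I would build a good \emph{linear} path out of a direct sum decomposition. Choose a vector space complement \(W\) of \(\ker Z\) in the domain copy of \(\C^n\), and a complement \(V\) of \(\img Z\) in the codomain copy of \(\C^n\). By the rank-nullity theorem we have \(\dim\ker Z = n - \dim\img Z = \dim V\), so we can fix a linear isomorphism \(E_0\!:\ker Z \iso V\) and extend it to an operator on \(\C^n\) by declaring \(E_0|_W = 0\). With respect to the decompositions \(\C^n = \ker Z \oplus W\) in the domain and \(\C^n = V\oplus\img Z\) in the codomain, the operator \(Z + tE_0\) is block diagonal with blocks \(tE_0|_{\ker Z}\) and \(Z|_W\); both blocks are invertible for all \(t\ne 0\), and the inverse therefore has the form
\[ (Z+tE_0)^{-1} = t^{-1}(E_0|_{\ker Z})^{-1}\oplus (Z|_W)^{-1}, \]
which is precisely \(t^{-1}C_{-1} + C_0\) with constant (hence analytic) \(C_0\). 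This exhibits \(Z + tE_0\) as a good linear path going to \(Z\), and Theorem \ref{thm:good-linear-paths-equals-ker-equals-vee} then finishes the proof.

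There is no real obstacle in this finite-dimensional setting: the only nontrivial content is the existence of the good path, and rank-nullity hands this to us immediately. The substantive difficulty, postponed to Section \ref{subsection:good-paths-in-hilbert-spaces-gen-ind-zero}, is to construct good paths in infinite dimensions, where one needs a hypothesis such as generalized index zero even to guarantee that invertibles exist arbitrarily close to \(Z\).
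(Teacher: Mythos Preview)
Your proof is correct. Both you and the paper reduce to Theorem \ref{thm:good-linear-paths-equals-ker-equals-vee}, so the only issue is the existence of a good path to \(Z\). The paper handles this by forward-referencing the machinery of Section \ref{subsection:good-paths-in-hilbert-spaces-gen-ind-zero}: polar decomposition reduces to the self-adjoint case, and then Lemma \ref{lemma:hilbert-self-adjoint-gen-index-zero-have-C1=C'} produces a good linear path from the orthogonal splitting \(X = \img(Z)\oplus\ker(Z)\). Your argument is the same block-diagonal construction carried out directly, using arbitrary linear complements and rank--nullity in place of orthogonal complements and self-adjointness. In finite dimensions this shortcut is entirely legitimate and avoids the detour through polar decomposition; the paper's route is chosen only because it sets up the infinite-dimensional generalization, where one genuinely needs the generalized-index-zero hypothesis and the Hilbert space structure to get the required splittings.
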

\begin{proof}
By Theorem \ref{thm:good-linear-paths-equals-ker-equals-vee}, if a good path exists for \(Z\), then the result holds. By our later Theorem \ref{thm:hilbert-space-gen-index-zero-C1=C'}
(see also Theorem \ref{thm:properties-of-C1(Z)} and Corollary \ref{corollary:hilbert-space-S_V=S_ker}), good paths exist for all \(Z\).
\end{proof}

\begin{remark}
	Before moving on, we would like to highlight the following remarkable observation: Corollary \ref{corollary:S-vee-equals-S-ker} implies
	that for all \(Z\in\End(\C^n)\), \(\frakS_\cup(Z)\), which was \emph{a priori only a set,} is actually an \emph{algebra.} We do not know any other way to show this,
	aside from as a corollary as just explained.
\end{remark}

So the good paths have allowed us to show that \(\frakS_\cup(Z)=\frakS_{\ker}(Z)\) (momentarily assuming that they exist; when we establish that they do, we will also be given a more
general veresion of Corollary \ref{corollary:S-vee-equals-S-ker}). However, for this to be a useful computation in practice,
we need to know what \(\frakS_{\ker}(Z)\) is, or at least the dimension. To solve this issue, we will reduce the problem to computing \(\frakS_{\ker}(Z)\)
for just a few choices of \(Z\) where one can do a calculation by hand.

\begin{theorem}\label{thm:S-vee-ker-operations}
	Let \(X\) be a Banach space, let \(Z\in\sfB(X)\), and let \(M\in\sfB(X)^\times\). Then
	\begin{enumerate}[label=(\alph*)]
	\item \(M^{-1}\frakS_{\ker}(Z)M = \frakS_{\ker}(ZM)\),
	\item \(\frakS_{\ker}(MZ) = \frakS_{\ker}(Z)\), and
	\item for any other \(M'\in\sfB(X)^\times\), there is an isomorphism \(\frakS_{\ker}(Z)\cong\frakS_{\ker}(MZM')\).
	\end{enumerate}
	In particular, if \(X=\C^n\), the dimension of \(\frakS_{\ker}(Z)\) depends only on the rank of \(Z\).
\end{theorem}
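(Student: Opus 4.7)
The plan is to exploit the explicit description of \(\frakS_{\ker}(Z)\) as the set of operators preserving \(\ker(Z)\), combined with the standard behavior of kernels under composition with invertible operators. First I would dispose of (b): since \(M\in\sfB(X)^\times\), we have \(\ker(MZ) = \ker(Z)\), and hence \(\frakS_{\ker}(MZ) = \frakS_{\ker}(Z)\) by unpacking the definition directly.

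For (a), the key observation is that \(\ker(ZM) = M^{-1}\ker(Z)\), again because \(M\) is invertible. Then \(A\in \frakS_{\ker}(ZM)\) iff \(AM^{-1}\ker(Z)\subseteq M^{-1}\ker(Z)\) iff \(MAM^{-1}\ker(Z)\subseteq \ker(Z)\) iff \(MAM^{-1}\in \frakS_{\ker}(Z)\), which gives \(\frakS_{\ker}(ZM) = M^{-1}\frakS_{\ker}(Z)M\). Part (c) then follows by chaining: \(\frakS_{\ker}(MZM') = \frakS_{\ker}(ZM') = {M'}^{-1}\frakS_{\ker}(Z)M'\), using (b) followed by (a); conjugation by \(M'\) is an algebra automorphism of \(\sfB(X)\), so restricting gives the required algebra isomorphism \(\frakS_{\ker}(Z)\cong \frakS_{\ker}(MZM')\).

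For the final statement, I would use that any \(Z\in\End(\C^n)\) of rank \(r\) can be written in the form \(MZ_rM'\) for some invertibles \(M,M'\in\GL_n(\C)\), where \(Z_r\) is a fixed standard rank-\(r\) matrix (for instance, the diagonal matrix whose first \(r\) entries are \(1\) and the rest \(0\)); this is just Gaussian elimination / Smith normal form for vector spaces. By (c), \(\frakS_{\ker}(Z)\cong\frakS_{\ker}(Z_r)\), so the dimension depends only on \(r\). There is no real obstacle here; the whole argument is elementary, and the one subtle point worth double-checking is keeping the sides straight in (a), so that the conjugation action \(A\mapsto M^{-1}AM\) lines up correctly with \(\ker(ZM) = M^{-1}\ker(Z)\).
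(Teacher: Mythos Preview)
Your proposal is correct and follows essentially the same approach as the paper: both use \(\ker(MZ)=\ker(Z)\) for (b), the identity \(\ker(ZM)=M^{-1}\ker(Z)\) for (a), and then chain these for (c). Your presentation of (a) via a single chain of equivalences is slightly more streamlined than the paper's two separate inclusion arguments, and you are a bit more explicit about invoking the rank normal form for the final ``in particular'' clause, but the underlying ideas are identical.
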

\begin{proof}
Observe that \(\ker(MZ) = \ker(Z)\), so (b) is trivial. To see that (a) holds, note first that \(x\in\ker(ZM)\) if and only if \(Mx\in\ker(Z)\), so that
\(\ker(ZM) = M^{-1}\ker(Z)\). Now, if \(A\in\frakS_{\ker}(Z)\), then \(M^{-1}AM\) satisfies
\[ M^{-1}AM\ker(ZM) = M^{-1}A\ker(Z)\subseteq M^{-1}\ker(Z) = \ker(ZM). \]
Therefore, \(M^{-1}\frakS_{\ker}(Z)M\subseteq\frakS_{\ker}(ZM)\). Conversely, if \(B\in\frakS_{\ker}(ZM)\), then
\[ MBM^{-1}\ker(Z) = MB\ker(ZM) \subseteq M\ker(ZM) = \ker(Z) \]
so that \(M\frakS_{\ker}(ZM)M^{-1}\subseteq\frakS_{\ker}(Z)\), that is \(\frakS_{\ker}(ZM)\subseteq M^{-1}\frakS_{\ker}(Z)M\).

Part (c) is a formal consequence of (a) and (b). In particular, we have
\[\frakS_{\ker}(Z) = \frakS_{\ker}(MZ) \cong \frakS_{\ker}(MZM'), \]
where the equality is by (b) and the isomorphism is by (a).
\end{proof}

\begin{remark}
	A priori, computing \(\frakS_{\ker}(Z)\) is very hard: it requires identifying the invariant subspaces of a matrix. However, since \(\frakS_{\ker}(Z)\) only cares
	about the rank of \(Z\), we may now replace \(Z\) entirely by the diagonal matrix with \(m=\dim\img{Z}\) one's on the diagonal and everything else zero. Due to
	the simplicity of this matrix, it is now much more tractable to compute \(\frakS_{\ker}(Z)\), at least up to an isomorphism given by conjugation.
\end{remark}

\begin{notation}
	Let \(m\leq n\). We write \(D_{n,m}\) for the \(n\times n\) diagonal matrix with \(m\) ones on the diagonal and zeros elsewhere. That is,
	\[ D_{n,m} := \begin{pmatrix} I_{m} & 0_{m\times (n-m)} \\ 0_{(n-m)\times m} & 0_{(n-m)\times (n-m)} \end{pmatrix} = I_m \oplus 0_{(n-m)\times (n-m)} . \]
\end{notation}

\begin{proposition}\label{prop:Sker(Dnm)-calculation}
	Consider \(\C^n\), and let \(n = m + k\). We have
	\[ \frakS_{\ker}(D_{n,m}) = \bigg\{ \begin{pmatrix} X & 0_{m\times k} \\ Y & W \end{pmatrix} \bigmid X \in \C^{m\times m},\,
	Y\in \C^{k\times m},\text{ and } W\in\C^{k\times k} \bigg\}. \]
	In particular, for any singular \(Z\) with \(\dim\img{Z} = m\), we have \(\dim\frakS_{\ker}(Z) = n^2 - mn + m^2\).
\end{proposition}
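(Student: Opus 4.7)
The plan is to reduce the problem to a direct block-matrix computation using the explicit form of $\ker(D_{n,m})$, and then invoke Theorem \ref{thm:S-vee-ker-operations}(c) to handle arbitrary singular $Z$ of rank $m$.

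First I would compute $\ker(D_{n,m})$ explicitly. Since $D_{n,m} = I_m \oplus 0_{k \times k}$, a vector $v = (v_1, v_2)^T \in \C^m \oplus \C^k$ lies in $\ker(D_{n,m})$ if and only if $v_1 = 0$, so $\ker(D_{n,m}) = \{0\} \oplus \C^k$. Then I would write an arbitrary $A \in \End(\C^n)$ in the block form
\[ A = \begin{pmatrix} P & Q \\ R & S \end{pmatrix}, \quad P \in \C^{m \times m},\ Q \in \C^{m \times k},\ R \in \C^{k \times m},\ S \in \C^{k \times k}, \]
and observe that for $w \in \C^k$ the vector $A(0, w)^T = (Qw, Sw)^T$ lies in $\ker(D_{n,m})$ iff $Qw = 0$. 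Requiring this for every $w$ forces $Q = 0$, and this condition is also clearly sufficient for $A\ker(D_{n,m}) \subseteq \ker(D_{n,m})$. This yields precisely the stated block form, with $X = P$, $Y = R$, $W = S$.

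The dimension count is then routine: the free blocks contribute $m^2 + km + k^2$, which after substituting $k = n - m$ simplifies to $n^2 - mn + m^2$.

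For the final statement about arbitrary singular $Z$ of rank $m$, I would use rank decomposition: any such $Z$ can be written $Z = M D_{n,m} M'$ for some $M, M' \in \GL_n(\C)$ (e.g.\ via Smith normal form, or just by choosing bases that witness the rank). Then Theorem \ref{thm:S-vee-ker-operations}(c) gives $\frakS_{\ker}(Z) \cong \frakS_{\ker}(D_{n,m})$ as vector spaces (in fact via a conjugation isomorphism), so the dimensions coincide. There is no real obstacle here; the only thing to be careful about is making sure the rank decomposition is invoked correctly so that both factors in $MZM'$ are invertible, matching the hypothesis of Theorem \ref{thm:S-vee-ker-operations}(c).
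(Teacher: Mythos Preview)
Your proposal is correct and essentially identical to the paper's own proof: both compute $\ker(D_{n,m}) = \{0\}\oplus\C^k$, write $A$ in block form, observe that the invariance condition forces the upper-right block to vanish, and then count dimensions. The only cosmetic difference is that the paper simply cites the last line of Theorem \ref{thm:S-vee-ker-operations} (``the dimension of $\frakS_{\ker}(Z)$ depends only on the rank of $Z$'') rather than spelling out the rank decomposition $Z = M D_{n,m} M'$ as you do, but this is the same argument.
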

\begin{proof}
It is easily seen that \(\ker(D_{n,m})\) is given by all vectors of the form \(v = 0_{m}\oplus v'\), where \(v'\) is of dimension \(k\). Thus, writing
\[ A = \begin{pmatrix} X & W' \\ Y & W \end{pmatrix},\quad W'\in\C^{m\times k}, \]
to have \(Av\in\ker(Z)\), we must have that
\[ Av = \begin{pmatrix} X & W' \\ Y & W \end{pmatrix} \begin{pmatrix} 0_m \\ v' \end{pmatrix} = \begin{pmatrix} W'v' \\ Wv' \end{pmatrix}\in\ker(Z), \]
which then means we must have \(W'v' = 0\). However, \(v'\) was totally arbitrary, so this implies that \(W'=0\).

The final assertion is just the observation that
\begin{align*}
	\dim\frakS_{\ker}(D_{n,m}) &= m^2 + mk + k^2 \\
	&= m^2 + m(n-m) + (n-m)^2 = n^2 - mn + m^2
\end{align*}
and that \(\dim\frakS_{\ker}(Z)\) only depends on the rank of \(Z\).
\end{proof}

\begin{corollary}\label{corollary:blow-up-for-almost-all}
	Let \(Z\in\End(\C^n)\), and let \(m=\dim\img{Z}\). Then \(\frakS_\cup(Z)\) is a linear subspace of \(\End(\C^n)\) of dimension
	\(n^2 - mn + m^2\). In particular, if \(Z\) is singular, then for almost all \(A\in\End(\C^n)\) we have \(\frakN_\cup(A,Z)=\infty\).
\end{corollary}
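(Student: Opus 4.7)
The plan is to assemble the corollary directly from the infrastructure already in place: namely the identification $\frakS_\cup(Z) = \frakS_{\ker}(Z)$ from Corollary \ref{corollary:S-vee-equals-S-ker}, the dimension count for $\frakS_{\ker}$ from Proposition \ref{prop:Sker(Dnm)-calculation}, and the rank-only dependence from Theorem \ref{thm:S-vee-ker-operations}. No new ideas should be needed.

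First I would invoke Corollary \ref{corollary:S-vee-equals-S-ker} to replace the set $\frakS_\cup(Z)$, which is \emph{a priori} only a union of pathwise vector spaces (and hence not obviously linear), by $\frakS_{\ker}(Z)$, which is an algebra by Theorem \ref{thm:kernel-invariant-algebra}(a). This immediately gives the ``linear subspace'' clause of the statement.

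Next I would pin down the dimension. By Theorem \ref{thm:S-vee-ker-operations}(c), $\dim\frakS_{\ker}(Z)$ depends only on the rank $m = \dim\img{Z}$, so I can reduce to the case $Z = D_{n,m}$. Then Proposition \ref{prop:Sker(Dnm)-calculation} gives $\dim\frakS_{\ker}(D_{n,m}) = n^2 - mn + m^2$, and this descends to $Z$.

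Finally, for the ``almost all'' clause: if $Z$ is singular and nonzero then $1 \le m \le n-1$, so $m(n-m) > 0$ and hence $n^2 - mn + m^2 < n^2 = \dim\End(\C^n)$. Thus $\frakS_\cup(Z)$ is a \emph{proper} linear subspace of $\End(\C^n)$ and so has Lebesgue measure zero; every $A$ outside it satisfies $\frakN_\cup(A,Z) = \infty$. (The degenerate case $Z = 0$ gives $m = 0$ and $\frakS_\cup(0) = \End(\C^n)$, which is consistent with the formula but must be implicitly excluded from the ``almost all'' statement.) There is no real obstacle here; the entire content of the corollary is a bookkeeping consequence of results already established, the only mildly subtle point being that the linearity of $\frakS_\cup(Z)$ is not manifest from its definition and genuinely requires the detour through $\frakS_{\ker}(Z)$ via good paths.
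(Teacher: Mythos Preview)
Your proposal is correct and follows essentially the same route as the paper: invoke Corollary~\ref{corollary:S-vee-equals-S-ker} to identify \(\frakS_\cup(Z)\) with \(\frakS_{\ker}(Z)\), then read off the dimension from Proposition~\ref{prop:Sker(Dnm)-calculation}. The paper's proof is terser (it does not separately cite Theorem~\ref{thm:S-vee-ker-operations}, since the rank-only dependence is already folded into the final assertion of Proposition~\ref{prop:Sker(Dnm)-calculation}), but the logic is identical. Your explicit remark that the case \(Z=0\) must be excluded from the ``almost all'' clause is a valid observation that the paper's statement of the corollary glosses over, though the version quoted in the introduction does include the hypothesis \(Z\neq 0\).
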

\begin{proof}
Apply Corollary \ref{corollary:S-vee-equals-S-ker} and Proposition \ref{prop:Sker(Dnm)-calculation} to see that
\[ \dim\frakS_\cup(Z) = \dim\frakS_{\ker}(Z) = n^2 - mn + m^2 \]
as desired.
\end{proof}

\subsection{Duality in Coefficients}\label{subsection:duality-in-coefficients}
We begin this subsection with the following remarkable observation:

\begin{proposition}\label{prop:general-good-path-duality}
	Let \(V\) be a Banach algebra, let \(z\in V\), and let \(z+te(t)\) be a good path to \(z\) with inverse \(c_{-1}t^{-1}+c(t)\). Then \(c_{-1} + tc(t)\) is a good path with
	inverse \(zt^{-1}+e(t)\).
\end{proposition}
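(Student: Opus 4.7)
The plan is to verify both claims—that $c_{-1}+tc(t)$ is a good path, and that its inverse is $zt^{-1}+e(t)$—simultaneously, by exhibiting an explicit two-sided inverse. Admissibility of $c_{-1}+tc(t)$ as a path to $c_{-1}$ is immediate from Terminology \ref{terminology:good-path}, since $c(t)=\sum_{k\ge 0}c_k t^k$ is analytic; and once we identify the inverse as $zt^{-1}+e(t)$, this visibly has the shape of a coefficient on $t^{-1}$ (namely $z$) plus an analytic function ($e(t)$), so the resulting path is not merely admissible but actually good. Invertibility near $0$ will fall out for free once a two-sided inverse is in hand.

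The whole argument then reduces to a single, clean algebraic observation: the factors of $t$ can simply be redistributed, giving
\[
(c_{-1}+tc(t))(zt^{-1}+e(t)) \;=\; (c_{-1}t^{-1}+c(t))(z+te(t)),
\]
since both sides expand (using only the distributive law in $V$) to $t^{-1}c_{-1}z + c_{-1}e(t) + c(t)z + tc(t)e(t)$. A symmetric identity holds for the product in the opposite order. Since the right-hand sides are both equal to $1$ by the hypothesis that $c_{-1}t^{-1}+c(t)$ is the two-sided inverse of $z+te(t)$, the left-hand sides are too, exhibiting $zt^{-1}+e(t)$ as a two-sided inverse for $c_{-1}+tc(t)$ at each sufficiently small $t>0$.

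There is no serious obstacle to overcome; the entire content of the result is the recognition of this redistribution trick, which reveals a clean symmetry exchanging the ``leading'' coefficients $z \leftrightarrow c_{-1}$ and the ``tails'' $e(t) \leftrightarrow c(t)$. It is worth noting, as a sanity check, that the cancellation of the $t^{-1}$ term in the expanded product is precisely the identity $c_{-1}z=0$ (and $zc_{-1}=0$ on the other side) supplied by Proposition \ref{prop:general-good-paths-kill-Z}, so the duality in the statement is in a sense \emph{powered by} the annihilation property of the leading Laurent coefficients—consistent with the fact that Proposition \ref{prop:general-good-paths-kill-Z} holds symmetrically for the dual path as well.
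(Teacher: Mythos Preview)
Your proof is correct and follows essentially the same approach as the paper: both amount to the observation that multiplying $(z+te(t))(c_{-1}t^{-1}+c(t))=1$ through by a redistribution of the scalar $t$ yields $(zt^{-1}+e(t))(c_{-1}+tc(t))=1$. The paper writes this as a one-line factoring $(z+te(t)) = (zt^{-1}+e(t))\cdot t$ followed by absorbing $t$ into the second factor, whereas you expand both products term-by-term and compare; these are the same manipulation.
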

\begin{proof}
Note that, for small enough \(t\), we have
\[ 1 = (z+te(t))\cdot (c_{-1}t^{-1}+c(t)) = (zt^{-1} + e(t))t\cdot (c_{-1}t^{-1} + c(t)) = (zt^{-1}+e(t))\cdot(c_{-1}+tc(t)) \]
which yields the result.
\end{proof}

That is, one sees that there is some kind of duality between good paths going to some \(z\) and good paths going to the \((-1)\)th coefficient of a good path going to \(z\).
That this is the case suggests that there is some kind of deeper relationship at play, and the purpose of this section is to uncover at least some aspects of this.

\begin{definition}\label{definition:frakC}
	Let \(V\) be a Banach algebra, and let \(z\in V\). Define the set \(\frakC_{-1}(z)\) to be the set of all \(c_{-1}\in V\) for which there exists a good path \(z+te(t)\)
	going to \(z\) such that \((z+te(t))^{-1} = c_{-1}t^{-1} + c(t) \).
\end{definition}


\begin{theorem}\label{thm:properties-of-C1(Z)}
	Let \(V\) be a Banach algebra.
	\begin{enumerate}[label=(\alph*)]
	\item Let \(c,z\in V\). Then \(c\in\frakC_{-1}(z)\) if and only if \(z\in\frakC_{-1}(c)\).
	\item Let \(z\in V\). Then \(\frakC_{-1}(z)\) is non-empty if and only if there exists a good path converging to \(z\).
	\item Let \(z\in V\). For any \(c\in\frakC_{-1}(z)\), we have \(cz = zc = 0\).
	\item Let \(z\in V\), and let \(p\in V^\times\). Then \(\frakC_{-1}(zp) = p^{-1}\frakC_{-1}(z)\) and \(\frakC_{-1}(pz) = \frakC_{-1}(z)p^{-1}\).
	\end{enumerate}
\end{theorem}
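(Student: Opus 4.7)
The plan is to observe that each of the four parts is essentially a direct bookkeeping consequence of Proposition \ref{prop:general-good-path-duality} and Proposition \ref{prop:general-good-paths-kill-Z}, together with the way good paths behave under multiplication by an invertible element. None of the statements require substantial new ideas; the only obstacle is making sure that when one manipulates a good path, the result is still a good path (analytic part, inverse of the correct form) and that the \((-1)\)-coefficient is read off correctly.

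For part (a), I would unpack the definitions: assume \(c\in\frakC_{-1}(z)\), so that there is a good path \(z+te(t)\) whose inverse is \(c\,t^{-1}+c(t)\). Proposition \ref{prop:general-good-path-duality} says exactly that \(c+t\,c(t)\) is then also a good path, and its inverse is \(z\,t^{-1}+e(t)\). This is a good path converging to \(c\) whose \((-1)\)-coefficient is \(z\), so \(z\in\frakC_{-1}(c)\). The reverse implication is the same argument with the roles of \(c\) and \(z\) swapped, using the fact that the duality in Proposition \ref{prop:general-good-path-duality} is involutive.

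Part (b) is immediate from the definition of \(\frakC_{-1}\): if a good path \(z+te(t)\) to \(z\) exists, then the \((-1)\)-coefficient of its inverse lies in \(\frakC_{-1}(z)\), and conversely any element of \(\frakC_{-1}(z)\) is by definition obtained from such a good path. Part (c) is just a restatement of Proposition \ref{prop:general-good-paths-kill-Z}.

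For part (d), the strategy is to push the invertible \(p\) through the definition. Given a good path \(z+te(t)\) with inverse \(c_{-1}t^{-1}+c(t)\), multiplying on the right by \(p\) gives \((z+te(t))\,p = zp + t\,e(t)p\), which is again analytic in \(t\) and invertible near zero, so it is a good path to \(zp\). Its inverse is \(p^{-1}\!\cdot(c_{-1}t^{-1}+c(t)) = p^{-1}c_{-1}\,t^{-1}+p^{-1}c(t)\), whose \((-1)\)-coefficient is \(p^{-1}c_{-1}\); this shows \(p^{-1}\frakC_{-1}(z)\subseteq \frakC_{-1}(zp)\), and the reverse inclusion follows by multiplying by \(p\) (i.e.\ applying the same observation to \(zp\) with the invertible \(p^{-1}\)). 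The statement for \(pz\) is symmetric: multiply \(z+te(t)\) on the left by \(p\), observe the inverse becomes \((c_{-1}t^{-1}+c(t))\,p^{-1}\), and read off coefficients. The main thing to be careful about is that left/right multiplication does not mix the \(t^{-1}\) term with the analytic part, which is clear since multiplication by a constant (in \(t\)) preserves the decomposition into a \(t^{-1}\) piece and an analytic piece.
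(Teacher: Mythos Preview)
Your proposal is correct and follows essentially the same approach as the paper: parts (a) and (c) are identified as reformulations of Propositions \ref{prop:general-good-path-duality} and \ref{prop:general-good-paths-kill-Z}, part (b) is by definition, and part (d) is obtained by multiplying a good path on the left or right by \(p\) and reading off the \((-1)\)-coefficient of the inverse. If anything, your treatment of (d) is slightly more careful than the paper's, explicitly noting the reverse inclusion via \(p^{-1}\) and that multiplication by a constant preserves the decomposition into the \(t^{-1}\) piece and the analytic piece.
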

\begin{proof}
(a) is a reformulation of Proposition \ref{prop:general-good-path-duality}, and (c) is a reformulation of Proposition \ref{prop:general-good-paths-kill-Z}.
Part (b) is just by definition. To obtain (d), note that, given a good path \(z+te(t)\), multiplication
by \(p\) on the left and right yields good paths \(zp + te(t)p\) and \(pz + t p e(t)\), and these operations are bijections. Computing the inverse of these paths
yields the desired equalities.
\end{proof}
\begin{remark}\newcommand{\simstar}{\ensuremath{\overset{*}{\sim}}}
	The duality in Proposition \ref{prop:general-good-path-duality} can now be phrased in the following manner: let \(\simstar\) denote the
	relation given by \(c \simstar z\) iff \(c\in\frakC_{-1}(z)\). We now observe that (a) in the above theorem says that \(\simstar\) is a
	symmetric relation.
\end{remark}

Now, the question to ask about is the following: where exactly does this duality come from? From the proof of Corollary \ref{corollary:S-vee-equals-S-ker},
we know that good paths are intimately related to answer questions like the ones we consider in this document. Here we have a hint:
in, say, Question \ref{question:local-infimum-question}, there is a certain \emph{parity} to the quantity of interest. That is, we considered
things of the form \(\|UAU^{-1}\|\), rather than \(\|U^{-1}AU\|\). A priori, this may seem like a trivial difference, but the two versions are actually
different; in fact, them being different yet obviously similar suggests that there should be a \emph{duality} between them. Since we now have
two sources of duality, it is sensible to conjecture that they are the same. To prove this, we will first provide a classification, as we did in the
last section, of the \(\|U^{-1}AU\|\) version of the problem.

\begin{definition}
	Let \(V\) be a Banach algebra. Define the function
	\[ \frakN_\cup^*\!: V\times V\to\R\cup\{\infty\} \]
	by
	\[ \frakN_\cup^*(a,z) := \limsup_{r\to 0}\inf_{\substack{u\in V^\times \\ \|u - z\| < r}} \| u^{-1}au \| \]
	and the algebra
	\[ \frakS_\cup^*(z) := \{ a\in V \mid \frakN_\cup^*(a,z) < \infty \}. \]
\end{definition}

Thus, we are now asking about when \(\frakN_\cup^*(a,z) < \infty\), rather than \(\frakN_\cup(a,z)\). In order to implement the proof
strategy of the last section, we need dual versions of the sets \(\frakS_C(Z)\) and \(\frakS_{\ker}(Z)\), and we need a dual
version of the kernel criterion, Proposition \ref{prop:local-erik}.

\begin{definition}
	Let \(V\) be a Banach algebra, and let \(c,z\in V\). Define the algebra
	\[ \frakS_c^*(z) := \{ a\in V \mid caz = 0 \}. \]
	If \(X\) is a Banach space and \(Z\in \sfB(X)\), define the algebra
	\[ \frakS_{\img}(Z) := \{ A\in \sfB(X) \mid A\img{Z} \subseteq \img{Z} \}. \]
\end{definition}

The above give the desired dual versions of \(\frakS_C(Z)\) and \(\frakS_{\ker}(Z)\). In order to confirm that this is the case, we prove
a dual version of the kernel criterion, which in this instance is an \emph{image} criterion instead.

\begin{lemma}\label{lemma:image-criterion}
	Let \(X\) be a Banach space, let \(A,Z\in \sfB(X)\). Assume that \(A\img(Z)\not\subseteq\img(Z)\).
	Then \(\frakN_\cup^*(A,Z) = \infty\). In particular, we have the inclusion \(\frakS_\cup^*(Z) \subseteq \frakS_{\img}(Z)\).
\end{lemma}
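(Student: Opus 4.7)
The plan is to mirror the proof of the kernel criterion (Proposition \ref{prop:local-erik}), with image playing the role of kernel. From the hypothesis, fix $x \in X$ such that $y := Zx$ satisfies $Ay \notin \img(Z)$. The lower bound
\[ \|U^{-1} A U\| \geq \frac{\|U^{-1} A U x\|}{\|x\|} \]
reduces matters to proving $\inf_{\|U - Z\| < r} \|U^{-1} A U x\| \to \infty$ as $r \to 0$. The intuition is that $U^{-1}$ is forced to ``approximately invert $Z$'' in the direction $Ay$, which lies outside $\img(Z)$, and no such inversion can be uniformly bounded as $U$ ranges over invertibles near $Z$.

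To make this precise, I would argue by contradiction: if the infimum stayed bounded, there would exist invertibles $U_n \to Z$ with $w_n := U_n^{-1} A U_n x$ satisfying $\|w_n\| \leq M$ for some $M$. Since $U_n x \to Zx = y$, one has $U_n w_n = A U_n x \to Ay$, while the decomposition $U_n w_n = Z w_n + (U_n - Z) w_n$ together with $\|(U_n - Z) w_n\| \leq \|U_n - Z\| \cdot M \to 0$ forces $Z w_n \to Ay$. Since each $Z w_n \in \img(Z)$, this puts $Ay$ in $\overline{\img(Z)}$, contradicting the choice of $x$ as long as $\img(Z)$ is closed. The inclusion $\frakS_\cup^*(Z) \subseteq \frakS_\img(Z)$ then follows by contrapositive.

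The one genuine obstacle is precisely this closedness point: what the argument naturally yields is $Ay \in \overline{\img(Z)}$, and collapsing this back to $Ay \in \img(Z)$ requires $\img(Z)$ to be closed. This is automatic in finite dimensions and, in the paper's primary setting, for operators of generalized index zero on a Hilbert space; in full Banach generality the cleanest statement this argument proves is the slightly stronger criterion $A\img(Z) \not\subseteq \overline{\img(Z)} \Rightarrow \frakN_\cup^*(A,Z) = \infty$, and I would interpret the hypothesis of the lemma as implicitly invoked in a context where $\img(Z)$ is closed.
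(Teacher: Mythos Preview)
Your argument is correct and follows the same dualization strategy as the paper. The execution differs slightly in the choice of test vector: the paper plugs the image element \(w = Zy\) into \(\sup_w \|U^{-1}Aw\|/\|U^{-1}w\|\) and then argues (somewhat informally) that the denominator behaves like \(\|y\|\), whereas you plug the preimage \(x\) directly into \(\|U^{-1}AUx\|/\|x\|\), which keeps the denominator fixed and makes the analysis cleaner. Both approaches reduce to the same core claim, namely that \(\|U_n^{-1}\xi\|\) must blow up whenever \(\xi\notin\img(Z)\) and \(U_n\to Z\). You are also right to flag the closedness issue: what the contradiction argument actually produces is \(Ay\in\overline{\img(Z)}\), and collapsing this to \(Ay\in\img(Z)\) needs \(\img(Z)\) closed. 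The paper's proof glosses over this point entirely, so your observation is a genuine refinement; as you note, in every context where the lemma is subsequently applied (finite dimensions, or \(Z\) of generalized index zero), the image is closed and the issue is moot.
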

\begin{proof}
Suppose \(x\in\img(Z)\), so \(x = Zy\), but \(Ax\not\in\img(Z)\). Then
\begin{align*}
	\frakN_\cup^*(A,Z) = \liminf_{r\to 0}\inf_{\substack{U\in\sfB(X)^\times \\ \|U - Z\| < r}}\sup_{w\not=0}\frac{\|U^{-1}Aw\|}{\|U^{-1}w\|}
	&\geq \liminf_{r\to 0}\inf_{\substack{U\in\sfB(X)^\times \\ \|U - Z\| < r}}\frac{\|U^{-1}Ax\|}{\|U^{-1}x\|} \\
	&= \liminf_{r\to 0}\inf_{\substack{U\in\sfB(X)^\times \\ \|U - Z\| < r}}\frac{\|U^{-1}Ax\|}{\|U^{-1}Zy\|} \\
	&= \liminf_{r\to 0}\inf_{\substack{U\in\sfB(X)^\times \\ \|U - Z\| < r}}\frac{\|U^{-1}Ax\|}{\|U^{-1}Uy\|} \\ 
	&= \frac{1}{\|y\|} \liminf_{r\to 0}\inf_{\substack{U\in\sfB(X)^\times \\ \|U - Z\| < r}}\|U^{-1}Ax\| = \infty
\end{align*}
as desired.
\end{proof}

With these tools available to us, we now just step through exactly the same proof as in the last section, except dualized.

\begin{proposition}
	Let \(X\) be a Banach space, and let \(Z,C\in\sfB(X)\) be such that \(ZC = CZ = 0\). Then \(\frakS_{\img}(Z)\subseteq\frakS_C^*(Z)\).
\end{proposition}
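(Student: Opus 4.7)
The plan is to mirror exactly the argument of Lemma \ref{lemma:Sker-in-SC}, just dualized: where that lemma exploited $ZC = 0 \iff \img(C) \subseteq \ker(Z)$ together with $A$ preserving $\ker(Z)$, here I will exploit the other hypothesis $CZ = 0$, which translates to $\img(Z) \subseteq \ker(C)$, together with $A$ preserving $\img(Z)$.

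Concretely, suppose $A \in \frakS_{\img}(Z)$, so $A\img(Z) \subseteq \img(Z)$. The assumption $CZ = 0$ says precisely that $\img(Z) \subseteq \ker(C)$. For any $x \in X$, we have $Zx \in \img(Z)$, and by the invariance hypothesis on $A$,
\[ AZx \in A\img(Z) \subseteq \img(Z) \subseteq \ker(C), \]
so $CAZx = 0$. Since $x \in X$ was arbitrary, $CAZ = 0$, i.e.\ $A \in \frakS_C^*(Z)$.

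There is no real obstacle here; the statement is a formal dualization of Lemma \ref{lemma:Sker-in-SC}, and the proof has essentially the same length. The only thing to be careful about is that we only need the hypothesis $CZ = 0$ (the hypothesis $ZC = 0$ is what makes $\frakS_C^*(Z)$ and $\frakS_C(Z)$ behave symmetrically later, and is needed to ensure that $\frakS_C^*(Z)$ is a sensible object to consider, but plays no role in the containment itself).
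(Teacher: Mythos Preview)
Your proof is correct and essentially identical to the paper's: both use that $CZ=0$ means $\img(Z)\subseteq\ker(C)$, then observe $A\img(Z)\subseteq\img(Z)\subseteq\ker(C)$ forces $CAZ=0$. Your remark that only the hypothesis $CZ=0$ is actually used for the containment is also accurate.
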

\begin{proof}
Saying that \(CZ = 0\) is the same as saying that \(\img(Z)\subseteq\ker(C)\). Now, if \(A\in\frakS_{\img}(Z)\), then \(A\) keeps the image of \(Z\)
invariant, and hence
\[ A\img(Z)\subseteq \img(Z) \subseteq \ker(C) \implies CAZ = 0 \]
as desired.
\end{proof}
\begin{lemma}
	Let \(V\) be a Banach algebra, let \(z\in V\), and let \(z+te(t)\) be a good path with inverse \(c_{-1}t^{-1}+c(t)\). Then we have the inclusion \(\frakS_{c_{-1}}^*(z)\subseteq\frakS_\cup^*(z)\).
\end{lemma}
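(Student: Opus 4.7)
The approach mirrors the proof of Lemma \ref{lemma:good-path-annihilates-coefficient-implies-vee} with multiplications transposed. Given $a\in\frakS_{c_{-1}}^*(z)$, i.e.\ $c_{-1}az=0$, the natural candidate witness for inf-boundedness of $\|u^{-1}au\|$ near $z$ is the good path itself: set $u(t) = z+te(t)$, so that $u(t)^{-1} = c_{-1}t^{-1}+c(t)$.

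The plan is to expand
\[
u(t)^{-1}\,a\,u(t) \;=\; \bigl(c_{-1}t^{-1}+c(t)\bigr)\,a\,\bigl(z+te(t)\bigr)
\;=\; c_{-1}az\cdot t^{-1} + c_{-1}ae(t) + c(t)az + t\,c(t)ae(t),
\]
observe that the only potentially singular summand is $c_{-1}az\cdot t^{-1}$, and conclude it vanishes by the hypothesis $c_{-1}az=0$. The remaining three summands are $O(1)$ as $t\to 0$ because $c(t)$ and $e(t)$ are analytic (in particular bounded) at the origin. Hence $\|u(t)^{-1}au(t)\|$ stays bounded as $t\to 0$.

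To conclude, I would argue exactly as in Lemma \ref{lemma:good-path-annihilates-coefficient-implies-vee}: since along the specific sequence $u(t_k)$ with $t_k\to 0^+$ we have $u(t_k)\to z$ (so $\|u(t_k)-z\|<r$ eventually for every $r>0$) and $\|u(t_k)^{-1}au(t_k)\|$ is eventually bounded by some constant $M$, the infimum defining $\frakN_\cup^*(a,z)$ is at most $M$, giving $\frakN_\cup^*(a,z)<\infty$ and thus $a\in\frakS_\cup^*(z)$.

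There is no genuine obstacle here; the argument is a clean transposition of the non-starred version, exploiting precisely the fact that $c_{-1}$ is the $t^{-1}$-coefficient of $u(t)^{-1}$ on the \emph{left} this time, so the condition killing the pole is annihilation on the left by $c_{-1}$ and on the right by $z$, which is exactly the defining relation of $\frakS_{c_{-1}}^*(z)$.
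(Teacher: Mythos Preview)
Your proof is correct and follows essentially the same approach as the paper: expand \(u(t)^{-1}au(t)\) along the good path, kill the \(t^{-1}\) term via the hypothesis \(c_{-1}az=0\), and note the remaining terms are \(O(1)\). The only cosmetic difference is that the paper writes out the leading coefficients \(c_0,e_0\) explicitly rather than keeping the compact \(c(t),e(t)\) notation.
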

\begin{proof}
Suppose \(a\in\frakS_{c_{-1}}^*(z)\). Noting that
\[ \frakN^*_\cup(a,z) = \limsup_{r\to 0}\inf_{\substack{u\in V^\times \\ \|u - z\| < r}} \|u^{-1}au\| \leq \lim_{t\to 0}\| (z+te(t))^{-1}a(z+te(t)) \| \]
we compute
\begin{align*}
	(c_{-1}t^{-1} + c_0 + O(t))a(z+te_0 + O(t^2)) &= c_{-1} a zt^{-1} + c_{-1} a e_0 + c_0 az + O(t) \\
	&= c_{-1} a e_0 + c_0 az + O(t)
\end{align*}
where we used that \(c_{-1}az = 0\) by assumption. This is bounded in norm as \(t\to 0\), so we are done.
\end{proof}
\begin{theorem}\label{thm:dual-local-classification}
	Let \(X\) be a Banach space, let \(Z\in\End(\C^n)\) be such that \(\frakC_{-1}(Z)\) is non-empty, and let \(C\in\frakC_{-1}(Z)\). Then
	\[ \frakS_{\img}(Z) = \frakS^*_{C}(Z) = \frakS_\cup^*(Z). \]
\end{theorem}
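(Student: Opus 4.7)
The plan is to prove the theorem by chaining together three inclusions, each of which has already been established in the immediately preceding lemmas. The structure will mirror the proof of Theorem \ref{thm:good-linear-paths-equals-ker-equals-vee} from the previous subsection, but using the dualized versions of each ingredient.

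First I would note that by Theorem \ref{thm:properties-of-C1(Z)}(c), the hypothesis $C \in \frakC_{-1}(Z)$ guarantees $CZ = ZC = 0$, so the proposition stating $\frakS_{\img}(Z) \subseteq \frakS_C^*(Z)$ applies. This gives the first inclusion. Next, since $C$ arises as the $(-1)$th coefficient in the inverse of some good path $Z + tE(t)$ going to $Z$, the lemma preceding the theorem yields $\frakS_C^*(Z) \subseteq \frakS_\cup^*(Z)$. Finally, the dual kernel (image) criterion, Lemma \ref{lemma:image-criterion}, closes the circle with $\frakS_\cup^*(Z) \subseteq \frakS_{\img}(Z)$.

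Putting these together, we obtain
\[ \frakS_{\img}(Z) \subseteq \frakS_C^*(Z) \subseteq \frakS_\cup^*(Z) \subseteq \frakS_{\img}(Z), \]
which forces all three sets to coincide. As a byproduct, this also shows that $\frakS_C^*(Z)$ is independent of the choice of $C \in \frakC_{-1}(Z)$, in direct analogy to the independence statement in Theorem \ref{thm:good-linear-paths-equals-ker-equals-vee}.

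There is no serious obstacle here, since the theorem is essentially a formal consequence of the three pre-existing inclusions and the duality already set up in Proposition \ref{prop:general-good-path-duality} and Theorem \ref{thm:properties-of-C1(Z)}. The only mild subtlety is bookkeeping: making sure that the hypothesis on $C$ is used precisely to invoke each of the three lemmas (for the first inclusion, $CZ = ZC = 0$; for the second, that $C$ is realized as a coefficient of a good path's inverse; the third needs nothing about $C$ at all). The genuine content of the theorem lies in the earlier lemmas, not in their combination.
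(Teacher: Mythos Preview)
Your proposal is correct and follows exactly the same approach as the paper: chain the three inclusions $\frakS_{\img}(Z)\subseteq\frakS^*_{C}(Z)\subseteq\frakS_\cup^*(Z)\subseteq\frakS_{\img}(Z)$ coming from the preceding proposition, lemma, and Lemma \ref{lemma:image-criterion}, respectively. Your bookkeeping about which hypothesis on $C$ feeds into each inclusion is accurate and matches the logic of the paper.
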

\begin{proof}
Combining the above results, we have that
\[ \frakS_{\img}(Z)\subseteq\frakS^*_{C}(Z)\subseteq\frakS_\cup^*(Z)\subseteq\frakS_{\img}(Z) \]
which yields the first part.
\end{proof}

Once again, for this result to be effective practically at least in the finite-dimensional case, we need to be able to calculate (the dimension of) \(\frakS_{\img}(Z)\). Luckily for us,
we do not need to work nearly as hard to do this, because we already did it for \(\frakS_{\ker}(Z)\).

\begin{proposition}\label{prop:Simg-dimension-calculation}
	Let \(Z\in\End(\C^n)\), and let \(k = \dim\ker{Z}\). Then
	\[ \dim{\frakS_{\img}(Z)} = n^2 - kn + k^2. \]
\end{proposition}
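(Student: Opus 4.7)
The plan is to reduce the computation directly to the already-completed calculation for $\frakS_{\ker}$ in Proposition \ref{prop:Sker(Dnm)-calculation}, by exploiting a simple algebraic duality via transposition. The essential observation is that although $\frakS_{\img}(Z)$ and $\frakS_{\ker}(Z)$ record different invariance data, they are interchanged by taking transposes.

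Concretely, I would first argue that the map $A\mapsto A^T$ is a $\C$-linear automorphism of $\End(\C^n)$ which restricts to a $\C$-linear isomorphism $\frakS_{\img}(Z)\iso \frakS_{\ker}(Z^T)$. To see this, I identify $\ker(Z^T)$ with the annihilator $\img(Z)^\circ$ under the standard bilinear pairing; then for any $A\in\End(\C^n)$ and any $\phi\in\img(Z)^\circ$ and $v\in\C^n$, one has $(A^T\phi, Zv) = (\phi, AZv)$, from which the equivalence
\[
A\,\img(Z)\subseteq\img(Z)\quad\Longleftrightarrow\quad A^T\,\ker(Z^T)\subseteq\ker(Z^T)
\]
is immediate (the reverse direction follows by applying the same argument to $A^T$ and the double annihilator).

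Given this bijection, the dimensions agree: $\dim\frakS_{\img}(Z) = \dim\frakS_{\ker}(Z^T)$. Since $\rank(Z^T) = \rank(Z) = n-k$, applying Proposition \ref{prop:Sker(Dnm)-calculation} with $m = n-k$ yields
\[
\dim\frakS_{\ker}(Z^T) = n^2 - (n-k)n + (n-k)^2 = n^2 - kn + k^2,
\]
which is the desired formula.

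There is no serious obstacle here; the only thing to watch is that one uses the transpose (which is $\C$-linear) rather than the adjoint (which is only conjugate-linear). As an alternative more in the spirit of Section \ref{subsection:bounded-unmodified-existence}, one could instead prove dual versions of Theorem \ref{thm:S-vee-ker-operations}—namely $M\frakS_{\img}(Z)M^{-1}=\frakS_{\img}(MZ)$ and $\frakS_{\img}(ZM)=\frakS_{\img}(Z)$, using that $\img(MZ)=M\img(Z)$ and $\img(ZM)=\img(Z)$—reduce to $Z=D_{n,n-k}$, and then directly observe that $\frakS_{\img}(D_{n,n-k})$ consists of block matrices with vanishing lower-left $k\times(n-k)$ block (since $\img(D_{n,n-k})$ is the span of the first $n-k$ standard basis vectors), giving dimension $(n-k)^2 + (n-k)k + k^2 = n^2 - kn + k^2$.
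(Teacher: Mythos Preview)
Your proof is correct and takes a different route from the paper. The paper instead chooses an operator $P_Z$ (described as the orthogonal projection onto $\ker(Z)$), asserts $\ker(P_Z) = \img(Z)$ so that $\frakS_{\img}(Z) = \frakS_{\ker}(P_Z)$ as sets, and then invokes Proposition \ref{prop:Sker(Dnm)-calculation} with $m = \dim\img(P_Z) = k$. (Strictly speaking that intermediate equality is off as written, since the kernel of the orthogonal projection onto $\ker(Z)$ is $\ker(Z)^\perp = \img(Z^*)$ rather than $\img(Z)$; projecting onto $\img(Z)^\perp$ fixes it, and in any case the dimension is unaffected.) Your argument instead passes through the transpose to obtain an isomorphism $\frakS_{\img}(Z)\cong\frakS_{\ker}(Z^T)$ and applies Proposition \ref{prop:Sker(Dnm)-calculation} with $m = n-k$; the symmetry of $n^2 - mn + m^2$ under $m\leftrightarrow n-m$ reconciles the two computations. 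Your transpose route avoids any inner-product structure, while the paper's approach stays at the level of subspaces without invoking duality. The alternative you sketch---dualizing Theorem \ref{thm:S-vee-ker-operations} and reducing to $D_{n,n-k}$ by hand---is also sound and is arguably the most self-contained of the three.
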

\begin{proof}
Let \(P_Z\) be the orthogonal projection onto \(\ker(Z)\). Note that \(\ker(P_Z) = \img(Z)\), and therefore
\[ \frakS_{\img}(Z) = \frakS_{\ker}(P_Z). \]
Thus, we know that \(\frakS_{\img}(Z)\) is an algebra, and that
\[ \dim{\frakS_{\img}(Z)} = \dim{\frakS_{\ker}(P_Z)} = n^2 - mn + m^2, \]
where \(m=\dim\img(P_Z)\), by Proposition \ref{prop:Sker(Dnm)-calculation}. By our choice of \(P_Z\), we have that \(\img(P_Z) = \ker(Z)\), so \(m=k\).
\end{proof}

With the classification done, we move to the first genuine ``duality-type'' theorem.

\begin{lemma}\label{lemma:S_C=S_Z*}
	Let \(V\) be a Banach algebra, and let \(c,z\in V\). Then \( \frakS_{c}(z) = \frakS_z^*(c) \).
\end{lemma}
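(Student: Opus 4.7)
The plan is essentially trivial: the equality $\frakS_c(z) = \frakS_z^*(c)$ is obtained by simply unfolding the two definitions. By Definition \ref{definition:frakS_c(z)}, we have $\frakS_c(z) = \{a \in V \mid zac = 0\}$. On the other hand, the starred algebra is defined by $\frakS_c^*(z) = \{a \in V \mid caz = 0\}$, so swapping the roles of $c$ and $z$ in the sub/super-script positions gives $\frakS_z^*(c) = \{a \in V \mid zac = 0\}$. The two descriptions are literally the same subset of $V$, so there is nothing to verify beyond reading off the definitions.

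What makes the lemma nontrivial is not its proof but its conceptual role: it is the bridge that converts the ``right-hand'' coefficient algebra $\frakS_c(z)$ showing up in the classification of $\frakS_\cup(-)$ (via Theorem \ref{thm:good-linear-paths-equals-ker-equals-vee} and Lemma \ref{lemma:good-path-annihilates-coefficient-implies-vee}) into the ``left-hand'' coefficient algebra $\frakS_z^*(c)$ showing up in the classification of $\frakS_\cup^*(-)$ (via Theorem \ref{thm:dual-local-classification}). Combined with the symmetry $c \in \frakC_{-1}(z) \iff z \in \frakC_{-1}(c)$ from Theorem \ref{thm:properties-of-C1(Z)}(a), this is exactly the algebraic manifestation of the ``good path duality'' of Proposition \ref{prop:general-good-path-duality}, so the next natural step after this lemma is to exploit the identification to transport results between the two worlds. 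The only ``obstacle'' here is purely notational — making sure the reader tracks which variable plays which role under the swap — so the write-up should simply display both sides side by side and observe they coincide.
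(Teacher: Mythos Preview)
Your proposal is correct and matches the paper's own proof exactly: both simply unfold the two definitions and observe that each side is literally the set $\{a\in V\mid zac=0\}$. The additional commentary you give on the lemma's conceptual role is accurate but not part of the proof proper.
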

\begin{proof}
This is a consequence of noticing the tautology that
\[ \frakS_{c}(z) = \{a\in V\mid zac = 0\} = \frakS_z^*(c) \]
as desired.
\end{proof}

\begin{theorem}\label{thm:good-paths-give-vee=vee*}
	Let \(X\) be a Banach space, and let \(Z\in\sfB(X)\) be such that \(\frakC_{-1}(Z)\) is non-empty. Then, for any \(C\in\frakC_{-1}(Z)\),
	\[ \frakS_\cup(Z) = \frakS_\cup^*(C). \]
	In particular, if \(Z+tE(t)\) is a good path to \(Z\) with inverse \(C_{-1}t^{-1}+C(t)\), then \(\frakS_\cup(Z) = \frakS_\cup^*(C_{-1})\).
\end{theorem}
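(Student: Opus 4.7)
The plan is to chain together the two classifications of $\frakS_\cup(-)$ and $\frakS_\cup^*(-)$ developed in this and the previous subsection, using the good-path duality from Proposition \ref{prop:general-good-path-duality} (equivalently Theorem \ref{thm:properties-of-C1(Z)}(a)) as the bridge.

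Here is the outline. Fix $C \in \frakC_{-1}(Z)$, so there is a good path $Z + tE(t)$ to $Z$ whose inverse has $(-1)$th coefficient equal to $C$. First, I would apply Theorem \ref{thm:good-linear-paths-equals-ker-equals-vee} directly to this path to conclude
\[ \frakS_\cup(Z) = \frakS_C(Z). \]
Second, I would invoke Theorem \ref{thm:properties-of-C1(Z)}(a) to swap the roles of $Z$ and $C$: the assumption $C \in \frakC_{-1}(Z)$ is symmetric, so $Z \in \frakC_{-1}(C)$. In particular, $\frakC_{-1}(C)$ is non-empty.

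Third, applying Theorem \ref{thm:dual-local-classification} to $C$ (which now plays the role of the operator whose $\frakS_\cup^*$ we classify) with $Z \in \frakC_{-1}(C)$ yields
\[ \frakS_\cup^*(C) = \frakS_Z^*(C). \]
Finally, Lemma \ref{lemma:S_C=S_Z*} says $\frakS_C(Z) = \frakS_Z^*(C)$ by tautology, so stringing the three equalities together gives $\frakS_\cup(Z) = \frakS_\cup^*(C)$, as required.

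There is no real obstacle; the entire content is that the ``two dualities'' (the one between good paths to $Z$ and good paths to $C_{-1}$, and the one between $\|UAU^{-1}\|$ and $\|U^{-1}AU\|$) become literally the same symmetry once both sides are pinned down by their algebraic classifications $\frakS_C(Z)$ and $\frakS_Z^*(C)$, which are definitionally identical. The only thing one has to be careful about is that the first classification (Theorem \ref{thm:good-linear-paths-equals-ker-equals-vee}) is phrased in terms of the $(-1)$th coefficient of a chosen good path, while the second (Theorem \ref{thm:dual-local-classification}) is phrased in terms of an arbitrary element of $\frakC_{-1}(-)$; the hypothesis $C \in \frakC_{-1}(Z)$ combined with part (a) of Theorem \ref{thm:properties-of-C1(Z)} is precisely what is needed to feed the right object into each.
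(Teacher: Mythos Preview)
Your proposal is correct and follows essentially the same route as the paper's proof: the chain $\frakS_\cup(Z) = \frakS_C(Z) = \frakS_Z^*(C) = \frakS_\cup^*(C)$, with the first equality from Theorem \ref{thm:good-linear-paths-equals-ker-equals-vee}, the second from Lemma \ref{lemma:S_C=S_Z*}, and the third from Theorem \ref{thm:dual-local-classification} applied to $C$ using $Z\in\frakC_{-1}(C)$ via Theorem \ref{thm:properties-of-C1(Z)}(a). If anything, you are slightly more explicit than the paper in naming Theorem \ref{thm:good-linear-paths-equals-ker-equals-vee} for the first step.
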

\begin{proof}
Using Theorem \ref{thm:dual-local-classification} and Lemma \ref{lemma:S_C=S_Z*}, and part (a) of Theorem \ref{thm:properties-of-C1(Z)}, note that
\[ \frakS_\cup(Z) = \frakS_C(Z) = \frakS_Z^*(C) = \frakS_\cup^*(Z). \]
The final assertion follows by letting \(C = C_{-1}\).
\end{proof}

With the above theorem in mind, we might ask the following question: for which pairs \((Z,Z')\) are the sets \(\frakS_\cup(Z)\) and \(\frakS_\cup^*(Z')\) exactly the same?
In other words, supposing one asks to find all \(A\) such that \(\frakN_\cup(A,Z)<\infty\), which \(Z'\) can be used to transform this to equivalently asking
\(\frakN_\cup^*(A,Z')\)? Due to Theorem \ref{thm:good-paths-give-vee=vee*}, we know that choosing \(Z'\in\frakC_{-1}(Z)\) works, but are there other choices? Building upon the below foundations, which give us a hint
at an algebraic classification of \(\frakC_{-1}(Z)\), we will see in Section \ref{subsection:good-paths-in-hilbert-spaces-gen-ind-zero} that there is a pleasing answer to this (see Corollary \ref{corollary:fun-suggestive-but-useless-form}).

\begin{lemma}\label{lemma:same-invariant-matrices-imply-equal}
	Let \(X\) be a Hilbert space, and let \(U,U'\subseteq X\) be subspaces of \(X\). Suppose that
	\[ \{ A\in\sfB(X) \mid AU\subseteq U \} \subseteq \{ A\in\sfB(X) \mid AU' \subseteq U' \}. \]
	Then \( U \subseteq U' \). In particular, if we have equality above, then \(U = U'\).
\end{lemma}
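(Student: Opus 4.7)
The plan is to prove the contrapositive: assuming $U \not\subseteq U'$, I will exhibit an operator $A \in \sfB(X)$ with $AU \subseteq U$ but $AU' \not\subseteq U'$, thereby contradicting the hypothesis. The guiding idea is to use a rank-one operator whose range is a single line $\operatorname{span}(u_0)$ for some $u_0 \in U \setminus U'$; then $U$-invariance is automatic (the image lies in $U$ already), while $U'$-invariance fails as soon as $A$ is non-zero on some vector of $U'$.

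Concretely, first I would pick $u_0 \in U \setminus U'$, which exists by assumption and is necessarily non-zero. Then, taking a non-zero $v \in U'$, I would form the bounded rank-one operator $A \in \sfB(X)$ given by $Ax := \langle x, v\rangle u_0$, which is bounded since $\lVert A\rVert \le \lVert v\rVert\,\lVert u_0\rVert$. One checks immediately that $AU \subseteq \operatorname{span}(u_0) \subseteq U$, so $A$ belongs to the left-hand collection. On the other hand, $Av = \lVert v\rVert^2 u_0$ is a non-zero scalar multiple of $u_0$, and therefore \emph{not} in $U'$: otherwise, rescaling (using that $U'$ is a subspace) would give $u_0 \in U'$, contradicting the choice of $u_0$. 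Hence $A$ does \emph{not} lie in the right-hand collection, which is the desired contradiction. The final assertion (that set-equality forces $U = U'$) follows by symmetry, applying the main statement to each inclusion in turn.

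The one genuine subtlety is the degenerate case $U' = \{0\}$: then the right-hand collection is all of $\sfB(X)$, the hypothesis is trivially satisfied, but the conclusion would force $U = \{0\}$. The argument above implicitly uses that $U'$ contains a non-zero vector, which I take to be a tacit convention in the paper (otherwise the lemma is literally false in this edge case). Apart from this cosmetic point, the proof is essentially formal; the Hilbert space hypothesis enters only through the Riesz-type construction of the functional $\langle \cdot, v\rangle$, and in a general Banach space one would simply invoke Hahn--Banach to produce a bounded functional non-vanishing on $v$. I do not expect any deeper obstacle: the entire content of the lemma is that a rank-one operator tailored to a vector in $U \setminus U'$ distinguishes the two invariance conditions.
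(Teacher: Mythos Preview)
Your argument is correct and is essentially the same as the paper's: both construct a rank-one operator with range contained in \(U\) (the paper does so directly, sending a chosen \(x\in U'\) to a chosen \(y\in U\), while you phrase it contrapositively), so the only difference is cosmetic. Your observation about the degenerate case \(U'=\{0\}\) is apt---the lemma is indeed false there, and the paper's own proof tacitly assumes a non-zero \(x\in U'\) just as yours does.
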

\begin{proof}
Let \(x\in U'\), let \(y\in U\), let \(P_x\!:X\to \Span(x)\subseteq U\) be the projection onto \(\Span(x)\). Define the map \(T\!:X\to\Span(y)\subseteq U\) as the composition
\begin{diagram*}
	\phantom{.}X\ar[r,"P_x"] & \Span(x)\ar[r,"x\, \mapsto\, y"] & \Span(y).
\end{diagram*}
Then \(TU\subseteq\Span(y)\subseteq U\), and hence \(TU'\subseteq U'\). However, since \(T(x) = y\), we see that \(y\in U'\). Since \(y\) was
arbitrary, we conclude that \(U\subseteq U'\).
\end{proof}

\begin{theorem}\label{thm:vee=vee*-iff-img=ker}
	Let \(X\) be a Banach space, and let \(C,Z\in\sfB(X)\).
	\begin{enumerate}[label=(\alph*)]
	\item Suppose \(X\) is a Hilbert space. If \(\frakS_\cup(Z) = \frakS_\cup^*(C)\), then \(\img(C) = \ker(Z)\).
	\item Suppose \(\frakC_{-1}(Z)\) and \(\frakC_{-1}(C)\) are non-empty. If \(\img(C) = \ker(Z)\), then \(\frakS_\cup(Z) = \frakS_\cup^*(C)\).
	\end{enumerate}
\end{theorem}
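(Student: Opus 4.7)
The plan is to handle the two parts separately, since they invoke different machinery.

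For part (b), the hypotheses $\frakC_{-1}(Z), \frakC_{-1}(C) \neq \emptyset$ deliver good paths to both $Z$ and $C$. Theorem~\ref{thm:good-linear-paths-equals-ker-equals-vee} applied to the good path to $Z$ identifies $\frakS_\cup(Z) = \frakS_{\ker}(Z)$; dually, Theorem~\ref{thm:dual-local-classification} applied to the good path to $C$ identifies $\frakS_\cup^*(C) = \frakS_{\img}(C)$. Under the assumption $\img(C) = \ker(Z)$, the two invariance algebras coincide tautologically, so the chain $\frakS_\cup(Z) = \frakS_{\ker}(Z) = \frakS_{\img}(C) = \frakS_\cup^*(C)$ closes the argument.

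For part (a), the plan is to reduce the subspace equality $\ker(Z) = \img(C)$ to an equality of invariance algebras via Lemma~\ref{lemma:same-invariant-matrices-imply-equal}: in Hilbert space, $\ker(Z) = \img(C)$ follows from the two algebra inclusions $\frakS_{\ker}(Z) \subseteq \frakS_{\img}(C)$ and $\frakS_{\img}(C) \subseteq \frakS_{\ker}(Z)$ (one application of the Lemma in each direction). From the hypothesis $\frakS_\cup(Z) = \frakS_\cup^*(C)$, combined with the kernel criterion (Theorem~\ref{thm:kernel-invariant-algebra}(b)) and the image criterion (Lemma~\ref{lemma:image-criterion}), one obtains the common inclusion $\frakS_\cup(Z) = \frakS_\cup^*(C) \subseteq \frakS_{\ker}(Z) \cap \frakS_{\img}(C)$ essentially for free.

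The hard part will be closing the gap: to feed the Lemma one must upgrade these inclusions to $\frakS_{\ker}(Z) = \frakS_\cup(Z)$ and $\frakS_{\img}(C) = \frakS_\cup^*(C)$, so that the two invariance algebras are forced to coincide. When good paths exist for both $Z$ and $C$, these upgrades fall out of part (b); without them, my plan is a direct Hilbert-space construction. Given a witness $y \in \ker(Z) \setminus \img(C)$ (the symmetric case is dual), I would use orthogonal projection onto $\Span(y) \subseteq \ker(Z)$ together with a bounded functional separating $y$ from $\img(C)$ to build a rank-one operator $A$, and then produce an explicit approach path $U_n \to Z$, built from polar-decomposition data for $Z$, along which $\|U_n A U_n^{-1}\|$ stays bounded; this would yield $A \in \frakS_\cup(Z)$ while $A\img(C) \not\subseteq \img(C)$ contradicts $A \in \frakS_\cup^*(C)$ via the image criterion. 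This delicate construction is the main obstacle, and will most likely draw on the good-path existence results developed in Section~\ref{subsection:good-paths-in-hilbert-spaces-gen-ind-zero} as a forward reference.
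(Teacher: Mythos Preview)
Your treatment of part (b) is exactly the paper's: invoke Theorem~\ref{thm:good-linear-paths-equals-ker-equals-vee} and Theorem~\ref{thm:dual-local-classification} under the good-path hypotheses to get $\frakS_\cup(Z)=\frakS_{\ker}(Z)$ and $\frakS_\cup^*(C)=\frakS_{\img}(C)$, then observe these coincide when $\img(C)=\ker(Z)$.

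For part (a), you are being considerably more careful than the paper. The paper's proof is a two-liner: it simply writes the chain
\[
\frakS_{\ker}(Z)=\frakS_\cup(Z)=\frakS_\cup^*(C)=\frakS_{\img}(C)
\]
and applies Lemma~\ref{lemma:same-invariant-matrices-imply-equal}. The middle equality is the hypothesis; the two outer equalities are asserted without comment, and the proof even writes ``with $X=\C^n$'' when invoking the Lemma, betraying that it was written with the finite-dimensional case (or a forward reference to Section~\ref{subsection:good-paths-in-hilbert-spaces-gen-ind-zero}) in mind. In other words, the paper's argument is precisely your ``Case~1'' where good paths exist, and it does not attempt anything like your ``Case~2''.

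So your worry about the ``hard part'' is legitimate --- the outer equalities genuinely require good-path existence, which part (a) as stated does not assume --- but the paper does not resolve it either. Your proposed direct construction for Case~2 (rank-one operator plus explicit approach path from polar data) is too vague to evaluate, and in any event goes beyond what the paper proves. To match the paper, you need only your Case~1 argument together with Lemma~\ref{lemma:same-invariant-matrices-imply-equal}; the rest can be dropped, with perhaps a remark that the outer equalities are supplied by the good-path machinery (which in the paper's applications, e.g.\ Corollary~\ref{corollary:C-in-C1Z-implies-imgC=kerZ}, is always available).
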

\begin{proof}
(a) If \(\frakS_\cup(Z)=\frakS_\cup^*(C)\), then, in particular, we know that
\[ \frakS_{\ker}(Z) = \frakS_\cup(Z) = \frakS_\cup^*(C) = \frakS_{\img}(C). \]
Applying Lemma \ref{lemma:same-invariant-matrices-imply-equal} with \(X = \C^n\), \(U=\ker(Z)\), and \(U'=\img(C)\) yields the result.

(b) Note that, by Theorems \ref{thm:good-linear-paths-equals-ker-equals-vee} \& \ref{thm:dual-local-classification}, we have
\begin{align*}
	\frakS_\cup(Z) = \frakS_{\ker}(Z) &= \{ A\in\End(\C^n) \mid A\ker(Z)\subseteq \ker(Z) \} \\
	&= \{ A\in\End(\C^n) \mid A\img(C)\subseteq \img(C) \} = \frakS_{\img}(C) = \frakS_\cup^*(C)
\end{align*}
which finishes the proof.
\end{proof}

\begin{corollary}\label{corollary:C-in-C1Z-implies-imgC=kerZ}
	Let \(X\) be a Hilbert space, let \(Z\in\sfB(X)\), and let \(C\in\frakC_{-1}(Z)\). Then
	\[ \img(C) = \ker(Z) \quad \text{and} \quad \ker(C) = \img(Z). \]
\end{corollary}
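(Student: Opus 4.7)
The plan is to leverage the two big results that have just been set up, namely Theorem \ref{thm:good-paths-give-vee=vee*} and Theorem \ref{thm:vee=vee*-iff-img=ker}(a), together with the symmetry property of the relation ``\(C\in\frakC_{-1}(Z)\)'' established in Theorem \ref{thm:properties-of-C1(Z)}(a). The whole statement should fall out essentially formally, with no genuine new ideas required; the heavy lifting has already been done.

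First I would handle the equality \(\img(C) = \ker(Z)\). Since \(C\in\frakC_{-1}(Z)\), Theorem \ref{thm:good-paths-give-vee=vee*} gives
\[ \frakS_\cup(Z) = \frakS_\cup^*(C). \]
This is precisely the hypothesis of Theorem \ref{thm:vee=vee*-iff-img=ker}(a), so applying it immediately yields \(\img(C) = \ker(Z)\). Note that this step uses the Hilbert space assumption, since Theorem \ref{thm:vee=vee*-iff-img=ker}(a) depends on Lemma \ref{lemma:same-invariant-matrices-imply-equal}, which is stated in the Hilbert space setting.

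For the second equality \(\ker(C) = \img(Z)\), I would invoke the symmetry statement in Theorem \ref{thm:properties-of-C1(Z)}(a): since \(C\in\frakC_{-1}(Z)\), we also have \(Z\in\frakC_{-1}(C)\). Now simply run the previous argument with the roles of \(C\) and \(Z\) exchanged: Theorem \ref{thm:good-paths-give-vee=vee*} gives \(\frakS_\cup(C) = \frakS_\cup^*(Z)\), and Theorem \ref{thm:vee=vee*-iff-img=ker}(a) then yields \(\img(Z) = \ker(C)\).

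There is no real obstacle here; the corollary is a clean packaging of the preceding duality machinery. The only subtlety worth flagging is that the use of Theorem \ref{thm:vee=vee*-iff-img=ker}(a) (rather than (b)) is essential in both directions, because we do not a priori know that \(\frakC_{-1}(C)\) is non-empty until we invoke the symmetry from Theorem \ref{thm:properties-of-C1(Z)}(a) — and it is precisely that symmetry that makes the argument go through twice without needing any extra hypothesis.
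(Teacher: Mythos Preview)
Your proposal is correct and follows essentially the same approach as the paper's own proof: apply Theorem \ref{thm:good-paths-give-vee=vee*} followed by Theorem \ref{thm:vee=vee*-iff-img=ker}(a) to get \(\img(C)=\ker(Z)\), then use the symmetry from Theorem \ref{thm:properties-of-C1(Z)}(a) to swap the roles of \(C\) and \(Z\) for the second equality. Your added commentary on why part (a) rather than (b) of Theorem \ref{thm:vee=vee*-iff-img=ker} is needed is accurate and a nice clarification.
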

\begin{proof}
Combining the above Theorem \ref{thm:good-paths-give-vee=vee*} and Theorem \ref{thm:vee=vee*-iff-img=ker} yields the first statement. The second follows
from noting part (a) of Theorem \ref{thm:properties-of-C1(Z)}.
\end{proof}
\begin{remark}
	The above proof strategy will work in any situation where one has an analogue of Lemma \ref{lemma:same-invariant-matrices-imply-equal}.
\end{remark}

\subsection{Good Paths in Hilbert Spaces \& Operators of Generalized Index Zero}\label{subsection:good-paths-in-hilbert-spaces-gen-ind-zero}
In order to apply Theorem \ref{thm:good-linear-paths-equals-ker-equals-vee}, we must establish the existence of good paths. This is a priori very difficult, as there is seemingly nothing
to grasp onto in order to do such a thing. Furthermore, unlike in the finite-dimensional case, approximating an operator by invertible ones---which is implicit in the existence of
good paths---is no longer generally possible (for example, the left shift on \(\ell^2\) cannot be the limit of invertible operators), so one cannot expect good paths to exist for all operators.

Fortuituously, we know that in the setting of a Hilbert space, every element of \(\frakC_{-1}(Z)\) satisfies an algebraic property, provided by Corollary \ref{corollary:C-in-C1Z-implies-imgC=kerZ}:
\begin{definition}\label{definition:frakC'}
	Let \(X\) be a Banach space, and let \(Z\in \sf(X)\). Define the set
	\[ \frakC'(Z) := \{ C\in \sfB(X) \mid \img(C) = \ker(Z)\text{ and } \img(Z) = \ker(C) \}. \]
\end{definition}
\noindent Corollary \ref{corollary:C-in-C1Z-implies-imgC=kerZ} tells us that \(\frakC_{-1}(Z)\subseteq\frakC'(Z)\). Furthermore, the
definition of \(\frakC'(Z)\) provides us with a natural obstruction to consider: if we want \(\frakC'(Z)\) to be non-empty, i.e.\ for there to exist an operator \(C\) such that
\(\img(C) = \ker(Z)\) and \(\ker(C) = \img(Z)\), then we must in particular also have an isomorphism
\[ \coker(Z) = X/\img(Z) = X/\ker(C) \iso \img(C) = \ker(Z). \]
Heuristically, this is saying that \(Z\) is ``index zero'' although without the finiteness assumptions. With this in mind, we may hope that satisfying this kind of condition
removes any pathologies that may appear.

\begin{definition}
	Let \(X\) be a Banach space, and let \(Z\in\sfB(X)\). We say \(Z\) is of \emph{generalized index zero} if there exists a continuous isomorphism \(\ker(Z)\iso\coker(Z)\).
\end{definition}

\begin{remark}
	Let \(Z\in\sfB(X)\) be of index zero. Then \(Z\) is also of generalized index zero: indeed, since both \(\ker(Z)\) and \(\coker(Z)\) are finite-dimensional vector spaces of equal dimension,
	one may find a (necessarily continuous) isomorphism \(\ker(Z)\iso\coker(Z)\). In particular, if \(X\) is finite-dimensional, then every \(Z\in\sfB(X)\) is of generalized index zero.
\end{remark}

\begin{remark}\label{remark:gen-ind-zero-has-closed-image}
	If \(Z\in\sfB(X)\) is of generalized index zero, then \(Z\) has closed image. Indeed, the quotient \(\coker(Z) = X/\img(Z)\) is a Banach space if and only if \(Z\) has closed image,
	and since \(\ker(Z)\) has a continuous isomorphism to \(\coker(Z)\), the latter must be a Banach space by a generalization of the open mapping theorem. In particular, it follows
	that the inverse map \(\coker(Z)\iso\ker(Z)\) is also continuous. Similarly, it follows that \(Z\) is of generalized index zero if and only if it has closed image and there is
	a continuous isomorphism \(\coker(Z)\to\ker(Z)\).
\end{remark}

\begin{theorem}\label{thm:generalized-index-zero-iff-C'-non-empty}
	Let \(X\) be a Banach space, and let \(Z\in\sfB(X)\). Then there is a bijection
	\[ \frakC'(Z) \cong \left\{\, \text{continuous isomorphisms } \ker(Z)\iso\coker(Z) \,\right\} \]
	which associates to \(C\in\frakC'(Z)\) the isomorphism \(\theta_C\) given by
	\[ \ker(Z) = \img(C) \iso X/\ker(C) = X/\img(Z) = \coker(Z) . \]
	In particular, \(Z\) is of generalized index zero if and only if \(\frakC'(Z)\) is non-empty.
\end{theorem}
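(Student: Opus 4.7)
The plan is to exhibit the bijection explicitly by constructing an inverse to $C \mapsto \theta_C$. For any $C \in \frakC'(Z)$, both $\img(C) = \ker(Z)$ and $\ker(C) = \img(Z)$ are closed in $X$ (kernels of continuous operators are always closed), so $X/\ker(C)$ is automatically a Banach space and $\img(C)$ inherits a Banach space structure from $X$. The first order of business is therefore to confirm that $\theta_C$ genuinely lands in the claimed target as a continuous isomorphism between these two Banach spaces.

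The key observation is that $C$ admits the canonical factorization
\[
X \twoheadrightarrow X/\ker(C) \xrightarrow{\bar{C}} \img(C) \hookrightarrow X,
\]
in which $\bar{C}$ is a continuous linear bijection between Banach spaces. The open mapping theorem then upgrades $\bar{C}$ to a topological isomorphism, and $\theta_C := \bar{C}^{-1}$ is precisely the composite described in the statement. For the reverse direction, given a continuous isomorphism $\theta \colon \ker(Z) \iso \coker(Z)$, set
\[
C_\theta := \bigl( X \twoheadrightarrow \coker(Z) \xrightarrow{\theta^{-1}} \ker(Z) \hookrightarrow X \bigr),
\]
which is continuous as a composition of continuous maps. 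A direct computation yields $\img(C_\theta) = \ker(Z)$ and $\ker(C_\theta) = \img(Z)$, so $C_\theta \in \frakC'(Z)$; tracing through the definitions then confirms $\theta_{C_\theta} = \theta$ and $C_{\theta_C} = C$, establishing the bijection.

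The main technical point is the open mapping theorem step, which requires both the domain and codomain of $\bar{C}$ to be Banach spaces; this rests on the fact that $\ker(C) = \img(Z)$ is automatically closed whenever $\frakC'(Z)$ is non-empty. The ``in particular'' statement is then immediate: if $\frakC'(Z) \neq \emptyset$, the bijection supplies a continuous isomorphism $\ker(Z) \iso \coker(Z)$ (and simultaneously ensures $\coker(Z)$ is Banach, so that the statement is meaningful), while if $Z$ is of generalized index zero, then Remark \ref{remark:gen-ind-zero-has-closed-image} guarantees $\coker(Z)$ is Banach and the construction of $C_\theta$ above produces an element of $\frakC'(Z)$.
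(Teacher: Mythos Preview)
Your argument is correct and follows essentially the same route as the paper: both construct the inverse $\theta \mapsto C_\theta$ via the composite $X \twoheadrightarrow \coker(Z) \xrightarrow{\theta^{-1}} \ker(Z) \hookrightarrow X$ and then verify the two round-trip identities. If anything, you are slightly more careful than the paper in explicitly invoking the open mapping theorem to justify that $\theta_C = \bar{C}^{-1}$ is continuous, and in observing that $\img(Z) = \ker(C)$ is automatically closed whenever $\frakC'(Z)$ is non-empty.
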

\begin{proof}
We have already provided a map
\[ \frakC'(Z) \to \left\{\, \text{continuous isomorphisms } \ker(Z)\iso\coker(Z) \,\right\} \]
and so it remains to construct an inverse. Let \(\theta\!:\ker(Z)\to\coker(Z)\) be a continuous isomorphism, and define \(C_\theta\!:X\to X\) by the diagram
\begin{diagram*}
	X \ar[d,two heads]\ar[r,dashed,"C_\theta"] & X \\
	\coker(Z) \ar[r,"\theta^{-1}"] & \ker(Z)\ar[u,hook]
\end{diagram*}
Trivially, since \(C_\theta\) is a composition of continuous maps, it follows that \(C_\theta\) is continuous. Furthermore, \(\ker(C_\theta) = \ker(X\sur\coker(Z)) = \img(Z)\), and \(\img(C_\theta) = \ker(Z)\), so \(C_\theta\in\frakC'(Z)\).

We need to check that for all \(C\in\frakC'(Z)\), we have \(C_{\theta_C} = C\), and that for all continuous isomorphisms \(\theta\!:\coker(Z)\iso\ker(Z)\) we have \(\theta_{C_\theta} = \theta\).
\begin{itemize}[label=\(\star\)]
\item If \(C\in\frakC'(Z)\), by the first isomorphism theorem and that \(\img(Z) = \ker(C)\), the map \(C_{\theta_C}\) is given by \(x\mapsto \theta_C^{-1}(x + \ker(C)) = Cx\).
\item If \(\theta\!:\ker(Z)\to\coker(Z)\) is a continuous isomorphism, then \(\theta_{C_\theta}\) is given by sending \(z\in\ker(Z)\) to
\[ C_\theta^{-1}\{x\} = \theta(x). \]
\end{itemize}
This concludes the proof.
\end{proof}

In the beginning of this section, as well as in Section \ref{subsection:duality-in-coefficients}, we implicitly promised a complete algebraic description of the elements of \(\frakC_{-1}(-)\),
in particulary given by \(\frakC'(-)\). Because of Theorem \ref{thm:generalized-index-zero-iff-C'-non-empty}, proving this would also yield the other main goal of this section, namely
the existence of good paths, thereby fulfilling all our previous promises. Our approach is a fairly standard one: begin with self-adjoint operators, then lift using a slightly sharpened version of Polar decomposition.

\begin{lemma}\label{lemma:hilbert-self-adjoint-gen-index-zero-have-C1=C'}
	Let \(X\) be a Hilbert space, and let \(Z\in\sfB(X)\) be a self-adjoint operator. Then \(Z\) is of generalized index zero if and only if \(Z\) has closed image, in which case
	\[ \frakC_{-1}(Z) = \frakC'(Z) \not= \varnothing. \]
\end{lemma}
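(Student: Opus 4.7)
The plan is to separate the statement into three pieces: the equivalence of being of generalized index zero and having closed image, the non-emptiness of $\frakC'(Z)$, and the equality $\frakC_{-1}(Z) = \frakC'(Z)$. The first is easy: one direction is just Remark \ref{remark:gen-ind-zero-has-closed-image}; for the other, if $Z$ is self-adjoint with closed image, the standard identity $\ker(Z) = \img(Z)^\perp$ combined with closedness gives the orthogonal decomposition $X = \ker(Z)\oplus\img(Z)$, and this yields a canonical continuous isomorphism $\coker(Z) = X/\img(Z) \iso \ker(Z)$. Non-emptiness of $\frakC'(Z)$ then follows from Theorem \ref{thm:generalized-index-zero-iff-C'-non-empty}.

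The main content is the equality $\frakC_{-1}(Z) = \frakC'(Z)$. The inclusion $\frakC_{-1}(Z) \subseteq \frakC'(Z)$ is precisely Corollary \ref{corollary:C-in-C1Z-implies-imgC=kerZ}. For the reverse inclusion, I would proceed by direct construction: given $C\in\frakC'(Z)$, I need to exhibit a good path $Z + tE(t)$ to $Z$ whose inverse has $C$ as its $(-1)$th coefficient. The key observation is that $C$ respects the orthogonal decomposition $X = \ker(Z)\oplus\img(Z)$: since $\img(C) = \ker(Z)$ and $\ker(C) = \img(Z)$, in block form $C = C_0 \oplus 0$ for some $C_0\!:\ker(Z)\to\ker(Z)$. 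Moreover $C_0$ is a continuous bijection of $\ker(Z)$: injectivity follows from $\ker(C)\cap\ker(Z) = \img(Z)\cap\ker(Z) = 0$, and surjectivity from $C(\ker(Z)) = C(X) = \img(C) = \ker(Z)$. By the open mapping theorem, $C_0^{-1}$ is bounded.

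Now set $E := C_0^{-1} \oplus 0 \in \sfB(X)$ and consider the linear admissible path $Z + tE$. Writing $Z_1 := Z|_{\img(Z)}\!:\img(Z)\to\img(Z)$, which is invertible as a self-adjoint operator with trivial kernel and closed image (by open mapping theorem again), we have in block form with respect to $X = \ker(Z)\oplus\img(Z)$
\[ Z + tE \;=\; \begin{pmatrix} tC_0^{-1} & 0 \\ 0 & Z_1 \end{pmatrix}, \]
which is invertible for all $t > 0$ with explicit inverse
\[ (Z+tE)^{-1} \;=\; Ct^{-1} + \bigl(0\oplus Z_1^{-1}\bigr). \]
Since the remainder is constant (in particular analytic), this exhibits $Z + tE$ as a good linear path to $Z$ with $(-1)$th coefficient equal to $C$, so $C\in\frakC_{-1}(Z)$.

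The main obstacle is not conceptual but one of bookkeeping: identifying the block-diagonal structure of $C$ against the orthogonal decomposition induced by self-adjointness, checking that $C_0$ restricts to an isomorphism of $\ker(Z)$, and verifying that the naive block inverse of $Z + tE$ is genuinely its inverse as an operator on $X$. All of these rely crucially on the orthogonality of $\ker(Z)$ and $\img(Z)$, which in turn is the only place self-adjointness is used; the upcoming generalization in Theorem \ref{thm:hilbert-space-gen-index-zero-C1=C'} will presumably reduce to this case via the sharpened polar decomposition advertised in the introduction.
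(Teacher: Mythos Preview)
Your proof is correct and follows essentially the same approach as the paper's: both exploit the orthogonal decomposition $X = \ker(Z)\oplus\img(Z)$ coming from self-adjointness to write $Z$ and $C$ in block-diagonal form, then construct the good linear path $Z + tE$ with $E$ the block-diagonal operator carrying $C_0^{-1}$ on $\ker(Z)$ and zero on $\img(Z)$. The only difference is cosmetic (the paper orders the summands as $\img(Z)\oplus\ker(Z)$), and you are slightly more explicit in verifying that $C_0$ is a bijection.
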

\begin{proof}
If \(Z\) is of generalized index zero, then \(Z\) has closed image, as argued in Remark \ref{remark:gen-ind-zero-has-closed-image}. Conversely, if \(Z\) has closed image, we can write
\[ X = \img(Z) \oplus \img(Z)^\perp = \img(Z)\oplus\ker(Z^*) = \img(Z)\oplus\ker(Z) \]
since \(Z\) is self-adjoint. In particular,
\[ X/\img(Z) = (\img(Z)\oplus\ker(Z))/\img(Z) \cong \ker(Z) \]
so \(Z\) is of generalized index zero.

We now prove the second half of the result. That \(\frakC'(Z)\) is non-empty follows immediately by Theorem \ref{thm:generalized-index-zero-iff-C'-non-empty}. Furthermore, Corollary \ref{corollary:C-in-C1Z-implies-imgC=kerZ} shows
that \(\frakC_{-1}(Z)\subseteq\frakC'(Z)\), so it remains only to show the reverse inclusion. For this, let \(C\in\frakC'(Z)\); one sees that, in the decomposition given above, the operators \(Z,C\) have the form
\[ Z = \begin{pmatrix} Z_{11} & 0 \\ 0 & 0 \end{pmatrix},\quad C = \begin{pmatrix} 0 & 0 \\ 0 & C_{22} \end{pmatrix} \]
where \(Z_{11}\!:\img(Z)\iso\img(Z)\) and \(C_{22}\!:\ker(Z)\iso\ker(Z)\) are continuous isomorphisms. Let \(E\!:X\to X\) and \(M\!:X\to X\) be given by
\[ E = \begin{pmatrix} 0 & 0 \\ 0 & C_{22}^{-1} \end{pmatrix},\quad M = \begin{pmatrix} Z_{11}^{-1} & 0 \\ 0 & 0 \end{pmatrix}. \]
Then, for \(t > 0\),
\[ (Z+tE)(Ct^{-1}+M) = \begin{pmatrix} Z_{11} & 0 \\ 0 & C_{22}^{-1}t \end{pmatrix} \begin{pmatrix} Z_{11}^{-1} & 0 \\ 0 & C_{22}t^{-1} \end{pmatrix} = \begin{pmatrix} \id_{\img(Z)} & 0 \\ 0 & \id_{\ker(Z)} \end{pmatrix} = \id_X \]
and similarly for \((Ct^{-1} + M)(Z+tE)\). Therefore, \(Z+tE\in\calE(Z)\) and thus \(C\in\frakC_{-1}(Z)\).
\end{proof}

\begin{lemma}\label{lemma:hilbert-space-closed-image-iff-bounded-below-on-kerperp}
	Let \(X\) be a Hilbert space, and let \(Z\in\sfB(X)\). Then \(Z\) has closed image if and only if there exists some \(c > 0\) such that \(\|Zx\| \geq c\|x\|\) for all \(x\in\ker(Z)^\perp\).
\end{lemma}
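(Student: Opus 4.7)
The plan is to prove the two directions separately, both of which should follow from reasonably standard Hilbert space / Banach space machinery.

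For the forward direction, suppose $Z$ has closed image. Then $\img(Z)$, being a closed subspace of the Hilbert space $X$, is itself a Banach space. Consider the restriction $Z|_{\ker(Z)^\perp}\!:\ker(Z)^\perp\to\img(Z)$. This map is continuous, injective (anything in $\ker(Z)\cap\ker(Z)^\perp$ is zero), and surjective onto $\img(Z)$ (since any $Zx$ equals $Zx'$ where $x'$ is the orthogonal projection of $x$ onto $\ker(Z)^\perp$). Both domain and codomain are Banach spaces, so by the open mapping theorem the inverse $(Z|_{\ker(Z)^\perp})^{-1}\!:\img(Z)\to\ker(Z)^\perp$ is bounded. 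Its operator norm being some $M < \infty$ means exactly that $\|x\|\leq M\|Zx\|$ for all $x\in\ker(Z)^\perp$, and taking $c = 1/M$ gives the claim.

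For the backward direction, suppose we have the bound $\|Zx\|\geq c\|x\|$ for $x\in\ker(Z)^\perp$. Let $(y_n)\subseteq\img(Z)$ be a sequence converging to some $y\in X$; we need $y\in\img(Z)$. Write $y_n = Zx_n$ with $x_n\in\ker(Z)^\perp$ (again using orthogonal projection to arrange this). Since $x_n - x_m\in\ker(Z)^\perp$, the bound gives
\[ \|x_n-x_m\|\leq c^{-1}\|Z(x_n-x_m)\| = c^{-1}\|y_n-y_m\|, \]
so $(x_n)$ is Cauchy in $X$, hence converges to some $x\in X$. By continuity of $Z$, we get $y_n = Zx_n\to Zx$, so $y = Zx\in\img(Z)$.

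Neither direction is genuinely hard; the main subtlety to keep straight is that when replacing an arbitrary preimage of $y_n\in\img(Z)$ by its component in $\ker(Z)^\perp$, one must use the fact that $X = \ker(Z)\oplus\ker(Z)^\perp$, which holds in a Hilbert space since $\ker(Z)$ is automatically closed. The only step that could be called an ``obstacle'' is remembering to invoke the open mapping theorem correctly in the forward direction, which requires that $\img(Z)$ be a Banach space, i.e.\ closed, which is exactly our hypothesis.
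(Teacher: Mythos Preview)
Your proof is correct and matches the paper's argument essentially line for line: both directions use the same ideas (open mapping theorem on the bijection $Z|_{\ker(Z)^\perp}\!:\ker(Z)^\perp\to\img(Z)$ for the forward direction, and a Cauchy-sequence argument with preimages chosen in $\ker(Z)^\perp$ for the converse). The only cosmetic difference is that the paper starts the converse with a Cauchy sequence in $\img(Z)$ while you start with a convergent sequence in $X$, which is equivalent here.
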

\begin{proof}
If \(Z\) has closed image, then \(\img(Z)\) is itself a Hilbert space, and the map \(Z|_{\ker(Z)^\perp}\!:\ker(Z)^\perp\to\img(Z)\) is a bounded isomorphism. By the open mapping theorem, one then
sees that there is some \(c>0\) such that \(\|Zx\| \geq c \|x\|\) for all \(x\in\ker(Z)^\perp\).

Conversely, suppose \(\|Zx\| \geq c\|x\|\) for all \(x\in\ker(Z)^\perp\), and consider a Cauchy sequence \(Zx_n\in\img(Z)\). Since we may write
\[ X = \ker(Z) \oplus \ker(Z)^\perp, \]
we may assume that \(x_n\in\ker(Z)^\perp\). Using our assumption, we see that
\[ \| x_n - x_m \| \leq \frac{1}{c} \| Zx_n - Zx_m \| \]
which implies that the sequence \(x_n\) is also Cauchy, and hence convergent to some \(x\in\ker(Z)^\perp\). By continuity, we then have \(Zx_n \to Zx\), so that \(\img(Z)\) is closed.
\end{proof}

\begin{lemma}
	Let \(X\) be a Hilbert space, and let \(Z\in\sfB(X)\). Then \(Z\) has closed image if and only if \(\sqrt{Z^*Z}\) has closed image.
\end{lemma}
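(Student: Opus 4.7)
The plan is to reduce the equivalence to the criterion provided in the immediately preceding Lemma \ref{lemma:hilbert-space-closed-image-iff-bounded-below-on-kerperp}: an operator on a Hilbert space has closed image if and only if it is bounded below on the orthogonal complement of its kernel. Thus, it suffices to verify two simple facts: that $Z$ and $\sqrt{Z^*Z}$ have the same kernel, and that the lower bound conditions on $\ker(Z)^\perp$ and $\ker(\sqrt{Z^*Z})^\perp$ are literally the same condition.

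First I would observe the identity
\[ \|Zx\|^2 = \langle Zx, Zx\rangle = \langle Z^*Zx, x\rangle = \langle \sqrt{Z^*Z}x, \sqrt{Z^*Z}x\rangle = \|\sqrt{Z^*Z}x\|^2 \]
for every $x\in X$, where the middle equality uses that $\sqrt{Z^*Z}$ is self-adjoint and squares to $Z^*Z$ by the functional calculus. This single identity does essentially all the work. Setting $\|Zx\|=0$ on both sides gives $\ker(Z)=\ker(\sqrt{Z^*Z})$, and in particular $\ker(Z)^\perp = \ker(\sqrt{Z^*Z})^\perp$.

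With this in hand, I would apply Lemma \ref{lemma:hilbert-space-closed-image-iff-bounded-below-on-kerperp} twice. On the one hand, $Z$ has closed image if and only if there is some $c>0$ such that $\|Zx\|\geq c\|x\|$ for all $x\in\ker(Z)^\perp$. On the other hand, $\sqrt{Z^*Z}$ is self-adjoint with the same kernel as $Z$, so it has closed image if and only if there is some $c>0$ such that $\|\sqrt{Z^*Z}x\|\geq c\|x\|$ for all $x\in\ker(Z)^\perp$. By the norm equality above, these two conditions are verbatim the same, concluding the proof.

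There is no genuine obstacle here; the proof is essentially a one-line computation combined with the previous lemma. The only thing to be slightly careful about is ensuring that one is entitled to speak of $\sqrt{Z^*Z}$ in the first place, which is justified by the continuous functional calculus applied to the positive self-adjoint operator $Z^*Z$, together with the standard facts that the square root is self-adjoint and satisfies $\sqrt{Z^*Z}\cdot\sqrt{Z^*Z}=Z^*Z$.
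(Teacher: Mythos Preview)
Your proof is correct and follows essentially the same approach as the paper: both use the identity \(\|Zx\| = \|\sqrt{Z^*Z}x\|\) and then invoke Lemma~\ref{lemma:hilbert-space-closed-image-iff-bounded-below-on-kerperp}. You are slightly more explicit in noting that this identity forces \(\ker(Z)=\ker(\sqrt{Z^*Z})\), which is a small but worthwhile clarification.
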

\begin{proof}
Since \(\|Zx\| = \|\sqrt{Z^*Z}x\|\) for all \(x\in X\), we deduce that
\[ \|Zx\| \geq c \|x\| \iff \|\sqrt{Z^*Z}x\| \geq c \|x\| \]
which yields the result, applying Lemma \ref{lemma:hilbert-space-closed-image-iff-bounded-below-on-kerperp}.
\end{proof}

\begin{theorem}[Sharpened polar decomposition]\label{thm:polar-decomposition}
	Let \(X\) be a Hilbert space, and let \(Z\in\sfB(X)\) be of generalized index zero with associated isomorphism \(\theta\!:\ker(Z) \iso \coker(Z)\).
	Then there is some \(U\in\sfB(X)^\times\), which is also a partial isometry, such that \( Z = U\sqrt{Z^* Z} \). Furthermore, if \(\theta\) is an isometry, then \(U\) is unitary.
\end{theorem}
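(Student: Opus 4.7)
The plan is to start from the usual polar decomposition and surgically repair its partial-isometry factor on $\ker(Z)$ using the generalized-index-zero hypothesis. Classical polar decomposition yields a partial isometry $U_0\in\sfB(X)$ with $Z = U_0\sqrt{Z^*Z}$, vanishing on $\ker(Z) = \ker(\sqrt{Z^*Z})$, and restricting to a norm-preserving isomorphism from $\ker(Z)^\perp$ onto $\overline{\img(Z)}$. Since $Z$ is of generalized index zero, Remark \ref{remark:gen-ind-zero-has-closed-image} tells us $\img(Z)$ is closed, so we have the orthogonal decompositions $X = \ker(Z)\oplus\ker(Z)^\perp = \img(Z)\oplus\img(Z)^\perp$, and $U_0|_{\ker(Z)^\perp}\!:\ker(Z)^\perp\iso\img(Z)$ is an isometric isomorphism.

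Next I would convert $\theta$ into a concrete map $\widetilde{\theta}\!:\ker(Z)\iso\img(Z)^\perp$. Since $\img(Z)$ is closed, the quotient projection $q\!:X\to\coker(Z)$ restricts to an isometric isomorphism $q|_{\img(Z)^\perp}\!:\img(Z)^\perp\iso\coker(Z)$---by Pythagoras, for $y\in\img(Z)^\perp$ the quotient norm $\inf_{z\in\img(Z)}\|y-z\|$ equals $\|y\|$. Setting $\widetilde{\theta} := (q|_{\img(Z)^\perp})^{-1}\circ\theta$ therefore yields a continuous isomorphism which is an isometry whenever $\theta$ is. I then define
\[ U := U_0 + \widetilde{\theta}\circ P_{\ker(Z)}, \]
where $P_{\ker(Z)}$ is the orthogonal projection onto $\ker(Z)$. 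This $U$ is bounded; it carries $\ker(Z)^\perp$ via $U_0$ onto $\img(Z)$ and $\ker(Z)$ via $\widetilde{\theta}$ onto $\img(Z)^\perp$, so it is an isomorphism $X\iso X$. The identity $Z = U\sqrt{Z^*Z}$ persists because $\sqrt{Z^*Z}$ annihilates $\ker(Z)$, so the $\widetilde{\theta}$-contribution drops out of the composite and $U|_{\ker(Z)^\perp} = U_0|_{\ker(Z)^\perp}$; in particular, the partial-isometry property of $U_0$ on $\ker(Z)^\perp$ is inherited by $U$.

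For the second assertion, if $\theta$ is an isometry then so is $\widetilde{\theta}$, and $U$ restricts to an isometry on each of the orthogonal summands $\ker(Z)^\perp$ and $\ker(Z)$. Because these are mapped into the mutually orthogonal summands $\img(Z)$ and $\img(Z)^\perp$ respectively, a Pythagorean computation shows $\|Ux\|^2 = \|x\|^2$ for every $x\in X$, so $U$ is a global isometry; combined with the invertibility already established this forces $U$ to be unitary.

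The main obstacle is bookkeeping rather than depth: one must be careful that $\coker(Z) = X/\img(Z)$ is genuinely Banach-isometric to the Hilbert subspace $\img(Z)^\perp$---this is precisely where the closed-image consequence of generalized index zero is essential---and then verify that gluing $U_0$ and $\widetilde{\theta}$ together really produces an honest invertible element of $\sfB(X)$ and not merely a formally defined direct sum.
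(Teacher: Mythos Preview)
Your argument is correct and essentially identical to the paper's: both construct $U$ as the block map sending $\ker(Z)^\perp=\img(\sqrt{Z^*Z})$ isometrically onto $\img(Z)$ via $\sqrt{Z^*Z}x\mapsto Zx$ and sending $\ker(Z)$ onto $\img(Z)^\perp$ via the identification $\img(Z)^\perp\cong\coker(Z)$ composed with $\theta$. The only cosmetic difference is that you cite the classical polar decomposition for the first block while the paper writes it out via the first isomorphism theorem, and you use the additive formula $U=U_0+\widetilde{\theta}\circ P_{\ker(Z)}$ where the paper uses explicit block notation.
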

\begin{proof}
The operator \(\sqrt{Z^*Z}\) has closed image since \(Z\) has closed image. Furthermore, since \(\|Zx\| = \| \sqrt{Z^*Z}x \|\), we also have that \(\ker(Z) = \ker(\sqrt{Z^*Z})\). Therefore,
we obtain an isomorphism \(U^\pre\) using the first isomorphism theorem
\[ U^\pre\!: \img(\sqrt{Z^*Z}) \iso V/\ker(\sqrt{Z^*Z}) = V/\ker(Z) \iso \img(Z), \]
given explicitly by \(\sqrt{Z^*Z}x \mapsto Zx\). This is an isometry since \(\|Zx\| = \|\sqrt{Z^*Z}x\|\). We would now like to extend this to a continuous invertible operator \(X\to X\). To do this, we make use of
the fact that we have two separate decompositions for \(X\), namely
\[ X = \img(\sqrt{Z^*Z})\oplus\ker(\sqrt{Z^*Z}) = \img(\sqrt{Z^*Z})\oplus\ker(Z) \]
and
\[ \phantom{.}X = \img(Z)\oplus\img(Z)^\perp. \]
For notational purposes, we let \(\iota\!:\coker(Z)\iso\img(Z)^\perp\) be the obvious isomorphism. From this, we define \(U\!:X\iso X\) by
\begin{diagram*}
	X \ar[r,"\sim"] & \img(\sqrt{Z^*Z})\oplus\ker(Z) \ar[r,"(U^\pre\text{,}\,\theta)"] & \img(Z)\oplus\coker(Z) \ar[r,"(\id\text{,}\,\iota)"] & \img(Z)\oplus\img(Z)^\perp\ar[r,"\sim"] & X
\end{diagram*}
where we note that every map involved is a continuous isomorphism, so that \(U\in\sfB(X)^\times\). It is now clear by the first decomposition that for all \(x = x_0 + \sqrt{Z^*Z}x_1\in X\), we will have
\[ Zx = Z\sqrt{Z^*Z}x_1 = U^\pre\sqrt{Z^*Z}\sqrt{Z^*Z}x_1 = U^\pre\sqrt{Z^*Z}(x_0 + \sqrt{Z^*Z}x_1) = U\sqrt{Z^*Z}x \]
so that \(Z=U\sqrt{Z^*Z}\). Finally, when \(\theta\) above is an isometry, we see that \(U\) is a surjective isometry, hence unitary.
\end{proof}

\begin{corollary}\label{corollary:gen-ind-zero-iff-closed-image-and-almost-self-adjoint}
	Let \(X\) be a Hilbert space, and let \(Z\in\sfB(X)\). Then \(Z\) is of generalized index zero if and only if \(\img(Z)\) is closed and there is some invertible operator \(U\in\sfB(X)^\times\) such
	that \(UZ\) is self-adjoint.
\end{corollary}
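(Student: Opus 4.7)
The plan is to prove both directions by leveraging the sharpened polar decomposition (Theorem \ref{thm:polar-decomposition}) and the characterization of generalized index zero for self-adjoint operators (Lemma \ref{lemma:hilbert-self-adjoint-gen-index-zero-have-C1=C'}), together with the simple observation that left multiplication by an invertible operator preserves both the relevant kernel/cokernel data and the property of having closed image.

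For the forward direction, I would suppose $Z$ is of generalized index zero. Then Remark \ref{remark:gen-ind-zero-has-closed-image} immediately tells me $\img(Z)$ is closed, so it suffices to produce an invertible $U$ with $UZ$ self-adjoint. Applying Theorem \ref{thm:polar-decomposition} gives some invertible $V\in\sfB(X)^\times$ with $Z = V\sqrt{Z^*Z}$, so that $V^{-1}Z = \sqrt{Z^*Z}$ is self-adjoint; taking $U := V^{-1}$ finishes this direction.

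For the reverse direction, suppose $\img(Z)$ is closed and let $U\in\sfB(X)^\times$ be such that $S := UZ$ is self-adjoint. The guiding observation is that left multiplication by the invertible $U$ preserves all the structural invariants in play: clearly $\ker(S) = \ker(Z)$, and $\img(S) = U\img(Z)$ is closed because $U$ is a homeomorphism of $X$. Lemma \ref{lemma:hilbert-self-adjoint-gen-index-zero-have-C1=C'} then yields a continuous isomorphism $\theta\!:\ker(S)\iso\coker(S)$. Finally, $U$ descends to a continuous bijection $\bar U\!:\coker(Z) = X/\img(Z)\iso X/\img(S) = \coker(S)$ (well-defined precisely because $U\img(Z) = \img(S)$), so the composite $\bar U^{-1}\circ\theta\!:\ker(Z)\iso\coker(Z)$ exhibits $Z$ as of generalized index zero.

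The whole argument is essentially bookkeeping once Theorem \ref{thm:polar-decomposition} and Lemma \ref{lemma:hilbert-self-adjoint-gen-index-zero-have-C1=C'} are in hand, and I do not anticipate any serious obstacles. The only point requiring mild care is confirming that the induced map $\bar U$ on cokernels is a homeomorphism rather than merely a continuous bijection; this follows from the open mapping theorem applied to $\bar U\!:\coker(Z)\to\coker(S)$, noting that both quotients are genuine Banach spaces precisely because their associated images are closed.
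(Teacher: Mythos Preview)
Your proof is correct and follows essentially the same approach as the paper: forward via Remark \ref{remark:gen-ind-zero-has-closed-image} and the sharpened polar decomposition, backward via Lemma \ref{lemma:hilbert-self-adjoint-gen-index-zero-have-C1=C'} applied to \(UZ\) together with the observation that left multiplication by an invertible preserves kernels and induces an isomorphism on cokernels. Your reverse direction is in fact a touch more explicit than the paper's; note also that the open mapping theorem is not strictly needed for \(\bar U\), since its inverse is visibly the map induced by \(U^{-1}\).
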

\begin{proof}
If \(Z\) is of generalized index zero, then this is a straightforward consequence of Remark \ref{remark:gen-ind-zero-has-closed-image} and Theorem \ref{thm:polar-decomposition}. Conversely,
if \(Z\) has closed image and we are given some \(U\in\sfB(X)^\times\) such that \(UZ\) is self-adjoint, then we see that \(UZ\) is of generalized index zero by Lemma \ref{lemma:hilbert-self-adjoint-gen-index-zero-have-C1=C'}.
Since \(\img(UZ) = U\img(Z)\cong\img(Z)\) and \(\ker(UZ) = \ker(Z)\), we see that \(Z\) is of generalized index zero.
\end{proof}

\begin{lemma}\label{lemma:gen-index-zero-implies-sqrt-gen-index-zero}
	Let \(X\) be a Hilbert space, and let \(Z\in\sfB(X)\). If \(Z\) is of generalized index zero, then so is \(\sqrt{Z^*Z}\).
\end{lemma}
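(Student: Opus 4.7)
The plan is to reduce the claim to the self-adjoint case, where we already have a clean criterion. Since $\sqrt{Z^* Z}$ is self-adjoint by construction (it is the positive square root of the positive self-adjoint operator $Z^* Z$), the characterization in Lemma \ref{lemma:hilbert-self-adjoint-gen-index-zero-have-C1=C'} tells us that $\sqrt{Z^*Z}$ is of generalized index zero if and only if it has closed image. Thus the entire task is to verify that $\sqrt{Z^*Z}$ has closed image whenever $Z$ does.

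First I would invoke Remark \ref{remark:gen-ind-zero-has-closed-image} to extract from the hypothesis the fact that $Z$ has closed image. Then I would directly apply the (unnamed) lemma immediately preceding Theorem \ref{thm:polar-decomposition}, which states precisely that $Z$ has closed image if and only if $\sqrt{Z^* Z}$ does. Chaining this with Lemma \ref{lemma:hilbert-self-adjoint-gen-index-zero-have-C1=C'} applied to the self-adjoint operator $\sqrt{Z^* Z}$ closes the argument.

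There is essentially no obstacle here: all three inputs (self-adjointness of $\sqrt{Z^* Z}$, the closed-image transfer lemma, and the self-adjoint criterion) are already in place. The only minor point worth stating explicitly in the write-up is that $\sqrt{Z^* Z}$ really is self-adjoint, which follows from the continuous functional calculus applied to the positive operator $Z^* Z$. After that, the proof is a two-line chain of implications.
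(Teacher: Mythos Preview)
Your proof is correct and arguably cleaner than the paper's. You reduce the problem to showing that \(\sqrt{Z^*Z}\) has closed image and then invoke the self-adjoint criterion of Lemma~\ref{lemma:hilbert-self-adjoint-gen-index-zero-have-C1=C'}; everything follows immediately from results already established.

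The paper takes a slightly different route: rather than appeal to the self-adjoint criterion, it builds the required isomorphism \(\ker(\sqrt{Z^*Z})\iso\coker(\sqrt{Z^*Z})\) explicitly as the composite
\[
\ker(\sqrt{Z^*Z}) = \ker(Z) \iso \coker(Z) \iso \coker(\sqrt{Z^*Z}),
\]
using the given isomorphism \(\ker(Z)\iso\coker(Z)\) for the middle arrow and the isometry \(U^{\pre}\!:\img(\sqrt{Z^*Z})\iso\img(Z)\) from the proof of Theorem~\ref{thm:polar-decomposition} (extended to an invertible \(U\) on all of \(X\)) to induce the last arrow on cokernels. This has the minor advantage of writing down the isomorphism concretely, but it also drags in the proof of polar decomposition as a dependency. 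Your argument sidesteps that entirely by recognizing that once \(\sqrt{Z^*Z}\) has closed image, its self-adjointness already forces \(\ker\cong\coker\) via the orthogonal decomposition, which is exactly what Lemma~\ref{lemma:hilbert-self-adjoint-gen-index-zero-have-C1=C'} packages.
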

\begin{proof}
By the proof of Theorem \ref{thm:polar-decomposition}, we have an isometric isomorphism
\[ U^\pre\!:\img(\sqrt{Z^*Z})\to\img(Z),\]
and thus we have an induced isomorphism given by the composite
\begin{diagram*}
	\ker(\sqrt{Z^*Z}) \ar[r,equal] & \ker(Z)\ar[r,"\sim"] & \coker(Z) \ar[r,"\sim"] & \coker(\sqrt{Z^*Z})
\end{diagram*}
as desired.
\end{proof}

\begin{theorem}\label{thm:hilbert-space-gen-index-zero-C1=C'}
	Let \(X\) be a Hilbert space, and let \(Z\in\sfB(X)\) be of generalized index zero. Then
	\[ \frakC_{-1}(Z) = \frakC'(Z) \not= \varnothing. \]
\end{theorem}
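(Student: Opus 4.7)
The plan is to reduce the claim to the self-adjoint case (Lemma \ref{lemma:hilbert-self-adjoint-gen-index-zero-have-C1=C'}) via the sharpened polar decomposition of Theorem \ref{thm:polar-decomposition}. The non-emptiness of \(\frakC'(Z)\) is already handed to us by Theorem \ref{thm:generalized-index-zero-iff-C'-non-empty}, since \(Z\) is of generalized index zero, and the inclusion \(\frakC_{-1}(Z)\subseteq\frakC'(Z)\) is exactly Corollary \ref{corollary:C-in-C1Z-implies-imgC=kerZ}. Hence everything reduces to establishing the reverse inclusion \(\frakC'(Z)\subseteq\frakC_{-1}(Z)\).

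First I would invoke Theorem \ref{thm:polar-decomposition} to write \(Z = U\sqrt{Z^*Z}\) with \(U\in\sfB(X)^\times\). By Lemma \ref{lemma:gen-index-zero-implies-sqrt-gen-index-zero}, the operator \(S := \sqrt{Z^*Z}\) is also of generalized index zero, and since it is self-adjoint, Lemma \ref{lemma:hilbert-self-adjoint-gen-index-zero-have-C1=C'} applies and gives \(\frakC_{-1}(S) = \frakC'(S)\). The key computation is then to translate this identity across the invertible factor \(U\): by Theorem \ref{thm:properties-of-C1(Z)}(d),
\[ \frakC_{-1}(Z) = \frakC_{-1}(US) = \frakC_{-1}(S)\,U^{-1} = \frakC'(S)\,U^{-1}, \]
so it remains to verify the purely set-theoretic identity \(\frakC'(Z) = \frakC'(S)\,U^{-1}\).

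The last identity should be a direct unwinding of definitions, and I expect it to be the only step requiring any real care. Given \(C'\in\frakC'(S)\), one wants \(C := C'U^{-1} \in \frakC'(Z)\); this amounts to checking \(\img(C) = \img(C') = \ker(S) = \ker(Z)\) (the first equality since \(U^{-1}\) is surjective, the second from \(\|Zx\| = \|Sx\|\)), together with \(\ker(C) = U\ker(C') = U\img(S) = \img(US) = \img(Z)\). The reverse direction, starting from \(C\in\frakC'(Z)\) and showing \(CU\in\frakC'(S)\), is symmetric. Chaining these identities together yields
\[ \frakC_{-1}(Z) = \frakC'(S)\,U^{-1} = \frakC'(Z), \]
which completes the proof.

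The main obstacle in the overall argument is really packaged into the preceding lemmas (the self-adjoint case and the sharpened polar decomposition); given those, this theorem is essentially a formal consequence. The one place where I would be careful is the bookkeeping in the final identity, since it is tempting to treat \(\frakC'(-)\) as if it transforms covariantly under right multiplication, and one has to actually verify this using that \(U\) is an isomorphism sending \(\img(S)\) onto \(\img(Z)\) and leaving \(\ker(S) = \ker(Z)\) invariant on the output side.
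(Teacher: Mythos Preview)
Your proposal is correct and follows essentially the same route as the paper: reduce to the self-adjoint case via the sharpened polar decomposition \(Z = U\sqrt{Z^*Z}\), then transport the identity \(\frakC_{-1}(\sqrt{Z^*Z}) = \frakC'(\sqrt{Z^*Z})\) through the invertible factor \(U\) using the transformation law of Theorem \ref{thm:properties-of-C1(Z)}(d). The paper compresses the chain to a single line and leaves the step \(\frakC'(\sqrt{Z^*Z})\,U^{-1} = \frakC'(U\sqrt{Z^*Z})\) implicit, whereas you spell out the verification of this identity; your explicit check is correct and is the only place the paper's proof asks the reader to fill in details.
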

\begin{proof}
Using Theorem \ref{thm:polar-decomposition}, write \(Z = U\sqrt{Z^*Z}\). By Lemma \ref{lemma:gen-index-zero-implies-sqrt-gen-index-zero}, \(\sqrt{Z^*Z}\) is of generalized index zero. Furthermore,
since it is self-adjoint, Lemma \ref{lemma:hilbert-self-adjoint-gen-index-zero-have-C1=C'} means we can compute
\[ \frakC_{-1}(Z) = \frakC_{-1}(U\sqrt{Z^*Z}) = \frakC_{-1}(\sqrt{Z^*Z})U^{-1} = \frakC'(\sqrt{Z^*Z})U^{-1} = \frakC'(U\sqrt{Z^*Z}) = \frakC'(Z). \]
This completes the proof.
\end{proof}
\begin{corollary}\label{corollary:hilbert-space-S_V=S_ker}
	Let \(X\) be a Hilbert space, and let \(Z\in\sfB(X)\) be of generalized index zero. Then
	\[ \frakS_\cup(Z) = \frakS_{\ker}(Z) = \frakS_C(Z) \]
	where \(C\in\frakC_{-1}(Z)\not=\varnothing\).
\end{corollary}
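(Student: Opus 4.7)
The plan is to combine the two main results we have just developed: Theorem \ref{thm:hilbert-space-gen-index-zero-C1=C'}, which guarantees the existence of good paths, and Theorem \ref{thm:good-linear-paths-equals-ker-equals-vee}, which provides the chain of equalities for any $Z$ admitting a good path. Essentially all the work has already been done; this corollary is where the two threads meet.

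Concretely, I would proceed as follows. First, since $Z$ is assumed to be of generalized index zero, Theorem \ref{thm:hilbert-space-gen-index-zero-C1=C'} tells us that $\frakC_{-1}(Z) = \frakC'(Z)$ is non-empty, so we may pick some $C \in \frakC_{-1}(Z)$. By the very definition of $\frakC_{-1}(Z)$ (Definition \ref{definition:frakC}), the choice of such $C$ is equivalent to the choice of a good path $Z + tE(t)$ converging to $Z$ whose inverse has $(-1)$th coefficient equal to $C$, i.e.\ $(Z + tE(t))^{-1} = Ct^{-1} + C(t)$ for some analytic $C(t)$.

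Next, with a good path to $Z$ now in hand, Theorem \ref{thm:good-linear-paths-equals-ker-equals-vee} applies directly and yields
\[ \frakS_{\ker}(Z) = \frakS_{C}(Z) = \frakS_{\pw}(Z; Z + tE(t)) = \frakS_\cup(Z), \]
which in particular gives the desired equalities $\frakS_\cup(Z) = \frakS_{\ker}(Z) = \frakS_C(Z)$ and also verifies that the value of $\frakS_C(Z)$ does not depend on the particular $C \in \frakC_{-1}(Z)$ chosen.

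There is no real obstacle to overcome here, as the statement is essentially just a packaging of previously established results; the genuine difficulty was in proving Theorem \ref{thm:hilbert-space-gen-index-zero-C1=C'} (via the sharpened polar decomposition and the self-adjoint case) and Theorem \ref{thm:good-linear-paths-equals-ker-equals-vee} (via the kernel criterion together with Lemma \ref{lemma:good-path-annihilates-coefficient-implies-vee}). The only thing one could reasonably worry about is confirming that the existence result really furnishes a good path (not merely an element of $\frakC'(Z)$ satisfying the algebraic conditions), but this is exactly what the equality $\frakC_{-1}(Z) = \frakC'(Z)$ from Theorem \ref{thm:hilbert-space-gen-index-zero-C1=C'} asserts, so no additional construction is required.
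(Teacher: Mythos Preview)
Your proposal is correct and takes essentially the same approach as the paper, which simply says to apply Theorem \ref{thm:good-linear-paths-equals-ker-equals-vee} and Theorem \ref{thm:hilbert-space-gen-index-zero-C1=C'}. You have merely spelled out in detail how these two results combine, which is exactly right.
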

\begin{proof}
Apply Theorem \ref{thm:good-linear-paths-equals-ker-equals-vee} and Theorem \ref{thm:hilbert-space-gen-index-zero-C1=C'}.
\end{proof}

Notably, we can rather remarkably determine \(\frakC_{-1}(Z)\) entirely through an algebraic description, despite its analytic nature.
The theorem can also be realized in the following highly suggestive form:

\begin{corollary}\label{corollary:fun-suggestive-but-useless-form}
	Let \(X\) be a Hilbert space, \(Z\in\sfB(X)\) of generalized index zero. Then
	\[ \frakC_{-1}(Z) = \{ C\in\sfB(X) \mid \frakS_\cup(Z) = \frakS_\cup^*(C) \text{ and } \frakS_\cup(C) = \frakS_\cup^*(Z) \}. \]
\end{corollary}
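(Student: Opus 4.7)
The plan is to verify the two inclusions of the stated set equality by assembling pieces from the preceding subsections; the content is almost entirely formal given what is already in place.

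For the forward inclusion, let $C \in \frakC_{-1}(Z)$. The equality $\frakS_\cup(Z) = \frakS_\cup^*(C)$ is immediate from Theorem \ref{thm:good-paths-give-vee=vee*}. To get the second equality, I would invoke the symmetry of the relation $\simstar$ supplied by part (a) of Theorem \ref{thm:properties-of-C1(Z)}: $C \in \frakC_{-1}(Z)$ forces $Z \in \frakC_{-1}(C)$, so applying Theorem \ref{thm:good-paths-give-vee=vee*} a second time (with the roles of $Z$ and $C$ interchanged) yields $\frakS_\cup(C) = \frakS_\cup^*(Z)$.

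For the reverse inclusion, suppose $\frakS_\cup(Z) = \frakS_\cup^*(C)$ and $\frakS_\cup(C) = \frakS_\cup^*(Z)$. Since $X$ is a Hilbert space, I can apply part (a) of Theorem \ref{thm:vee=vee*-iff-img=ker} directly to the first hypothesis to conclude $\img(C) = \ker(Z)$, and apply the same result with the roles of $Z$ and $C$ swapped to the second hypothesis to conclude $\img(Z) = \ker(C)$. Together these place $C$ in $\frakC'(Z)$, and the hypothesis that $Z$ is of generalized index zero lets me invoke Theorem \ref{thm:hilbert-space-gen-index-zero-C1=C'}, whose content is precisely $\frakC_{-1}(Z) = \frakC'(Z)$, giving $C \in \frakC_{-1}(Z)$ as required.

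There is no real obstacle here, since every ingredient is already proved. If anything, the subtle point is just to notice that Theorem \ref{thm:vee=vee*-iff-img=ker}(a) can be applied in both directions without any hypothesis on $C$ (unlike part (b), which needs $\frakC_{-1}(C)$ nonempty), so one does not need to worry about $C$ itself being of generalized index zero as an a priori hypothesis in the reverse inclusion; of course, once the proof is complete, the membership $C \in \frakC_{-1}(Z) = \frakC'(Z)$ together with Theorem \ref{thm:generalized-index-zero-iff-C'-non-empty} (applied to $C$) forces $C$ to be of generalized index zero as a consequence.
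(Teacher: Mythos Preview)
Your proof is correct and follows essentially the same approach as the paper's, which simply says to apply Theorem \ref{thm:hilbert-space-gen-index-zero-C1=C'} together with Theorem \ref{thm:vee=vee*-iff-img=ker}. Your forward inclusion uses Theorem \ref{thm:good-paths-give-vee=vee*} directly rather than going through Theorem \ref{thm:vee=vee*-iff-img=ker}(b), but this is an immaterial difference, and your unpacking of the reverse inclusion is exactly what the terse reference in the paper is pointing to.
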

\begin{proof}
Apply Theorem \ref{thm:hilbert-space-gen-index-zero-C1=C'} together with Theorem \ref{thm:vee=vee*-iff-img=ker}.
\end{proof}

\begin{remark}
	As a consequence of Theorem \ref{thm:hilbert-space-gen-index-zero-C1=C'}, we see that for \(C,Z\in\End(\C^n)\), we have \(C\in\frakC_{-1}(Z)\) if and only if the sequence
	\begin{diagram*}
		\cdots\ar[r,"C"] & \C^n\ar[r,"Z"] & \C^n\ar[r,"C"] & \C^n\ar[r,"Z"] & \cdots
	\end{diagram*}
	is exact. Furthermore, denoting a complex of the form
	\begin{diagram*}
		\cdots\ar[r,"X"] & \C^n\ar[r,"Z"] & \C^n\ar[r,"X"] & \C^n\ar[r,"Z"] & \cdots
	\end{diagram*}
	by \(\sfC(X)\), if \(X\in\End(\C^n)\) is such that we have a quasi-isomorphism \(\sfC(X)\cong\sfC(C)\), then \(X\in\frakC_{-1}(Z)\). Indeed,
	we would have \(\HH^k(\sfC(X))\cong\HH^k(\sfC(C))=0\) for all \(k\) since \(\sfC(C)\) is exact, and therefore \(\sfC(X)\) is also exact.
\end{remark}

\subsection{Pathwise Boundedness Along Admissible Paths}\label{subsection:pathwise-boundedness}
Ultimately, aside from computing such things as \(\frakS(-)\) and \(\frakS_\cup(-)\), we are really also interested in boundedness behaviour along \emph{specified} paths. Indeed,
it is an immediate refinement of both \(\frakS(-)\) and \(\frakS_\cup(-)\) to characterize boundedness along a given arbitrary path; understanding pathwise boundedness necessarily gives us more
information about the fuzzier questions we have been addressing above.

A valuable approach now is to try to investigate \(\frakS_\pw(Z; E(t))\) (see Definition \ref{definition:frakS_pw}) for particularly nice functions \(E(t)\), namely analytic ones, and hope that this can in some way be characterized by
an algebraic condition. To this end, we introduce yet another gadget:
\begin{definition}\label{definition:frakS-filtration}
	Let \(X\) be a Banach space, let \(V\subseteq X\) be a closed subspace, and let \(F^\bullet V\) be a filtration of \(V\)
	\[ V = F^0 V \supseteq F^1V  \supseteq F^2V \supseteq \cdots \]
	by closed subspaces. Define the algebra
	\[ \frakS_\frakF(F^\bullet V) := \{ A\in\sfB(X) \mid \forall i \geq 0,\,\, A(F^iV)\subseteq F^iV \}. \]
	To any sequence of closed subspaces \(V_k\subseteq X\), \(k\geq 0\), one may associate a filtration \(F^\bullet_{\cap V_k} V_0\) of \(V_0\) given by
	\[ F^i_{\cap V_k}V_0 := \bigcap_{k = 0}^i V_k. \]
	In the situation of such a filtration, we use the notation \(\frakS_\frakF(\cap_k V_k) := \frakS_\frakF(F^\bullet_{\cap V_k}V_0)\).
\end{definition}
\begin{notation}
	Observe that for any filtration \(F^\bullet V\) of \(V\) by closed subspaces, one can for each \(n\geq 0\) obtain a new filtration, which we denote \(F^{\bullet\leq n} V\), simply by truncating \(F^\bullet V\) after the \(n\)th term,
	i.e.\ by setting
	\[ F^{i\leq n}V = \begin{cases} F^iV & \text{if } i \leq n, \\ 0 & \text{otherwise.} \end{cases} \]
	When \(F^\bullet V = F^\bullet_{\cap_kV_k}V_0\) is the filtration of a closed subspace \(V = V_0\) induced by a sequence of closed subspaces \(V_k\), we will write
	\[ \frakS_\frakF(F^{\bullet\leq n}_{\cap_kV_k}V_0) := \frakS_\frakF(\cap_k^{\leq n}V_k). \]
\end{notation}
\begin{proposition}\label{prop:truncated-filtration}
	Let \(X\) be a Banach space, let \(V\subseteq X\) be a closed subspace, and let \(F^\bullet V\) be a filtration of \(V\) by closed subspaces. Then, for all \(0 \leq m \leq n \leq \infty\),
	\[ \frakS_\frakF(F^{\bullet\leq n} V) \subseteq \frakS_\frakF(F^{\bullet\leq m}V). \]
	In particular, choosing \(n=\infty\), \(m=0\), and \(V = \ker(Z)\) for some \(Z\in\sfB(X)\), we have
	\[ \frakS_\frakF(F^\bullet\ker(Z))\subseteq\frakS_{\ker}(Z).\]
\end{proposition}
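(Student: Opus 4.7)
The proposal is to observe that this proposition is essentially a tautology obtained by unpacking the definitions of $\frakS_\frakF(-)$ and of the truncated filtration. No analytic content, no appeal to earlier results, and no nontrivial obstacle is expected; the whole point is to record a formal inclusion that will presumably be used later.

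The first step is to write out what membership in each side means. An operator $A \in \sfB(X)$ lies in $\frakS_\frakF(F^{\bullet \leq n} V)$ iff $A(F^{i \leq n} V) \subseteq F^{i \leq n} V$ for every $i \geq 0$. For $i > n$ this is the condition $A \cdot 0 \subseteq 0$, which is automatic, so the content is exactly that $A(F^i V) \subseteq F^i V$ for $0 \leq i \leq n$. In particular, the condition for level $n$ is a conjunction of the conditions for each intermediate level $0 \leq i \leq n$. The inclusion $\frakS_\frakF(F^{\bullet \leq n} V) \subseteq \frakS_\frakF(F^{\bullet \leq m} V)$ for $m \leq n$ is then immediate: any $A$ satisfying the larger conjunction satisfies the smaller one.

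The second step is to specialise at $n = \infty$, $m = 0$, $V = \ker(Z)$. On the left we get exactly $\frakS_\frakF(F^\bullet \ker(Z))$ by definition. On the right, the truncated filtration $F^{\bullet \leq 0} \ker(Z)$ has only one nontrivial term, namely $F^0 \ker(Z) = \ker(Z)$, so membership reduces to the single requirement $A \ker(Z) \subseteq \ker(Z)$, which is the defining condition for $\frakS_{\ker}(Z)$ from Definition \ref{def:kernel-invariant-set}. Combining the two steps yields the stated inclusion.

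The only thing worth double-checking is the convention when $n = \infty$: the truncated filtration $F^{\bullet \leq \infty} V$ should coincide with $F^\bullet V$ itself, which is consistent with the case split in the definition of $F^{i \leq n}V$ (the condition $i \leq n$ is vacuously satisfied). Once this convention is in hand, nothing else is needed; the argument is a one-line consequence of the definitions and there is no genuine obstacle.
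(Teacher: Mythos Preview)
Your proposal is correct and matches the paper's approach exactly: the paper's proof reads ``Follows immediately from the definitions,'' and your write-up is simply a careful unpacking of that one line.
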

\begin{proof}
Follows immediately from the definitions.
\end{proof}

\begin{proposition}
	Let \(X\) be a Banach space, let \(E_k\in\sfB(X)\) for all \(k\geq 0\) be such that \(\sum_{k=0}^\infty E_kt^k\) is an admissible path converging to \(E_0\). Then there
	exists some \(0\leq n \leq \infty\) such that
	\[ \bigcap_{k=0}^n\ker{E_k} = 0. \]
	In particular, for all \(m\geq n\) (including \(m=\infty\)), we have
	\[ \frakS_\frakF(\cap_k^{\leq m}\ker{E_k}) = \frakS_{\frakF}(\cap_k^{\leq n}\ker{E_k}). \]
	If \(X\) is finite-dimensional, then one may choose \(n < \infty\).
\end{proposition}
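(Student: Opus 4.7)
The plan is to take $n = \infty$ as the canonical choice that always works, and then to extract the stabilization statement and the finite-dimensional refinement as formal consequences.

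First I would show $\bigcap_{k=0}^\infty \ker E_k = 0$ by exploiting invertibility of the admissible path. Given $x$ in this intersection, $E_k x = 0$ for every $k$, so the analytic operator-valued function $U(t) := \sum_k E_k t^k$ satisfies $U(t)x = \sum_k (E_k x) t^k = 0$ identically on its disc of convergence. By admissibility, $U(t_0)$ is invertible for some $t_0 > 0$, and applying $U(t_0)^{-1}$ to both sides forces $x = 0$. This establishes existence of $n$ by taking $n = \infty$.

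For the ``in particular'' clause, I would fix any $n$ witnessing $\bigcap_{k=0}^n \ker E_k = 0$ and any $m \geq n$, and compare the two truncated filtrations termwise. At indices $i \leq n$, both filtrations agree by definition of truncation. At indices $n < i \leq m$, monotonicity of intersections yields $\bigcap_{k=0}^i \ker E_k \subseteq \bigcap_{k=0}^n \ker E_k = 0$, so the $m$-truncated filtration contains only the zero subspace at these indices, which matches the $n$-truncated filtration (zero there by truncation). At indices $i > m$, both filtrations are zero. The invariant-subspace conditions defining $\frakS_\frakF$ therefore impose identical requirements on $A \in \sfB(X)$, giving the desired equality.

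Finally, for finite-dimensional $X$, the descending chain $W_i := \bigcap_{k=0}^i \ker E_k$ of subspaces must stabilize after at most $\dim X$ steps, and its eventual value equals $\bigcap_{k=0}^\infty \ker E_k = 0$ by the first step. Taking $n$ to be this stabilization index yields a finite witness. I do not foresee any real obstacle here; the result is essentially a bookkeeping lemma whose only nonroutine input is the invertibility step in the first paragraph, and its purpose seems to be to ensure that subsequent arguments involving the filtration $\bigcap_k \ker E_k$ may truncate to a fixed (and possibly finite) stage without altering the associated algebra $\frakS_\frakF$.
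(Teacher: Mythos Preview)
Your proposal is correct and follows essentially the same argument as the paper: both use invertibility of the path at some small $t_0>0$ to force $\bigcap_{k\geq 0}\ker E_k=0$, then observe that all filtration terms beyond index $n$ vanish so the truncated algebras $\frakS_\frakF$ coincide, and finally invoke the descending chain condition in finite dimensions. Your write-up is somewhat more explicit about the termwise comparison of the truncated filtrations, but there is no substantive difference in strategy.
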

\begin{proof}
If the given intersection is non-trivial, then the path could not be invertible---which is required, locally to be admissible---hence yielding a contradiction. The second
assertion follows by noting that all additional conditions imposed in higher truncations are trivial, as
\[ \bigcap_{k=0}^{m}\ker{E_k} \subseteq \bigcap_{k=0}^{n}\ker{E_k} = 0. \]
That \(n\) can be chosen to be finite when \(X\) is a finite-dimensional space is clear, as any strictly descending chain of subspaces will stabilize at some point.
\end{proof}

\begin{proposition}\label{prop:analytic-paths-boundedness-criterion}
	Let \(X\) be a Banach space, let \(E_k\in\sfB(X)\), \(k \geq 0\), and consider an admissible path \(\sum_{k=0}^\infty E_kt^k\), together with its induced
	sequence of subspaces \(\ker(E_k)\), \(k \geq 0\). Then
	\[ \frakS_{\pw}\left(E_0; \sum_{k=0}^\infty E_kt^k \right) \subseteq \frakS_\frakF(\cap_k\ker(E_k)). \]
\end{proposition}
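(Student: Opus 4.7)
The plan is to induct on $i \geq 0$, proving the stronger statement that every $A \in \frakS_\pw(E_0; U(t))$ — writing $U(t) := \sum_{k \geq 0} E_k t^k$ — preserves the closed subspace $V_i := \bigcap_{k=0}^i \ker(E_k)$. The base case $i = 0$ is essentially the pathwise version of the kernel criterion of Proposition \ref{prop:local-erik} applied along $U(t)$; the inductive step will be a ``higher-order'' sharpening of the same idea, obtained by tracking the precise order of vanishing of $U(t) x$ and $U(t) A x$ as $t \to 0$.

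Fix such an $A$ together with a constant $M$ satisfying $\|U(t) A U(t)^{-1}\| \leq M$ for all sufficiently small $t > 0$; since $U(t)$ is invertible near $0$, every nonzero $x$ satisfies $U(t) x \neq 0$ and the operator-norm bound translates into $\|U(t) A x\| \leq M \|U(t) x\|$. Setting $V_{-1} := X$ for uniformity, assume inductively $A V_{i-1} \subseteq V_{i-1}$, let $x \in V_i$, and suppose for contradiction that $A x \notin V_i$. Then $x \in V_{i-1}$ combined with the inductive hypothesis gives $A x \in V_{i-1}$, i.e.\ $E_k A x = 0$ for all $k < i$; together with $A x \notin V_i$ this forces $E_i A x \neq 0$. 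Expanding the two series using these vanishing relations produces
\[ U(t) A x \;=\; t^i\bigl(E_i A x + O(t)\bigr), \qquad U(t) x \;=\; t^{i+1} \!\!\sum_{k \geq i+1} E_k x \cdot t^{k-i-1}, \]
so $\|U(t) A x\| \geq \tfrac{1}{2} t^i \|E_i A x\|$ and $\|U(t) x\| \leq C\, t^{i+1}$ for some $C > 0$ and all small $t > 0$. Dividing and letting $t \to 0^+$,
\[ M \;\geq\; \frac{\|U(t) A x\|}{\|U(t) x\|} \;\geq\; \frac{\|E_i A x\|}{2C\,t} \;\longrightarrow\; \infty, \]
the desired contradiction.

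The argument is essentially a bookkeeping sharpening of Proposition \ref{prop:local-erik}, so I do not foresee any serious obstacle. The one mildly technical point is justifying the $O(t)$ remainder and the upper-bound constant $C$ in operator norm, which follows from analyticity of the admissible path $U(t)$: on a slightly smaller disk the series $\sum_k \|E_k\| t^k$ is dominated by a convergent geometric series (by the standard Cauchy estimates for Banach-valued power series), so the tails in the expansions of $U(t) x$ and $U(t) A x$ are uniformly controlled.
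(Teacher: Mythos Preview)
Your proof is correct and follows essentially the same approach as the paper's: both arguments translate the operator-norm bound into $\|U(t)Ax\|\leq M\|U(t)x\|$, then induct on the filtration level by comparing the orders of vanishing of numerator and denominator for $x\in V_i$. Your presentation is slightly cleaner in making the inductive hypothesis $AV_{i-1}\subseteq V_{i-1}$ explicit, whereas the paper accumulates the weaker statements $A(\bigcap_{k\leq j}\ker E_k)\subseteq\ker E_j$ and then reassembles them into the same conclusion.
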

\begin{proof}
Let \(E(t) = \sum_kE_kt^k\). If \(A\in\frakS_\pw(E_0; E(t))\), then, per the definition, we have
\[ \lim_{t\to 0}\|E(t)AE(t)^{-1}\| = \lim_{t\to 0}\sup_{x\in X}\frac{\|E(t)Ax\|}{\|E(t)x\|} < \infty \]
and thus, in particular,
\[ \lim_{t\to 0}\sup_{x\in \ker{E_0}}\frac{\|E(t)Ax\|}{\|E(t)x\|} \leq \lim_{t\to 0}\sup_{x\in X}\frac{\|E(t)Ax\|}{\|E(t)x\|} < \infty. \]
For notational simplicity, we will omit the limit over \(t\). Expanding the above out and using that the left supremum is over \(\ker{E_0}\), we have
\[ \sup_{x\in \ker{E_0}}\frac{\|E_0Ax + tE_1Ax + t^2E_2 Ax + \cdots + t^kE_kAx + \cdots\|}{\|tE_1x + t^2E_2 x + \cdots + t^kE_kx + \cdots\|} < \infty \]
which implies that we necessarily have \(E_0Ax = 0\) when \(x\in\ker{E_0}\), i.e.\ that \(A\ker{E_0}\subseteq\ker{E_0}\). However, using that fact and factoring out the \(t\), we obtain
\[ \sup_{x\in \ker{E_0}}\frac{\|E_1Ax + tE_2 Ax + \cdots + t^{k-1}E_kAx + \cdots\|}{\|E_1x + tE_2 x + \cdots + t^{k-1}E_kx + \cdots\|} < \infty. \]
We now proceed by induction: taking a supremum over \(\ker(E_0)\) will be greater than taking a supremum over \(\ker(E_0)\cap\ker(E_1)\), and so on. In particular, repeatedly applying the above reasoning,
we see that, for each \(j \geq 0\),
\[ A\left(\bigcap_{k=0}^j\ker(E_k)\right) \subseteq \ker(E_j). \]
For all \(0 \leq j' \leq j\) we have
\[ \bigcap_{k=0}^j\ker(E_k) \subseteq \bigcap_{k=0}^{j'}\ker(E_k) \implies A\left(\bigcap_{k=0}^{j}\ker(E_k)\right) \subseteq A\left(\bigcap_{k=0}^{j'}\ker(E_k)\right) \subseteq \ker(E_{j'}) \]
and thus
\[ A\left(\bigcap_{k=0}^{j}\ker(E_k)\right) \subseteq \bigcap_{k=0}^{j}\ker(E_k) \]
which is exactly the statement that \(A\in\frakS_\frakF(\cap_k\ker{E_k})\).
\end{proof}
\begin{remark}
	In the case where the path is linear, i.e.\ of the form \(Z+tE\), note that invertibility forces \(\ker(Z)\cap\ker(E) = 0\). Applying the above proposition, one just obtains the kernel criterion \(\frakS_{\ker}(Z)\)
	on the right-hand side, since the filtration will be given by
	\[ \ker(Z)\supseteq \ker(Z)\cap\ker(E) = 0 \supseteq 0 \cdots. \]
	In other words, the proposition suggests that boundedness along a linear path is strongly related to the kernel criterion of \(\frakS_{\ker}(Z)\), which aligns with the results of Section \ref{subsection:bounded-unmodified-existence},
	particularly in light of the below Theorem \ref{thm:good-path-truncation-rigidity}.
\end{remark}

Proposition \ref{prop:analytic-paths-boundedness-criterion} turns out to imply following \emph{striking} result about the rigidity of good paths, which, following the above remark, suggests that
good paths are fundamentally controlled by linear phenomena.
\begin{theorem}\label{thm:good-path-truncation-rigidity}
	Let \(X\) be a Banach space, and let \(E_0\in\sfB(X)\) be such that there exists a good path \(E_0+tE(t) = \sum_{k=0}^\infty E_kt^k\), with corresponding operator \(C_{-1}\in\frakC_{-1}(E_0)\). Then
	\[ \frakS_\pw(E_0;tE(t)) = \frakS_{C_{-1}}(E_0) = \frakS_{\frakF}(\cap_k\ker{E_k}) = \frakS_{\ker}(E_0). \]
	In particular, for all \(0 \leq m \leq n \leq \infty\), we have \( \frakS_\frakF(\cap_k^{\leq n}\ker(E_k)) = \frakS_\frakF(\cap_k^{\leq m}\ker(E_k)) \).
\end{theorem}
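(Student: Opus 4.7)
The plan is to wedge the new algebra \(\frakS_\frakF(\cap_k \ker E_k)\) into a chain of inclusions whose endpoints are already known to coincide by Theorem~\ref{thm:good-linear-paths-equals-ker-equals-vee}. Since a good path to \(E_0\) exists by hypothesis, that theorem immediately supplies
\[ \frakS_\pw(E_0; E_0 + tE(t)) = \frakS_{C_{-1}}(E_0) = \frakS_\ker(E_0), \]
which accounts for three of the four algebras in the claim, so all that remains is to insert \(\frakS_\frakF(\cap_k \ker E_k)\) appropriately into this chain.

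First I would apply Proposition~\ref{prop:analytic-paths-boundedness-criterion} to the admissible expansion \(E_0 + tE(t) = \sum_{k\geq 0} E_k t^k\) to obtain the upper inclusion
\[ \frakS_\pw(E_0; E_0 + tE(t)) \subseteq \frakS_\frakF(\cap_k \ker E_k). \]
Then I would invoke Proposition~\ref{prop:truncated-filtration} with \(n = \infty\) and \(m = 0\), noting that the zeroth step of the filtration \(\cap_k^{\leq \bullet} \ker E_k\) is simply \(\ker E_0\), hence \(\frakS_\frakF(\cap_k^{\leq 0} \ker E_k) = \frakS_\ker(E_0)\). This yields the lower inclusion \(\frakS_\frakF(\cap_k \ker E_k) \subseteq \frakS_\ker(E_0)\). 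Since both endpoints of the resulting chain agree by Theorem~\ref{thm:good-linear-paths-equals-ker-equals-vee}, every intermediate inclusion is forced to be an equality, giving the fourfold identification in the statement.

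The ``in particular'' assertion then follows immediately: Proposition~\ref{prop:truncated-filtration} supplies the monotone nested family
\[ \frakS_\frakF(\cap_k \ker E_k) \subseteq \frakS_\frakF(\cap_k^{\leq n} \ker E_k) \subseteq \frakS_\frakF(\cap_k^{\leq m} \ker E_k) \subseteq \frakS_\ker(E_0) \]
for \(0 \leq m \leq n \leq \infty\), whose two endpoints we have just shown to coincide, so everything in between must collapse to the same algebra. I do not expect any substantive obstacle here; the proof is essentially an assembly of prior results, and the one minor point that requires care is simply aligning the theorem's notation \(\frakS_\pw(E_0; tE(t))\) with the convention of Definition~\ref{definition:frakS_pw}, where the argument after the semicolon can refer to either the path \(E_0 + tE(t)\) itself or its perturbation \(tE(t)\).
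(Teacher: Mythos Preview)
Your proposal is correct and follows essentially the same approach as the paper: both arguments sandwich \(\frakS_\frakF(\cap_k\ker E_k)\) between \(\frakS_\pw(E_0;tE(t))\) and \(\frakS_{\ker}(E_0)\) via Propositions~\ref{prop:analytic-paths-boundedness-criterion} and~\ref{prop:truncated-filtration}, and then close the chain using the equalities already supplied by Theorem~\ref{thm:good-linear-paths-equals-ker-equals-vee}. The ``in particular'' clause is likewise obtained identically, by the monotone nesting from Proposition~\ref{prop:truncated-filtration}.
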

\begin{proof}
By Proposition \ref{prop:truncated-filtration}, Proposition \ref{prop:analytic-paths-boundedness-criterion}, the results of Section \ref{subsection:bounded-unmodified-existence}, and the remarks above, we have
\[ \frakS_{\ker}(E_0) = \frakS_{C_{-1}}(E_0) \subseteq \frakS_\pw(E_0;tE(t)) \subseteq \frakS_\frakF(\cap_k\ker{E_k})\subseteq\frakS_{\ker}(E_0) \]
yielding the first result. The second follows by noting that, for all \(m,n\) as stated, we have
\[ \frakS_\frakF(\cap_k^{\leq n}\ker(E_k)) \subseteq \frakS_\frakF(\cap_k^{\leq n}\ker(E_k)) \subseteq \frakS_\frakF(\cap_k^{\leq m}\ker(E_k)) \subseteq
\frakS_\frakF(\cap_k^{\leq 0}\ker(E_k)) = \frakS_{\ker}(E_0) \]
providing us the desired equalities.
\end{proof}
\begin{corollary}\label{corollary:good-path-rigidity-in-kernels}
	Let \(X\) be a Banach space, and let \(E_0\in\sfB(X)\) be such that there exists a good path \(E_0+tE(t) = \sum_{k=0}^\infty E_kt^k\). Then there exists some $n > 0$ such that
	\[ \forall m < n,\, \ker{E_0}\subseteq\ker{E_m},\quad\text{and}\quad \ker{E_0}\cap\ker{E_n} = 0. \]
\end{corollary}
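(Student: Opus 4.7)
The plan is to derive the corollary directly from the formal identity satisfied by the good path and its inverse, rather than routing through the full force of Theorem \ref{thm:good-path-truncation-rigidity}. Writing the inverse as $t^{-1}C_{-1} + C(t)$ with $C(t) = \sum_{k\geq 0}C_k t^k$ analytic, I would expand
\[ (t^{-1}C_{-1} + C(t))(E_0 + tE(t)) \;=\; \id_X \]
as a formal Laurent series in $t$ and compare coefficients. The $t^{-1}$ coefficient just recovers the relation $C_{-1}E_0 = 0$ from Proposition \ref{prop:general-good-paths-kill-Z}. The crucial input is the $t^0$ coefficient, which reads
\[ C_{-1}E_1 + C_0 E_0 \;=\; \id_X. \]

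The next step is to restrict this identity to $\ker E_0$: the second term vanishes on $\ker E_0$, leaving $C_{-1}E_1 x = x$ for every $x\in\ker E_0$. This immediately says that $E_1$ acts injectively on $\ker E_0$, because any $x\in\ker E_0 \cap \ker E_1$ satisfies $x = C_{-1}E_1 x = C_{-1}\cdot 0 = 0$. Therefore $\ker E_0 \cap \ker E_1 = 0$.

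With that in hand, the corollary follows by simply setting $n = 1$. The condition $\ker E_0 \subseteq \ker E_m$ for all $m < n = 1$ collapses to the tautology $\ker E_0 \subseteq \ker E_0$, while the second condition $\ker E_0 \cap \ker E_n = 0$ is exactly the injectivity statement just proved. There is no real obstacle in this proof; the only subtlety is identifying the $t^0$ coefficient of the inverse equation as the carrier of the structural information. The perhaps surprising feature is that $n = 1$ always suffices, so the filtration of successive kernel intersections collapses in a single step from $\ker E_0$ to $0$. This is entirely consistent with the message of Theorem \ref{thm:good-path-truncation-rigidity} that the boundedness behaviour of a good path is controlled by its linear part $E_0 + tE_1$, for which invertibility forces $\ker E_0 \cap \ker E_1 = 0$ directly.
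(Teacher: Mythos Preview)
Your proof is correct and takes a genuinely different, more elementary route than the paper. The paper deduces the corollary from Theorem \ref{thm:good-path-truncation-rigidity}: it uses the equality \(\frakS_\frakF(\cap_k^{\leq 1}\ker E_k) = \frakS_{\ker}(E_0)\) to argue that \(\ker E_0 \cap \ker E_1\) cannot be a nonzero proper subspace of \(\ker E_0\), giving a dichotomy (either \(\ker E_0 \subseteq \ker E_1\) or \(\ker E_0 \cap \ker E_1 = 0\)), and then iterates. You instead read off the single identity \(C_{-1}E_1 + C_0 E_0 = \id_X\) from the \(t^0\) coefficient of the inverse equation and restrict to \(\ker E_0\), which immediately forces \(\ker E_0 \cap \ker E_1 = 0\). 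This avoids the \(\frakS_\frakF\) machinery entirely and works cleanly in any Banach space without appeal to invariant-subspace arguments.

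Your approach also yields a sharper conclusion than the paper states or proves: \(n = 1\) \emph{always} suffices. In particular, the first branch of the paper's dichotomy (\(\ker E_0 \subseteq \ker E_1\) with \(\ker E_0 \neq 0\)) can never occur for a good path, so the iteration in the paper's proof is in fact vacuous. This is a nice strengthening worth recording explicitly: for any good path \(\sum_{k\geq 0} E_k t^k\), one has \(\ker E_0 \cap \ker E_1 = 0\). The paper's route has the advantage of fitting into the broader filtration narrative of Section \ref{subsection:pathwise-boundedness}, but yours is the more direct and more informative argument for this particular corollary.
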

\begin{proof}
Apply the second statement in Theorem \ref{thm:good-path-truncation-rigidity}. In particular, since
\[ \frakS_\frakF(\cap_k^{\leq 1}\ker(E_k)) = \frakS_\frakF(\cap_k^{\leq 0}\ker(E_k)) = \frakS_{\ker}(E_0), \]
in order to avoid a strictly non-trivial condition, we must either have \(\ker{E_0}\subseteq \ker{E_1}\), or we must have \(\ker{E_0}\cap\ker{E_1} = 0\). In the latter case, we are done; in the former,
we repeat the same reasoning. Eventually, this must terminate: if it did not, we would have \(\ker(E_0)\subseteq \ker(E_k)\) for every \(E_k\), in which case the path could not be invertible.
\end{proof}
\begin{remark}
	Consider two paths \(U_k^{(1)} \to Z\) and \(U_k^{(2)}\to Z\). Then one may form a new path to \(Z\) given by
	\[ (U_k^{(1,2)})_{k\geq 1} := (U_1^{(1)}, U_1^{(2)}, U_2^{(1)}, U_2^{(2)}, \ldots, U_k^{(1)}, U_k^{(2)},\ldots), \]
	that is, by intertwining the paths given by \(U_k^{(1)}\) and \(U_k^{(2)}\). The pathwise algebra \(\frakS_{\pw}(Z; U_k^{(1,2)})\) then acts like taking the intersection between \(\frakS_\pw(Z;U_k^{(1)})\) and
	\(\frakS_\pw(Z;U_k^{(2)})\). Furthermore, the same general trick can be done with a countable sequence of paths \(U_k^{(n)}\), \(n \geq 1\), by considering the path
	\[ U_k^{(\geq 0)} := (U_1^{(1)}, U_1^{(2)}, U_2^{(1)}, U_1^{(3)}, U_2^{(2)}, U_3^{(1)}, U_1^{(4)}, U_2^{(3)}, U_3^{(2)}, U_4^{(1)}, \ldots). \]
\end{remark}
\begin{remark}
	While it is generally hard to produce elements of \(\frakS_\pw(Z;U_k)\) for some \(U_k\to Z\), there are certain simple conditions that can be used to produce examples, at least in principle. Notably,
	if \(A\) satisfies \(U_n A = B U_n\), then \(A \in \frakS_\pw(Z;U_k)\); in particular, then one simply has that \(U_nAU_n^{-1} = B\) is constant.
\end{remark}
\begin{remark}
	There is a more general approach one can use to determine \(\frakS^\varphi_{\pw}(Z;U_k)\) in some cases. Let \(X\) be a Banach space, let \(\varphi\!:\sfB(X)\to\sfB(X)\) be a bounded linear operator,
	and let \(U\in\sfB(X)^\times\) be an invertible operator. To this, we can associate a bounded operator
	\[ \Omega_{\varphi,U}\!:\sfB(X)\to\sfB(X),\quad  A\mapsto \varphi(UAU^{-1}). \]
	In particular, if \(U_k\to Z\) is some convergent sequence of invertible operators on \(X\), then one obtains from this a sequence of bounded operators \(\Omega^{(k)} := \Omega_{\varphi,U_k}\)
	on \(\sfB(X)\). Taking the limit, one obtains an \emph{unbounded} operator \(\Omega\) on \(\sfB(X)\), and the task of determining \(\frakS^\varphi_{\pw}(Z;U_k)\) is exactly that of determining the
	domain of \(\Omega\).

	If \(X\) is \emph{finite-dimensional,} this can in principle be done computationally on a case-by-case basis. Picking a basis on \(X\cong\C^n\), one obtains a basis on \(\sfB(X)\cong\End(\C^n)\)
	given by the elementary matrices, and the operators \(\Omega^{(k)}\) can be expressed as \(n^2\times n^2\) matrices under this basis, with entries \(\omega^{(k)}_{ij}\). By the equivalence of norms in finite-dimensional spaces, we
	can assume that the norm on \(\sfB(X)\) is the \(L^{\infty}\)-norm on \(\C^{n^2}\), in which case
	\[ \lim_{k\to\infty}\|\Omega^{(k)}A\| < \infty \iff \lim_{k\to\infty} \max_{1\leq i \leq n^2} \left| \sum_{j=1}^{n^2}\omega^{(k)}_{ij}A_{(j)} \right| < \infty \]
	where by \(A_{(j)}\) we mean the \(j\)th coefficient of \(A\) as a vector in \(\sfB(X)\cong\C^{n^2}\). When everything is explicit, this is something which could be checked on a computer, as it corresponds
	to what is essentially a system of linear constraints.
\end{remark}
\begin{remark}
	If one constrains the situation even more, to the case of a finite-dimensional \(X\) and a \emph{polynomial} path \(Z+tE(t)\) (i.e.\ where \(E(t)\) is a polynomial in \(t\) with coefficients in \(\sfB(X)\cong\End(\C^n)\)),
	then one can characterize \(\frakS^\varphi_{\pw}(Z;Z+tE(t))\) in a yet simpler way. In general, for any path \(Z+tE(t)\), since for all (sufficiently small) \(t > 0\) the matrix \(Z+tE(t)\) is invertible, we can observe that
	\[ \adj(Z+tE(t))\cdot (Z+tE(t)) = \det(Z+tE(t))\cdot I \implies (Z+tE(t))^{-1} = \frac{\adj(Z+tE(t))}{\det(Z+tE(t))}  \]
	where \(\adj(-)\) denotes the adjucate matrix. In particular,
	\[ \lim_{t\to 0}\| (Z+tE(t)) A (Z+tE(t))^{-1} \| = \lim_{t\to 0} \frac{ \| (Z+tE(t))A\adj(Z+tE(t)) \| }{\left|\det(Z+tE(t))\right|} < \infty \]
	happens if and only if
	\[ \| (Z+tE(t))A\adj(Z+tE(t)) \| = O(\det(Z+tE(t)))\quad \text{as } t\to 0. \]
	When \(E(t)\) is chosen to be a polynomial in \(t\) (and the norm is chosen appropriately), both sides of the above will be polynomials in \(t\), and thus comparing
	growth rates comes down purely to comparing which one has the lowest degree power of \(t\) with a non-zero coefficient. This means that it is possible, in a situation where everything
	is explicit, to completely determine necessary and sufficient conditions for a matrix \(A\) to be bounded along the path \(Z+tE(t)\).
\end{remark}

\subsection{A Preorder on Bounded Linear Maps}\label{subsection:preorder}
In the preceding sections, we have focused our attention solely on \(\frakS_\cup(-)\). We will now turn to the harder question of understanding \(\frakS_\cup^\varphi(-)\). To help us along
with this, and to give ourselves a framework in which to place this question, we will note that
there is some interesting behaviour exhibited when \emph{comparing} different choices of \(\varphi\). This will allow us to answer certain cases of Question \ref{question:hadamard-question}.
To study this more systematically, we will now introduce a relation to encode these comparisons in a very general context. Most of this will not be necessary for our purposes.

\begin{notation}\label{notation:preorder-relation}
	Let \(V\) be a Banach algebra, and \(X\), \(Y\) be Banach spaces. Let \(\varphi\!:V\to X\) and \(\psi\!:V\to Y\) be bounded linear maps. We define the relation \(\preceq\) by \(\varphi\preceq\psi\) if and only if for all \(a\in V\),
	there exists a right-continuous order-preserving \(f_a\!:\R_{\geq 0}\to\R_{\geq 0}\) depending only on the conjugacy class of \(a\), such that \(\|\varphi(a)\|\leq f_a(\|\psi(a)\|)\). If \(\varphi \preceq \psi\)
	and \(\psi \preceq \varphi\), then we write \(\varphi\simeq\psi\).
\end{notation}

\begin{proposition}
	The relation \(\preceq\) is a preorder, i.e.\ it is reflexive and transitive. In particular, \(\simeq\) is an equivalence relation.
\end{proposition}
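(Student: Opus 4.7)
The plan is to verify reflexivity and transitivity in turn, after which the fact that $\simeq$ is an equivalence relation will follow immediately since $\simeq$ is, by definition, the symmetrization of $\preceq$.

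For reflexivity, I would simply observe that the identity $\mathrm{id}\!:\R_{\geq 0}\to\R_{\geq 0}$ is right-continuous and order-preserving, and, being independent of $a$, certainly depends only on the conjugacy class of $a$. Taking $f_a := \mathrm{id}$ for every $a$, we get $\|\varphi(a)\| = \mathrm{id}(\|\varphi(a)\|)$, so $\varphi \preceq \varphi$.

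For transitivity, suppose $\varphi \preceq \psi$ and $\psi \preceq \chi$, witnessed by families $(f_a)_{a\in V}$ and $(g_a)_{a \in V}$ respectively. The natural candidate is $h_a := f_a \circ g_a$, and I would verify that this works: using order-preservation of $f_a$ together with the two given inequalities, one obtains
\[ \|\varphi(a)\| \leq f_a(\|\psi(a)\|) \leq f_a\bigl(g_a(\|\chi(a)\|)\bigr) = h_a(\|\chi(a)\|). \]
The conjugacy-class dependence is inherited since both $f_a$ and $g_a$ depend only on the conjugacy class of $a$. The main verification is that the composition of right-continuous order-preserving functions is again right-continuous and order-preserving. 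Order-preservation is immediate; right-continuity is the only subtle point. Given a sequence $y_n \to y$ with $y_n \geq y$, monotonicity of $g_a$ gives $g_a(y_n) \geq g_a(y)$, while right-continuity of $g_a$ at $y$ gives $g_a(y_n) \to g_a(y)$; feeding this into right-continuity of $f_a$ at $g_a(y)$ (which, combined with the inequality $g_a(y_n) \geq g_a(y)$, controls $f_a(g_a(y_n))$ via the one-sided limit definition) yields $f_a(g_a(y_n)) \to f_a(g_a(y))$, as required.

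The mildly delicate step, and really the only place where the precise definition of $\preceq$ is being used in a non-trivial way, is this right-continuity of the composition; everything else is essentially a matter of unwinding definitions. Once transitivity is in hand, symmetry of $\simeq$ is built into its definition, and reflexivity and transitivity of $\simeq$ follow formally from the corresponding properties of $\preceq$, completing the proof that $\simeq$ is an equivalence relation.
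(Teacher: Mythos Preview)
Your proposal is correct and follows the same approach as the paper: witness reflexivity with the identity and transitivity with the composition $f_a\circ g_a$. The paper's own proof is extremely terse and does not explicitly verify that the composition remains right-continuous and order-preserving; you fill in this detail carefully, which is a welcome addition but not a genuinely different route.
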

\begin{proof}
That \(\psi\preceq\psi\) for all \(\psi\!:V\to X\) is clear. If \(\psi \preceq \psi' \preceq \psi''\) is exhibited by the sets of functions \(\{f_a\}_a\) and \(\{g_a\}_a\), then
\[ \|\psi(a)\|\leq f_a(\|\psi'(a)\|) \leq f_a(g_a(\|\psi''(a)\|)) \]
so that \(\psi \preceq \psi''\). That \(\simeq\) is now an equivalence relations follows trivially.
\end{proof}

The below proposition will be useful, at least in principle, to reduce the complexity of our situation (primarily for finite-dimensional spaces). To be as general as possible, we introduced a relation on a very loose set of maps (and, in fact, being
a \emph{set} here is subject to some set-theoretical subtleties), but we are primarily interested in the case when \(V\) is a \emph{finite-dimensional} Banach algebra. In this case, one can reduce to considering linear endomorphisms of \(V\) itself;
see Corollary \ref{corollary:finite-dimensional-banach-algebras-only-require-endomorphisms}.

\begin{proposition}\label{prop:preceq-composition-relations-1}
	Let \(V\) be a Banach algebra, and suppose we have bounded linear maps
	\begin{diagram*}
		V \ar[r,"\varphi"] & X \ar[r, "\rho"] & X'.
	\end{diagram*}
	Then \(\rho\circ\varphi \preceq \varphi\). If, in additon, there is some constant \(C>0\) such that \(\|x\|_X \leq C\|\rho(x)\|_{X'}\) (for example, if \(\rho\) is an isometry), then \(\rho\circ\varphi \simeq \varphi\).
\end{proposition}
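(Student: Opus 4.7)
The plan is to verify both statements by producing explicit linear functions $f_a$ that exhibit the required preorder relations, using only the boundedness of $\rho$ and, where needed, the hypothesis that $\rho$ is bounded below.

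For the first assertion $\rho \circ \varphi \preceq \varphi$, I would observe that since $\rho\!:X \to X'$ is a bounded linear map, we have for every $a \in V$ the estimate $\|\rho(\varphi(a))\|_{X'} \leq \|\rho\|_{\op} \cdot \|\varphi(a)\|_X$. I would therefore define $f_a\!:\R_{\geq 0} \to \R_{\geq 0}$ by $f_a(t) := \|\rho\|_{\op}\,t$, which is continuous (hence right-continuous) and order-preserving. Since $f_a$ does not depend on $a$ at all, it trivially depends only on the conjugacy class of $a$, which is what the definition of $\preceq$ in Notation \ref{notation:preorder-relation} demands.

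For the second assertion, supposing $\|x\|_X \leq C\|\rho(x)\|_{X'}$ for all $x \in X$, I would apply this inequality to $x = \varphi(a)$ to obtain $\|\varphi(a)\|_X \leq C\|\rho(\varphi(a))\|_{X'}$. Defining $f'_a(t) := Ct$ gives a right-continuous order-preserving function (again independent of $a$) witnessing $\varphi \preceq \rho\circ\varphi$, and combined with the first part this yields $\varphi\simeq\rho\circ\varphi$. In the particular case that $\rho$ is an isometry, one may take $C = 1$.

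There is no real obstacle here: the whole content is the elementary fact that the preorder $\preceq$ is compatible with composition on the left by bounded (respectively, bounded-below) linear maps, and the linear choices $f_a(t) = \|\rho\|_{\op}\,t$ and $f'_a(t) = Ct$ provide witnesses immediately. The only thing worth flagging is that the verification confirms a uniform (conjugacy-class-independent) dependence, which is strictly stronger than what the definition of $\preceq$ asks for and will be useful downstream when transferring boundedness statements between $\frakN_\cup^\varphi$ and $\frakN_\cup^{\rho\circ\varphi}$.
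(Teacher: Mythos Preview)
Your proof is correct and is essentially identical to the paper's: both use the bound $\|(\rho\circ\varphi)(a)\| \leq \|\rho\|\cdot\|\varphi(a)\|$ for the first direction and $\|\varphi(a)\| \leq C\|(\rho\circ\varphi)(a)\|$ for the second, with you being slightly more explicit in naming the linear witnesses $f_a(t)=\|\rho\|_{\op}t$ and $f'_a(t)=Ct$.
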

\begin{proof}
We have that for all \(a\in V\),
\[ \|(\rho\circ\varphi)(a)\| \leq \|\rho\|\cdot\|\varphi(a)\|. \]
Therefore, \(\rho\circ\varphi\preceq\varphi\). For the final statement, note that, given the assumptions, we have
\[ \| \varphi(a) \|_X \leq C\cdot\|(\rho\circ\varphi)(a)\| \]
which supplies the desired \(\varphi\preceq\rho\circ\varphi\).
\end{proof}
\begin{corollary}\label{corollary:finite-dimensional-banach-algebras-only-require-endomorphisms}
	Let \(V\) be a finite-dimensional Banach algebra, let \(X\) be a Banach space, and consider a bounded linear map \(\varphi\!: V\to X\). Then there exists a linear map \(\varphi'\!:V\to V\)
	such that \(\varphi\simeq\varphi'\).
\end{corollary}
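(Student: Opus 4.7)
The plan is to factor $\varphi$ through its image and then transport that finite-dimensional image into $V$ itself via an injective linear map. Since $V$ is finite-dimensional, so is $W := \varphi(V) \subseteq X$, with $\dim W \leq \dim V$. Fix any injective linear map $\sigma : W \to V$---such a $\sigma$ exists by a simple dimension count---and set $\varphi' := \sigma \circ \varphi : V \to V$, which is well defined since $\varphi(a) \in W$ for every $a \in V$. The map $\varphi'$ is automatically bounded, being the composition of two bounded linear maps (the continuity of $\sigma$ is automatic since $W$ is finite-dimensional).

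To verify $\varphi \simeq \varphi'$, I would use that $\sigma$ is bounded both above and below on $W$: by equivalence of norms on the finite-dimensional space $W$, there exist constants $c, C > 0$ such that $c\,\|w\| \leq \|\sigma(w)\| \leq C\,\|w\|$ for all $w \in W$. Specializing to $w = \varphi(a)$ yields
\[ c\,\|\varphi(a)\| \;\leq\; \|\varphi'(a)\| \;\leq\; C\,\|\varphi(a)\| \qquad \text{for every } a\in V. \]
Taking $f_a(t) = Ct$ in one direction and $f_a(t) = t/c$ in the other---both constant in $a$ (hence trivially conjugacy-class-independent), right-continuous, and order-preserving functions $\R_{\geq 0} \to \R_{\geq 0}$---we obtain $\varphi' \preceq \varphi$ and $\varphi \preceq \varphi'$ respectively, as required by Notation \ref{notation:preorder-relation}.

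The only subtlety worth flagging is that one might hope to invoke the second half of Proposition \ref{prop:preceq-composition-relations-1} directly by extending $\sigma$ to some bounded linear $\rho : X \to V$ (say, by composing $\sigma$ with a Hahn--Banach-produced projection $X \twoheadrightarrow W$). However, the hypothesis $\|x\|_X \leq C\|\rho(x)\|_V$ in that proposition forces $\rho$ to be injective on all of $X$, which is impossible whenever $X$ is infinite-dimensional. The direct verification above sidesteps this issue by restricting the two-sided norm control to $W = \varphi(V)$, where it comes for free from finite-dimensionality---and this restriction is all that is actually needed, since the points $\rho$ is ever applied to in the argument lie in $W$.
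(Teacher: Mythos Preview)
Your proof is correct and follows essentially the same route as the paper: factor through the finite-dimensional image $W=\varphi(V)$, embed $W$ into $V$ via an injective linear map, and use equivalence of norms on $W$ to get two-sided control. The only cosmetic difference is that the paper packages the argument as two applications of Proposition~\ref{prop:preceq-composition-relations-1} (first the isometric inclusion $\img(\varphi)\hookrightarrow X$ gives $\varphi\simeq\tilde\varphi$, then the embedding $\iota\!:\img(\varphi)\to V$ gives $\tilde\varphi\simeq\iota\circ\tilde\varphi$), whereas you verify the norm inequalities directly; your remark about the difficulty of extending $\sigma$ to all of $X$ is thus slightly beside the point, since the paper never attempts this---it applies the proposition with domain $\img(\varphi)$ rather than $X$.
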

\begin{proof}
Since \(V\) is finite dimensional, \(\varphi\) has finite dimensional image in \(X\), and therefore we may factor it as
\begin{diagram*}
	V \ar[r,"\tilde\varphi"] & \img(\varphi) \ar[r, hook] & X
\end{diagram*}
so that \(\varphi\simeq\tilde\varphi\) since the inclusion \(\img(\varphi)\to X\) is an isometry (when \(\img(\varphi)\) is endowed with the norm induced by \(X\)). Now, since \(\img(\varphi)\) is finite-dimensional of dimension \(\leq\dim(V)\),
we can find some embedding \(\iota\!:\img(\varphi)\to V\) which, by the equivalence of norms in finite-dimensional vector spaces, will satisfy the conditions of Proposition \ref{prop:preceq-composition-relations-1}.
Setting \(\varphi' = \iota\circ\tilde\varphi\) completes the proof.
\end{proof}
\begin{lemma}
	Let \(V\) be a Banach algebra, and let \(\varphi\!:V\to V\) be a bounded linear map. Then \(\varphi\preceq\id\).
\end{lemma}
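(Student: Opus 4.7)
The plan is to exhibit, for each $a \in V$, a suitable function $f_a$ witnessing $\varphi \preceq \id$. The natural candidate is the linear function $f_a(r) := \|\varphi\| \cdot r$, which is independent of $a$ entirely. Since the condition in Notation \ref{notation:preorder-relation} requires $f_a$ to depend only on the conjugacy class of $a$, and this candidate depends on $a$ not at all, that requirement is satisfied trivially.

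Next, I would verify the remaining properties of $f_a$. It maps $\R_{\geq 0}$ into $\R_{\geq 0}$ because $\|\varphi\| \geq 0$; it is order-preserving (non-strictly monotone) and right-continuous (in fact continuous on all of $\R_{\geq 0}$). Finally, the required inequality is $\|\varphi(a)\| \leq f_a(\|\id(a)\|) = \|\varphi\| \cdot \|a\|$, which is nothing other than the defining inequality of a bounded linear map.

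There is no substantive obstacle: the lemma is essentially a tautological consequence of boundedness. The only mild subtlety is to notice that the conjugacy-class condition permits constant (in $a$) choices of $f_a$, so one does not need to work with orbits of the conjugation action at all. If one preferred a more ``honest'' choice that genuinely uses the structure of each conjugacy class $[a]$, one could alternatively take $f_a(r) := \sup\{\|\varphi(a')\| : a' \sim a,\, \|a'\| \leq r\}$, which is bounded above by $\|\varphi\| r$ and hence finite, order-preserving by construction, and class-invariant by definition; but this elaboration is unnecessary for the statement.
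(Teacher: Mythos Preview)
Your proof is correct and follows exactly the same approach as the paper: both use the boundedness inequality $\|\varphi(a)\| \leq \|\varphi\|\cdot\|a\|$ to immediately conclude $\varphi \preceq \id$, with you merely spelling out explicitly that the witnessing function is $f_a(r) = \|\varphi\|\cdot r$.
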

\begin{proof}
Since \(\varphi\) is continuous, we have for all \(a\in V\) that
\[ \|\varphi(a)\| \leq \|\varphi\|\cdot \| a \| \]
which immediately tells us that \(\varphi\preceq\id\).
\end{proof}

For our purposes, the below Lemma \ref{lemma:preceq-yields-finite-implies-finite} is fundamental: it is what allows us to relate the relation \(\preceq\) to the questions investigated in this document.

\begin{lemma}\label{lemma:right-continuous-monotone-limsup-inequality}
	Let \(f\!:\R\to\R\) be an order-preserving right-continuous function. Then, for any sequence \((x_k)_{k=1}^\infty\), we have
	\[ \limsup_{k\to\infty}f(x_k) \leq f(\limsup_{k\to\infty}x_k). \]
\end{lemma}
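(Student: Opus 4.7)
The plan is straightforward: reduce everything to a standard $\epsilon$-$\delta$ argument combining right-continuity, monotonicity, and the definition of $\limsup$. Set $L := \limsup_{k\to\infty} x_k$. If $L = +\infty$, then the right-hand side should be interpreted as $+\infty$ (or the inequality is trivially vacuous since $f$ is defined on $\R$); so the only substantive case is $L < \infty$, which I handle below.

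First I would fix $\epsilon > 0$ and invoke right-continuity of $f$ at $L$: there exists $\delta > 0$ such that $f(y) < f(L) + \epsilon$ for all $y \in [L, L+\delta)$. Next, by the definition of $\limsup$, there exists $N$ such that $x_k < L + \delta$ for all $k \geq N$. Splitting into two cases according to whether $x_k \leq L$ or $L < x_k < L+\delta$, monotonicity gives $f(x_k) \leq f(L)$ in the first case, and right-continuity gives $f(x_k) < f(L) + \epsilon$ in the second. Either way, $f(x_k) < f(L) + \epsilon$ for all $k \geq N$, so $\sup_{k\geq N} f(x_k) \leq f(L) + \epsilon$, and hence
\[ \limsup_{k\to\infty} f(x_k) \leq f(L) + \epsilon. \]
Letting $\epsilon \to 0$ then yields the desired inequality.

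There is no real obstacle here; the only subtlety is making sure both directions of the comparison $x_k$ versus $L$ are handled, since right-continuity alone gives control only from above. Monotonicity fills the gap on the side $x_k \leq L$. I would also briefly remark that neither right-continuity from the left nor full continuity is needed, which explains why the inequality is one-sided: without left-continuity, the reverse inequality $\liminf f(x_k) \geq f(\liminf x_k)$ is the natural companion but is not what is claimed.
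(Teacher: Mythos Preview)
Your argument is correct. It differs from the paper's in style rather than substance: the paper invokes the one-line characterization ``a monotone function is right-continuous if and only if it preserves infima'', then writes
\[
\limsup_k f(x_k)=\inf_{k_0}\sup_{k\ge k_0}f(x_k)\le \inf_{k_0}f\bigl(\sup_{k\ge k_0}x_k\bigr)=f\bigl(\inf_{k_0}\sup_{k\ge k_0}x_k\bigr)=f(\limsup_k x_k),
\]
using monotonicity for the inequality and infima-preservation for the penultimate equality. Your direct $\epsilon$--$\delta$ argument unpacks exactly the same two ingredients (monotonicity handling $x_k\le L$, right-continuity handling $L<x_k<L+\delta$) without appealing to the abstract characterization; it is slightly longer but entirely self-contained. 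One small remark: you dispose of $L=+\infty$ but say nothing about $L=-\infty$; the paper is equally silent there, and in the intended application (norms, with $f_a:\R_{\ge 0}\to\R_{\ge 0}$) the case does not arise, so no harm is done.
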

\begin{proof}
An order-preserving (i.e.\ monotone) function is right continuous if and only if it preserves infima. Therefore, we have
\[ \limsup_{k\to\infty}f(x_k) = \inf_{k_0\geq 0}\{ \sup_{k\geq k_0}f(x_k) \} \leq \inf_{k_0\geq k}\{ f(\sup_{k\geq k_0}x_k) \} = f(\inf_{k_0\geq k}\{ \sup_{k\geq k_0}x_k \}) = f(\limsup_{k\to\infty}x_k) \]
as desired.
\end{proof}
\begin{lemma}\label{lemma:preceq-yields-finite-implies-finite}
	Let \(V\) be a Banach algebra, and let \(\varphi,\psi\!:V\to V\) be bounded linear maps. Then the following statements are equivalent.
	\begin{enumerate}[label=(\alph*)]
	\item The two maps satisfy \(\varphi\preceq\psi\).
	\item For all \(a\in V\), and any sequence of invertibles \(u_k\in V\), we have
	\[ \limsup_{k\to\infty}\|\psi(u_kau_k^{-1})\| < \infty \implies \limsup_{k\to\infty}\|\varphi(u_kau_k^{-1})\| < \infty. \]
	\end{enumerate}
\end{lemma}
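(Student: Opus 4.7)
The plan is to prove the two implications separately. The direction $(a) \Rightarrow (b)$ will be an immediate consequence of Lemma \ref{lemma:right-continuous-monotone-limsup-inequality}: given the witnessing function $f_a$, one has $\|\varphi(u_k a u_k^{-1})\| \leq f_a(\|\psi(u_k a u_k^{-1})\|)$ for each $k$, so that
\[ \limsup_k \|\varphi(u_k a u_k^{-1})\| \leq \limsup_k f_a\bigl(\|\psi(u_k a u_k^{-1})\|\bigr) \leq f_a\bigl(\limsup_k \|\psi(u_k a u_k^{-1})\|\bigr), \]
and the right-hand side is finite whenever $\limsup_k \|\psi(u_k a u_k^{-1})\| < \infty$.

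For the more substantive $(b) \Rightarrow (a)$ direction, the strategy is to manufacture $f_a$ as the pointwise-optimal control extractable from the orbit of $a$. Concretely, I would define
\[ g_a(t) := \sup\{\,\|\varphi(uau^{-1})\| : u\in V^\times,\,\|\psi(uau^{-1})\|\leq t\,\} \]
(with the convention that the empty supremum equals $0$), and then take $f_a(t) := \inf_{s > t} g_a(s)$, the right-continuous envelope. By construction $g_a$ is order-preserving, and it depends only on the conjugacy class of $a$, since for any $v \in V^\times$ the reparametrisation $u \mapsto uv$ is a bijection of $V^\times$ that takes $uau^{-1}$ to $u(vav^{-1})u^{-1}$. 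Hence $f_a$ inherits order-preservation and conjugacy-invariance, is right-continuous by the standard argument ($\inf_{u > t}\inf_{s > u}g_a(s) = \inf_{s > t}g_a(s)$), and dominates $g_a$; the desired inequality $\|\varphi(uau^{-1})\| \leq f_a(\|\psi(uau^{-1})\|)$ then follows directly from the definition of $g_a$ evaluated at $t = \|\psi(uau^{-1})\|$.

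The main obstacle, and really the only non-formal step, is verifying that $f_a(t) < \infty$ for every $t \geq 0$, for which it suffices to check $g_a(t) < \infty$ (since $f_a(t) \leq g_a(s)$ for any $s > t$). I will prove this by contradiction: if $g_a(t_0) = \infty$ for some $t_0$, then for each $k \in \N$ one extracts $u_k \in V^\times$ with $\|\psi(u_k a u_k^{-1})\| \leq t_0$ and $\|\varphi(u_k a u_k^{-1})\| > k$. This sequence satisfies $\limsup_k \|\psi(u_k a u_k^{-1})\| \leq t_0 < \infty$ while $\limsup_k \|\varphi(u_k a u_k^{-1})\| = \infty$, directly violating the hypothesis in $(b)$. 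This contradiction completes the proof.
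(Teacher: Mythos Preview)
Your proof is correct and follows essentially the same approach as the paper: both directions are handled identically, with (a)$\Rightarrow$(b) via Lemma~\ref{lemma:right-continuous-monotone-limsup-inequality}, and (b)$\Rightarrow$(a) by defining the optimal envelope $g_a$ (the paper's $f_a^{\mathrm{pre}}$), passing to its right-continuous regularization, and showing finiteness by contradiction. Your write-up is slightly more explicit about conjugacy-invariance and the right-continuity of the envelope, but the argument is the same.
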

\begin{proof}
To see that (a) implies (b), observe that by Lemma \ref{lemma:right-continuous-monotone-limsup-inequality} we have
\[ \limsup_{k\to\infty}\|\varphi(u_kau_k^{-1})\| \leq \limsup_{k\to\infty}f_a\!\left(\|\psi(u_kau_k^{-1})\|\right) \leq f_a\!\left(\limsup_{k\to\infty}\|\psi(u_kau_k^{-1})\|\right) < \infty \]
which is the statement we wanted.

To show that (b) implies (a), we must, for each \(a\in V\), construct a function \(f_a\!:\R_{\geq 0}\to\R_{\geq 0}\) with some properties, and in particular it must not depend
on the conjugacy class of \(a\). Thus, consider the functions
\[ f_a^\pre(x) := \sup\left\{ \|\varphi(uau^{-1})\| \mid u\in V\text{ invertible, }\|\psi(uau^{-1})\| \leq x \right\} \]
and
\[ f_a(x_0) := \lim_{x\to x_0^+}f_a^\pre(x). \]
We must show that \(f_a^\pre\) is well-defined; in other words, we must show that it is finite. If there is some \(x\in\R_{\geq 0}\) such that \(f_a^\pre(x)=\infty\), then there must
in particular exist some sequence of invertibles \(u_k\in V\) such that \(\lim_{k\to\infty}(\|\varphi(u_kau_k^{-1})\|) = \infty\) and such that, for all \(k\), we have \(\|\psi(u_kau_k^{-1})\|\leq x\).
However, by the latter condition we have
\[ \limsup_{k\to\infty}\|\psi(u_kau_k^{-1})\| < \infty \implies  \limsup_{k\to\infty}\|\varphi(u_kau_k)\| < \infty \]
which contradicts the aformentioned divergence. Therefore, we must have \(f_a^\pre(x) < \infty\) for all \(x\in\R_{\geq 0}\). As an immediate corollary of the definition, we see that \(f_a\)
is right-continuous and order-preserving. Finally, we have that, for all invertible \(u\in V\),
\[ \|\varphi(uau^{-1})\| \leq f_a^\pre(\|\psi(uau^{-1})\|) \leq f_a(\|\psi(uau^{-1})\|) \]
so that \(\varphi\preceq\psi\) as desired.
\end{proof}

In particular, we can now relate pathwise \(\varphi\)-boundedness and pathwise \(\psi\)-boundedness whenever \(\varphi\) and \(\psi\) are related, which is the concrete reason
one might care about the \(\preceq\) relation in our context.
\begin{proposition}\label{prop:hadamard-pathwise-inclusions}
	Let \(V\) be a Banach algebra, let \(z\in V\), and let \(u_k\to z\) be a sequence of invertibles. If \(\varphi\preceq\psi\), then
	\[ \frakS_{\pw}^{\psi}(z;u_k)\subseteq\frakS_{\pw}^\varphi(z;u_k). \]
	In particular,
	\begin{enumerate}[label=(\alph*)]
	\item if \(\varphi\simeq\psi\), then \(\frakS_{\pw}^{\psi}(z;u_k)=\frakS_{\pw}^\varphi(z;u_k)\), and
	\item for all \(\varphi\!:V\to V\), we have \(\frakS_{\pw}(z;u_k) \subseteq\frakS_{\pw}^\varphi(z;u_k)\).
	\end{enumerate}
\end{proposition}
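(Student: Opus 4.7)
The plan is a direct application of the (a)$\Rightarrow$(b) implication in Lemma \ref{lemma:preceq-yields-finite-implies-finite}, with Lemma \ref{lemma:right-continuous-monotone-limsup-inequality} supplying the key analytic step. Recall that $a \in \frakS_\pw^\varphi(z;u_k)$ encodes the statement that $\|\varphi(u_kau_k^{-1})\|$ stays bounded as $k\to\infty$, so comparing $\frakS_\pw^\psi(z;u_k)$ and $\frakS_\pw^\varphi(z;u_k)$ comes down to comparing $\psi$-boundedness with $\varphi$-boundedness along the one fixed sequence $(u_k)$.

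Concretely, I would fix $a \in \frakS_\pw^\psi(z;u_k)$, so that $\limsup_k \|\psi(u_kau_k^{-1})\| < \infty$. The hypothesis $\varphi \preceq \psi$ produces a right-continuous order-preserving $f_a\!:\R_{\geq 0}\to\R_{\geq 0}$ that depends only on the conjugacy class of $a$; in particular, $f_{u_kau_k^{-1}} = f_a$, so the inequality $\|\varphi(u_kau_k^{-1})\| \leq f_a(\|\psi(u_kau_k^{-1})\|)$ holds for every $k$. Taking $\limsup$ of both sides and pushing it inside $f_a$ via Lemma \ref{lemma:right-continuous-monotone-limsup-inequality} yields $\limsup_k \|\varphi(u_kau_k^{-1})\| \leq f_a(\limsup_k \|\psi(u_kau_k^{-1})\|) < \infty$, i.e., $a \in \frakS_\pw^\varphi(z;u_k)$. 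Part (a) is then immediate by applying the main statement with the roles of $\varphi$ and $\psi$ reversed; part (b) follows from the unnamed lemma immediately preceding the proposition, which asserts that every bounded linear $\varphi\!:V\to V$ satisfies $\varphi \preceq \id_V$, together with the notational identity $\frakS_\pw^{\id_V}(z;u_k) = \frakS_\pw(z;u_k)$.

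There is no substantive obstacle here: the heavy lifting has already been done in Lemma \ref{lemma:preceq-yields-finite-implies-finite}, and the present proposition is a short corollary. The only point of mild care is reading the $\lim$ in Definition \ref{definition:frakS_pw} as the appropriate $\limsup < \infty$ boundedness condition (which is how the rest of the paper evidently treats it, and is precisely the form in which Lemma \ref{lemma:preceq-yields-finite-implies-finite} is stated); once that is settled, the argument reduces to a single chain of inequalities.
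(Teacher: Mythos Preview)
Your proposal is correct and takes essentially the same approach as the paper: both invoke Lemma \ref{lemma:preceq-yields-finite-implies-finite} (the (a)$\Rightarrow$(b) direction) to get the main inclusion, and then note that (a) and (b) follow trivially. You simply unpack that lemma's proof a little more explicitly via $f_a$ and Lemma \ref{lemma:right-continuous-monotone-limsup-inequality}, which is fine.
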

\begin{proof}
Suppose that \(\varphi\preceq\psi\), and that \(a\in\frakS_{\pw}^\psi(z;u_k)\).Then, by Lemma \ref{lemma:preceq-yields-finite-implies-finite},
\[ \lim_{k\to\infty}\|\psi(u_kau_k^{-1})\| < \infty \implies \lim_{k\to\infty}\|u_kau_k^{-1}\| < \infty \]
which means \(a\in\frakS^{\varphi}_{\pw}(z;u_k)\). The remaining assertions follow trivially.
\end{proof}
\begin{corollary}\label{corollary:phi-simeq-psi-implies-same-S}
	Let \(V\) be a Banach algebra, and let \(\varphi\preceq\psi\). Then, for all \(z\in V\),
	\[ \frakS^\psi(z) \subseteq \frakS^\varphi(z)\quad\text{and}\quad \frakS_\cup^\psi(z) \subseteq \frakS_\cup^\varphi(z). \]
	In particular, if \(\varphi\simeq\psi\), then
	\[ \frakS^\psi(z) = \frakS^\varphi(z)\quad\text{and}\quad \frakS_\cup^\psi(z) = \frakS_\cup^\varphi(z). \]
\end{corollary}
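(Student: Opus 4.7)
The plan is to deduce the corollary as an immediate consequence of two results already in hand: the pathwise inclusion provided by Proposition \ref{prop:hadamard-pathwise-inclusions}, and the description of $\frakS^\varphi_\cap(z)$ and $\frakS_\cup^\varphi(z)$ as an intersection and a union (respectively) of the pathwise sets, given in Proposition \ref{prop:S-and-S_V-as-pathwise-cap-or-cup}. The point is that $\preceq$ comparisons already descend to the pathwise level, so it only remains to pass the resulting set inclusion through an intersection (for the ``$\cap$'' version) or a union (for the ``$\cup$'' version), both of which are monotone with respect to containment.

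More concretely, suppose $\varphi \preceq \psi$. For any sequence of invertibles $u_k \to z$, Proposition \ref{prop:hadamard-pathwise-inclusions} yields $\frakS_\pw^\psi(z;u_k) \subseteq \frakS_\pw^\varphi(z;u_k)$. Intersecting both sides over the class of all such sequences preserves the inclusion, and by Proposition \ref{prop:S-and-S_V-as-pathwise-cap-or-cup} these intersections coincide with $\frakS^\psi_\cap(z)$ and $\frakS^\varphi_\cap(z)$, giving the first inclusion of the statement. Replacing ``intersect'' by ``union'' in the same argument (and again invoking Proposition \ref{prop:S-and-S_V-as-pathwise-cap-or-cup}) yields $\frakS_\cup^\psi(z) \subseteq \frakS_\cup^\varphi(z)$.

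For the final clause, if $\varphi \simeq \psi$ then both $\varphi \preceq \psi$ and $\psi \preceq \varphi$ hold. Applying the inclusion just established in each direction immediately produces the two equalities.

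There is no genuine obstacle here: all of the substantive work has been done in Proposition \ref{prop:hadamard-pathwise-inclusions} (which in turn relies on the $\limsup$-compatibility built into Lemma \ref{lemma:preceq-yields-finite-implies-finite}) and in Proposition \ref{prop:S-and-S_V-as-pathwise-cap-or-cup}. The corollary is a routine monotonicity step, and the only thing to keep track of is that both $\bigcap$ and $\bigcup$ preserve inclusions of indexed families of sets.
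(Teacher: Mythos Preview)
Your proof is correct and follows exactly the same approach as the paper, which simply says to apply Proposition \ref{prop:hadamard-pathwise-inclusions} and Proposition \ref{prop:S-and-S_V-as-pathwise-cap-or-cup}; you have merely spelled out the monotonicity step that the paper leaves implicit.
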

\begin{proof}
Apply Proposition \ref{prop:hadamard-pathwise-inclusions} and Proposition \ref{prop:S-and-S_V-as-pathwise-cap-or-cup}.
\end{proof}

\subsection{Existence With a Modifier}\label{subsection:bounded-modified-existence}
We first apply Corollary \ref{corollary:phi-simeq-psi-implies-same-S} to reduce a particular case of computing \(\frakS_\cup^\varphi(-)\) to computing \(\frakS_\cup(-)\), which we have already done. To do this, we need
a preliminary lemma which tells us that a particularly interesting class of \(\varphi\)'s is equivalent to the identity map. The proof makes use of Gershgorin's circle theorem;
see e.g.\ \cite[p.\ 389,\, Corollary 6.1.3]{horn-johnson-matrix-analysis-2nd-edition}.

\begin{theorem}[Gershgorin's circle theorem]
	Let \(A=(a_{ij})\) be a complex \(n\times n\) matrix, let \(R_i = \sum_{j\not=i}|a_{ij}|\), and let \(D(a_{ii},R_i)\) be the disk of radius \(R_i\) centered at \(a_{ii}\). Then
	all eigenvalues are contained within the region \(\bfG := \bigcup_{i}D(a_{ii},R_i)\). Furthermore, each connected component of \(\bfG\) contains exactly as many eigenvalues (counted
	with their algebraic multiplicty) as it contains \(a_{ii}\)'s.
\end{theorem}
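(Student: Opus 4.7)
The plan is to establish the two assertions separately. The containment of eigenvalues in $\bfG$ admits a direct argument via a maximum-entry trick on an eigenvector, and the counting of eigenvalues per connected component I would prove by a continuity/homotopy argument in which the off-diagonal part of $A$ is scaled down to zero.

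For the containment, suppose $\lambda$ is an eigenvalue with eigenvector $v\ne 0$, and pick an index $i$ with $|v_i|=\max_j|v_j|>0$. Writing out the $i$th coordinate of $Av=\lambda v$ yields $(\lambda-a_{ii})v_i=\sum_{j\ne i}a_{ij}v_j$. Dividing by $v_i$ and using $|v_j/v_i|\le 1$ gives $|\lambda-a_{ii}|\le\sum_{j\ne i}|a_{ij}|=R_i$, so $\lambda\in D(a_{ii},R_i)\subseteq\bfG$.

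For the counting statement, the idea is to homotope the off-diagonal part down to zero. Let $D$ be the diagonal part of $A$ and $B=A-D$, and consider the family $A(t)=D+tB$ for $t\in[0,1]$. At $t=0$ the eigenvalues of $A(t)$ are precisely the diagonal entries $a_{ii}$ (counted with algebraic multiplicity), while $A(1)=A$. The Gershgorin region associated to $A(t)$ is $\bfG(t)=\bigcup_i D(a_{ii},tR_i)\subseteq\bfG$, so by the containment result already proven, every eigenvalue of $A(t)$ lies in $\bfG$ for all $t\in[0,1]$. I would then invoke the classical fact that the eigenvalues of a matrix vary continuously with its entries (more precisely, one can label them so as to obtain continuous functions $\lambda_1(t),\ldots,\lambda_n(t)$ on $[0,1]$): each such continuous curve is confined to $\bfG$, and the connected components of $\bfG$ are separated by a positive distance, so no curve can change components along the homotopy. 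Hence the number of eigenvalues in each connected component is independent of $t$, and evaluating at $t=0$ identifies this number with the number of $a_{ii}$'s lying in that component.

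The main obstacle is the continuity-of-eigenvalues step: one must be careful to state it as a continuous labelling of the full multiset of roots of the characteristic polynomial, not merely a continuous dependence of the spectrum as a set, and then use the positive separation of the connected components of $\bfG$ to conclude that such an eigenvalue-path cannot leap across a gap. The rest is elementary, and once this continuity is in hand the counting assertion is immediate.
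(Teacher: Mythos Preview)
Your proof is correct and is the standard textbook argument for Gershgorin's theorem. Note, however, that the paper does not actually prove this statement: it is quoted as a classical result with a reference to Horn--Johnson, \emph{Matrix Analysis}, and is then used as a black box in the proof of Lemma~\ref{lemma:J-is-equivalent-to-id}. So there is nothing to compare against; your argument is precisely the one found in that reference (maximum-entry trick for the containment, and the homotopy $A(t)=D+tB$ together with continuity of eigenvalues for the component count).
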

\begin{lemma}\label{lemma:J-is-equivalent-to-id}
	Let \(J = \1-I\), and let \(\phi\!:\End(\C^n)\to\End(\C^n)\) be the bounded linear map given by \(\phi\!:A\mapsto J*A\). Then \(\phi\simeq\id\).
\end{lemma}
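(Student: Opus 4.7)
The direction $\phi\preceq\id$ is immediate: since $\phi$ is a bounded linear operator on the finite-dimensional space $\End(\C^n)$, we have $\|\phi(A)\|\le\|\phi\|\cdot\|A\|$, so the linear function $f_A(x)=\|\phi\|\cdot x$ (which manifestly depends only on the conjugacy class of $A$, since it does not depend on $A$ at all) witnesses the relation. The content of the lemma is the reverse direction $\id\preceq\phi$.

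The plan is to exploit the fact that applying $\phi$ to $B=uAu^{-1}$ destroys only the diagonal, and that the diagonal of $B$ is constrained by the spectrum of $A$, which is a conjugation invariant. Concretely, set $B=uAu^{-1}$ and let $\lambda_1,\ldots,\lambda_n$ be the eigenvalues of $A$ (equivalently, of $B$). Gershgorin's theorem tells us that the spectrum of $B$ is contained in the union $\bfG$ of the disks $D(b_{ii},R_i)$ with $R_i=\sum_{j\neq i}|b_{ij}|$, and moreover that each connected component of $\bfG$ contains exactly as many eigenvalues (with multiplicity) as it contains centers $b_{ii}$. Since any connected component made up of $k$ disks has diameter at most $2\sum R_i\le 2n\max_i R_i$, a crude pigeonhole argument shows that every diagonal entry $b_{ii}$ lies within distance $2n\max_i R_i$ of some eigenvalue of $B$, hence
\[ |b_{ii}|\;\le\; \rho(A)+2n\max_i R_i, \]
where $\rho(A):=\max_k|\lambda_k|$ is the spectral radius, a conjugation invariant. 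By the equivalence of norms on $\End(\C^n)$, there is a constant $K_n$ (depending only on $n$ and the chosen norm) such that $\max_i R_i\le K_n\|J*B\|$ and $\|\operatorname{diag}(B)\|\le K_n\max_i|b_{ii}|$.

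Putting the pieces together,
\[ \|B\|\;\le\;\|\operatorname{diag}(B)\|+\|J*B\|\;\le\; K_n\!\left(\rho(A)+2nK_n\|J*B\|\right)+\|J*B\|, \]
which has the form $\|B\|\le\alpha_A+\beta_n\|\phi(B)\|$ where $\alpha_A$ depends only on $\rho(A)$ (hence only on the conjugacy class of $A$) and $\beta_n$ is a universal constant. Taking $f_A(x):=\alpha_A+\beta_n x$ gives an order-preserving continuous function witnessing $\id\preceq\phi$.

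The main obstacle here is the careful use of the second half of Gershgorin's theorem to convert ``eigenvalues lie near diagonal entries'' into the bound we actually need, namely that each diagonal entry lies near some eigenvalue; the naive form of Gershgorin gives only the former. The counting statement about connected components is exactly what fixes this, and it is the only slightly subtle ingredient.
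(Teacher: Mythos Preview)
Your proof is correct and follows essentially the same approach as the paper: both use the counting version of Gershgorin's theorem to bound each diagonal entry by an off-diagonal quantity plus a conjugation invariant, then invoke equivalence of norms. The only cosmetic differences are that the paper uses $\sum_i|\lambda_i|$ as the invariant while you use the spectral radius $\rho(A)$, and the paper works directly with a generic matrix $A$ rather than writing it as $B=uAu^{-1}$.
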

\begin{proof}
Observe that we trivially have \(\phi\preceq\id\), so it remains only to check that \(\id\preceq\phi\). We first note the following inequalities
\[ \| A \| \leq c\sum_{i,j}|a_{ij}| \leq c'\| J*A \| + c\sum_{i}|a_{ii}|, \]
which we derive from the equivalences of norms on finite-dimensional vector spaces. We will use Gershgorin's circle theorem to estimate the latter sum. Let \(\bfG\) be
as in the statement of Gershgorin's circle theorem, and let \((\lambda_i)\) be the eigenvalues of \(A\), counted with algebraic multiplicity and numbered such that \(\lambda_i\)
is in the same connected component of \(\bfG\) as \(a_{ii}\), which is possible by Gershgorin. We then have
\[ \sum_i |a_{ii}| = \sum_i |a_{ii} - \lambda_i + \lambda_i| \leq \sum_i|a_{ii}-\lambda_i| + \sum_i|\lambda_i|.\]
We now specialize down to estimating just one term \(|a_{ii}-\lambda_i|\). Let \(\bfG_i\) be the connected component of \(\bfG\) containing \(a_{ii}\). Since \(\bfG_i\) is the union of a number of disks \(D(a_{jj},R_j)\),
it is geometrically immediate that any two points of \(\bfG_i\) are certainly within \(2\sum_{a_{jj}\in\bfG_i}R_j\) of each other, and therefore, since \(\lambda_i\in\bfG_i\), we may as well
make an even more extreme estimate summing over \emph{all} \(j\), so
\[ |a_{ii}-\lambda_i| \leq 2\sum_{j=1}^nR_j = 2\sum_j \sum_{k\not=j}|a_{jk}| \leq c''\|J*A\|. \]
Putting this together, we get
\[ \| A \| \leq c'\| J*A \| + c\sum_i c''\|J*A\| + \sum_i|\lambda_i| = (c'+ncc'')\|J*A\| + c\sum_i|\lambda_i| \]
which shows that \(\id\preceq\phi\), as desired.
\end{proof}

\begin{theorem}\label{thm:hadamard-local-infimum-J-theorem}
	Denote by \(\bbJ\) the set of bounded linear maps \(\End(\C^n)\to\End(\C^n)\) that under some basis of \(\C^n\) are of the form \(A\mapsto (\1 - I)*A\), and let \(Z\in\End(\C^n)\). If \(\varphi\in\bbJ\), then
	\[ \frakS_\cup(Z) = \frakS_\cup^\varphi(Z). \]
	In particular, \(\frakS_\cup^\varphi(Z) = \frakS_{\ker}(Z)\).
\end{theorem}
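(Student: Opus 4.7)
The plan is to reduce everything to Lemma \ref{lemma:J-is-equivalent-to-id} through the preorder machinery of Section \ref{subsection:preorder}. Specifically, I want to show that every $\varphi\in\bbJ$ satisfies $\varphi\simeq\id$. Once this is in hand, Corollary \ref{corollary:phi-simeq-psi-implies-same-S} immediately yields $\frakS_\cup^\varphi(Z)=\frakS_\cup^{\id}(Z)=\frakS_\cup(Z)$, and then the ``in particular'' clause follows from Corollary \ref{corollary:S-vee-equals-S-ker}, which identifies $\frakS_\cup(Z)$ with $\frakS_{\ker}(Z)$ in the finite-dimensional setting.

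To verify $\varphi\simeq\id$ I would write $\varphi$ explicitly. By definition of $\bbJ$, there is some $P\in\GL(\C^n)$ such that
\[ \varphi(A) = P\bigl(J*(P^{-1}AP)\bigr)P^{-1},\qquad J=\1-I, \]
since passing through the basis in which $\varphi$ acts as Hadamard product with $J$ is precisely conjugation by $P$. The easy direction is $\varphi\preceq\id$: since both conjugation by $P$ and the Hadamard map $A'\mapsto J*A'$ are bounded, there is a constant $C>0$ (depending only on $P$ and $\|J\|$) with $\|\varphi(A)\|\leq C\|A\|$, giving $\varphi\preceq\id$ via a linear function. For the other direction $\id\preceq\varphi$, I would exploit the crucial point that the comparison functions in the definition of $\preceq$ depend only on the \emph{conjugacy class} of the argument. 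By Lemma \ref{lemma:J-is-equivalent-to-id} applied to the base map $\phi\!:A'\mapsto J*A'$, for each $A$ there is a right-continuous order-preserving $g_A$ (conjugacy-class-dependent) with $\|A'\|\leq g_{A'}(\|J*A'\|)$. Apply this to $A'=P^{-1}AP$; since $A'$ is conjugate to $A$, we get $g_{A'}=g_A$, and combining with $\|A\|\leq\|P\|\|P^{-1}\|\|P^{-1}AP\|$ and $\|J*(P^{-1}AP)\|\leq\|P\|\|P^{-1}\|\|\varphi(A)\|$ yields
\[ \|A\|\leq \|P\|\|P^{-1}\|\, g_A\!\bigl(\|P\|\|P^{-1}\|\,\|\varphi(A)\|\bigr), \]
which is a valid comparison function for $\id\preceq\varphi$ (it is still right-continuous, order-preserving, and depends only on the conjugacy class of $A$).

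The only conceptually non-obvious step is precisely the one I highlighted above: without the conjugacy-class invariance built into the definition of $\preceq$, the change-of-basis trick would produce a function indexed by $P^{-1}AP$ rather than by $A$, and the argument would fail. Granted that invariance, the computation is essentially just operator-norm bookkeeping around the factorization $\varphi(A)=P(J*(P^{-1}AP))P^{-1}$. I would conclude by invoking Corollary \ref{corollary:phi-simeq-psi-implies-same-S} to turn $\varphi\simeq\id$ into $\frakS_\cup^\varphi(Z)=\frakS_\cup(Z)$, and then Corollary \ref{corollary:S-vee-equals-S-ker} for the final identification with $\frakS_{\ker}(Z)$.
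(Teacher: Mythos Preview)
Your proof is correct and follows exactly the paper's approach: show $\varphi\simeq\id$ via Lemma \ref{lemma:J-is-equivalent-to-id}, then invoke Corollary \ref{corollary:phi-simeq-psi-implies-same-S} and Corollary \ref{corollary:S-vee-equals-S-ker}. The paper's proof is terse and leaves the change-of-basis step implicit, whereas you carefully spell out why the conjugacy-class invariance in the definition of $\preceq$ lets one transport $\phi\simeq\id$ to any $\varphi\in\bbJ$; this is a welcome clarification but not a different argument.
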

\begin{proof}
Supposing we can prove the first assertion, the second follows by Corollary \ref{corollary:S-vee-equals-S-ker}. To prove our main claim,
apply Corollary \ref{corollary:phi-simeq-psi-implies-same-S} and Lemma \ref{lemma:J-is-equivalent-to-id}.
\end{proof}

Now, in Lemma \ref{lemma:J-is-equivalent-to-id}, we used a rather specialized trick to show that \((J*)\simeq\id\), and then used general theory to compute \(\frakS_\cup^J(-)\). This trick has no reason
whatsoever to work for more general choices of pairs \((\varphi,\id)\), so we have to change tactics completely if we want a hope of understanding the wider situation. Based on the results of
Section \ref{subsection:bounded-unmodified-existence}, we expect that \(\frakS_\cup^\varphi(Z)\) should be related to some modified version of the sets \(\frakS_C(Z)\) for \(C\in\frakC_{-1}(Z)\). We introduce this modification now:

\begin{definition}\label{definition:frakS_c^phi(z)}
	Let \(V\) be a Banach algebra, let \(\varphi\!:V\to X\) be a bounded linear map to a Banach space \(X\), and let \(c,z\in V\) be such that \(zc = cz = 0\). Define the set
	\[ \frakS_{c}^\varphi(z) := \ker(a\mapsto \varphi(zac)) = \{ a\in V\mid \varphi(zac) = 0 \}. \]
	Note that \(\frakS_c(z) = \frakS_c^{\id}(z)\).
\end{definition}
\begin{proposition}\label{prop:hadamard-S_C^H-conjugation}
	Let \(V\) be a Banach algebra, let \(\varphi\!:V\to X\) be a bounded linear map, let \(c,z\in V\), and let \(p\in V^\times\). Then
	\[ \frakS_{c}^\varphi(zp) = p^{-1} \frakS_c^\varphi(z) \quad \text{and} \quad \frakS_{p^{-1}c}^\varphi(z) = \frakS_c^\varphi(z)p. \]
	In particular, \(\frakS_{p^{-1}c}^\varphi(zp) = p^{-1}\frakS_c^\varphi(z)p\).
\end{proposition}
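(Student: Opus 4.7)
The plan is to prove both equalities by direct manipulation of the defining equation $\varphi(zac) = 0$, using only associativity in $V$ and the fact that multiplication by the invertible $p$ is a bijection. Both assertions are essentially tautologies once the set $\frakS_c^\varphi(z)$ is unfolded as $\{a\in V\mid \varphi(zac)=0\}$, so I do not expect any significant obstacle; the only thing to keep straight is the bookkeeping of which side $p$ or $p^{-1}$ sits on.

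First I would prove $\frakS_c^\varphi(zp) = p^{-1}\frakS_c^\varphi(z)$. An element $a\in V$ belongs to the left-hand side if and only if $\varphi((zp)ac)=0$, and by associativity this reads $\varphi(z(pa)c)=0$, i.e.\ $pa\in\frakS_c^\varphi(z)$. Since $p$ is invertible, this is equivalent to $a\in p^{-1}\frakS_c^\varphi(z)$. An entirely analogous argument proves $\frakS_{p^{-1}c}^\varphi(z) = \frakS_c^\varphi(z)p$: unpacking the definition, $a\in\frakS_{p^{-1}c}^\varphi(z)$ iff $\varphi(za(p^{-1}c))=0$, which by associativity is $\varphi(z(ap^{-1})c)=0$, hence $ap^{-1}\in\frakS_c^\varphi(z)$, and multiplying on the right by $p$ yields the stated equality.

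For the ``in particular'' statement, I would apply the two equalities in sequence. Using the first equality with $c$ replaced by $p^{-1}c$ gives $\frakS_{p^{-1}c}^\varphi(zp) = p^{-1}\frakS_{p^{-1}c}^\varphi(z)$, and then inserting the second equality $\frakS_{p^{-1}c}^\varphi(z) = \frakS_c^\varphi(z)p$ produces $p^{-1}\frakS_c^\varphi(z)p$, as desired.

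The only subtlety worth flagging is that the setting in Definition \ref{definition:frakS_c^phi(z)} assumes $zc=cz=0$; for the equalities above to be meaningful one should check that the analogous compatibility holds for the new pairs, but this is immediate, as $(zp)(p^{-1}c)=zc=0$ and $(p^{-1}c)(zp)=p^{-1}(cz)p=0$, and the intermediate objects $\frakS_c^\varphi(zp)$ and $\frakS_{p^{-1}c}^\varphi(z)$ are still well-posed as kernels of the respective linear maps regardless. So no genuine analytic input is needed, and the whole proposition reduces to a short chain of associativity manipulations.
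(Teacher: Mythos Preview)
Your proposal is correct and follows essentially the same approach as the paper's proof, which simply records the identities $\varphi(zac)=\varphi((zp)(p^{-1}a)c)$ and $\varphi(zac)=\varphi(z(ap)(p^{-1}c))$ and declares the final statement an immediate combination. Your additional remark checking that the compatibility $zc=cz=0$ transfers to the new pairs is a welcome bit of care that the paper leaves implicit.
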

\begin{proof}
These equalities follow immediately from
\[ \varphi(zac) = \varphi((zp)(p^{-1}a)c) \quad \text{and}\quad \varphi(zac) = \varphi(z(ap)(p^{-1}c)) \]
respectively. The final statement is an immediate combination of the first two.
\end{proof}
\begin{proposition}
	Let \(V\) be a Banach algebra, let \(\varphi\!:V\to X\) be a bounded linear map, let \(z\in V\), and let \(c\in\frakC_{-1}(Z)\not=\varnothing\). Then
	\[ \frakS_c^\varphi(z)\subseteq\frakS_\cup^\varphi(z). \]
\end{proposition}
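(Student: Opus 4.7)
The plan is to mimic the proof of Lemma \ref{lemma:good-path-annihilates-coefficient-implies-vee}, exploiting the fact that the modifier $\varphi$ is linear and continuous, so applying $\varphi$ termwise to an expansion of $uau^{-1}$ is harmless. Since $c\in\frakC_{-1}(z)$, by Definition \ref{definition:frakC} there is some good path $z+te(t)$ to $z$ whose inverse takes the form $ct^{-1}+c(t)$ with $c(t)$ analytic near $0$. The idea is to use precisely this good path as a witness that $a\in\frakS_\cup^\varphi(z)$ for any $a$ with $\varphi(zac)=0$.

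Concretely, I would take $a\in\frakS_c^\varphi(z)$ and, as in Lemma \ref{lemma:good-path-annihilates-coefficient-implies-vee}, observe the bound
\[
\frakN_\cup^\varphi(a,z)=\limsup_{r\to 0}\inf_{\substack{u\in V^\times\\\|u-z\|<r}}\|\varphi(uau^{-1})\|
\;\leq\; \lim_{t\to 0}\bigl\|\varphi\bigl((z+te(t))\,a\,(ct^{-1}+c(t))\bigr)\bigr\|,
\]
which is legitimate because the good path $z+te(t)$ is invertible for all sufficiently small $t>0$ and converges to $z$. Expanding the product inside using $zc=cz=0$ (Theorem \ref{thm:properties-of-C1(Z)}(c)) — actually we only need the $zc=0$ half, which is automatic from $c\in\frakC_{-1}(z)\subseteq\frakC'(z)$ — yields
\[
(z+te(t))\,a\,(ct^{-1}+c(t)) \;=\; t^{-1}\,zac \;+\; zac(t) \;+\; e(t)ac \;+\; t\,e(t)ac(t).
\]
Applying $\varphi$ and using linearity, the potentially singular term $t^{-1}\varphi(zac)$ vanishes by the hypothesis $a\in\frakS_c^\varphi(z)$, and the remaining three terms are manifestly $O(1)$ as $t\to 0$ (continuous functions of $t$ composed with the bounded linear $\varphi$). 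Hence $\|\varphi(u(t)au(t)^{-1})\|$ stays bounded, giving $\frakN_\cup^\varphi(a,z)<\infty$, i.e.\ $a\in\frakS_\cup^\varphi(z)$.

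There is no serious obstacle here: the argument is essentially a $\varphi$-decorated rerun of Lemma \ref{lemma:good-path-annihilates-coefficient-invalid-implies-vee}'s proof, and the role of the modifier is only to delete the singular contribution via the hypothesis $\varphi(zac)=0$. The one place where a mild care is needed is in the passage from the estimate along the single good path $z+te(t)$ to the $\inf$ inside $\frakN_\cup^\varphi$; this is covered by Proposition \ref{prop:S-and-S_V-as-pathwise-cap-or-cup} (or rather its obvious one-sided analogue $\frakS_{\pw}^\varphi(z;u_k)\subseteq\frakS_\cup^\varphi(z)$, which follows by taking $u_k=z+t_k e(t_k)$ for any sequence $t_k\to 0^+$).
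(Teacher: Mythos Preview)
Your proof is correct and follows essentially the same approach as the paper's own proof: pick a good path witnessing $c\in\frakC_{-1}(z)$, expand $(z+te(t))a(ct^{-1}+c(t))$, and use linearity of $\varphi$ to kill the $t^{-1}$ term via the hypothesis $\varphi(zac)=0$. The paper's version is simply more terse, writing the expansion as $\varphi(zact^{-1}+O(1))=O(1)$ and then deferring to Lemma~\ref{lemma:good-path-annihilates-coefficient-implies-vee}; your explicit four-term expansion is the same computation unpacked. One harmless slip: your parenthetical about needing $zc=0$ (and the reference to $\frakC'(z)$, which is only defined for $\sfB(X)$) is not actually used anywhere in your expansion---the four-term formula is just the raw product, and the singular term dies purely from the hypothesis $\varphi(zac)=0$.
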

\begin{proof}
Since \(c\in\frakC_{-1}(Z)\), we know that there is at least one corresponding good path \(z+te(t)\) converging to \(z\). Thus, for any \(a\in\frakS_c^\varphi(z)\) we have
\[ \varphi\left( (z+te(t))A(z+te(t))^{-1} \right) = \varphi(zact^{-1} + O(1)) = O(1) \]
where the last equality uses linearity and continuity. As in Lemma \ref{lemma:good-path-annihilates-coefficient-implies-vee}, we are done.
\end{proof}
\begin{corollary}\label{corollary:hadamard-S-vee-contains-union-of-hadamard-SC}
	Let \(V\) be a Banach algebra, let \(\varphi\!:V\to X\) be a bounded linear map, and let \(z\in V\). Then
	\[ \frakS_\cup^\varphi(z) \supseteq \bigcup_{c\in\frakC_{-1}(z)}\frakS_c^\varphi(z). \]
\end{corollary}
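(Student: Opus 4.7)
The plan is to recognize this as an immediate consequence of the preceding unnumbered Proposition, which asserts $\frakS_c^\varphi(z)\subseteq\frakS_\cup^\varphi(z)$ for each individual $c\in\frakC_{-1}(z)$, and then to package the family of containments as a single containment on the union. So the entire argument is essentially a bookkeeping step, and the real content is upstream.

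First I would separate into two cases according to whether $\frakC_{-1}(z)$ is empty. If $\frakC_{-1}(z)=\varnothing$, then the index set of the union on the right-hand side is empty, so the union itself is the empty set and the claimed containment is vacuous; this is the trivial case, and no appeal to any prior machinery is needed. Otherwise, if $\frakC_{-1}(z)\neq\varnothing$, the preceding Proposition applies and yields, for each individual $c\in\frakC_{-1}(z)$, a containment $\frakS_c^\varphi(z)\subseteq\frakS_\cup^\varphi(z)$.

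From here, the conclusion is immediate from the elementary fact that unions are the join in the containment order: if $S_i\subseteq T$ for every $i$ in some index set, then $\bigcup_i S_i\subseteq T$. Applying this with $i=c$ ranging over $\frakC_{-1}(z)$, $S_c=\frakS_c^\varphi(z)$, and $T=\frakS_\cup^\varphi(z)$ yields exactly the desired inclusion.

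The only substantive obstacle in the argument is hidden in the preceding Proposition, not in the corollary itself: namely, one must know that a good path $z+te(t)$ realizing the given $c$ as its $(-1)$th coefficient actually exists, and that along such a path the singular $t^{-1}$ contribution to $\varphi\bigl((z+te(t))a(z+te(t))^{-1}\bigr)$ vanishes precisely when $\varphi(zac)=0$. Once those ingredients are granted, the corollary requires no further work beyond the formal union step described above.
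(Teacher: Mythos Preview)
Your proposal is correct and takes essentially the same approach as the paper: the corollary is an immediate consequence of the preceding proposition, obtained by taking the union over all $c\in\frakC_{-1}(z)$ of the individual containments $\frakS_c^\varphi(z)\subseteq\frakS_\cup^\varphi(z)$. The paper does not even spell this step out, treating it as self-evident, so your explicit case split on whether $\frakC_{-1}(z)$ is empty is, if anything, slightly more careful than necessary.
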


\noindent Based on this, one can make a reasonably natural conjecture: that the above inclusion should be an equality.

\begin{conjecture}\label{conjecture:hadamard-general}
	Let \(V\) be a Banach algebra, and let \(\varphi\!:V\to X\) be a bounded linear map. Then, for all \(z\in V\),
	\[ \frakS_\cup^\varphi(z) = \bigcup_{c\in\frakC_{-1}(z)}\frakS_c^\varphi(z). \]
\end{conjecture}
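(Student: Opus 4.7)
The easy inclusion $\supseteq$ is Corollary \ref{corollary:hadamard-S-vee-contains-union-of-hadamard-SC}, so the entire content of the conjecture lies in the reverse inclusion $\frakS_\cup^\varphi(z) \subseteq \bigcup_{c\in\frakC_{-1}(z)} \frakS_c^\varphi(z)$. Given $a \in \frakS_\cup^\varphi(z)$, Proposition \ref{prop:S-and-S_V-as-pathwise-cap-or-cup} supplies a sequence of invertibles $u_k \to z$ along which $\|\varphi(u_k a u_k^{-1})\|$ stays bounded. The plan is to distill from this sequence a single element $c \in \frakC_{-1}(z)$ witnessing $\varphi(zac) = 0$, thereby placing $a$ in $\frakS_c^\varphi(z)$.

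The guiding heuristic comes from the definition of a good path in Terminology \ref{terminology:good-path}: write $t_k := \|u_k - z\|$, $v_k := (u_k - z)/t_k$, and set $c_k := t_k u_k^{-1}$ as the candidate for the singular part of the inverse. The identity $u_k u_k^{-1} = 1$ rearranges to $z c_k = t_k(1 - v_k c_k)$, so whenever $c_k$ stays bounded and admits a limit $c$ along some subsequence, $zc = 0 = cz$ come for free. A direct expansion of the conjugate yields
\[ \varphi(u_k a u_k^{-1}) \;=\; t_k^{-1}\varphi(zac_k) + \varphi(v_k a c_k), \]
and since $\varphi(v_k a c_k)$ is bounded under the standing boundedness assumptions on $v_k, c_k$, the boundedness of the left-hand side forces $\varphi(zac_k) = O(t_k) \to 0$. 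Continuity of $\varphi$ then produces $\varphi(zac) = 0$ in the limit. The problem thus reduces to two existence questions: (i) can one extract a convergent subsequence $c_{k_j} \to c$ in some topology compatible with both $\varphi$ and left/right multiplication, and (ii) does the resulting $c$ actually lie in the analytically-defined $\frakC_{-1}(z)$ rather than merely in a larger algebraic analogue along the lines of $\frakC'(z)$?

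The main obstacle is (i). In a general Banach algebra, there is no a priori reason for $c_k = t_k u_k^{-1}$ to be norm-bounded---bounded $c_k$ encodes the regularity assumption that $\|u_k^{-1}\|$ blows up no faster than $1/t_k$---and even when bounded, norm-compactness is unavailable. This is precisely why the companion Theorem \ref{thm:hilbert-hadamard-general} restricts to Hilbert spaces with $z$ of generalized index zero: the sharpened polar decomposition of Theorem \ref{thm:polar-decomposition} reduces to self-adjoint $z$, the explicit construction in Lemma \ref{lemma:hilbert-self-adjoint-gen-index-zero-have-C1=C'} then produces bounded candidates $c_k$ adapted to $X = \img(z) \oplus \ker(z)$, and weak-operator-compactness of bounded sets handles the extraction; in finite dimensions norm-compactness is immediate. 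Question (ii) is then dispatched by Theorem \ref{thm:hilbert-space-gen-index-zero-C1=C'}, which identifies $\frakC_{-1}(z)$ with $\frakC'(z)$. For a general Banach algebra lacking both compactness and such rigidity, I would proceed first by searching for counterexamples---probing algebras where $\frakC_{-1}(z)$ can be empty for non-invertible $z$ that nevertheless admit invertible approximations sustaining a finite $\frakN_\cup^\varphi(a,z)$---and failing that, by isolating minimal auxiliary hypotheses (say, existence of at least one good path to $z$, or boundedness of $\{t_k u_k^{-1}\}$ in some coarser topology) under which the heuristic above can be made rigorous.
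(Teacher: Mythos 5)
The statement you were asked to prove is stated in the paper as Conjecture \ref{conjecture:hadamard-general} and is \emph{not} proved there in this generality: only the inclusion $\supseteq$ is known (Corollary \ref{corollary:hadamard-S-vee-contains-union-of-hadamard-SC}), and the reverse inclusion is established only for Hilbert spaces and $z$ of generalized index zero (Theorem \ref{thm:hilbert-hadamard-general}). Your proposal is candid that it is a strategy rather than a proof, correctly locates all the content in the inclusion $\subseteq$, and correctly identifies the proved special case and the role of Theorem \ref{thm:hilbert-space-gen-index-zero-C1=C'} in it; as an assessment of the state of the problem it is accurate.

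Where your sketch genuinely diverges from the paper's argument for the special case is in the choice of singular scale, and this is where it breaks. You normalize by the scalar $t_k := \|u_k - z\|$ and take $c_k := t_k u_k^{-1}$, but this scale need not match the blow-up rate of $u_k^{-1}$: already for $u_k = \operatorname{diag}(1, 1/k, 1/k^2) \to z = \operatorname{diag}(1,0,0)$ in $\End(\C^3)$ one has $t_k \sim 1/k$ while $c_k \sim \operatorname{diag}(1/k, 1, k)$ is unbounded, so the compactness extraction in step (i) fails even in finite dimensions, and any renormalized subsequential limit can have image strictly smaller than $\ker(z)$, so step (ii) fails too --- the limit need not lie in $\frakC'(z)$, let alone $\frakC_{-1}(z)$. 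The paper's proof of Theorem \ref{thm:hilbert-hadamard-general} avoids a scalar normalization entirely: after polar decomposition ($Z' = MZ$ with $Z$ self-adjoint) and the block-inverse formula of Lemma \ref{lemma:inverse-of-block-map} relative to $X = \img(Z)\oplus\ker(Z)$, the singular part of $U_n^{-1}$ is isolated as the operator block $P_n^{-1}$, the boundedness hypothesis forces $\varphi$ to vanish on the dominant term $M\left(\begin{smallmatrix} 0 & Z_{11}A_{12}P_n^{-1} \\ 0 & 0\end{smallmatrix}\right)M^{-1}$ for a single large $n$, and the witness $C' = \left(\begin{smallmatrix} 0 & 0 \\ 0 & P_n^{-1}\end{smallmatrix}\right)M^{-1}$ is read off directly, lying in $\frakC_{-1}(Z')$ by its explicit block form. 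No limit of the $c_k$ is ever taken. So if you want to salvage your approach, the missing idea is to replace the scalar $t_k$ by the operator-valued normalization furnished by a block decomposition adapted to $\ker(z)$; as written, (i) and (ii) are not technical obstacles but points of genuine failure.
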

\begin{remark}
	When \(X=\End(\C^n)\) and \(\varphi=\id\), the above is trivially true with the theory we have developed. In particular, we know that in fact
	\[ \frakS_\cup^{\id}(Z) = \frakS_\cup(Z) = \frakS_C(Z) = \frakS_C^{\id}(Z) \]
	and so we also have \(\frakS_{C_1}^{\id}(Z) = \frakS_{C_2}^{\id}(Z)\) for all \(C_1,C_2\in\frakC_{-1}(Z)\). In other words, when \(\varphi=\id\), the conjecture is easily verified and is fairly tautological.
	However, in general, we have no reason to expect \(\frakS_{C_1}^\varphi(Z)\) to be equal to \(\frakS_{C_2}^\varphi(Z)\), but nonetheless we know that for all \(C\in\frakC_{-1}(Z)\), we have \(\frakS_C^\varphi(Z)\subseteq\frakS_\cup^\varphi(Z)\)
	following the exact same proof as in the case \(\varphi=\id\) (see below).
\end{remark}

In the cases we are most interested in, Conjecture \ref{conjecture:hadamard-general} turns out to be true.

\begin{lemma}\label{lemma:inverse-of-block-map}
	Let \(X\) be a vector space, and suppose there is a direct sum decomposition of \(X\) of the form
	\[ X = X_1 \oplus X_2. \]
	Let \(U\!:X\to X\) be a linear map of the form
	\[ U = \begin{pmatrix} Q & R \\ S & T \end{pmatrix} \]
	such that \(Q\!:X_1\to X_1\) is invertible. Then
	\begin{enumerate}[label=(\alph*)]
	\item the map \(P := T - S Q^{-1} R\) is invertible if and only if \(U\) is invertible, and
	\item if \(U\) is invertible, the inverse is given by
	\[ U^{-1} = \begin{pmatrix}
        Q^{-1} + Q^{-1} R P^{-1}SQ^{-1} & -Q^{-1}R P^{-1} \\ 
        -P^{-1}SQ^{-1} & P^{-1} \end{pmatrix} \]
	\end{enumerate}
\end{lemma}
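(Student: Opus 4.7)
The plan is to establish both parts simultaneously via the standard block $LDU$-decomposition of $U$, which only uses that $Q$ is invertible. Explicitly, one verifies by direct block multiplication that
\[
U = \begin{pmatrix} Q & R \\ S & T \end{pmatrix} = \begin{pmatrix} I & 0 \\ SQ^{-1} & I \end{pmatrix}\begin{pmatrix} Q & 0 \\ 0 & P \end{pmatrix}\begin{pmatrix} I & Q^{-1}R \\ 0 & I \end{pmatrix},
\]
where $P := T - SQ^{-1}R$ is the Schur complement. This is just a $2\times 2$ block calculation, so I would not dwell on it.

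For part (a), the two outer block-triangular factors are unipotent, hence invertible on $X = X_1\oplus X_2$ (their inverses are obtained by flipping the sign of the off-diagonal block). Therefore $U$ is invertible if and only if the middle block-diagonal factor $\operatorname{diag}(Q,P)$ is invertible, and since $Q$ is assumed invertible on $X_1$, this happens if and only if $P$ is invertible on $X_2$. This gives (a).

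For part (b), assuming $U$ is invertible (equivalently $P$ is invertible), I would just invert the three factors above in reverse order:
\[
U^{-1} = \begin{pmatrix} I & -Q^{-1}R \\ 0 & I \end{pmatrix}\begin{pmatrix} Q^{-1} & 0 \\ 0 & P^{-1} \end{pmatrix}\begin{pmatrix} I & 0 \\ -SQ^{-1} & I \end{pmatrix},
\]
and then multiply out the three blocks. The $(2,2)$-entry is immediately $P^{-1}$; the $(2,1)$-entry is $-P^{-1}SQ^{-1}$; the $(1,2)$-entry is $-Q^{-1}RP^{-1}$; and the $(1,1)$-entry is $Q^{-1} + Q^{-1}RP^{-1}SQ^{-1}$, matching the claimed formula. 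This step is entirely mechanical.

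There is no real obstacle here: the only substantive input is the existence of the $LDU$-decomposition, which is a one-line block computation, and the rest is bookkeeping. If one wished to avoid the factorization, an alternative would be to verify the claimed formula for $U^{-1}$ by directly computing $UU^{-1}$ and $U^{-1}U$ in blocks and checking all four entries equal $I$ or $0$; this is equally routine but gives slightly less insight into why $P$ must be invertible. I would prefer the factorization approach because it yields (a) and (b) in one stroke.
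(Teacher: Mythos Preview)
Your proposal is correct and is essentially identical to the paper's own proof: the paper also writes down the same block $LDU$-factorization of $U$, deduces (a) from the invertibility of the outer unipotent factors and the block-diagonal middle factor, and then obtains (b) by inverting the three factors and multiplying out.
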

\begin{proof}
Note that we can write
\[ U = \begin{pmatrix} Q & R \\ S & T \end{pmatrix} = \begin{pmatrix} \id_{X_1} & 0 \\ SQ^{-1} & \id_{X_2} \end{pmatrix} \begin{pmatrix} Q & 0 \\ 0 & P \end{pmatrix} \begin{pmatrix} \id_{X_1} & Q^{-1}R \\ 0 & \id_{X_2} \end{pmatrix} \]
and that the two surrounding maps are invertible on account of being lower/upper block triangular. Therefore, the invertibility of \(U\) is equivalent to the invertibility of the middle map,
whose invertibility, due to being block diagonal, depends only on \(Q\) and \(P\). Since \(Q\) is invertible by assumption, we see that \(P\) is invertible if and only if \(U\) is. This proves (a).
The proof of (b), then by (a) we know that \(P\) is invertible, and that \(U^{-1}\) is of the given form is just a standard computation coming from the expression given above.
\end{proof}
\begin{theorem}\label{thm:hilbert-hadamard-general}
	Let \(X, Y\) be Hilbert spaces, let \(\varphi\!:\sfB(X)\to Y\) be a bounded linear map, and let \(Z'\in\sfB(X)\) be of generalized index zero. Then
	\[ \frakS_\cup^\varphi(Z') = \bigcup_{C\in\frakC_{-1}(Z)}\frakS_C^\varphi(Z'). \]
\end{theorem}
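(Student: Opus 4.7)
The \(\supseteq\) inclusion is already Corollary \ref{corollary:hadamard-S-vee-contains-union-of-hadamard-SC} and holds without hypotheses on \(Z'\), so the content is the reverse inclusion. Fix \(A\in\frakS_\cup^\varphi(Z')\); my plan is to produce some \(C\in\frakC_{-1}(Z') = \frakC'(Z')\) (the equality being Theorem \ref{thm:hilbert-space-gen-index-zero-C1=C'}) satisfying \(\varphi(Z'AC) = 0\).

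The first step is a standard reduction to the self-adjoint case followed by blocking up. Using the sharpened polar decomposition (Theorem \ref{thm:polar-decomposition}), write \(Z' = UZ_0\) with \(U\in\sfB(X)^\times\) and \(Z_0\) self-adjoint; by Lemma \ref{lemma:gen-index-zero-implies-sqrt-gen-index-zero}, \(Z_0\) is also of generalized index zero. The change of variable \(v := U^{-1}u\) identifies \(\frakS_\cup^\varphi(Z')\) with \(\frakS_\cup^\psi(Z_0)\) for \(\psi := \varphi\circ\mathrm{Ad}_U\), and Theorem \ref{thm:properties-of-C1(Z)}(d) identifies \(\frakC_{-1}(Z') = \frakC_{-1}(Z_0)U^{-1}\), with the bijection \(C \mapsto CU\) intertwining the conditions \(\varphi(Z'AC) = 0\) and \(\psi(Z_0A\cdot CU) = 0\). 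Invoking the orthogonal decomposition \(X = \img(Z_0)\oplus\ker(Z_0)\) from Lemma \ref{lemma:hilbert-self-adjoint-gen-index-zero-have-C1=C'}, I write \(Z_0\) in block form with invertible \((1,1)\)-block \(Z_1\), note that every \(C\in\frakC'(Z_0)\) is of the shape \(\left(\begin{smallmatrix} 0 & 0 \\ 0 & C_2 \end{smallmatrix}\right)\) with \(C_2\in\sfB(\ker Z_0)\) bicontinuous, and see that the desired condition collapses to \(\tilde\psi(C_2) = 0\), where \(\tilde\psi(T) := \psi\!\left(\left(\begin{smallmatrix} 0 & Z_1A_{12}T \\ 0 & 0 \end{smallmatrix}\right)\right)\) and \(A=(A_{ij})\) is the block form of \(A\).

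The heart of the proof extracts such a \(C_2\) from the hypothesis. Given a witnessing sequence \(V_n\to Z_0\) of invertibles with \(\|\psi(V_nAV_n^{-1})\|\leq M\), apply Lemma \ref{lemma:inverse-of-block-map} to the block form \(V_n = \left(\begin{smallmatrix} Q_n & R_n \\ S_n & T_n \end{smallmatrix}\right)\), obtaining \(V_n^{-1} = N_n + \alpha_n P_n^{-1}\beta_n\), where \(N_n\), \(\alpha_n\), \(\beta_n\) converge in operator norm to bounded limits and \(P_n = T_n - S_nQ_n^{-1}R_n \in \sfB(\ker Z_0)\) is invertible with \(\|P_n\|\to 0\). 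Expanding \(V_nAV_n^{-1}\), the bounded piece \(V_nAN_n\) converges in operator norm, while the singular piece, modulo corrections of order \(\|V_n-Z_0\|\cdot\|P_n^{-1}\|\), reduces to the leading block \(\left(\begin{smallmatrix} 0 & Z_1A_{12}P_n^{-1} \\ 0 & 0 \end{smallmatrix}\right)\). Dividing through by \(\|P_n^{-1}\|\) and extracting a subsequential limit \(\hat P\) of \(P_n^{-1}/\|P_n^{-1}\|\) with \(\|\hat P\| = 1\), the boundedness of \(\psi(V_nAV_n^{-1})\) forces \(\tilde\psi(\hat P) = 0\), so that \(\hat C := \left(\begin{smallmatrix} 0 & 0 \\ 0 & \hat P \end{smallmatrix}\right)\) formally satisfies the required vanishing.

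The main obstacle is that \(\hat P\), though a normalized limit of invertibles, may fail to be bicontinuous, so \(\hat C\) need not itself lie in \(\frakC'(Z_0)\). Working around this is where the bulk of the effort lies, and I expect one of three routes to close the gap: (i) refine the witness \(V_n\) so that \(P_n\) remains well-conditioned (keeping \(\|P_n\|\cdot\|P_n^{-1}\|\) bounded, which simultaneously controls the error terms and preserves invertibility in the limit); (ii) exploit the closed-subspace structure of \(\ker\tilde\psi\subset\sfB(\ker Z_0)\) combined with classical bounds on the maximal dimension of subspaces of singular operators to locate an invertible element directly; or (iii) in infinite dimensions, replace norm compactness with a carefully chosen weak-operator-topology argument respecting the norm-continuity of \(\psi\). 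Once an invertible \(C_2\in\ker\tilde\psi\) is in hand, pulling it back through the block-form and polar-decomposition reductions yields the desired \(C\in\frakC_{-1}(Z')\) by routine verification.
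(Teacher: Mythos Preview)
Your setup—polar decomposition to reduce to a self-adjoint \(Z_0\), the orthogonal block splitting \(X = \img(Z_0)\oplus\ker(Z_0)\), and the block-inverse expansion via Lemma~\ref{lemma:inverse-of-block-map}—is exactly the paper's route, and your isolation of the leading singular contribution \(Z_{11}A_{12}P_n^{-1}\) in the \((1,2)\)-block matches the paper's computation line for line.

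The divergence is entirely in the endgame. You normalise and pass to a subsequential limit \(\hat P\) of \(P_n^{-1}/\|P_n^{-1}\|\), then confront the obstacle that \(\hat P\) need not be invertible (and in infinite dimensions need not even exist in norm). None of your three proposed workarounds is actually carried out, and route~(iii) in particular looks unpromising: \(\psi\) is only norm-continuous, so a weak-operator limit will not inherit the vanishing you need. The paper sidesteps all of this by never taking a limit. It observes that every block other than the \((1,2)\)-block contributes to the singular part only through terms of the shape \((\text{something}\to 0)\cdot P_n^{-1}\), while the \((1,2)\)-block alone carries a term \(\text{const}\cdot P_n^{-1}\); applying the reverse triangle inequality, the boundedness of \(\|\psi(V_nAV_n^{-1})\|\) is then argued to force
\[
\varphi\Bigl(M \begin{pmatrix} 0 & Z_{11}A_{12}P_n^{-1} \\ 0 & 0 \end{pmatrix} M^{-1}\Bigr) = 0
\]
\emph{exactly}, for all sufficiently large \(n\). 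Granting this, one simply sets \(C = \left(\begin{smallmatrix}0&0\\0&P_n^{-1}\end{smallmatrix}\right)M^{-1}\in\frakC_{-1}(Z')\) for any single such \(n\): since \(P_n^{-1}\) is an honest invertible operator on \(\ker(Z_0)\), there is no compactness step, no limiting procedure, and no invertibility obstruction. The concern that occupies the final third of your proposal therefore never arises in the paper's line of argument; the missing idea is to stay at finite \(n\) rather than pass to the limit.
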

\begin{proof}
Using Theorem \ref{thm:polar-decomposition}, write \(Z' = MZ\) where \(M\) is invertible and \(Z\) is self-adjoint. Now, consider the decomposition
\[ X = \img(Z) \oplus \ker(Z), \quad \text{under which } Z = \begin{pmatrix} Z_{11} & 0 \\ 0 & 0 \end{pmatrix} \]
where \(Z_{11}\) is invertible. From now on, we will work in the setting of this decomposition.

If \(A\in\frakS_\cup^\varphi(Z')\), then there is some sequence \(U_n'\to Z'\) exhibiting this fact. Since \(Z' = MZ\), we may write \(U_n' = MU_n\) where \(U_n\to Z\), i.e.
\[ U_n = \begin{pmatrix} Z_{11} + Q_n & R_n \\ S_n & T_n \end{pmatrix}, \quad \text{where } Q_n,R_n,S_n,T_n\to 0 \text{ as }n\to\infty. \]
For simplicity, we write \(\tilde{Q}_n = Z_{11} + Q_n\). In what follows, we will neglect to include the subscript \(n\) to keep the notation uncluttered.

We are interested in the product \(U'AU'^{-1}\), which by Lemma \ref{lemma:inverse-of-block-map} can be written as
\[ MUAU^{-1}M^{-1} = M \begin{pmatrix} \tilde{Q} & R \\ S & T \end{pmatrix} \begin{pmatrix} A_{11} & A_{12} \\ A_{21} & A_{22} \end{pmatrix}
\begin{pmatrix} \tilde{Q}^{-1} + \tilde{Q}^{-1} RP^{-1}S\tilde{Q}^{-1} & -\tilde{Q}^{-1}RP^{-1} \\ -P^{-1}S\tilde{Q}^{-1} & P^{-1} \end{pmatrix} M^{-1} \]
where \(P_n = T_n - S_n(I + Q_n)^{-1}R_n\). We consider the top-right block of this, which (as in the proof of Theorem \ref{thm:hilbert-hadamard-general}) is given by
\[ (\tilde{Q}_nA_{12} + R_nA_{22} - (\tilde{Q}_nA_{11} + R_nA_{21})\tilde{Q}_n^{-1}R_n)P_n^{-1}. \]
Using that \(\tilde{Q}_n = Z_{11} + Q_n\), we note that the above may be rewritten to be of the form
\[ Z_{11}A_{12}P^{-1}_n + B_nP^{-1}_n \]
where \(B_n \to 0\). Now, one easily checks that in all blocks, any unbounded term has its unboundedness come from \(P^{-1}_n\), and furthermore,
that the top-right is the only block containing a term of the form \(\text{const}\cdot P^{-1}_n\). As a result, after applying the reverse triangle inequality, one sees that in
\begin{align*}
	\| \varphi(M U_n A U_n^{-1} M^{-1}) \| &= \left\| \varphi\!\left(M \begin{pmatrix} \bullet & Z_{11}A_{12}P_n^{-1} + B_nP^{-1}_n \\ \bullet & \bullet \end{pmatrix} M^{-1}\right) \right\| \\
	&\geq \left| \left\| \varphi\!\left(M \begin{pmatrix} \bullet & B_nP_n^{-1} \\ \bullet & \bullet \end{pmatrix} M^{-1}\right) \right\|
	- \left\| \varphi\!\left(M \begin{pmatrix} 0 & Z_{11}A_{12}P^{-1}_n \\ 0 & 0 \end{pmatrix} M^{-1}\right) \right\| \right|
\end{align*}
the term
\[ \left\| \varphi\!\left(M \begin{pmatrix} 0 & Z_{11}A_{12}P^{-1}_n \\ 0 & 0 \end{pmatrix} M^{-1}\right) \right\| \]
dominates as \(n\to\infty\). By assumption, \(A\) and the sequence \(U_n\) were chosen such that this is finite, and therefore, we deduce that we must necessarily have
\[ \varphi\!\left(M \begin{pmatrix} 0 & Z_{11}A_{12}P^{-1}_n \\ 0 & 0 \end{pmatrix} M^{-1}\right) = 0 \]
once \(n\) is sufficiently large. Clearly, we have that
\[ C := \begin{pmatrix} 0 & 0 \\ 0 & P_n^{-1} \end{pmatrix} \in \frakC_{-1}(Z) \quad \leadsto \quad C' := CM^{-1}\in\frakC_{-1}(Z') \]
so that
\begin{align*}
	0 = \varphi\!\left(M \begin{pmatrix} 0 & Z_{11}A_{12}P^{-1}_n \\ 0 & 0 \end{pmatrix} M^{-1}\right) = \varphi(M Z A C M^{-1}) = \varphi(Z'AC')
\end{align*}
We conclude that
\[ \frakS_\cup^\varphi(Z')\subseteq \bigcup_{C\in\frakC_{-1}(Z')}\frakS_C^\varphi(Z') \]
as desired.
\end{proof}

\begin{theorem}\label{thm:hilbert-hadamard-general-dual}
	Let \(X\) be a Hilbert space, and let \(Z\in\sfB(X)\). We let
	\[ \frakN_\cup^{\varphi,*}(A,Z) = \limsup_{r\to 0}\inf_{\| U - Z \|<r}\| \varphi(U^{-1}AU) \|, \]
	and define the sets
	\[ \frakS_\cup^{\varphi,*}(Z) = \{ A \in\sfB(X) \mid \frakN_\cup^{\varphi,*}(A,Z) < \infty \}, \quad \frakS_C^{\varphi,*}(Z) = \{ A\in\sfB(X) \mid \varphi(CAZ) = 0 \}. \]
	Then, if \(Z\) is of generalized index zero,
	\[ \frakS_\cup^{\varphi,*}(Z) = \bigcup_{C\in\frakC_{-1}(Z)} \frakS_C^{\varphi,*}(Z). \]
\end{theorem}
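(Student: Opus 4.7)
The plan is to mirror the proof of Theorem \ref{thm:hilbert-hadamard-general} in dual form, working with $U_n^{-1}AU_n$ in place of $U_nAU_n^{-1}$. For the inclusion $\supseteq$, given $C\in\frakC_{-1}(Z)$ there is by Theorem \ref{thm:hilbert-space-gen-index-zero-C1=C'} a good path $Z+tE(t)$ with inverse $Ct^{-1}+c(t)$, and expanding gives
\[ (Z+tE(t))^{-1}A(Z+tE(t)) = CAZ\cdot t^{-1} + (CAE_0 + c_0 AZ) + O(t), \]
which is $O(1)$ in $\varphi$-norm whenever $\varphi(CAZ)=0$; thus $A\in\frakS_\cup^{\varphi,*}(Z)$.

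For the inclusion $\subseteq$, use the sharpened polar decomposition (Theorem \ref{thm:polar-decomposition}) to write $Z=MH$ with $M$ invertible and $H=\sqrt{Z^*Z}$ self-adjoint; by Lemma \ref{lemma:gen-index-zero-implies-sqrt-gen-index-zero}, $H$ is again of generalized index zero. A witnessing sequence $U_n\to Z$ can be written as $MV_n$ with $V_n\to H$, and
\[ U_n^{-1}AU_n = V_n^{-1}\tilde A V_n, \qquad \tilde A := M^{-1}AM. \]
In the orthogonal decomposition $X=\img(H)\oplus\ker(H)$, write
\[ H = \begin{pmatrix} H_{11} & 0 \\ 0 & 0 \end{pmatrix}, \quad V_n = \begin{pmatrix} \tilde Q_n & R_n \\ S_n & T_n \end{pmatrix}, \quad \tilde A = \begin{pmatrix} \tilde A_{11} & \tilde A_{12} \\ \tilde A_{21} & \tilde A_{22} \end{pmatrix}, \]
where $H_{11}$ is invertible on $\img(H)$, $\tilde Q_n\to H_{11}$, and $R_n,S_n,T_n\to 0$. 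By Lemma \ref{lemma:inverse-of-block-map}, the bottom-right block of $V_n^{-1}$ is $P_n^{-1}$ for the Schur complement $P_n := T_n - S_n\tilde Q_n^{-1}R_n\to 0$, while its off-diagonal blocks each carry a compensating factor of $R_n$ or $S_n$.

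Expanding $V_n^{-1}\tilde A V_n$ block by block, the $(2,1)$ block turns out to be the unique block containing a \emph{pure} $P_n^{-1}$ contribution (i.e., one not multiplied by any of $R_n,S_n,T_n$), with dominant term $P_n^{-1}\tilde A_{21}H_{11}$; this agrees with the $(2,1)$ entry of $C_n\tilde A H$ for $C_n := \begin{pmatrix} 0 & 0 \\ 0 & P_n^{-1} \end{pmatrix}$. One verifies directly using Lemma \ref{lemma:hilbert-self-adjoint-gen-index-zero-have-C1=C'} that $C_n\in\frakC'(H)=\frakC_{-1}(H)$. Applying the reverse triangle inequality to $\|\varphi(V_n^{-1}\tilde A V_n)\|$ then forces, exactly as in the proof of Theorem \ref{thm:hilbert-hadamard-general}, $\varphi(C_n\tilde A H)=0$ for sufficiently large $n$. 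Setting $C := C_n M^{-1} \in \frakC_{-1}(MH) = \frakC_{-1}(Z)$ (via Theorem \ref{thm:properties-of-C1(Z)}(d)) rephrases this as $\varphi(CAZ)=\varphi(C_nM^{-1}A\cdot MH)=\varphi(C_n\tilde A H)=0$, placing $A\in\frakS_C^{\varphi,*}(Z)$.

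The main obstacle is the bookkeeping in the block computation: one must carefully verify that among the blocks of $V_n^{-1}\tilde A V_n$, only the $(2,1)$ block contains a pure $P_n^{-1}$ contribution (all others involving at least one vanishing factor among $R_n,S_n,T_n$), so that the reverse-triangle estimate isolates precisely the coefficient $P_n^{-1}\tilde A_{21}H_{11}$. This is entirely parallel to the argument in Theorem \ref{thm:hilbert-hadamard-general}, with the roles of the ``top-right'' and ``bottom-left'' blocks swapped, reflecting the duality between conjugating by $U_n$ and by $U_n^{-1}$.
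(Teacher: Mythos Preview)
Your proposal is correct and follows essentially the same approach as the paper's (sketched) proof: reduce via sharpened polar decomposition to the self-adjoint case, then dualize the block computation of Theorem~\ref{thm:hilbert-hadamard-general}, isolating the pure $P_n^{-1}$ contribution in the bottom-left block rather than the top-right. Your write-up is in fact more detailed than the paper's own proof, which simply instructs the reader to carry out exactly this dualization.
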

\begin{proof}
First, note that one may reduce to the case where \(Z\) is self-adjoint. In particular, for any \(Z'\in\sfB(X)\), we write \(Z' = MZ\) with \(M\) invertible
and \(Z\) self-adjoint by Theorem \ref{thm:polar-decomposition}. Then dual versions of the tranformation laws for \(\frakS_\cup^\varphi(-)\) and \(\frakS_C^\varphi(-)\)
mean that it will suffice to consider \(\frakS_\cup^{\varphi,*}(Z)\). After this, simply apply a dual version of the proof strategy of Theorem \ref{thm:hilbert-hadamard-general},
considering the bottom-left block instead of the top-right block.
\end{proof}

While we have already solved the case when \(X\) is finite-dimensional and \(\varphi(A)=J*A\) by reducing it to the unmodified version of the problem, which in turn was done by a clever trick, we may employ
Theorem \ref{thm:hilbert-hadamard-general} to give a different proof of Theorem \ref{thm:hadamard-local-infimum-J-theorem}. To begin, we make the following observations:

\begin{lemma}\label{lemma:CZ=0-implies-ZAC-nilpotent}
	Let \(Z,C\in\End(\C^n)\) be such that \(CZ = 0\). Then for any matrix \(A\) we have \((ZAC)^2 = 0\), and in particular, \(ZAC\) is nilpotent.
\end{lemma}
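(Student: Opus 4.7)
The plan is essentially a one-line computation exploiting the hypothesis $CZ = 0$. I would begin by writing out the square directly:
\[ (ZAC)^2 = (ZAC)(ZAC) = ZA(CZ)AC. \]
The middle factor $CZ$ equals $0$ by hypothesis, so the entire product vanishes. This gives $(ZAC)^2 = 0$.

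For the ``in particular'' clause, I would simply note that any operator $T$ with $T^2 = 0$ is by definition nilpotent (of nilpotency index at most $2$), so applying this to $T = ZAC$ gives nilpotency immediately.

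There is no real obstacle here: the proof is purely algebraic and uses nothing beyond associativity of matrix multiplication together with the given relation $CZ = 0$. I would present it as a two-sentence proof with no auxiliary lemmas or case distinctions.
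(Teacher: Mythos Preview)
Your proposal is correct and matches the paper's own proof essentially verbatim: the paper simply writes \((ZAC)^2 = ZACZAC = ZA(CZ)AC = 0\). There is nothing to add.
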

\begin{proof}
Trivially, \((ZAC)^2 = ZACZAC = ZA(CZ)AC = 0\).
\end{proof}

\begin{proposition}\label{prop:J-hadamard-Ann-is-Ann}
	Let \(J = \1 - I\), let \(\varphi\!:\End(\C^n)\to\End(\C^n)\) be the map given by \(A\mapsto J*A\), and let \(Z,C\in\End(\C^n)\) be matrices such that \(CZ = 0\). Then
	\[ \frakS_C^\varphi(Z) = \frakS_C(Z). \]
\end{proposition}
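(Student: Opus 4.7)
The plan is to unwind the definitions and observe that the only obstruction to the two sets being equal is some matrix $ZAC$ being diagonal but non-zero, which I will rule out using nilpotency.

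First, I would note that the inclusion $\frakS_C(Z) \subseteq \frakS_C^\varphi(Z)$ is immediate: if $ZAC = 0$ then certainly $\varphi(ZAC) = J * 0 = 0$, so $A \in \frakS_C^\varphi(Z)$. The content is in the reverse inclusion.

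For the reverse inclusion, I would pick $A \in \frakS_C^\varphi(Z)$, which by Definition \ref{definition:frakS_c^phi(z)} means $J * (ZAC) = 0$. Since $J = \mathbb{1} - I$ has ones everywhere except on the diagonal, the Hadamard product $J * M$ vanishes precisely when every off-diagonal entry of $M$ vanishes, i.e.\ when $M$ is diagonal. Thus $ZAC$ is a diagonal matrix.

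On the other hand, the hypothesis $CZ = 0$ allows me to invoke Lemma \ref{lemma:CZ=0-implies-ZAC-nilpotent}, which tells us that $(ZAC)^2 = 0$; in particular $ZAC$ is nilpotent. The main observation is then that a nilpotent diagonal matrix must be zero, since the diagonal entries of a diagonal matrix are its eigenvalues, and the eigenvalues of a nilpotent matrix all vanish. Therefore $ZAC = 0$, meaning $A \in \frakS_C(Z)$, which completes the proof. There is no real obstacle here: the argument is a two-line combination of the elementary fact about diagonal nilpotent matrices with the preceding lemma, and the only subtlety is recognizing that the very restrictive form of $\varphi$ (killing the diagonal) interacts perfectly with the nilpotency guaranteed by $CZ = 0$.
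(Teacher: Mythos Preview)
Your proof is correct and follows essentially the same approach as the paper's: both argue that \(J*(ZAC)=0\) forces \(ZAC\) to be diagonal, invoke Lemma \ref{lemma:CZ=0-implies-ZAC-nilpotent} to get nilpotency, and conclude that a diagonal nilpotent matrix is zero.
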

\begin{proof}
If \(ZAC = 0\), then clearly also \(J*(ZAC)=0\). Conversely, \(J*(ZAC)=0\) is equivalent to saying that \(ZAC\) is diagonal, which means
in particular that any basis vector \(e_k\) is an eigenvector of \(ZAC\). However, by Lemma \ref{lemma:CZ=0-implies-ZAC-nilpotent} the matrix
\(ZAC\) is nilpotent, so all eigenvalues are zero. Therefore, \(ZACe_k = 0\), so that \(ZAC=0\).
\end{proof}

\begin{proof}[Alternative proof of Theorem \ref{thm:hadamard-local-infimum-J-theorem}]
Let \(Z\in\End(\C^n)\) and let \(J = \1 - I\). By Proposition \ref{prop:J-hadamard-Ann-is-Ann} we have that for all \(C,C'\in\frakC_{-1}(Z)\),
\[ \frakS_C^{(J*)}(Z) = \frakS_C(Z) = \frakS_{\cup}(Z) = \frakS_{C'}(Z). \]
Therefore, the union in Theorem \ref{thm:hilbert-hadamard-general} collapses to give us
\[ \frakS_\cup^{(J*)}(Z) = \bigcup_{C\in\frakC_{-1}}\frakS_C(Z) = \frakS_\cup(Z) \]
as desired.
\end{proof}

With having now proven Conjecture \ref{conjecture:hadamard-general} in an interesting case and given an application of it, we will point out a downside of it: in practice, one must in principle check an
infinite number of conditions (one for each \(C\in\frakC_{-1}(Z)\)) in order to know if some \(A\) is in \(\frakS_\cup^\varphi(Z)\) or not.
It would be extremely convenient if one could somehow reduce this to checking a \emph{single} condition. However, as it turns out, this is too good to be true in general.

\begin{example}\label{example:infinite-union-necessary}
	Here is an example showing that for at least one choice of \(H,Z\), there is no \(C\in\frakC_{-1}(Z)\) for which \(\frakS_\cup^{(H*)}(Z) = \frakS_C^{(H*)}(Z)\). Let
	\[ Z = \begin{pmatrix} 1 & 0 & 0 \\ 0 & 0 & 0 \\ 0 & 0 & 0 \end{pmatrix},\quad H = \begin{pmatrix} 0 & 0 & 1 \\ 0 & 0 & 0 \\ 0 & 0 & 0 \end{pmatrix}. \]
	In this case, by Theorem \ref{thm:hilbert-space-gen-index-zero-C1=C'}, we know that all \(C\in\frakC_{-1}(Z)\) are of the form
	\[ C = \begin{pmatrix} 0 & 0 & 0 \\ 0 & x & y \\ 0 & z & w \end{pmatrix}. \]
	Now, a computation shows that
	\[ ZAC = \begin{pmatrix} 0 & xa_{12} + za_{13} & ya_{12} + wa_{13} \\ 0 & 0 & 0 \\ 0 & 0 & 0 \end{pmatrix}, \quad\text{where } A=(a_{ij}). \]
	Now the condition that \(H*(ZAC) = 0\) just becomes \(ya_{12} + wa_{13} = 0\).

	For any pair \((y,w)\in\C^2\backslash\{(0,0)\}\), one may find a matrix \(C\in\frakC_{-1}(Z)\) of the above form. Indeed, if they are non-zero, then choosing \(z=0\) and \(x=1\)
	yields that the bottom right of \(C\) is an upper triangular matrix, hence invertible. If only one is zero, then one can perform a similar trick. Now, by the above computation and
	Theorem \ref{thm:hilbert-hadamard-general}, we have
	\[ \frakS_\cup^{(H*)}(Z) = \bigcup_{C\in\frakC_{-1}(Z)}\frakS_C^{(H*)}(Z) = \bigcup_{(y,w)\in\C^2\backslash\{(0,0)\}}\{ (a_{ij}) \mid ya_{12} + wa_{13} = 0 \}. \]
	However, since for any pair \((a_{12},a_{13})\in\C^2\) we may find some pair \((y,w)\not=(0,0)\) such that \(ya_{12} + wa_{13} = 0\), we see that
	\[ \frakS_\cup^{(H*)}(Z) = \End(\C^3). \]
	On the other hand, it is clear that for all \(C\in\frakC_{-1}(Z)\), we have a \emph{strict} inclusion
	\[\frakS_C^{(H*)}(Z)\subsetneq\End(\C^n).\]
\end{example}

\begin{remark}
	The above example also implies that, in general, it is impossible to pick a finite subset of \(\frakC_{-1}(Z)\) which completely represents \(\frakS_\cup^H(Z)\). If it were possible,
	then it is easy to see that in fact \emph{one} \(C\in\frakC_{-1}(Z)\) would be enough, which is ruled out by the example.
\end{remark}

\subsection{Modifiers Equivalent to the Identity}\label{subsection:modifiers-equivalent-to-the-identity}
We finish the section with a characterizing condition for those \(\varphi\) such that \(\frakS_\cup^\varphi(-) = \frakS_\cup(-)\). This yields a necessary
criterion for having \(\varphi\simeq\id\), and a complete classification in the case when \(\varphi\) is given by a Hadamard product.

\begin{lemma}\label{lemma:hadamard-S_C-inclusion}
	Let \(V\) be a Banach algebra, \(X\) a Banach space, \(\varphi\!: V\to X\) be a bounded linear map, and let \(c,z\in V\) be such that \(cz = zc = 0\). Then \(\frakS_c(z)\subseteq\frakS_c^\varphi(z)\).
\end{lemma}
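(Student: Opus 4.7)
The plan is to unfold the definitions and use linearity of $\varphi$. Recall from Definition \ref{definition:frakS_c(z)} that $\frakS_c(z) = \{a \in V \mid zac = 0\}$, while from Definition \ref{definition:frakS_c^phi(z)} we have $\frakS_c^\varphi(z) = \{a \in V \mid \varphi(zac) = 0\}$.

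First I would take an arbitrary $a \in \frakS_c(z)$, so that $zac = 0$ by definition. Then, since $\varphi$ is a bounded \emph{linear} map, it sends the zero element of $V$ to the zero element of $X$, and hence $\varphi(zac) = \varphi(0) = 0$. This is precisely the condition for $a \in \frakS_c^\varphi(z)$, giving the desired inclusion.

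There is no real obstacle here: the statement is essentially a tautology obtained by applying $\varphi$ to the defining equation of $\frakS_c(z)$. The hypothesis $cz = zc = 0$ is only there to guarantee that both sides of the inclusion are well-defined as in Definitions \ref{definition:frakS_c(z)} and \ref{definition:frakS_c^phi(z)}; it plays no role in the one-line argument itself. The lemma is presumably stated as a stepping stone for the subsequent characterization of those $\varphi$ with $\frakS_\cup^\varphi(-) = \frakS_\cup(-)$, where the converse-type question (when does the inclusion become equality, perhaps after taking unions over $c \in \frakC_{-1}(z)$?) is the genuinely nontrivial content.
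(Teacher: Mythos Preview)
Your proof is correct and matches the paper's own argument essentially verbatim: take \(a\in\frakS_c(z)\), so \(zac=0\), hence \(\varphi(zac)=0\), so \(a\in\frakS_c^\varphi(z)\). Your additional commentary about the role of the hypothesis \(cz=zc=0\) and the surrounding context is accurate as well.
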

\begin{proof}
If \(a\in\frakS_c(z)\), we will have \( zac=0 \), so that \(\varphi(zac)=0\), and hence \(a\in\frakS_c^\varphi(z)\).
\end{proof}
\begin{proposition}\label{prop:S_V^phi=S_V-iff-S_C^phi=S_C}
	Let \(X\) be a Hilbert space, \(Y\) a Banach space, \(\varphi\!:\sfB(X)\to Y\) be a bounded linear map, and let \(Z\in\sfB(X)\) be of generalized index zero. Then
	\[ \frakS_\cup^\varphi(Z) = \frakS_\cup(Z) \iff \forall C\in\frakC_{-1}(Z),\, \frakS_C^\varphi(Z) = \frakS_C(Z). \]
\end{proposition}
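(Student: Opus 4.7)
The plan is to reduce this to a sandwich argument using identities we already have available. Theorem \ref{thm:hilbert-hadamard-general} gives the decomposition
\[ \frakS_\cup^\varphi(Z) = \bigcup_{C\in\frakC_{-1}(Z)}\frakS_C^\varphi(Z), \]
and Theorem \ref{thm:hilbert-space-gen-index-zero-C1=C'} combined with Corollary \ref{corollary:hilbert-space-S_V=S_ker} tells us that, since \(Z\) is of generalized index zero, \(\frakC_{-1}(Z) \not= \varnothing\) and \(\frakS_\cup(Z) = \frakS_C(Z)\) for \emph{every} \(C \in \frakC_{-1}(Z)\). This last fact is the key structural input: the ``unmodified'' side is \emph{rigid} in \(C\), while the \(\varphi\)-modified side could in principle vary with \(C\), as Example \ref{example:infinite-union-necessary} illustrates.

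For the \((\Leftarrow)\) direction, I would simply substitute: assuming \(\frakS_C^\varphi(Z) = \frakS_C(Z)\) for each \(C \in \frakC_{-1}(Z)\), each \(\frakS_C^\varphi(Z)\) equals the common value \(\frakS_\cup(Z)\), so the union collapses to \(\frakS_\cup(Z)\).

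For the \((\Rightarrow)\) direction, fix an arbitrary \(C \in \frakC_{-1}(Z)\) and build a chain of inclusions squeezing \(\frakS_C^\varphi(Z)\) between two copies of \(\frakS_C(Z)\). On one side, Lemma \ref{lemma:hadamard-S_C-inclusion} gives \(\frakS_C(Z) \subseteq \frakS_C^\varphi(Z)\). On the other side, \(\frakS_C^\varphi(Z)\) is a single summand of the union, so Corollary \ref{corollary:hadamard-S-vee-contains-union-of-hadamard-SC} yields \(\frakS_C^\varphi(Z) \subseteq \frakS_\cup^\varphi(Z)\); combining with the hypothesis \(\frakS_\cup^\varphi(Z) = \frakS_\cup(Z)\) and the equality \(\frakS_\cup(Z) = \frakS_C(Z)\) closes the loop:
\[ \frakS_C(Z) \subseteq \frakS_C^\varphi(Z) \subseteq \frakS_\cup^\varphi(Z) = \frakS_\cup(Z) = \frakS_C(Z). \]

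There is no genuinely hard step here, since all of the nontrivial content has been front-loaded into Theorem \ref{thm:hilbert-hadamard-general} and Corollary \ref{corollary:hilbert-space-S_V=S_ker}. The only subtlety worth flagging is that the argument depends essentially on the \emph{uniformity} of \(\frakS_\cup(Z) = \frakS_C(Z)\) across all \(C \in \frakC_{-1}(Z)\); without this uniformity, neither direction of the equivalence would go through, which explains why the proposition is stated specifically for \(Z\) of generalized index zero rather than for general \(Z\in\sfB(X)\).
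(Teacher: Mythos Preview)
Your proposal is correct and matches the paper's proof essentially line for line: the \((\Leftarrow)\) direction substitutes into the union from Theorem \ref{thm:hilbert-hadamard-general}, and the \((\Rightarrow)\) direction is the same sandwich \(\frakS_C(Z) \subseteq \frakS_C^\varphi(Z) \subseteq \frakS_\cup^\varphi(Z) = \frakS_\cup(Z) = \frakS_C(Z)\) via Lemma \ref{lemma:hadamard-S_C-inclusion} and Corollary \ref{corollary:hilbert-space-S_V=S_ker}.
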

\begin{proof}
If the latter condition is satisfied, then by Theorem \ref{thm:hilbert-hadamard-general} we will have
\[ \frakS_\cup^\varphi(Z) = \bigcup_{C\in\frakC_{-1}(Z)}\frakS_{C}^\varphi(Z) = \bigcup_{C\in\frakC_{-1}(Z)}\frakS_C(Z) = \frakS_\cup(Z). \]
Conversely, suppose \(\frakS_\cup^\varphi(Z)=\frakS_\cup(Z)\) and fix some \(C\in\frakC_{-1}(Z)\). Then, by Corollary \ref{corollary:hilbert-space-S_V=S_ker} and Lemma \ref{lemma:hadamard-S_C-inclusion},
\[ \frakS_C(Z)\subseteq\frakS_C^\varphi(Z)\subseteq\frakS_\cup^\varphi(Z) = \frakS_\cup(Z) = \frakS_C(Z) \]
so that \(\frakS_C^\varphi(Z) = \frakS_C(Z)\) as desired.
\end{proof}

\begin{definition}
	An \emph{orthogonal algebra} is an algebra \(\calA\) with the property that \(xy=0\) for all \(x,y\in\calA\). Given an orthogonal subalgebra \(\calT\subseteq\sfB(X)\) where \(X\) is a Hilbert space, we define two associated
	closed subspaces
	\[ \img(\calT) := \bigcap_{S\in\calT}\ker(S),\quad \ker(\calT) := \img(\calT)^\perp. \]
\end{definition}
\begin{proposition}\label{prop:orthogonal-subalgebra-of-B(X)-decomposition}
	Let \(X\) be a Hilbert space, consider an orthogonal subalgebra \(\calT\subseteq\sfB(X)\), and let \(T\in\calT\). Then, under the decomposition \(X = \ker(\calT)\oplus\img(\calT)\), we can write \(T\) in the form
	\begin{diagram*}[ampersand replacement=\&]
		\img(\calT)\oplus\ker(\calT) \arrow{r}{\begin{pmatrix} 0 & T_{12} \\ 0 & 0 \end{pmatrix}} \& \img(\calT)\oplus\ker(\calT).
	\end{diagram*}
	In particular, there is a bounded operator \(Z\in\sfB(X)\) of generalized index zero and some \(C\in\frakC_{-1}(Z)\) such that for all \(T\in\calT\) we have \(T = ZTC\).
\end{proposition}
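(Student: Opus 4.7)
The plan is to exploit the orthogonality condition $ST=0$ to force the block structure directly, and then to take $Z$ and $C$ to be the two complementary orthogonal projections associated to the decomposition $X=\img(\calT)\oplus\ker(\calT)$.

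First I would verify the matrix form. Fix $T\in\calT$. Since $T\in\calT$, every $x\in\img(\calT)=\bigcap_{S\in\calT}\ker(S)$ is in particular killed by $T$, so $T$ annihilates $\img(\calT)$. Conversely, for every $S\in\calT$ we have $ST=0$, so $\img(T)\subseteq\ker(S)$ for all $S$, i.e.\ $\img(T)\subseteq\img(\calT)$. Under $X=\img(\calT)\oplus\ker(\calT)$, these two facts are precisely the vanishing of every block of the matrix of $T$ except the upper-right one, as required.

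For the second statement, set $Z := P_{\img(\calT)}$, the orthogonal projection onto $\img(\calT)$ (which is closed as an intersection of kernels of bounded operators), and $C := I-Z = P_{\ker(\calT)}$. The orthogonal decomposition yields an isometric isomorphism $\ker(Z)=\ker(\calT)\iso X/\img(\calT)=\coker(Z)$, so $Z$ is of generalized index zero. Since $\img(C)=\ker(\calT)=\ker(Z)$ and $\ker(C)=\img(\calT)=\img(Z)$, we have $C\in\frakC'(Z)$, and Theorem \ref{thm:hilbert-space-gen-index-zero-C1=C'} promotes this to $C\in\frakC_{-1}(Z)$.

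Finally, the identity $T=ZTC$ is immediate from the block structure: since $T$ kills $\img(\calT)$ we have $TC = T(I-P_{\img(\calT)}) = T$, and since $\img(T)\subseteq\img(\calT)$ we have $ZT = P_{\img(\calT)}T = T$; composing yields $ZTC=T$. There is no real obstacle here—the orthogonality hypothesis is so strong that it essentially tautologically produces complementary projections with the required kernel/image data, with Theorem \ref{thm:hilbert-space-gen-index-zero-C1=C'} doing the only nontrivial work by upgrading the algebraic criterion $C\in\frakC'(Z)$ to membership in $\frakC_{-1}(Z)$.
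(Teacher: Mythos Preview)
Your proof is correct and follows essentially the same route as the paper: both deduce the block form from $\img(\calT)\subseteq\ker(T)$ and $\img(T)\subseteq\img(\calT)$, and both take $Z$ and $C$ to be the complementary orthogonal projections onto $\img(\calT)$ and $\ker(\calT)$. The only cosmetic difference is that the paper cites Theorem \ref{thm:generalized-index-zero-iff-C'-non-empty} to certify that $Z$ is of generalized index zero (via $C\in\frakC'(Z)$) and leaves the passage to $\frakC_{-1}(Z)$ implicit, whereas you argue the generalized-index-zero property directly and then explicitly invoke Theorem \ref{thm:hilbert-space-gen-index-zero-C1=C'} to upgrade $\frakC'(Z)$ to $\frakC_{-1}(Z)$.
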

\begin{proof}
By definition of $\img(\calT)$, we have $\img(\calT)\subseteq \ker(T)$. Since \(\calT\) is an orthogonal algebra, we have that \(ST = 0\) for all \(S\in\calT\), so \(\img(T)\subseteq \img(\calT)\). Therefore,
in total, we have
\[ \img(T)\subseteq \img(\calT) \subseteq\ker(T). \]
From this, it is clear that \(T\) has the claimed form. Setting
\[ Z = \begin{pmatrix} \id_{\img(\calT)} & 0 \\ 0 & 0 \end{pmatrix},\quad C = \begin{pmatrix} 0 & 0 \\ 0 & \id_{\ker(\calT)} \end{pmatrix} \]
one obtains the final statement. That \(Z\) is of generalized index zero is clear by Theorem \ref{thm:generalized-index-zero-iff-C'-non-empty}, since \(C\in\frakC'(Z)\).
\end{proof}

\begin{lemma}\label{lemma:2nd-order-nilpotents-are-ZAC}
	Let \(X\) be a Hilbert space, and let \(T\in\sfB(X)\). Then \(T^2 = 0\) if and only if there exists \(Z\in\sfB(X)\) of generalized index zero and \(C\in\frakC_{-1}(Z)\) such
	that \(T = ZAC\).
\end{lemma}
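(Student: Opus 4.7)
The plan is to prove the two directions separately, with the ``if'' direction being an immediate consequence of earlier results, and the ``only if'' direction following from a clever application of Proposition \ref{prop:orthogonal-subalgebra-of-B(X)-decomposition}.

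For the ``if'' direction, suppose $T = ZAC$ for some $Z$ of generalized index zero, some $C \in \frakC_{-1}(Z)$, and some $A \in \sfB(X)$. Part (c) of Theorem \ref{thm:properties-of-C1(Z)} yields $CZ = 0$, so by Lemma \ref{lemma:CZ=0-implies-ZAC-nilpotent} we obtain $T^2 = (ZAC)^2 = 0$.

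For the ``only if'' direction, suppose $T^2 = 0$, and consider the one-dimensional subspace $\calT := \C T \subseteq \sfB(X)$. This is trivially closed under addition and scalar multiplication, and for any $\lambda, \mu \in \C$ we have $(\lambda T)(\mu T) = \lambda\mu T^2 = 0 \in \calT$, so $\calT$ is a (closed, one-dimensional) orthogonal subalgebra of $\sfB(X)$. Applying Proposition \ref{prop:orthogonal-subalgebra-of-B(X)-decomposition} to $\calT$ produces some $Z \in \sfB(X)$ of generalized index zero and some $C \in \frakC_{-1}(Z)$ such that every element of $\calT$, and in particular $T$ itself, can be written as $ZTC$. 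Taking $A = T$ exhibits $T$ in the required form $T = ZAC$, which completes the proof.

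No step should present real difficulty: the ``if'' direction is essentially a two-line calculation quoting earlier results, and the ``only if'' direction reduces immediately to verifying that $\C T$ is an orthogonal algebra, which uses nothing more than the hypothesis $T^2 = 0$. The only mildly subtle point is recognizing that the proposition quoted above produces the decomposition in exactly the form we want, with $A$ allowed to be $T$ itself.
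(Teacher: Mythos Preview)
Your proof is correct and follows essentially the same approach as the paper: the ``if'' direction is the trivial computation $(ZAC)^2 = ZA(CZ)AC = 0$, and the ``only if'' direction applies Proposition \ref{prop:orthogonal-subalgebra-of-B(X)-decomposition} to the orthogonal subalgebra generated by $T$ (the paper writes this as $\C[T]$, which in context is the non-unital algebra $\C T$ you use). The only cosmetic difference is that you cite Theorem \ref{thm:properties-of-C1(Z)}(c) and Lemma \ref{lemma:CZ=0-implies-ZAC-nilpotent} explicitly for the easy direction, whereas the paper just writes out the one-line computation.
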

\begin{proof}
If \(T\) is of the form \(T=ZAC\), then trivially \(T^2 = ZACZAC = 0\). Conversely, apply Proposition \ref{prop:orthogonal-subalgebra-of-B(X)-decomposition} to the subalgebra \(\C[T]\subseteq\sfB(X)\)
generated by \(T\), which is an orthogonal algebra since \(T^2=0\).
\end{proof}

\begin{theorem}\label{thm:S_V^phi=S_V-implies-phi(2nd-ord-nilp)-nonzero}
	Let \(X,Y\) be Hilbert spaces, and let \(\varphi\!:\sfB(X)\to Y\) be a bounded linear map. Then the following statements are equivalent:
	\begin{enumerate}[label=(\roman*)]
	\item For all \(Z\in\sfB(X)\) of generalized index zero, we have \(\frakS_\cup^\varphi(Z) = \frakS_\cup(Z)\).
	\item For all \(Z\in\sfB(X)\) of generalized index zero and for all \(C\in\frakC_{-1}(Z)\), we have \(\frakS_C^\varphi(Z) = \frakS_C(Z)\).
	\item For all \(T\in\sfB(X)\) such that \(T^2=0\), we have \(\varphi(T)=0\) if and only if \(T=0\).
	\end{enumerate}
\end{theorem}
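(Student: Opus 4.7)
The plan is to treat (i) $\Leftrightarrow$ (ii) as essentially handed to us by Proposition \ref{prop:S_V^phi=S_V-iff-S_C^phi=S_C}, and then focus all attention on (ii) $\Leftrightarrow$ (iii). The crucial bridge between these last two statements is Lemma \ref{lemma:2nd-order-nilpotents-are-ZAC}, which matches up the two sides perfectly: on the (ii) side we have the quantity $ZAC$ with $Z$ of generalized index zero and $C\in\frakC_{-1}(Z)$, and on the (iii) side we have arbitrary second-order nilpotents, and the lemma tells us these are exactly the same class of operators. Once one internalizes this, both directions are essentially tautologies.

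To prove (iii) $\Rightarrow$ (ii), I would fix $Z$ of generalized index zero and $C\in\frakC_{-1}(Z)$. The inclusion $\frakS_C(Z)\subseteq\frakS_C^\varphi(Z)$ is immediate from Lemma \ref{lemma:hadamard-S_C-inclusion}, so only the reverse inclusion requires work. For $A\in\frakS_C^\varphi(Z)$, we have $\varphi(ZAC)=0$ by definition; on the other hand, $CZ = 0$ by part (c) of Theorem \ref{thm:properties-of-C1(Z)}, so $(ZAC)^2 = ZA(CZ)AC = 0$. Applying (iii) to $T := ZAC$ forces $ZAC=0$, which places $A$ into $\frakS_C(Z)$.

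For (ii) $\Rightarrow$ (iii), let $T\in\sfB(X)$ satisfy $T^2=0$ and $\varphi(T)=0$ (the reverse implication in (iii) is trivial by linearity). Lemma \ref{lemma:2nd-order-nilpotents-are-ZAC} produces some $Z\in\sfB(X)$ of generalized index zero and some $C\in\frakC_{-1}(Z)$ together with an operator $A\in\sfB(X)$ such that $T=ZAC$. Then $\varphi(ZAC)=\varphi(T)=0$ says exactly that $A\in\frakS_C^\varphi(Z)$, and invoking (ii) yields $A\in\frakS_C(Z)$, i.e., $T=ZAC=0$.

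The only step with any real content is the structural decomposition of second-order nilpotents supplied by Lemma \ref{lemma:2nd-order-nilpotents-are-ZAC} (which rests on Proposition \ref{prop:orthogonal-subalgebra-of-B(X)-decomposition}). Once that decomposition is available, the rest of the argument is essentially formal: the characterizations match on both sides of the correspondence $T\leftrightarrow(Z,A,C)$, and the equivalence drops out. The potential pitfall to double-check is simply that the decomposition can always be arranged so that $Z$ is of generalized index zero and $C$ actually lies in $\frakC_{-1}(Z)$ (rather than just in $\frakC'(Z)$), but this is already embedded in the referenced lemmas via Theorem \ref{thm:hilbert-space-gen-index-zero-C1=C'}.
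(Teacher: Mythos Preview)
Your proposal is correct and follows essentially the same approach as the paper: both invoke Proposition~\ref{prop:S_V^phi=S_V-iff-S_C^phi=S_C} for (i)\,$\Leftrightarrow$\,(ii) and then use Lemma~\ref{lemma:2nd-order-nilpotents-are-ZAC} together with the observation $(ZAC)^2=0$ to pass between (ii) and (iii). Your write-up is in fact slightly more explicit about which auxiliary results (Lemma~\ref{lemma:hadamard-S_C-inclusion}, Theorem~\ref{thm:properties-of-C1(Z)}(c)) are being invoked at each step, and your closing remark about $\frakC_{-1}(Z)$ versus $\frakC'(Z)$ correctly identifies the one place where care is needed.
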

\begin{proof}
The equivalence between (i) and (ii) follows by Prooposition \ref{prop:S_V^phi=S_V-iff-S_C^phi=S_C}. We prove that (iii) is equivalent to (ii). Suppose that (ii) holds; it is equivalent to saying that
\[ \forall Z\in\sfB(X),\,C\in\frakC_{-1}(Z),\quad [\varphi(ZAC) \not= 0 \iff ZAC \not= 0] \]
and thus, if \(T\not=0\) is such that \(T^2 = 0\), Lemma \ref{lemma:2nd-order-nilpotents-are-ZAC} tells us precisely that \(\varphi(T)\not=0\). Therefore, (ii) implies (iii)

Conversely, if (iii) holds, then this is the same as saying that for all \(T\) such that \(T^2=0\), we have \(\varphi(T)=0\) if and only if \(T=0\).
However, since for all \(A,Z\in\sfB(X)\) where \(Z\) is of generalized index zero and \(C\in\frakC_{-1}(Z)\), we have \((ZAC)^2=0\), this means
\[ ZAC = 0\iff \varphi(ZAC)=0 \]
so that \(\frakS_C^\varphi(Z)=\frakS_C(Z)\). Thus, (iii) implies (ii).
\end{proof}
\begin{corollary}\label{corollary:hadamard-phis-equiv-to-id}
	Let \(\varphi\!:\End(\C^n)\to\End(\C^n)\).
	\begin{enumerate}[label=(\alph*)]
	\item If \(\varphi\simeq\id\) and \(X^2=0\), then \(\varphi(X)=0\) if and only if \(X=0\).
	\item If \(\varphi\) is, under some basis of \(\C^n\), given by \(A\mapsto H*A\) for some matrix \(H\), then
	\[ \frakS_\cup^\varphi(-) = \frakS_\cup(-) \iff \text{all non-diagonal entries of \(H\) are non-zero}\iff \varphi\simeq\id. \]
\end{enumerate}
\end{corollary}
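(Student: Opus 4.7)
My plan is to treat (a) as essentially a corollary of results already assembled, and to prove (b) by chasing the three conditions around a small cycle of implications.

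For (a), since $\varphi\simeq\id$, Corollary~\ref{corollary:phi-simeq-psi-implies-same-S} immediately yields $\frakS_\cup^\varphi(Z)=\frakS_\cup(Z)$ for every $Z\in\End(\C^n)$. In finite dimensions every operator is of generalized index zero, so the implication (i)$\Rightarrow$(iii) of Theorem~\ref{thm:S_V^phi=S_V-implies-phi(2nd-ord-nilp)-nonzero} applies and gives exactly the claim: for any $X$ with $X^2=0$, $\varphi(X)=0$ if and only if $X=0$.

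For (b), abbreviate the three conditions as (C1) $\frakS_\cup^\varphi(-)=\frakS_\cup(-)$, (C2) every off-diagonal entry of $H$ is non-zero, and (C3) $\varphi\simeq\id$. I would establish the cycle (C3)$\Rightarrow$(C1)$\iff$(C2)$\Rightarrow$(C3). The first arrow is again Corollary~\ref{corollary:phi-simeq-psi-implies-same-S}. For (C1)$\iff$(C2), I would apply Theorem~\ref{thm:S_V^phi=S_V-implies-phi(2nd-ord-nilp)-nonzero} to reduce (C1) to the condition that $\varphi(T)=0\iff T=0$ for every $T$ with $T^2=0$. Testing against elementary matrices $E_{ij}$ with $i\neq j$, which satisfy $E_{ij}^2=0$ and $\varphi(E_{ij})=H_{ij}E_{ij}$, immediately yields (C1)$\Rightarrow$(C2): were some $H_{ij}=0$ with $i\neq j$, we would have $\varphi(E_{ij})=0$ while $E_{ij}\neq 0$. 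Conversely, if (C2) holds and $H*T=0$ with $T^2=0$, the entrywise equations $H_{ij}T_{ij}=0$ force $T_{ij}=0$ for $i\neq j$, so $T$ is diagonal, and then nilpotency forces each $T_{ii}=0$, hence $T=0$.

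The remaining implication (C2)$\Rightarrow$(C3) is the only genuinely analytic step, and the plan is simply to rerun the Gershgorin argument used in Lemma~\ref{lemma:J-is-equivalent-to-id} with constants adjusted for a general $H$. Setting $c:=\min_{i\neq j}|H_{ij}|>0$, one has $|A_{ij}|\leq c^{-1}|(H*A)_{ij}|$ for every $i\neq j$, so $\sum_{i\neq j}|A_{ij}|$ is controlled by a constant multiple of $\|H*A\|$. Then, exactly as in the proof of Lemma~\ref{lemma:J-is-equivalent-to-id}, one estimates $\sum_i|a_{ii}|$ via Gershgorin, bounding $|a_{ii}-\lambda_i|$ by twice the sum of the Gershgorin radii (hence by a multiple of $\|H*A\|$) and picking up a term $\sum_i|\lambda_i|$ which depends only on the conjugacy class of $A$. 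This produces a bound of the form $\|A\|\leq c_1\|H*A\|+c_2\sum_i|\lambda_i|$, which is exactly what is needed to conclude $\id\preceq\varphi$; combined with the trivial $\varphi\preceq\id$ coming from continuity of $\varphi$, this gives $\varphi\simeq\id$.

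The hard part, such as it is, lies entirely in (C2)$\Rightarrow$(C3); but since the Gershgorin/eigenvalue argument has already been executed in Lemma~\ref{lemma:J-is-equivalent-to-id}, the only actual work is verifying that replacing the constant $1$ (the off-diagonal value of $\1-I$) by $c^{-1}$ does not disturb any estimate, which it manifestly does not. Everything else reduces cleanly to results already established in Sections~\ref{subsection:preorder} and~\ref{subsection:bounded-modified-existence}.
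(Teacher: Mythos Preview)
Your proof is correct and follows essentially the same overall architecture as the paper's: part (a) is identical, and for (b) both arguments run the same cycle of implications using Corollary~\ref{corollary:phi-simeq-psi-implies-same-S} and Theorem~\ref{thm:S_V^phi=S_V-implies-phi(2nd-ord-nilp)-nonzero}. The one genuine difference is in the step (C2)$\Rightarrow$(C3): you rerun the Gershgorin estimate of Lemma~\ref{lemma:J-is-equivalent-to-id} from scratch with the constant $c=\min_{i\neq j}|H_{ij}|$ inserted, whereas the paper instead passes to the $0/1$ matrix $\tilde H$ obtained by replacing every non-zero entry of $H$ by $1$, observes $\varphi\simeq\tilde\varphi$ by equivalence of norms (via Proposition~\ref{prop:preceq-composition-relations-1}), and then notes $J\preceq\tilde\varphi$ so that Lemma~\ref{lemma:J-is-equivalent-to-id} applies verbatim. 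Your route is more self-contained; the paper's is more modular in that it invokes the lemma as a black box rather than reopening it. Both are equally valid, and as you note, the Gershgorin constants are entirely insensitive to the change. (Your direct verification of (C2)$\Rightarrow$(C1) also appears in the paper, but only as the remark immediately following the corollary; the paper's proof proper relies on the cycle (C1)$\Rightarrow$(C2)$\Rightarrow$(C3)$\Rightarrow$(C1) and does not need that direction separately.)
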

\begin{proof}
To prove (a), note that \(\varphi\simeq\id\) implies that \(\frakS_\cup^\varphi(-)=\frakS_\cup(-)\), and hence Theorem \ref{thm:S_V^phi=S_V-implies-phi(2nd-ord-nilp)-nonzero} implies that \(\varphi(X)\not=0\) unless \(X=0\).
To prove (b), first observe that the implication
\[ \frakS_\cup^\varphi(-) = \frakS_\cup(-) \implies \text{all non-diagonal entries of \(H\) are non-zero} \]
follows from Theorem \ref{thm:S_V^phi=S_V-implies-phi(2nd-ord-nilp)-nonzero}, noting that the basis matrices \(E_{ij}\) satisfy \(E_{ij}^2 = 0\) as long as \(i\not=j\). Next, note that if we let \(\tilde{H}\) be the matrix given by replacing
all non-zero entries in \(H\) by \(1\) and set \(\tilde{\varphi}(A) := \tilde{H}*A\), it is clear that \(\varphi\simeq\tilde\varphi\) by the equivalence of norms on finite-dimensional spaces (for example, one may apply
Proposition \ref{prop:preceq-composition-relations-1}). Furthermore, it is clear that \(J\preceq\tilde{\varphi}\) whenever all non-diagonal entries of \(H\) are non-zero, so by Lemma \ref{lemma:J-is-equivalent-to-id} we get
\[ \text{all non-diagonal entries of \(H\) are non-zero}\implies \varphi\simeq\id. \]
Finally, that
\[ \varphi\simeq\id\implies\frakS_\cup^\varphi(-) = \frakS_\cup(-) \]
follows by Corollary \ref{corollary:phi-simeq-psi-implies-same-S}. This completes the proof.
\end{proof}
\begin{remark}
	In fact, in the above, the equivalence
	\[ \frakS_\cup^\varphi(-) = \frakS_\cup(-) \iff \text{all non-diagonal entries of \(H\) are non-zero} \]
	follows also from noting that diagonal nilpotents are zero, and therefore if \(H\) satisfies the condition on the right, \(X^2=0\), and \(H*X = 0\), then \(X=0\). This is the same proof as
	in Proposition \ref{prop:J-hadamard-Ann-is-Ann}.
\end{remark}
\begin{remark}\label{remark:proof-of-nilpotent-thing}
	The proof of Theorem \ref{thm:S_V^phi=S_V-implies-phi(2nd-ord-nilp)-nonzero} can be replicated more generally whenever one has an analogue of Lemma \ref{lemma:2nd-order-nilpotents-are-ZAC} and one
	knows that Conjecture \ref{conjecture:hadamard-general} is true. In more general settings, however, one may have to contend with \(\frakC_{-1}(-)\) and \(\frakC'(-)\) no longer
	obviously being identical.
\end{remark}

\clearpage
\section{Existence of Unbounded Paths}\label{section:existence-of-unbounded-paths}
This section is dedicated to understanding \(\frakS_\cap(-)\). Before we get on with our main results, though, we point out a few central devices we will use in our strategy.

\subsection{Generalities}\label{subsection:unbounded-generalities}
We start with a remarkably powerful observation based on the following fact: if we have some family of sets \(S_{r}\subseteq\R\), \(r > 0\), such that \(S_{t}\subseteq S_{t'}\) whenever \(t < t'\), then
\[ \lim_{r\to 0}\sup{S_r} < \infty \implies \exists r'>0\text{ such that } \sup{S_{r'}} < 0. \]
Applying this fact to \(\frakS_\cap^\varphi(-)\) yields the following theorem:

\begin{theorem}\label{thm:S(Z)-contained-in-cup-cap-ker}
	Let \(V\) be a Banach algebra, let \(Y\) be a Banach space, let \(\varphi\!:V\to Y\) be a bounded linear map, and let \(z\in V\). Then
	\[ \frakS_\cap^\varphi(z) = \bigcup \left\{ \bigcap_{z'\in U}\frakS_\cap^\varphi(z') : U\subseteq V\text{ open and bounded, }z\in U \right\}. \]
	Here, by \emph{bounded,} we mean \emph{bounded in diameter} (as a subset of a metric space).
\end{theorem}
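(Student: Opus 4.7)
The plan is to exploit the fact that, for fixed $a,z$, the quantity
\[ f(r) := \sup_{\substack{u\in V^\times \\ \|u-z\|<r}}\|\varphi(uau^{-1})\| \]
is monotone non-decreasing in $r$. Because of this monotonicity, the $\limsup_{r\to 0}$ defining $\frakN^\varphi_\cap(a,z)$ is actually $\inf_{r>0}f(r)$, so $a\in\frakS_\cap^\varphi(z)$ if and only if \emph{there exists} some $r>0$ with $f(r)<\infty$. This converts the asymptotic finiteness condition into a ``finiteness on a definite ball'' condition, which is exactly the sort of statement amenable to a triangle-inequality argument on nearby centers.

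First I would establish the inclusion $\supseteq$, which is essentially a tautology: if $a$ lies in the intersection $\bigcap_{z'\in U}\frakS_\cap^\varphi(z')$ for some open neighborhood $U$ of $z$, then taking $z'=z$ gives $a\in\frakS_\cap^\varphi(z)$.

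For the harder inclusion $\subseteq$, I would take $a\in\frakS_\cap^\varphi(z)$ and, using the monotonicity observation above, pick some radius $r>0$ with $f(r)<\infty$. Then I would set $U := \{z' \in V : \|z'-z\|<r/2\}$, which is open and bounded (diameter $\leq r$) and contains $z$. For any $z'\in U$ and any $u\in V^\times$ with $\|u-z'\|<r/2$, the triangle inequality yields $\|u-z\|<r$; hence
\[ \sup_{\substack{u\in V^\times \\ \|u-z'\|<r/2}}\|\varphi(uau^{-1})\| \;\leq\; \sup_{\substack{u\in V^\times \\ \|u-z\|<r}}\|\varphi(uau^{-1})\| \;=\; f(r) \;<\; \infty, \]
so $\frakN^\varphi_\cap(a,z')\leq f(r)<\infty$, i.e.\ $a\in\frakS_\cap^\varphi(z')$. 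Thus $a\in\bigcap_{z'\in U}\frakS_\cap^\varphi(z')$, which is one of the sets comprising the right-hand side.

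There is no real obstacle here: once one notices that the $\limsup$ over $r\to 0$ of a monotone-in-$r$ quantity is realized by \emph{some} positive $r$, the rest is a direct triangle-inequality comparison. The conceptual content of the theorem, which this short proof makes transparent, is simply that being sup-bounded at $z$ automatically propagates to a full open neighborhood of $z$.
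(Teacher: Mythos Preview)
Your proof is correct and follows essentially the same approach as the paper's: both hinge on the observation that finiteness of the $\limsup$ forces finiteness of the supremum over some fixed ball, after which the nearby $z'$ inherit boundedness because small balls around them sit inside that fixed ball. Your version is somewhat more explicit (spelling out the monotonicity of $f(r)$ and the triangle-inequality step with radius $r/2$), whereas the paper compresses this into a chain of inclusions, but the underlying argument is the same.
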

\begin{proof}
If \(a\in\frakS_\cap^\varphi(z)\), then, by definition,
\[ \lim_{r\to 0}\sup_{\substack{\|u - z\| < r \\ u\in V^\times}}\|\varphi(uau^{-1})\| < \infty. \]
However, for this to hold, it must be that for some \(r' > 0\),
\[ \sup_{\substack{\|u - z\| < r' \\ u\in V^\times}}\|\varphi(uau)^{-1}\| < \infty. \]
In other words,
\begin{align*}
	\frakS_\cap^\varphi(z) &\subseteq \bigcup_{\substack{U\subseteq V \\ U\text{ bounded} \\ z\in U}}\{ a\in V \mid \sup_{u\in U\cap V^\times}\|\varphi(uau)^{-1}\| < \infty \} \\
	&\subseteq \bigcup_{\substack{U\subseteq V \\ U\text{ bounded} \\ z\in U}}\bigcap_{z'\in U} \frakS_\cap^\varphi(z') \subseteq \frakS_\cap^\varphi(z),
\end{align*}
as was to be proven.
\end{proof}

Here is the really useful way we may utilize the above:
\begin{corollary}\label{corollary:S^phi-local-closure-properties}
	Let \(V\) be a Banach algebra, \(Y\) a Banach space, \(\varphi\!:V\to Y\) a bounded linear map, and \(z\in V\). If \(a\in\frakS_\cap^\varphi(z)\), then there is some \(r > 0\), depending on \(a\), such
	that for all \(x\in V\) satisfying \(\|x\| < r\), we have \((1+x)^{-1}a(1+x)\in\frakS_\cap^\varphi(z)\). In particular, the following hold:
	\begin{enumerate}[label=(\roman*)]
	\item For any \(x\in V\), let \(e^x := \sum_{k=0}^\infty \frac{x^k}{k!} \). For all \(a\in\frakS_\cap^\varphi(z)\), there is some \(r' > 0\) such that if \(\|x\| < r'\), then \(e^{-x}ae^x\in\frakS_\cap^\varphi(z)\).
	\item For any \(x\in V\) such that \(e^{\varepsilon x} = f(\varepsilon) + g(\varepsilon)x\) where \(f\) is even and \(g\) is odd, for example when
		\begin{enumerate}[label=(\alph*)]
		\item \(x^2 = 0\), or
		\item \(x^2 = 1\),
		\end{enumerate}
	and for all \(a\in\frakS_\cap^\varphi(z)\), we have \(xax\in\frakS_\cap^\varphi(z)\) and \([x,a]\in\frakS_\cap^\varphi(z)\).
	\end{enumerate}
\end{corollary}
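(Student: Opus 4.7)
The plan is to first establish the main statement by combining the local-closure property of Theorem~\ref{thm:S(Z)-contained-in-cup-cap-ker} with the conjugation law of Corollary~\ref{corollary:hadamard-conjugation-law}, and then deduce (i) and (ii) by direct algebraic manipulation. Starting from \(a\in\frakS^\varphi_\cap(z)\), Theorem~\ref{thm:S(Z)-contained-in-cup-cap-ker} supplies an open bounded neighborhood \(U\) of \(z\) such that \(a\in\frakS^\varphi_\cap(z')\) for every \(z'\in U\). Because the map \(x\mapsto z(1+x)^{-1}\) is continuous at \(x=0\) with value \(z\), there is some \(r>0\) (depending on \(a\) through \(U\)) such that whenever \(\|x\|<r\), \(1+x\) is invertible and \(z(1+x)^{-1}\in U\). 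Then apply Corollary~\ref{corollary:hadamard-conjugation-law} with \(p=(1+x)^{-1}\) to obtain
\[ \frakS^\varphi_\cap(z(1+x)^{-1}) = (1+x)\,\frakS^\varphi_\cap(z)\,(1+x)^{-1}, \]
and since \(a\) lies in the left-hand side, rearranging gives \((1+x)^{-1}a(1+x)\in\frakS^\varphi_\cap(z)\), which is the main statement.

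For part (i), write \(e^x=1+y\) with \(y:=e^x-1\) and note \(\|y\|\le e^{\|x\|}-1\to 0\) as \(\|x\|\to 0\). Choose \(r'>0\) small enough that \(\|x\|<r'\) forces \(\|y\|<r\), and apply the main statement to conclude \(e^{-x}ae^x = (1+y)^{-1}a(1+y) \in \frakS^\varphi_\cap(z)\).

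For part (ii), apply (i) to \(\varepsilon x\) and to \(-\varepsilon x\) for all sufficiently small \(\varepsilon>0\), so that both \(e^{-\varepsilon x}ae^{\varepsilon x}\) and \(e^{\varepsilon x}ae^{-\varepsilon x}\) belong to \(\frakS^\varphi_\cap(z)\). The hypothesis \(e^{\varepsilon x}=f(\varepsilon)+g(\varepsilon)x\), together with \(f\) even and \(g\) odd, yields \(e^{-\varepsilon x}=f(\varepsilon)-g(\varepsilon)x\), and a direct expansion gives
\[ e^{\mp\varepsilon x}\,a\,e^{\pm\varepsilon x} \;=\; f(\varepsilon)^2\,a \;\mp\; f(\varepsilon)g(\varepsilon)\,[x,a] \;-\; g(\varepsilon)^2\,xax. \]
By Theorem~\ref{thm:local-algebra}, \(\frakS^\varphi_\cap(z)\) is a vector space, so summing and subtracting the two relations shows that \(g(\varepsilon)^2\,xax\) and \(f(\varepsilon)g(\varepsilon)\,[x,a]\) both lie in \(\frakS^\varphi_\cap(z)\). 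In case (a) we have \(f\equiv 1\) and \(g(\varepsilon)=\varepsilon\); in case (b), \(f(\varepsilon)=\cosh(\varepsilon)\) and \(g(\varepsilon)=\sinh(\varepsilon)\); in either case (and more generally whenever \(x\neq 0\) satisfies the hypothesis, since \(g'(0)=1\) and \(f(0)=1\)), both scalar coefficients are non-zero for some small \(\varepsilon>0\), and dividing yields \(xax,\,[x,a]\in\frakS^\varphi_\cap(z)\).

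The only subtle step is orienting the conjugation law correctly: one takes \(p=(1+x)^{-1}\) (rather than \(p=1+x\)) in Corollary~\ref{corollary:hadamard-conjugation-law}, so that the shifted base point \(zp\) tends to \(z\) and lands in the good neighborhood \(U\), while the conjugating factor obtained on the right is exactly \((1+x)\) as required.
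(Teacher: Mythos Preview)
Your proof is correct and follows essentially the same approach as the paper's: both use Theorem~\ref{thm:S(Z)-contained-in-cup-cap-ker} to obtain a neighborhood on which \(a\) is sup-bounded, then apply the conjugation law of Corollary~\ref{corollary:hadamard-conjugation-law} with \(p=(1+x)^{-1}\), and deduce (i) and (ii) by the same algebraic expansion and symmetrization in \(\pm\varepsilon x\). Your explicit appeal to \(f(0)=1\) and \(g'(0)=1\) to justify that the scalar factors are non-zero is a small but welcome addition over the paper's ``scaling then proves the result''.
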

\begin{proof}
Let \(a\in\frakS_\cap^\varphi(z)\). Then, for some \(r_0 > 0\) we have that \(a\in\frakS_\cap^\varphi(z')\) for all \(\|z-z'\| < r_0\) by Theorem \ref{thm:S(Z)-contained-in-cup-cap-ker}. In particular,
if we pick \(r > 0\) so that whenever \(x\in V\) satisfies \(\|x\| < r\) it is small enough that \(1+x\) is invertible and \(\|z - z(1+x)^{-1}\| < r_0\), then
\[ a \in \frakS_\cap(z(1+x)^{-1}) = (1+x)\frakS_\cap(z)(1+x)^{-1}, \]
by Corollary \ref{corollary:hadamard-conjugation-law}, so that \((1+x)^{-1}a(1+x)\in\frakS_\cap(z)\).

To prove (i), note that if \(\|x\|\) is chosen to be small enough (less than some threshold \(r' > 0\)), then
\[ \left\|x + \frac{x^2}{2!} + \frac{x^3}{3!} + \cdots\right\| < r, \]
so that we may employ the first part to see that \(e^{-x}ae^x\in\frakS_\cap^\varphi(z)\).

Let \(\varepsilon\) be such that \(\|\varepsilon x\| < r'\). To prove (ii), note that if \(e^{\varepsilon x} = f(\varepsilon) + g(\varepsilon)x\) with \(f\) even and \(g\) odd, then \(e^{-\varepsilon x} = f(\varepsilon) - g(\varepsilon)x\), so
\begin{align*}
	e^{-\varepsilon x}ae^{\varepsilon x} &= (f(\varepsilon) - g(\varepsilon)x)a(f(\varepsilon) + g(\varepsilon)x)\\
	&= f(\varepsilon)^2a + f(\varepsilon)g(\varepsilon)ax - f(\varepsilon)g(\varepsilon)xa - g(\varepsilon)^2xax \\
	&= f(\varepsilon)^2a + f(\varepsilon)g(\varepsilon)[a,x] - g(\varepsilon)^2xax \in \frakS_\cap^{\varphi}(z)
\end{align*}
which, since \(\frakS_\cap^{\varphi}(z)\) is a vector space, implies that
\[ e^{-\varepsilon x}ae^{\varepsilon x} - f(\varepsilon)^2a = f(\varepsilon)g(\varepsilon)ax - f(\varepsilon)g(\varepsilon)xa - g(\varepsilon)^2xax \in \frakS_\cap^{\varphi}(z). \]
Swapping \(x\) for \(-x\) and noting that \([a,x] = -[x,a]\), we see that, in total,
\begin{align*}
	f(\varepsilon)g(\varepsilon)[a,x] - g(\varepsilon)^2xax &\in \frakS_\cap^{\varphi}(z),\text{ and} \\
	-f(\varepsilon)g(\varepsilon)[a,x] - g(\varepsilon)^2xax &\in \frakS_\cap^{\varphi}(z).
\end{align*}
Once again using that \(\frakS_\cap^\varphi(z)\) is a vector space, we may add and subtract to see that
\[ -2g(\varepsilon)^2xax \in \frakS_\cap^{\varphi}(z),\quad \text{and}\quad 2f(\varepsilon)g(\varepsilon)[a,x]\in\frakS_\cap^\varphi(z). \]
Scaling then proves the result.

We need to show that the provided examples for \(x\) in (ii) satisfy the property we described. This is a reasonably easy computation obtained from just expanding
out the definition of \(e^{\varepsilon x}\).
\begin{enumerate}[label=(\alph*)]
\item If \(x^2=0\), then
\[ e^{\varepsilon x} = 1 + \varepsilon x + \frac{\varepsilon^2 x^2}{2!} + \frac{\varepsilon^3 x^3}{3!} + \cdots = 1+\varepsilon x.  \]
\item If \(x^2=1\), then
\[ e^{\varepsilon x} = 1 + \varepsilon x + \frac{\varepsilon^2}{2!} + \frac{\varepsilon^3 x}{3!} + \cdots = \cosh(\varepsilon) + \sinh(\varepsilon)x. \]
\end{enumerate}
Thus, we are done.
\end{proof}

\subsection{Classification for Existence With a Modifier in Finite Dimensions}\label{subsection:unbounded-modified-existence}
We begin with a construction which will allow us to show there are only two possible ways \(\frakS_\cap^\varphi(Z)\) can look for \(Z\in\End(\C^n)\).
\begin{definition}
	Let \(X\) be a Hilbert space, and \(x,y\in X\). Define the operator
	\[ E_{xy}\!:X\to \Span\{x\}\subseteq X,\quad E_{xy}u := (y,u)x. \]
	In other words,
	\[ E_{xy}y := x,\quad \forall z\in\Span\{y\}^\perp,\, Ez := 0. \]
\end{definition}
\begin{proposition}\label{prop:Exy-properties}
	Let \(X\) be a Hilbert space, and \(x,y,z\in X\) have unit norm.
	\begin{enumerate}[label=(\roman*)]
	\item We have \(E_{xy}E_{yz} = E_{xz}\). In particular, \(E_{xx}^2 = E_{xx}\).
	\item We have \((x,z)=0\) if and only if \(E_{yz}E_{xy} = 0\). In particular, we have \(E_{xy}^2=0\) when \((x,y)=0\).
	\item For any operator \(A\in\sfB(X)\), there is some \(a_{yx}\in\C\) such that
	\[ E_{xy}AE_{xy} = a_{yx}E_{xy}. \]
	\item We have \(a_{yx} = 0\) if and only if \((Ax,y) = 0\). Furthermore,
	\begin{enumerate}[label=(\alph*)]
	\item \(a_{yx} = 0\) for all \(x,y\in X\), if and only if \(A = 0\), and
	\item \(a_{yx} = 0\) for all perpendicular \(x,y\), if and only if \(A = \lambda I\).
	\end{enumerate}
	\end{enumerate}
\end{proposition}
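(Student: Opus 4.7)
The plan is to derive all four statements by direct computation from the defining formula $E_{xy}u = (y,u)x$, using repeatedly that $\|x\|=\|y\|=\|z\|=1$. The only delicate step will be extracting $A=\lambda I$ in part (iv)(b); the rest is just tracking how the rank-one operators $E_{xy}$ act.

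For (i), I would evaluate $E_{xy}E_{yz}u = E_{xy}\bigl((z,u)y\bigr) = (z,u)(y,y)x = (z,u)x = E_{xz}u$, where it is exactly $(y,y)=1$ that makes these rank-one operators compose cleanly. The corollary $E_{xx}^2=E_{xx}$ is the specialization $y=z=x$. For (ii), the same style of calculation yields $E_{yz}E_{xy}u = (y,u)(z,x)y$; this vanishes on all of $X$ if and only if $(z,x)=0$, since otherwise setting $u=y$ produces a nonzero vector. The special case $E_{xy}^2 = 0$ corresponds to $z=x$.

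For (iii), the analogous computation gives $E_{xy}AE_{xy}u = E_{xy}A\bigl((y,u)x\bigr) = (y,u)(y,Ax)x = (y,Ax)E_{xy}u$, so the claimed scalar is $a_{yx}=(y,Ax)$. Since $(Ax,y)=\overline{(y,Ax)}$, the condition $a_{yx}=0$ is equivalent to $(Ax,y)=0$, yielding the first half of (iv). Part (iv)(a) then follows at once from non-degeneracy of the inner product: if $(Ax,y)=0$ for every pair of unit vectors, then fixing $x$ and varying $y$ over the unit sphere forces $Ax=0$ for every unit $x$, whence $A=0$ by homogeneity.

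For (iv)(b), I would argue as follows. Assume $(Ax,y)=0$ for every pair of perpendicular unit vectors $x,y$; by scaling, this extends to all nonzero $y\in\Span\{x\}^\perp$, so $Ax\in(\Span\{x\}^\perp)^\perp=\Span\{x\}$. Thus for each unit $x$ there exists $\lambda(x)\in\C$ with $Ax=\lambda(x)x$, which I extend homogeneously to all of $X\setminus\{0\}$. For any linearly independent unit vectors $x,y$ with $x+y\neq 0$, linearity of $A$ gives $\lambda(x)x+\lambda(y)y = A(x+y) = \lambda(x+y)(x+y)$, and linear independence forces $\lambda(x)=\lambda(x+y)=\lambda(y)$. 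Hence $\lambda$ is constant on nonzero vectors, say equal to some $\lambda\in\C$, and $A=\lambda I$; the converse is immediate since $(\lambda I x, y) = \lambda(x,y) = 0$ whenever $x\perp y$. The main obstacle here is purely bookkeeping: ensuring that the hypothesis ``perpendicular unit vectors'' still probes enough directions to pin $Ax$ to $\Span\{x\}$, which is handled by the scaling/extension step above (and the one-dimensional case $X=\C$ is vacuous, as every operator is already a scalar).
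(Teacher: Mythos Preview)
Your proof is correct and follows essentially the same approach as the paper: direct computation from the defining formula for \(E_{xy}\). The only cosmetic difference is that the paper decomposes vectors as \(u=\mu z+z'\) with \(z'\perp z\) before applying \(E_{yz}\), whereas you apply the formula \(E_{xy}u=(y,u)x\) directly; your bookkeeping is slightly cleaner, and you also spell out explicitly the ``\(\lambda\) is independent of \(x\)'' step in (iv)(b) that the paper leaves as a one-line remark.
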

\begin{proof}
To prove (i), write \(u = \mu z + z'\), \(z'\in\Span\{z\}^\perp\), and compute
\[ E_{xy}E_{yz}u = E_{xy}E_{yz}( \mu z + z' ) = \mu E_{xy} y = \mu x = E_{xz}u. \]
To prove (ii), assume \((x,z)=0\), and write \(u = \mu y + y'\), \(y'\in\Span\{y\}^\perp\), and note that
\[ E_{yz}E_{xy} u = E_{yz}E_{xy}(\mu y + y') = \mu E_{yz} x = 0 \]
since \(x\in\Span\{z\}^\perp\). The other direction is similar.

For (iii), write \(u = \mu y + y'\) and \(Ax = a_{yx} y + y''\), where \(y',y''\in\Span\{y\}^\perp\), and note that
\[ E_{xy}AE_{xy}u = \mu E_{xy}Ax = \mu a_{yx} x = a_{yx} E_{xy}u. \]
Finally, for (iv), note that \(a_{yx}=0\) if and only if \(Ax\in\Span\{y\}^\perp\) by the above. If this holds for all \(x,y\), then \((Ax,y) = 0\) for all \(y\), so \(Ax=0\), proving (iv)(a).
For (iv)(b), note that we are given that
\[ (x,y) = 0\implies (Ax,y) = 0. \]
Thus, writing \(Ax = \lambda x + x'\), we see that \((x',y)=0\) for all \(y\in\Span\{x\}^\perp\), so \((x',x')=0\), meaning \(x'=0\). We conclude that \(Ax = \lambda x\), where \(\lambda\) now depends on \(x\).
However, by linearity, it must be independent of \(x\), so that \(A = \lambda I\).
\end{proof}
\begin{lemma}\label{lemma:E-of-basis-sum-to-I}
	Let \(\{e_i\}_{i=1}^n\) be an orthonormal basis of \(\C^n\). Then
	\[ I = \sum_{i=1}^n E_{e_ie_i}. \]
\end{lemma}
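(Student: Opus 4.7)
The plan is very short: I would verify the identity by evaluating both sides on an arbitrary vector and invoking the standard Fourier expansion in an orthonormal basis. Since both sides are bounded linear operators on $\C^n$, it suffices to check agreement pointwise.

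Concretely, fix $u\in\C^n$. By the defining formula $E_{xy}v = (y,v)x$ (unwinding the definition just above), we have
\[ E_{e_ie_i}u = (e_i,u)\,e_i. \]
Summing over $i$,
\[ \sum_{i=1}^n E_{e_ie_i}u = \sum_{i=1}^n (e_i,u)\, e_i. \]
Since $\{e_i\}_{i=1}^n$ is an orthonormal basis of $\C^n$, the right-hand side is precisely the expansion of $u$ in this basis, and hence equals $u$. Therefore $\sum_{i=1}^n E_{e_ie_i} = I$, as claimed.

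There is no real obstacle here: the lemma is essentially a restatement of the fact that the orthogonal projections onto the one-dimensional coordinate axes of an orthonormal basis form a resolution of the identity. The only thing to be mindful of is the convention for the inner product (linear in which slot) encoded in the definition of $E_{xy}$; once that matches, the computation above is immediate.
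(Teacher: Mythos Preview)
Your proof is correct and essentially identical to the paper's own argument: both apply the definition $E_{e_ie_i}u = (e_i,u)e_i$ to an arbitrary vector and then use the orthonormal expansion $u = \sum_i (e_i,u)e_i$.
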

\begin{proof}
Note that for all \(v\in \C^n\), we have \(E_{e_ie_i}v = (e_i,v) e_i\). In particular,
\[ \sum_{i=1}^n E_{e_ie_i} v = \sum_{i=1}^n (e_i,v)e_i = v \]
as desired.
\end{proof}

Here is the central trick we will use to deduce our result. It tells us that any subspace of \(\End(\C^n)\) satisfying a meagre amount of closure properties must contain all matrices.
\begin{lemma}\label{lemma:subspace-of-matrices-spans}
	Let \(V\subseteq\End(\C^n)\), \(n \geq 2\), be a linear subspace with the following properties:
	\begin{enumerate}[label=(\arabic*)]
	\item \(I\in V\).
	\item For some \(x,y\in\C^n\) of unit length such that \((x,y)=0\), we have \(E_{xy}\in V\) and \(E_{yx}\in V\).
	\item If \(A\in V\), then for all \(v,w\in\C^n\) such that \((v,w)=0\), we have \([A,E_{vw}]\in S\).
	\end{enumerate}
	Then \(V=\End(\C^n)\).
\end{lemma}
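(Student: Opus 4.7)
The plan is to fix an orthonormal basis $\{e_1,\ldots,e_n\}$ of $\C^n$ with $e_1=x$ and $e_2=y$, and show that every rank-one operator $E_{e_ie_j}$ lies in $V$. Since the family $\{E_{e_ie_j}\}_{i,j}$ spans $\End(\C^n)$, this will suffice. Throughout I would use the multiplication rule $E_{ab}E_{cd}=(b,c)E_{ad}$, which follows immediately from Definition of $E_{ab}$ and is implicit in Proposition \ref{prop:Exy-properties}, together with the fact that $V$ is a linear subspace, to cancel unwanted summands in the commutators produced by condition (3).

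The first step is to extract the crucial ``symmetry-breaking'' element $E_{xx}-E_{yy}$. Applying condition (3) with $A=E_{xy}\in V$ and $E_{vw}=E_{yx}$ (which is allowed since $y\perp x$), the multiplication rule gives
\[ [E_{xy},E_{yx}] = (y,y)E_{xx} - (x,x)E_{yy} = E_{xx}-E_{yy}\in V. \]
Without this element the argument cannot get started.

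Next I would propagate $E_{xx}-E_{yy}$ to generate all off-diagonal basis matrices. Commuting it with $E_{xw}$ for any $w\perp x$ yields
\[ [E_{xx}-E_{yy},E_{xw}] = E_{xw} + (w,y)E_{xy}, \]
and since $E_{xy}\in V$ this forces $E_{xw}\in V$; three symmetric variants (swapping left/right and $x\leftrightarrow y$) give $E_{vx},E_{yw},E_{vy}\in V$ under the appropriate orthogonality assumptions. In particular, $E_{e_1e_j}$ and $E_{e_je_1}$ lie in $V$ for every $j\geq 2$. Then, for any $i,j\geq 2$ with $i\neq j$, the further commutator
\[ [E_{e_ie_1},E_{e_1e_j}] = (e_1,e_1)E_{e_ie_j} - (e_j,e_i)E_{e_1e_1} = E_{e_ie_j} \]
produces every remaining off-diagonal $E_{e_ie_j}$. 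Finally, $[E_{e_ie_j},E_{e_je_i}] = E_{e_ie_i}-E_{e_je_j}\in V$ for all $i\neq j$, and combining these with the identity $I=\sum_i E_{e_ie_i}\in V$ from Lemma \ref{lemma:E-of-basis-sum-to-I} gives a linear system from which each diagonal $E_{e_ie_i}$ is uniquely solvable (explicitly, $nE_{e_1e_1}=I+\sum_{j\neq 1}(E_{e_1e_1}-E_{e_je_j})$).

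The main obstacle is not conceptual but organizational: every commutator delivers a two-term linear combination rather than a clean basis element, so the steps must be ordered so that the ``parasitic'' summand is already known to lie in $V$ and can be subtracted off. The order given above—first $E_{xx}-E_{yy}$, then off-diagonals touching $x$ or $y$, then arbitrary off-diagonals via a composition of two such, and finally the diagonals—ensures that at each stage the required cancellations are available, and no additional hypothesis on $V$ is needed beyond (1)--(3).
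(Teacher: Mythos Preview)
Your argument is correct and follows the same overall strategy as the paper: extend $\{x,y\}$ to an orthonormal basis, use commutators with the $E_{vw}$'s to capture all off-diagonal matrix units, and then recover the diagonals from the differences $E_{ii}-E_{jj}$ together with $I$ via Lemma~\ref{lemma:E-of-basis-sum-to-I}.

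The organization differs in one respect worth noting. The paper reaches a general $E_{k\ell}$ in a single stroke via the nested commutator
\[ [[E_{12},E_{2\ell}],-E_{k1}] = E_{k\ell}, \]
applied directly to the hypothesis elements $E_{12}=E_{xy}$ and $E_{21}=E_{yx}$. You instead first isolate the ``pivot'' $E_{xx}-E_{yy}=[E_{xy},E_{yx}]$, use it to propagate outward and obtain all $E_{e_1e_j},E_{e_je_1}$, and only then compose to reach arbitrary $E_{e_ie_j}$. Your route makes the bookkeeping of parasitic terms more transparent (each step cancels exactly one already-known summand), at the cost of one extra layer of commutators; the paper's double commutator is terser but requires the reader to track more cancellations at once. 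Both are valid and neither uses anything beyond hypotheses (1)--(3).
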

\begin{proof}
By Proposition \ref{prop:Exy-properties}, we have
\[ E_{vw}E_{wu} = E_{vu},\quad \text{and if }(u,v)=0, \quad E_{wu}E_{vw} = 0. \]
Extend the vectors \(\{x,y\}\) from (2) to a basis \(\{e_1,\ldots,e_n\}\), where \(x = e_1\) and \(y=e_2\). For simplicity, we will write \(E_{ij} := E_{e_ie_j}\). Using property (2) along with property (3) twice,
as long as \(k\not=\ell\) and at least \(k\not=2\) or \(\ell\not=1\), we have
\begin{align*}
	[ [E_{12}, E_{2\ell}], -E_{k1} ] &= [E_{12}E_{2{\ell}} - E_{2{\ell}}E_{12}, -E_{k1}] \\
	&= [E_{k1}, E_{1{\ell}} - E_{2{\ell}}E_{12}] \\
	&= E_{k1}E_{1{\ell}} - E_{k1}E_{2{\ell}}E_{12} - E_{1{\ell}}E_{k1} + E_{2{\ell}}E_{12}E_{k1} \\
	&= E_{k{\ell}} + E_{2{\ell}}E_{12}E_{k1} = E_{k\ell}\in V.
\end{align*}
Since \(E_{21}\in V\) by assumption, we have that \(E_{ij}\in V\) for all \(i\not= j\).

This leaves the ``diagonals'' \(E_{ii}\). To get \(E_{ii}\in V\), we use that, by the above, we have \(E_{ij}\in V\) for all \(i\not=j\). Therefore, we see, for \(i\not=k\), that \(E_{ki}\in V\).
Applying (3), we have
\[ [E_{ik},E_{ki}] = E_{ik}E_{ki}-E_{ki}E_{ik} = E_{ii} - E_{kk} \in V. \]
Therefore, fixing \(i\) and summing up over all \(k\not i\) we have
\[ (n-1)E_{ii} - \sum_{k\not= i} E_{kk} = (n-1)E_{ii} - (I - E_{kk}) = nE_{ii} + I \in V \]
by Lemma \ref{lemma:E-of-basis-sum-to-I}, which, using (1), means that \(E_{kk}\in V\).
\end{proof}

\begin{theorem}\label{thm:S^phi(Z)-classification}
	Let \(Z\in\End(\C^n)\). Then \(\frakS_\cap^\varphi(Z) = \{\lambda I \mid \lambda\in\C\}\) or \(\End(\C^n)\).
\end{theorem}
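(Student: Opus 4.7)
The plan is to apply Lemma \ref{lemma:subspace-of-matrices-spans} to $V := \frakS_\cap^\varphi(Z)$ under the assumption $V \neq \{\lambda I \mid \lambda \in \C\}$, deducing $V = \End(\C^n)$ in that case (the case $n=1$ is vacuous since both alternatives coincide). Hypothesis (1), $I \in V$, is immediate because $U(\lambda I)U^{-1} = \lambda I$ along any path $U \to Z$. Hypothesis (3) is a direct consequence of Corollary \ref{corollary:S^phi-local-closure-properties}(ii)(a): for orthogonal unit vectors $v, w$, Proposition \ref{prop:Exy-properties}(ii) gives $E_{vw}^2 = 0$, so $[A, E_{vw}] \in V$ for every $A \in V$. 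All the work goes into hypothesis (2).

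To establish (2), I would first pick any $A \in V$ that is not a scalar multiple of $I$. By Proposition \ref{prop:Exy-properties}(iv)(b), there exist orthogonal unit vectors $v, w$ with $a_{wv} = (Av, w) \neq 0$. Then Corollary \ref{corollary:S^phi-local-closure-properties}(ii)(a), applied to $A$ with the nilpotent $E_{vw}$, together with Proposition \ref{prop:Exy-properties}(iii), yields
\[ E_{vw} A E_{vw} = a_{wv} E_{vw} \in V, \]
and since $a_{wv} \neq 0$ and $V$ is a vector space (Theorem \ref{thm:local-algebra}), we conclude $E_{vw} \in V$.

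To produce $E_{wv}$ from $E_{vw}$, I would consider the involution
\[ \sigma := I - E_{vv} - E_{ww} + E_{vw} + E_{wv}, \]
which swaps $v$ and $w$ and fixes $\Span\{v, w\}^\perp$. A routine expansion using only the multiplication identities of Proposition \ref{prop:Exy-properties}(i)--(ii) shows $\sigma^2 = I$, so Corollary \ref{corollary:S^phi-local-closure-properties}(ii)(b) applies and gives $\sigma E_{vw} \sigma \in V$. A short evaluation on a generic $y = \alpha v + \beta w + y'$ with $y' \perp v, w$ shows $\sigma E_{vw} \sigma = E_{wv}$, so $E_{wv} \in V$. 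Hypothesis (2) is therefore satisfied, and Lemma \ref{lemma:subspace-of-matrices-spans} yields $V = \End(\C^n)$.

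The main obstacle is the passage from $E_{vw}$ to $E_{wv}$: commutators with nilpotents of the form $E_{ab}$ built only from $v, w$ preserve the order of the indices, and in low dimensions (particularly $n = 2$) one cannot introduce auxiliary orthogonal directions. The key observation is that Corollary \ref{corollary:S^phi-local-closure-properties}(ii)(b) permits conjugation by an arbitrary genuine involution, and $\sigma$ is precisely the right one --- explicit, supported on $\Span\{v, w\}$, squaring to $I$ in every dimension $n \geq 2$, and visibly swapping $v$ and $w$ inside $E_{vw}$.
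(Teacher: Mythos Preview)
Your proof is correct and follows essentially the same route as the paper's: verify the hypotheses of Lemma \ref{lemma:subspace-of-matrices-spans} via Corollary \ref{corollary:S^phi-local-closure-properties}, obtaining $E_{vw}$ from $E_{vw}AE_{vw}=a_{wv}E_{vw}$ and then $E_{wv}$ by conjugating with an involution swapping $v$ and $w$. Your $\sigma = I - E_{vv} - E_{ww} + E_{vw} + E_{wv}$ is exactly the operator the paper calls $T_{xy}$, just written out in terms of the $E$'s rather than by specifying its action on a basis.
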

\begin{proof}
Suppose \(\frakS_\cap^\varphi(Z)\not=\{\lambda I\mid \lambda\in\C\}\). We use Corollary \ref{corollary:S^phi-local-closure-properties} to show that \(\frakS_\cap^\varphi(Z)\) satisfies the conditions of Lemma \ref{lemma:subspace-of-matrices-spans}.
Condition (1) is clear, and condition (3) follows by part (ii)(a) of Corollary \ref{corollary:S^phi-local-closure-properties}. It remains to show that \(\frakS_\cap^\varphi(Z)\) satisfies property (2). For this,
pick \(A\in\frakS_\cap^\varphi(Z)\) such that \(A \not= \lambda I\), and note that for all \((x_0,y_0) = 0\) we have
\[ E_{x_0y_0}AE_{x_0y_0} = a_{y_0x_0}E_{x_0y_0}\in\frakS_\cap^\varphi(Z). \]
Since \(A\not=\lambda I\), we know by (iv)(b) that there is some choice of \((x,y) = 0\) such that \(a_{yx}\not=0\), so that \(E_{xy}\in\frakS_\cap^{\varphi}\). Finally, let \(T_{xy}\) be the
operator given by
\[ T_{xy}x := y,\quad T_{xy}y := x,\quad \forall z\in\Span\{x,y\}^\perp,\, Tz = z. \]
Then \(T_{xy}^2 = I\), so
\[ T_{xy}E_{xy}T_{xy} = E_{yx} \in \frakS_\cap^\varphi(Z) \]
by Corollary \ref{corollary:S^phi-local-closure-properties}.
\end{proof}

\subsection{Existence Without a Modifier \& With Some Modifiers}\label{subsection:unbounded-unmodified-existence}
Now, Theorem \ref{thm:S^phi(Z)-classification} tells us that in finite dimensions, for general \(\varphi\), the space \(\frakS_\cap^\varphi(Z)\) has two possible forms. We now show that in a very general setting,
we can say exactly what \(\frakS_\cap(Z)\) is. Using the same exact trick as in Section \ref{subsection:bounded-modified-existence}, via the \(\preceq\) relation of Section \ref{subsection:preorder}, we will also be able to
extend this to a few more choices of \(\varphi\).

\begin{lemma}\label{lemma:subspace-with-closure-violates-invariance}
	Let \(X\) be a Hilbert space, and let \(V\subseteq X\) be a (not necessarily closed) linear subspace such that
	\begin{enumerate}[label=(\arabic*)]
	\item there are some orthonormal \(e,e'\in X\) such that \(E_{ee'}\in V\) and \(E_{e'e}\in V\),
	\item if \(A\in V\), then for all orthonormal \(v,w\in\C^n\), we have \([A,E_{vw}]\in S\), and
	\item if \(A\in V\), then for all operators \(T\in\sfB(X)\) such that \(T^2 = I\), we have \(TAT\in V\).
	\end{enumerate}
	Then, for any non-trivial linear subspace \(Y\subseteq X\), there is some \(A\in V\) such that \(AY\not\subseteq Y\).
\end{lemma}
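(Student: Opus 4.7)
The plan is to argue by contradiction: suppose $AY \subseteq Y$ for every $A \in V$, and produce an $A \in V$ violating this, using only hypotheses (1) and (3). (Condition (2) turns out to be unused for this lemma.) First I would extract what (1) buys against the assumed invariance: from $E_{e,e'} Y \subseteq Y$ we get $(e', y) e \in Y$ for every $y \in Y$, forcing either $e' \perp Y$ or $e \in Y$; symmetrically from $E_{e',e} Y \subseteq Y$, either $e \perp Y$ or $e' \in Y$. The two ``mixed'' combinations force $e = 0$ or $e' = 0$, contradicting $\|e\| = \|e'\| = 1$, so only two cases survive: (A) $e \perp Y$ and $e' \perp Y$, or (D) $e, e' \in Y$. (Implicitly ``non-trivial'' must mean $\{0\} \neq Y \neq X$, since the conclusion is otherwise vacuous.)

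The core of the argument is to dispose of each case by conjugating $E_{e,e'}$ by an appropriate self-adjoint involution and invoking (3). A short computation from $E_{x,y} u = (y,u) x$ shows that if $T = T^*$ and $T^2 = I$ then $T E_{x,y} T = E_{Tx, Ty}$. In Case A, pick any $y_0 \in Y \setminus \{0\}$; since $e, e' \perp Y$, the set $\{e, e', \hat y\}$ with $\hat y := y_0/\|y_0\|$ is orthonormal, so the swap $e' \leftrightarrow \hat y$ extended by the identity on $\Span\{e', \hat y\}^\perp$ is a bounded self-adjoint operator $T$ with $T^2 = I$. By (3), $E_{e, \hat y} = T E_{e,e'} T \in V$; applied to $y_0$ this produces $\|y_0\| e$, which must lie in $Y$. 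Since $e \perp Y$, this forces $\|y_0\| = 0$, contradicting $y_0 \neq 0$.

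For Case D, since $Y \neq X$ I pick $z_0 \in X \setminus Y$ and set $z := z_0 - (e, z_0) e - (e', z_0) e'$; this lies outside $Y$ because the subtracted terms are in $\Span\{e, e'\} \subseteq Y$, and it is orthogonal to both $e$ and $e'$ by construction. With $\hat z := z/\|z\|$, the swap involution $T$ exchanging $e \leftrightarrow \hat z$ gives $E_{\hat z, e'} = T E_{e,e'} T \in V$ by (3); applied to $e' \in Y$ this returns $\hat z$, contradicting $\hat z \notin Y$. In either case we have produced an $A \in V$ with $AY \not\subseteq Y$. The only real obstacle is the dichotomy of Step 1; once $e, e'$ are pinned down as either both inside or both orthogonal to $Y$, a single swap involution via condition (3) breaks the invariance, and the remaining verifications (that a two-dimensional swap extended by the identity is a bounded self-adjoint operator on $X$ squaring to $I$) are routine.
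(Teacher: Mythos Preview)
Your argument is correct and in fact cleaner than the paper's. The paper proceeds by first extending $\{e,e'\}$ to a full orthonormal basis $\{e_i\}_{i\in I}$, then uses conditions (1), (2), and (3) together (mimicking the finite-dimensional Lemma~\ref{lemma:subspace-of-matrices-spans}) to show that every elementary operator $E_{ij}$ lies in $V$; from there it picks $y\in Y\setminus\{0\}$ and a basis vector $e_k\notin Y$, and builds a finite partial sum $\sum_{|i|<N}(e_i,y)E_{ki}\in V$ which sends $y$ to a nonzero multiple of $e_k$. By contrast, you never touch condition (2) at all: the dichotomy forced by (1) and the assumed invariance pins $e,e'$ either both inside $Y$ or both in $Y^\perp$, and a single swap involution via (3) then produces an explicit $E_{\hat z,e'}$ or $E_{e,\hat y}$ in $V$ that violates invariance. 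Your route avoids the basis extension, the commutator bookkeeping, and the limiting argument with partial sums; the paper's route, on the other hand, actually establishes the stronger intermediate fact that $V$ contains all rank-one operators $E_{ij}$ in some basis, which is closer in spirit to the finite-dimensional classification but is more than is needed for the lemma as stated.
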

\begin{proof}
By Zorn's lemma, we may extend \(\{e,e'\}\) to an orthonormal basis \(\{e_i\}_{i\in I}\) of \(X\), where \(I\) may be uncountable. Write \(E_{ij} := E_{e_ie_j}\). Following the argument of
Lemma \ref{lemma:subspace-of-matrices-spans}, (1) and (2) imply that \(E_{ij}\in V\) for all \(i\not=j\in I\). To get \(E_{ii}\in V\) for all \(i\in I\), define an operator \(T_{kj}\) for all \(k\not=j\in I\)
by
\[ T_{jk}e_j = e_k - e_j,\quad T_{jk}(e_k-e_j) = e_j,\quad \forall z\in\Span\{e_j,e_k-e_j\}^\perp,\, T_{jk}z = z. \]
Then \(T_{kj}^2 = I\), and we note that for \(j\not=i\)
\[ T_{ji} E_{ij} T_{ji}(e_i - e_j) = T_{ji}E_{ij}e_j = T_{ji}e_i = T_{ij}(e_i - e_j + e_j) = e_j + e_i - e_j = e_i, \]
and more generally,
\[ T_{ji}E_{ij}T_{ji} = E_{e_i(e_j-e_i)} \]
which is in \(V\) by (3). However, we also have
\[ E_{e_i(e_j-e_i)} = E_{ij} - E_{ii} \implies E_{ii} = E_{ij} - E_{e_i(e_j-e_i)} \in V \]
so that \(E_{ii}\in V\).

Now, for any \(y\in X\), there is some countable subset \(I_y\subseteq I\) for which we may write
\[ y = \sum_{i\in I_y} (e_i,y)e_i. \]
Since \(Y\) is non-trivial, we can find \(y\in Y\backslash\{0\}\) and some basis vector \(e_k\not\in Y\). We then have
\[ E_{e_ky} = \sum_{i\in I_y}(e_i,y)E_{ki}. \]
Choosing a bijection \(|\cdot|\!:I_x\iso\N\), for any \(K > 0\) we define
\[ E_{e_ky}^{(K)} := \sum_{\substack{i\in I_x \\ |i| < K}} (e_i,y)E_{ki} \]
and note that for all \(K>0\), we have \(E_{e_ky}^{(K)}\in V\) by the first half of the proof. Finally, since
\[ E_{e_ky}^{(K)} \to E_{e_ky}\quad \text{as }K\to\infty, \]
we may find some \(N > 0\) such that
\[ E_{e_ky}^{(N)}y \not= 0, \]
which, setting \(A := E^{(N)}_{e_ky}\), means we have some \(A\in V\) such that \(Ay = \lambda e_k\) for some \(\lambda\not=0\), and in particular, \(AY\not\subseteq Y\) since \(e_k\not\in Y\).
\end{proof}

\begin{theorem}\label{thm:S(singular)-is-cI}
	Let \(X\) be a Hilbert space, and let \(Z\in\sfB(X)\) be a singular operator. Then
	\[ \frakS_\cap(Z) = \{\lambda I \mid \lambda\in\C\}. \]
\end{theorem}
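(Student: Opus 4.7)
The plan is to deduce this from the infrastructure already built: the invariance inclusion $\frakS_\cap(Z)\subseteq\frakS_{\ker}(Z)$ from Theorem \ref{thm:kernel-invariant-algebra}, the closure properties of $\frakS_\cap^\varphi(Z)$ in Corollary \ref{corollary:S^phi-local-closure-properties}, and the key subspace-violates-invariance statement of Lemma \ref{lemma:subspace-with-closure-violates-invariance}. The inclusion $\{\lambda I\mid\lambda\in\C\}\subseteq\frakS_\cap(Z)$ is automatic, so the work is entirely in the reverse inclusion, which I would prove by contradiction: suppose there exists $A\in\frakS_\cap(Z)$ with $A\neq\lambda I$, and aim to produce some $A'\in\frakS_\cap(Z)$ that fails to preserve $\ker(Z)$, contradicting $\frakS_\cap(Z)\subseteq\frakS_{\ker}(Z)$. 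This uses crucially that $Z$ is singular, so $\ker(Z)$ is a non-trivial subspace of $X$.

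The strategy is to verify that $V:=\frakS_\cap(Z)$ satisfies the three hypotheses of Lemma \ref{lemma:subspace-with-closure-violates-invariance} and then apply that lemma with $Y=\ker(Z)$. Hypothesis (2) follows immediately from part (ii)(a) of Corollary \ref{corollary:S^phi-local-closure-properties}, since for orthonormal $v,w$ one has $E_{vw}^2=0$ by Proposition \ref{prop:Exy-properties}(ii). Hypothesis (3) follows equally directly from part (ii)(b) of the same corollary, applied to the involution $T$. This leaves hypothesis (1), which is the substantive content: to produce a single pair of orthonormal $e,e'$ with $E_{ee'},E_{e'e}\in\frakS_\cap(Z)$.

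For hypothesis (1), I would follow exactly the trick used in the proof of Theorem \ref{thm:S^phi(Z)-classification}. Since $A\neq\lambda I$, Proposition \ref{prop:Exy-properties}(iv)(b) supplies orthonormal $e,e'\in X$ with $a_{e'e}\neq 0$, where $E_{ee'}AE_{ee'}=a_{e'e}E_{ee'}$. Since $(e,e')=0$ we have $E_{ee'}^2=0$, so Corollary \ref{corollary:S^phi-local-closure-properties}(ii)(a) gives $E_{ee'}AE_{ee'}\in\frakS_\cap(Z)$, and rescaling yields $E_{ee'}\in\frakS_\cap(Z)$. To obtain $E_{e'e}$, conjugate by the involution $T$ which swaps $e$ and $e'$ and fixes $\Span\{e,e'\}^\perp$ pointwise: $T^2=I$, so Corollary \ref{corollary:S^phi-local-closure-properties}(ii)(b) gives $TE_{ee'}T=E_{e'e}\in\frakS_\cap(Z)$.

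With all three hypotheses verified, Lemma \ref{lemma:subspace-with-closure-violates-invariance} applied to $Y=\ker(Z)$ (non-trivial since $Z$ is singular) produces some $A'\in\frakS_\cap(Z)$ with $A'\ker(Z)\not\subseteq\ker(Z)$, contradicting $\frakS_\cap(Z)\subseteq\frakS_{\ker}(Z)$ from Theorem \ref{thm:kernel-invariant-algebra}(b). Hence no such non-scalar $A$ can exist, completing the proof. I do not anticipate any real obstacle: the heavy lifting has been done in Lemma \ref{lemma:subspace-with-closure-violates-invariance} and Corollary \ref{corollary:S^phi-local-closure-properties}, and the only mildly delicate point is the bookkeeping in hypothesis (1), which is a direct rerun of the argument already used in Theorem \ref{thm:S^phi(Z)-classification}.
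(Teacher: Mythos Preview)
Your argument is essentially the paper's proof for the case $Z\neq 0$, and that part is correct. However, there is a gap: Lemma \ref{lemma:subspace-with-closure-violates-invariance} requires the subspace $Y$ to be \emph{non-trivial}, meaning both $Y\neq 0$ and $Y\neq X$ (the proof of the lemma explicitly picks $y\in Y\setminus\{0\}$ and a basis vector $e_k\notin Y$). Singularity of $Z$ only gives $\ker(Z)\neq 0$; when $Z=0$ you have $\ker(Z)=X$, and the lemma does not apply. Your sentence ``non-trivial since $Z$ is singular'' elides this.

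The paper handles $Z=0$ by a separate short argument: for any $x\in X$, Theorem \ref{thm:S(Z)-contained-in-cup-cap-ker} places every $A\in\frakS_\cap(0)$ inside $\frakS_\cap(\varepsilon P_x)\subseteq\frakS_{\ker}(P_x)$ for sufficiently small $\varepsilon>0$, where $P_x$ is the orthogonal projection onto $\Span\{x\}^\perp$; hence $Ax\in\Span\{x\}$ for every $x$, forcing $A=\lambda I$. Once you add this case split (or any equivalent handling of $Z=0$), your proof matches the paper's.
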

\begin{proof}
First consider the case when \(Z\) is non-zero. We will argue by contradiction: suppose the assertion were \emph{not} true. By Theorem \ref{thm:kernel-invariant-algebra}, we have that
\[  \{ \lambda I \mid \lambda\in\C \}\subseteq \frakS_\cap(Z)\subseteq\frakS_\cup(Z) \subseteq \frakS_{\ker}(Z) \]
and since \(Z\not=0\) and is singular, we know that \(\ker(Z)\) is a non-trivial subspace of \(X\). Therefore, we may try to employ Lemma \ref{lemma:subspace-with-closure-violates-invariance}.
However, conditions (2) and (3) follow trivially from Corollary \ref{corollary:S^phi-local-closure-properties}, and condition (1) follows by the same argument as in
Theorem \ref{thm:S^phi(Z)-classification}.

When \(Z=0\), note that for any \(x\in X\), we may consider the orthogonal projection \(P_x\) onto \(\Span\{x\}^\perp\), which has \(\ker(P_x) = \Span\{x\}\). For
any \(A\in\frakS_\cap(Z)\), we will then have that \(A\in\frakS_\cap(\varepsilon P_x)\subseteq \frakS_{\ker}(\varepsilon P_x) = \frakS_{\ker}(P_x)\) for some sufficiently small \(\varepsilon > 0\) by Theorem \ref{thm:S(Z)-contained-in-cup-cap-ker},
and in particular, \(Ax = \lambda_x x\) for all \(x\). A simple argument by linearity then shows that \(A = \lambda I\).
\end{proof}

From the above, we now know exactly how \(\frakS_\cap(-)\) looks in Hilbert spaces. In finite dimensions, therefore, we can then apply the general Corollary \ref{corollary:phi-simeq-psi-implies-same-S}
to deduce \(\frakS_\cap^\varphi(-)\) looks for some simple but interesting choices of \(\varphi\).

\begin{theorem}\label{thm:hadamard-sup-version-J-class}
	Let \(\bbJ\) be as given in Theorem \ref{thm:hadamard-local-infimum-J-theorem}, and let \(Z\in\End(\C^n)\). If \(\varphi\in\bbJ\), then
	\[ \frakS_\cap^\varphi(Z) = \frakS_\cap(Z). \]
	In particular, suppose \(Z\) is singular. Then \(\frakS_\cap^\varphi(Z) = \{\lambda I \mid \lambda\in\C\}\).
\end{theorem}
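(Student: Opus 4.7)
The plan is to invoke the $\simeq$-calculus of Section \ref{subsection:preorder} in exactly the same fashion as Theorem \ref{thm:hadamard-local-infimum-J-theorem} does for the $\frakS_\cup$-version. The first step is to show that every $\varphi \in \bbJ$ satisfies $\varphi \simeq \id$. In the standard basis this is exactly Lemma \ref{lemma:J-is-equivalent-to-id}. For a general basis, I would let $P$ denote the change-of-basis matrix, so that $\varphi(A) = P\bigl((\1-I)*(P^{-1}AP)\bigr)P^{-1}$ with the Hadamard product now taken in the standard basis. Pre- and post-conjugation by $P$ preserves the $\simeq$-class: the norms $\|P^{-1}AP\|$ and $\|A\|$ differ by at most a constant factor, while the right-continuous order-preserving function $f_A$ supplied by Lemma \ref{lemma:J-is-equivalent-to-id} depends only on the conjugacy class of its argument, which is unchanged by conjugation by $P$. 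Putting these together yields, for every $A$, inequalities of the form $\|A\| \leq g_A(\|\varphi(A)\|)$ and $\|\varphi(A)\| \leq \|\varphi\|\cdot\|A\|$ with $g_A$ depending only on the conjugacy class of $A$, i.e.\ $\varphi \simeq \id$.

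With $\varphi \simeq \id$ in hand, Corollary \ref{corollary:phi-simeq-psi-implies-same-S} immediately yields $\frakS_\cap^\varphi(Z) = \frakS_\cap(Z)$ for every $Z \in \End(\C^n)$, which is the first assertion.

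For the ``in particular'' clause, I would apply Theorem \ref{thm:S(singular)-is-cI} with the Hilbert space $X = \C^n$: whenever $Z$ is singular, $\frakS_\cap(Z) = \{\lambda I \mid \lambda \in \C\}$. Chaining this with the previous equality gives the stated description of $\frakS_\cap^\varphi(Z)$. No step presents a genuine obstacle; the only mildly subtle point is the basis-change argument in the first step, and that is taken care of by the equivalence of norms on $\End(\C^n)$ together with the conjugacy-invariance of the bookkeeping functions in Notation \ref{notation:preorder-relation}.
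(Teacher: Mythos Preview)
Your proposal is correct and follows exactly the paper's approach: invoke Lemma \ref{lemma:J-is-equivalent-to-id} to get $\varphi\simeq\id$, apply Corollary \ref{corollary:phi-simeq-psi-implies-same-S} for $\frakS_\cap^\varphi(Z)=\frakS_\cap(Z)$, and then Theorem \ref{thm:S(singular)-is-cI} for the singular case. The paper's proof is a one-line ``combine these three results,'' whereas you additionally spell out the basis-change argument needed to pass from the standard-basis Lemma \ref{lemma:J-is-equivalent-to-id} to an arbitrary $\varphi\in\bbJ$; this extra care is warranted and correctly handled via the conjugacy-invariance built into Notation \ref{notation:preorder-relation}.
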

\begin{proof}
Combine Theorem \ref{thm:S(singular)-is-cI} with Corollary \ref{corollary:phi-simeq-psi-implies-same-S} and Lemma \ref{lemma:J-is-equivalent-to-id}.
\end{proof}

\begin{remark}
	Like with \(\frakS_\cup(-)\) and \(\frakS_\cup^*(-)\), one may define a dual version, \(\frakS_\cap^*(-)\), of \(\frakS_\cap(-)\). This has the peculiar property that it is very
	easy to show that \(\frakS_\cap^*(0) = \sfB(X)\) in any Banach space \(X\), using dualized versions of Theorems \ref{thm:kernel-invariant-algebra} \& \ref{thm:S(Z)-contained-in-cup-cap-ker}.
\end{remark}

\clearpage
\section{A Few Conjectures \& A Question}
We end this paper with a few conjectures on the objects we have considered. The first is that Theorem \ref{thm:S^phi(Z)-classification} can be extended to all Hilbert spaces.
\begin{conjecture}
	Let \(X\) be a Hilbert space, \(Y\) a Banach space, and let \(\varphi\!:\sfB(X)\to Y\) be a bounded linear map. Then, for all singular \(Z\in\sfB(X)\), either \(\frakS_\cap^\varphi(Z) = \{\lambda I\mid \lambda\in\C\}\)
	or \(\frakS^\varphi(Z) = \sfB(X)\).
\end{conjecture}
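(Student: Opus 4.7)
The plan is to extend the dichotomy of Theorem \ref{thm:S^phi(Z)-classification} from $\End(\C^n)$ to $\sfB(X)$ by combining its proof strategy with the Hilbert-space generation argument of Lemma \ref{lemma:subspace-with-closure-violates-invariance}. Suppose $\frakS_\cap^\varphi(Z)$ properly contains the scalars, and fix some $A \in \frakS_\cap^\varphi(Z)$ with $A \neq \lambda I$. First I would apply Proposition \ref{prop:Exy-properties}(iv)(b) to produce orthonormal $x, y \in X$ with the scalar $a_{yx} \neq 0$; applying Corollary \ref{corollary:S^phi-local-closure-properties}(ii) to the nilpotent $E_{xy}$ (which has $E_{xy}^2 = 0$) then yields $E_{xy}AE_{xy} = a_{yx} E_{xy} \in \frakS_\cap^\varphi(Z)$, and hence $E_{xy} \in \frakS_\cap^\varphi(Z)$. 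The swap operator $T$ exchanging $x$ and $y$ (identity on $\{x,y\}^\perp$) has $T^2 = I$, so the same corollary applied to $T$ gives $TE_{xy}T = E_{yx} \in \frakS_\cap^\varphi(Z)$.

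Next, I would extend $\{x, y\}$ via Zorn's lemma to an orthonormal basis $\{e_i\}_{i \in I}$ and replay the nested commutator manipulations from the proof of Lemma \ref{lemma:subspace-with-closure-violates-invariance}, substituting Corollary \ref{corollary:S^phi-local-closure-properties}(ii) for the raw closure hypotheses of that lemma. This deposits every $E_{e_ie_j}$ into $\frakS_\cap^\varphi(Z)$, and then by linearity every operator of the form $P_F B P_F$ --- where $P_F$ is the orthogonal projection onto the span of finitely many basis vectors --- belongs to $\frakS_\cap^\varphi(Z)$.

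The genuine obstacle is lifting this inclusion from ``finitely-matrix-supported operators'' to arbitrary $B \in \sfB(X)$. In the finite-dimensional setting the corresponding step is vacuous, but here the finitely-matrix-supported operators are not even norm-dense in the rank-one operators, and $\frakS_\cap^\varphi(Z)$ possesses no evident topological closure property: the functional
\[ \frakN_\cap^\varphi(\cdot, Z) = \inf_{r>0} h_r(\cdot), \qquad h_r(a) := \sup_{\substack{u\in\sfB(X)^\times \\ \|u-Z\|<r}}\|\varphi(uau^{-1})\| \]
is an infimum of lower semicontinuous functionals, and is therefore only upper semicontinuous in $a$, so $\{a : \frakN_\cap^\varphi(a, Z) < \infty\}$ has no a priori reason to be closed in any useful topology. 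The most promising route I see is to combine the neighbourhood stability of Theorem \ref{thm:S(Z)-contained-in-cup-cap-ker} with the subadditivity $\frakN_\cap^\varphi(a+b, Z) \leq \frakN_\cap^\varphi(a, Z) + \frakN_\cap^\varphi(b, Z)$ to obtain uniform control of $\frakN_\cap^\varphi$ on a bounded family of truncations $P_F B P_F$ as $F$ exhausts $I$, and then pass to a limit; but establishing such uniform control appears to require essentially new input beyond the closure properties collected in Corollary \ref{corollary:S^phi-local-closure-properties}, and I expect this to be the main genuine technical obstruction --- exactly the reason the statement is recorded as a conjecture rather than as a theorem.
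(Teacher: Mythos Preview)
The statement you were given is recorded in the paper as a \emph{conjecture}, not a theorem: the paper offers no proof and explicitly leaves it open. So there is no ``paper's own proof'' to compare against.

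Your analysis is nonetheless valuable and essentially accurate. You correctly identify the natural extension strategy --- bootstrap from a single non-scalar element to all $E_{e_ie_j}$ via the closure properties of Corollary \ref{corollary:S^phi-local-closure-properties}, exactly as in the proofs of Lemma \ref{lemma:subspace-of-matrices-spans} and Lemma \ref{lemma:subspace-with-closure-violates-invariance} --- and you correctly locate the obstruction: the vector space $\frakS_\cap^\varphi(Z)$ carries no evident topological closure property, so one cannot pass from finite linear combinations of the $E_{e_ie_j}$ to all of $\sfB(X)$. Your remark that $\frakN_\cap^\varphi(\cdot,Z)$ is only upper semicontinuous, hence its finiteness locus need not be closed, is exactly the right diagnosis.

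One small correction: finite linear combinations of the $E_{e_ie_j}$ \emph{are} norm-dense in the rank-one operators (indeed, their norm closure is all compact operators). The genuine gap is between compact operators and $\sfB(X)$, not between finitely-supported and rank-one. This does not change your overall conclusion --- the compacts are still a proper ideal when $X$ is infinite-dimensional --- but the statement as written is inaccurate. Beyond this, your identification of the obstruction matches precisely why the paper records the statement as a conjecture.
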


For our second conjecture, we would like to say that the families \(\frakS_\cap^\varphi(-)\) and \(\frakS_\cup^\varphi(-)\) are tightly related to each other. Since we have a necessary and sufficient condition
for all \(\varphi\) that satisfy \(\frakS_\cup^\varphi(-) = \frakS_\cup(-)\) (given by Theorem \ref{thm:S_V^phi=S_V-implies-phi(2nd-ord-nilp)-nonzero}), ideally we would like those same criteria
to determine the situation for \(\frakS_\cap^\varphi(-)\).

\begin{conjecture}
	Let \(X\) be a Hilbert space, \(Y\) a Banach space, and let \(\varphi\!:\sfB(X)\to Y\) be a bounded linear map. Then the following conditions are equivalent.
	\begin{enumerate}[label=(\arabic*)]
	\item For all \(Z\in\sfB(X)\) of generalized index zero, \(\frakS_\cap^\varphi(Z) = \frakS_\cap(Z)\).
	\item For all \(Z\in\sfB(X)\) of generalized index zero, \(\frakS_\cup^\varphi(Z) = \frakS_\cup(Z)\).
	\end{enumerate}
\end{conjecture}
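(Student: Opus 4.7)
The strategy is to route both conditions through the algebraic criterion from Theorem \ref{thm:S_V^phi=S_V-implies-phi(2nd-ord-nilp)-nonzero}, which rephrases (2) as the condition
\[ \text{(3):}\ \forall\,T\in\sfB(X),\ T\neq 0\text{ and }T^2=0\ \Longrightarrow\ \varphi(T)\neq 0. \]
It then suffices to prove (1) $\iff$ (3). The direction (3) $\Longrightarrow$ (1) should follow by closely mirroring Theorem \ref{thm:S(singular)-is-cI}: fix $Z$ of generalized index zero; handle the invertible case by the evident extension of Proposition \ref{prop:S(invertible)-is-everything} to arbitrary $\varphi$; and in the singular case combine (3) with Corollary \ref{corollary:hilbert-space-S_V=S_ker} to conclude $\frakS_\cap^\varphi(Z)\subseteq\frakS_\cup^\varphi(Z) = \frakS_{\ker}(Z)$. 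Since the closure properties of Corollary \ref{corollary:S^phi-local-closure-properties} hold for $\frakS_\cap^\varphi(-)$ without further hypotheses on $\varphi$, Lemma \ref{lemma:subspace-with-closure-violates-invariance} applied with $Y=\ker(Z)$ precludes any non-scalar from $\frakS_\cap^\varphi(Z)$, forcing $\frakS_\cap^\varphi(Z) = \{\lambda I\} = \frakS_\cap(Z)$.

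For the converse (1) $\Longrightarrow$ (3) I would argue by contrapositive: suppose $\varphi(T) = 0$ for some non-zero $T$ with $T^2 = 0$, and try to exhibit a non-scalar element of $\frakS_\cap^\varphi(Z)$ for some singular $Z$ of generalized index zero. Proposition \ref{prop:orthogonal-subalgebra-of-B(X)-decomposition} applied to the orthogonal subalgebra $\C\cdot T$ yields a concrete presentation $T = ZTC$ in the decomposition $X = \ker(T)\oplus\ker(T)^\perp$, with $Z$ the projection onto $\ker(T)$ (singular and of generalized index zero) and $C\in\frakC_{-1}(Z)$. Since $\frakS_\cap(Z) = \{\lambda I\}$ by Theorem \ref{thm:S(singular)-is-cI}, it suffices to show $A = T$ itself lies in $\frakS_\cap^\varphi(Z)$. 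A block computation patterned on Theorem \ref{thm:hilbert-hadamard-general} gives
\[ UTU^{-1} = \begin{pmatrix}\tilde Q\\ S\end{pmatrix} T_{12}\,P^{-1}\begin{pmatrix}-S\tilde Q^{-1} & I\end{pmatrix},\qquad P = T' - S\tilde Q^{-1}R, \]
whose dominant contribution to $\varphi(UTU^{-1})$ is $\varphi(T\tilde C)$ for $\tilde C = \left(\begin{smallmatrix}0 & 0\\ 0 & P^{-1}\end{smallmatrix}\right)\in\frakC_{-1}(Z)$.

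The main obstacle is precisely this last step. The single datum $\varphi(T)=0$ only annihilates $T\tilde C_0$ for one specific $\tilde C_0$, but the $\sup$ in $\frakN_\cap^\varphi$ demands $\varphi(T\tilde C) = 0$ uniformly as $\tilde C$ varies over all of $\frakC_{-1}(Z)$; moreover, near-singular approach paths force several blocks of $UTU^{-1}$ to blow up simultaneously, and $\varphi$ must kill all of those cross-terms as well. Both requirements are strictly stronger than (3), so the plan as sketched does not close. In fact, low-dimensional experiments—taking $\varphi\colon\End(\C^2)\to\End(\C^2)$ to zero out the $(1,2)$-entry, for which $\varphi(E_{12})=0$ and (2) fails at $Z=E_{11}$—suggest the conjecture as stated may be false: at every singular $Z$ the untouched blocks of $UAU^{-1}$ can still be made to blow up along near-singular paths such as $U = \left(\begin{smallmatrix}R-\eta & 1\\ R^2 & R\end{smallmatrix}\right)$ near $Z=E_{12}$ (where $P_{11}\sim R/\eta$), apparently forcing $\frakS_\cap^\varphi(Z) = \{\lambda I\}$ and therefore (1). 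If this is correct, the conjecture is false as stated; otherwise, any successful proof of (1) $\Longrightarrow$ (3) must extract, from a sup-boundedness hypothesis, a structural closure of the $\varphi$-nullspace on $\{T^2=0\}$ under right-multiplication by elements of $\frakC_{-1}(Z)$—a non-trivial constraint for which I see no obvious avenue.
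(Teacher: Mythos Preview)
The statement you are addressing is listed in the paper as a \emph{conjecture}, not a theorem; the paper provides no proof and explicitly leaves it open. There is therefore nothing to compare your attempt against.

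That said, your analysis is appropriate for an open problem. Your reduction to the nilpotent criterion (3) via Theorem \ref{thm:S_V^phi=S_V-implies-phi(2nd-ord-nilp)-nonzero} is the natural move, and your sketch of (3) $\Longrightarrow$ (1) is plausible, though you should be careful: the argument of Theorem \ref{thm:S(singular)-is-cI} uses that $\frakS_\cap(Z)\subseteq\frakS_{\ker}(Z)$, and for general $\varphi$ you instead need $\frakS_\cap^\varphi(Z)\subseteq\frakS_\cup^\varphi(Z)=\frakS_{\ker}(Z)$, the last equality requiring (3). That chain does go through, but the case $Z=0$ also needs separate treatment, since $\ker(Z)=X$ is not a non-trivial subspace and Lemma \ref{lemma:subspace-with-closure-violates-invariance} does not apply directly; the paper's proof of Theorem \ref{thm:S(singular)-is-cI} handles $Z=0$ by perturbing to nearby $\varepsilon P_x$, and you would need the same step here.

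Your diagnosis of the obstacle in (1) $\Longrightarrow$ (3) is correct and well-articulated: a single vanishing $\varphi(T)$ does not obviously control $\varphi(ZAC)$ across all $C\in\frakC_{-1}(Z)$, which is what sup-boundedness demands. Your proposed counterexample direction---taking $\varphi$ on $\End(\C^2)$ to kill the $(1,2)$-entry and arguing that $\frakS_\cap^\varphi(Z)=\{\lambda I\}$ nonetheless holds at every singular $Z$---is a reasonable line of attack on the conjecture itself. If you can verify that computation carefully (checking all singular $Z$ up to the transformation law of Corollary \ref{corollary:hadamard-conjugation-law}, so essentially $Z=0$, rank-one projections, and rank-one nilpotents), you would have a disproof rather than a proof, which is a perfectly legitimate resolution of a conjecture.
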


Our third conjecture is harder to motivate. Essentially, in the process of proving the results of Section \ref{section:existence-of-unbounded-paths}, several approaches came up. One
of them involved doing a computation of the kernel of a particular map, which conjures forth ideas that results in Section \ref{section:existence-of-unbounded-paths} should be
approachable with the concepts developed in Section \ref{section:existence-of-bounded-paths}.

\begin{conjecture}
	Let \(V\) be a Banach algebra, \(Y\) a Banach space, and \(\varphi\!:V\to Y\) a bounded linear map. Then
	\[ \forall z\in V,\quad \frakS_\cap^\varphi(z) = \bigcup\left\{ \bigcap_{z'\in U}\frakS^\varphi_\cup(z') : U\subseteq V\text{ open and bounded, } z\in U \right\}. \]
\end{conjecture}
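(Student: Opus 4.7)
The inclusion $\frakS_\cap^\varphi(z) \subseteq \bigcup_{U}\bigcap_{z'\in U}\frakS^\varphi_\cup(z')$ is immediate from Theorem \ref{thm:S(Z)-contained-in-cup-cap-ker}, which establishes the same equality with $\frakS_\cap^\varphi(z')$ on the right-hand side, combined with the trivial inclusion $\frakS_\cap^\varphi(z') \subseteq \frakS^\varphi_\cup(z')$ for each $z'$. The substance of the conjecture is therefore the reverse inclusion: I need to show that if $a\in\bigcap_{z'\in U}\frakS^\varphi_\cup(z')$ for some open bounded $U$ containing $z$, then $a\in\frakS_\cap^\varphi(z)$.

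My first attempt would be to argue by contradiction. Suppose $a\in\bigcap_{z'\in U}\frakS^\varphi_\cup(z')$ but $a\notin\frakS_\cap^\varphi(z)$; then there exists a sequence $u_n\in V^\times$ with $u_n\to z$ and $\|\varphi(u_nau_n^{-1})\|\to\infty$, and for $n$ large enough $u_n\in U$. The key step is to extract from this unboundedness some $z^{*}\in U$ such that $a\notin\frakS^\varphi_\cup(z^{*})$, contradicting the hypothesis. The natural thing to try is to write $u_n = z + \epsilon_n d_n$ with $\|d_n\|=1$, pass to a subsequence along which $d_n\to d$ (using finite-dimensional compactness, or a weak-type compactness argument more generally), and set $z^{*}=z+t_0 d$ for a suitably small $t_0$. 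One would then hope to show that the $u_n$ pass near $z^{*}$ in a manner that forces every sequence of invertibles converging to $z^{*}$ to exhibit the same blow-up, yielding $a\notin\frakS^\varphi_\cup(z^{*})$.

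An alternative and possibly cleaner strategy is to exploit the algebraic descriptions of $\frakS^\varphi_\cup(-)$ obtained earlier. In the Hilbert space setting with $\varphi=\id$ and $z'$ of generalized index zero, Corollary \ref{corollary:hilbert-space-S_V=S_ker} gives $\frakS_\cup(z')=\frakS_{\ker}(z')$, so the hypothesis becomes that $a$ preserves $\ker(z')$ for every singular $z'$ in some neighborhood of $z$. As $z'$ varies, the kernels $\ker(z')$ sweep out a rich family of subspaces, and an invariance argument in the spirit of Lemma \ref{lemma:subspace-of-matrices-spans} should force $a$ to be rigid enough --- in the extreme, a scalar multiple of the identity when $z$ is singular --- so that $a\in\frakS_\cap^\varphi(z)$ by Theorem \ref{thm:kernel-invariant-algebra} (or via the classification Theorem \ref{thm:S(singular)-is-cI}). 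For general $\varphi$, one would replace this kernel criterion with the condition of Theorem \ref{thm:hilbert-hadamard-general}: the hypothesis becomes that for every $z'\in U$ there exists $C'\in\frakC_{-1}(z')$ with $\varphi(z'aC')=0$, and the task reduces to analysing the simultaneous solvability of this family of equations as $z'$ varies through $U$.

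I expect the principal obstacle to be bridging the pointwise hypothesis (at each $z'$, one bounded approaching sequence exists) and the uniform conclusion ($\sup$ over all invertibles near $z$ is finite). A Baire category argument applied to $V^\times\cap U$ yields uniform boundedness of $u\mapsto\|\varphi(uau^{-1})\|$ on some open subset, but because $z$ lies on the singular locus where the relevant invertibles accumulate, this does not directly transfer to a full neighborhood of $z$. It is likely that extra hypotheses --- such as $V$ being finite-dimensional, or $z$ being of generalized index zero --- will be needed to push the argument through, and it would not be surprising if the conjecture as stated fails in full generality but holds in such restricted settings.
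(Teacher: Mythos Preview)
This statement appears in the paper's closing section as an explicit \emph{conjecture}; the paper offers no proof and treats it as open. There is therefore nothing to compare your proposal against on the paper's side.

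On the substance of your proposal: the easy inclusion is correct and your justification via Theorem~\ref{thm:S(Z)-contained-in-cup-cap-ker} together with the trivial containment \(\frakS_\cap^\varphi(z')\subseteq\frakS_\cup^\varphi(z')\) is exactly right. For the reverse inclusion, however, what you have written is not a proof but a pair of heuristics, and you say as much yourself in the final paragraph. In your first approach, the critical step --- that a sequence \(u_n\to z\) with \(\|\varphi(u_nau_n^{-1})\|\to\infty\) passing near some \(z^*\) ``forces every sequence of invertibles converging to \(z^*\) to exhibit the same blow-up'' --- is precisely the content of the conjecture and is left unjustified; a single unbounded sequence near \(z^*\) says nothing about \(\frakN_\cup^\varphi(a,z^*)\), which is an infimum. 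Your second approach, specialising to \(\varphi=\id\) on a Hilbert space and invoking \(\frakS_\cup(z')=\frakS_{\ker}(z')\), is more concrete, but the assertion that kernels of nearby operators ``sweep out a rich family of subspaces'' needs to be made precise and quantified before any rigidity conclusion can be drawn; as stated it is only a plausibility argument. Your closing remark --- that the statement may well fail in the stated generality and require additional hypotheses such as finite-dimensionality or generalized index zero --- is an honest assessment and matches the paper's own stance in leaving this as a conjecture.
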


In Section \ref{subsection:bounded-unmodified-existence}, we crucially make use of the condition that our operators are generalized index zero in order to prove our results (in particular, to write
our base space as a direct sum so we can produce good paths and do computations).

\begin{question}
	Does Corollary \ref{corollary:hilbert-space-S_V=S_ker} hold even for operators \(Z\) that are not of generalized index zero? That is, is it true that
	as long as \(Z\in\sfB(X)\) can be approximated by invertible operators, then \(\frakS_\cup(Z) = \frakS_{\ker}(Z)\)?
\end{question}

\clearpage
\begin{appendices}
\section{Variant of the Proof of the Lemma \ref{lemma:J-is-equivalent-to-id}}\label{section:appendix-variant-of-modified}
We provide here a variant of the proof of Lemma \ref{lemma:J-is-equivalent-to-id}, which chronologically came first and is due to the first author.
\begin{lemma}\label{lemma:inf-J-bounded-paths-are-inf-bounded}
	Let \(Z,A\in\End(\C^n)\), let \(U_k\to Z\) be a sequence of invertible matrices converging to \(Z\), and let \(J=\1-I\). Then
	\[ \lim_{k\to \infty}\| J * (U_kAU_k^{-1})\| < \infty \iff \lim_{k\to\infty} \|U_kAU_k^{-1}\| < \infty. \]
\end{lemma}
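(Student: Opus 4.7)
The plan is to reduce this pathwise statement directly to the same norm inequality that underlies Lemma \ref{lemma:J-is-equivalent-to-id}, exploiting the key observation that conjugation preserves the spectrum.

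The $(\Leftarrow)$ direction is immediate: Hadamard product with a fixed matrix is a bounded linear map on $\End(\C^n)$ (in any of the equivalent finite-dimensional norms), so $\|J * X\| \leq c_0 \|X\|$ for some absolute constant $c_0$, and the bound transfers along the sequence.

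For the nontrivial $(\Rightarrow)$ direction, I plan to establish, for every $X \in \End(\C^n)$, the pointwise inequality
\[ \|X\| \;\leq\; c_1 \|J * X\| + c_2 \sum_{i=1}^n |\lambda_i(X)|, \]
where $\lambda_1(X),\ldots,\lambda_n(X)$ are the eigenvalues of $X$ listed with algebraic multiplicity and $c_1, c_2$ depend only on $n$. Specializing $X = U_k A U_k^{-1}$ immediately finishes the proof, because conjugation preserves the eigenvalues, so $\sum_i |\lambda_i(X)| = \sum_i |\lambda_i(A)|$ is a constant independent of $k$; then boundedness of $\|J * (U_k A U_k^{-1})\|$ forces boundedness of $\|U_k A U_k^{-1}\|$.

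To prove the inequality, I first pass to the entrywise $\ell^1$ norm (which is equivalent to $\|\cdot\|$ in finite dimensions) and split into diagonal and off-diagonal pieces: $\|X\|_{\ell^1} = \sum_{i \neq j}|X_{ij}| + \sum_i |X_{ii}| \leq c_1'\|J * X\| + \sum_i |X_{ii}|$. The heart of the argument is then to bound $\sum_i |X_{ii}|$ in terms of $\|J*X\|$ plus $\sum_i |\lambda_i(X)|$. For this I invoke Gershgorin's circle theorem: the eigenvalues lie in $\bfG = \bigcup_i D(X_{ii}, R_i)$ with $R_i = \sum_{j \neq i} |X_{ij}| \leq \|J * X\|_{\ell^1}$, and moreover Gershgorin allows us to number the eigenvalues so that $\lambda_i$ and $X_{ii}$ lie in the same connected component. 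Since any connected component of $\bfG$ has diameter at most $2\sum_{j} R_j \leq 2n \|J*X\|_{\ell^1}$, one obtains $|X_{ii} - \lambda_i| \leq 2n\|J*X\|_{\ell^1}$ for each $i$, and summing yields $\sum_i |X_{ii}| \leq 2n^2 \|J * X\|_{\ell^1} + \sum_i |\lambda_i(X)|$, which, combined with the previous step and the equivalence of norms, gives the desired inequality.

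The main subtlety will be the correct matching between eigenvalues and diagonal entries across the Gershgorin components, which is precisely the content of the second assertion of Gershgorin's theorem as stated in the excerpt; once that is invoked, the rest is bookkeeping. An alternative phrasing is simply to quote Lemma \ref{lemma:J-is-equivalent-to-id} and Lemma \ref{lemma:preceq-yields-finite-implies-finite} directly, since the statement is exactly the pathwise content of $\phi \simeq \id$; I would include the direct argument above for readability, with the alternative quoted as a remark.
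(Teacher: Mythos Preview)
Your proposal is correct and takes essentially the same approach as the paper: both rely on Gershgorin's theorem together with the fact that conjugation preserves the spectrum, so the eigenvalues of $U_kAU_k^{-1}$ are fixed while the Gershgorin radii stay bounded. Your version packages this as the explicit inequality $\|X\|\leq c_1\|J*X\|+c_2\sum_i|\lambda_i(X)|$ (which is exactly the proof of Lemma~\ref{lemma:J-is-equivalent-to-id}), whereas the appendix proof phrases the same idea as a direct contradiction argument; the underlying content is identical.
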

\begin{proof}
The \((\Leftarrow)\) direction is clear. To prove the \((\To)\) direction, set \(B_k := U_kAU_k^{-1}\). Then, by assumption,
we have that \(\lim_{k\to\infty}\|J*B_k\|<\infty\), so that the non-diagonal entries of \(B_k\) remain bounded as \(k\to\infty\).
Now we have the following facts:
\begin{enumerate}[label=(\roman*)]
\item Since each \(B_k\) is a conjugate of \(A\), they all have the same eigenvalues as \(A\).
\item By the Gershgorin circle theorem, the eigenvalues of \(A\) lie within some bounded disks centered on the diagonal entries of \(A\).
\item On the other hand, also by the Gershgorin circle theorem, the eigenvalues of \(B_k\) must lie within some \emph{bounded radius, which is independent of} \(k\),
of the diagonal entries of \(B_k\) because, by assumption, the non-diagonal entries stay bounded as \(k\to\infty\). That is,
the supremum of the radii of the Gershgorin circles of the \(B_k\) is some finite number.
\end{enumerate}
If some diagonal entry of \(B_k\) exploded as \(k\to\infty\), this would imply that one of the eigenvalues of \(B_k\), i.e.\ one of the eigenvalues of \(A\),
lies outside the Gershgorin circles of \(A\). This is impossible, hence the diagonal entries must necessarily also stay bounded, and therefore
\(\|B_k\| = \|U_kAU_k^{-1}\|\) stays bounded as \(k\to\infty\).
\end{proof}

\section{Notation}\label{section:appendix-notation}

\begin{tabular}{l p{11.25cm}}\label{table-of-notation}
	Symbol & Meaning \\ \hline
	\(H*A\) & The Hadamard product of \(H\) with \(A\).\\
	\(\varphi\preceq\psi\), \(\varphi\simeq\psi\) & See Notation \ref{notation:preorder-relation}.\\
	\(\sfB(X)\) & The Banach algebra of bounded linear operators \(X\to X\).\\
	\(\frakC_{-1}(z)\) &  The set of elements appearing as the \((-1)\)th coefficient of the inverse of a good path converging to \(z\). See Definition \ref{definition:frakC}.\\
	\(\frakC'(Z)\) & The set of bounded linear operators \(C\) such that \(\img(C) = \ker(Z)\) and \(\ker(C)=\img(Z)\). See Definition \ref{definition:frakC'}. \\
	\(\End(\C^n)\) & The (Banach) algebra of linear maps \(\C^n\to\C^n\). After a choice of basis, can be identified with the algebra \(\C^{n\times n}\) of \(n\times n\) matrices.\\
	\(\frakN^\varphi_\cap\), \(\frakN_\cup^\varphi\) & See Definition \ref{definition:frakN}.\\
	\(\frakS^\varphi_\cap(-)\), \(\frakS^\varphi_\cup(-)\) & See Definition \ref{definition:frakS}.\\
	\(\frakS_{\ker}(Z)\) & The algebra of bounded linear operators keeping \(\ker{Z}\) invariant. See Definition \ref{def:kernel-invariant-set}.\\
	\(\frakS_c(-)\), \(\frakS_c^\varphi(-)\) & See Definition \ref{definition:frakS_c(z)} and Definition \ref{definition:frakS_c^phi(z)}.\\
	\(\frakS_{\pw}(z;u_k)\) & See Definition \ref{definition:frakS_pw}.\\
	\(\frakS_\frakF(F^\bullet V)\) & See Definition \ref{definition:frakS-filtration}.\\
	\(V^\times\) & The group of invertible elements in an algebra \(V\).
\end{tabular}

\end{appendices}

\clearpage
\phantomsection
\addcontentsline{toc}{section}{References}
\printbibliography

\end{document}